\newcommand{\mbf}{\mathbf}
\newcommand{\swt}[1]{\left\langle #1 \right\rangle} 
\newcommand{\merge}[1]{\vee_{#1}} 
\newcommand{\SymGp}[1]{\mathfrak{S}_{#1}} 
\newcommand{\std}[1]{\widetilde{#1}} 
\newcommand{\diag}[1]{\left[#1\right]} 
\DeclareMathOperator{\inter}{int} 
\DeclareMathOperator{\wt}{wt} 
\DeclareMathOperator{\SP}{SP} 
\DeclareMathOperator{\word}{word} 
\DeclareMathOperator{\xcoord}{x} 
\DeclareMathOperator{\ycoord}{y} 
\DeclareMathOperator{\id}{id} 
\DeclareMathOperator{\SST}{SST} 
\DeclareMathOperator{\KM}{KM}
\newcommand{\ff}{\mathbf{f}}
\newcommand{\mm}{\mathbf{m}}
\newcommand{\nn}{\mathbf{n}}
\newcommand{\pp}{\mathbf{p}}
\newcommand{\qq}{\mathbf{q}}
\newcommand{\uu}{\mathbf{u}}
\newcommand{\vv}{\mathbf{v}}
\newcommand{\xx}{\mathbf{x}}
\newcommand{\mcA}{\mathcal{A}}
\newcommand{\mcW}{\mathcal{W}}
\newcommand{\mcI}{\mathcal{I}}
\newcommand{\NN}{\mathbb{N}}
\newcommand{\ZZ}{\mathbb{Z}}
\newcommand{\RR}{\mathbb{R}}
\newcommand{\fraks}{\mathfrak{s}}
\let\sumnonlimits\sum
\let\prodnonlimits\prod
\let\cupnonlimits\bigcup
\let\capnonlimits\bigcap
\renewcommand{\sum}{\sumnonlimits\limits}
\renewcommand{\prod}{\prodnonlimits\limits}
\renewcommand{\bigcup}{\cupnonlimits\limits}
\renewcommand{\bigcap}{\capnonlimits\limits}
\newenvironment{subproof}{\textit{Proof.} }{\hfill$\blacksquare$ \medskip}
\newenvironment{verlonglong}{}{}
\newenvironment{verlong}{}{}
\newenvironment{vershort}{}{}
\newcommand{\powset}[2][]{\ifthenelse{\equal{#2}{}}{\mathcal{P}\left(#1\right)}{\mathcal{P}_{#1}\left(#2\right)}}
\newcommand{\set}[1]{\left\{ #1 \right\}}
\newcommand{\abs}[1]{\left| #1 \right|}
\newcommand{\tup}[1]{\left( #1 \right)}
\newcommand{\ive}[1]{\left[ #1 \right]}
\definecolor{darkred}{rgb}{0.7,0,0} 
\newcommand{\defn}[1]{{\color{darkred}\emph{#1}}} 
\definecolor{dblackcolor}{rgb}{0.0,0.0,0.0}
\definecolor{dbluecolor}{rgb}{0.01,0.02,0.7}
\definecolor{dgreencolor}{rgb}{0.2,0.4,0.0}
\definecolor{dgraycolor}{rgb}{0.30,0.3,0.30}
\definecolor{UQgold}{RGB}{196, 158, 54} 
\definecolor{UQpurple}{RGB}{73, 7, 94} 
\definecolor{UMNgold}{RGB}{255,200,46} 
\definecolor{UMNmaroon}{RGB}{106,0,50} 
\protected\def\specialmergetwolists{%
  \begingroup
  \@ifstar{\def\cnta{1}\@specialmergetwolists}
    {\def\cnta{0}\@specialmergetwolists}%
}
\def\@specialmergetwolists#1#2#3#4{%
  \def\tempa##1##2{%
    \edef##2{%
      \ifnum\cnta=\@ne\else\expandafter\@firstoftwo\fi
      \unexpanded\expandafter{##1}%
    }%
  }%
  \tempa{#2}\tempb\tempa{#3}\tempa
  \def\cnta{0}\def#4{}%
  \foreach \x in \tempb{%
    \xdef\cnta{\the\numexpr\cnta+1}%
    \gdef\cntb{0}%
    \foreach \y in \tempa{%
      \xdef\cntb{\the\numexpr\cntb+1}%
      \ifnum\cntb=\cnta\relax
        \xdef#4{#4\ifx#4\empty\else,\fi\x#1\y}%
        \breakforeach
      \fi
    }%
  }%
  \endgroup
}
\theoremstyle{plain}
\newtheorem{thm}{Theorem}[section]
\newtheorem{lemma}[thm]{Lemma}
\newtheorem{prop}[thm]{Proposition}
\newtheorem{cor}[thm]{Corollary}
\newtheorem{claim}[thm]{Claim}
\theoremstyle{definition}
\newtheorem{dfn}[thm]{Definition}
\newtheorem{example}[thm]{Example}
\newtheorem{remark}[thm]{Remark}
\numberwithin{equation}{section}
\begin{document}
\title[MLQs with spectral parameters]{Multiline queues with spectral parameters}

\author[E.~Aas]{Erik Aas}
\address[E. Aas]{Department of Mathematics, Pennsylvania State University, McAllister Building, State College, PA 116802, USA}
\email{eaas@kth.se}

\author[D.~Grinberg]{Darij Grinberg}
\address[D. Grinberg]{School of Mathematics, University of Minnesota, 206 Church St. SE, Minneapolis, MN 55455}
\email{darijgrinberg@gmail.com}
\urladdr{http://www.cip.ifi.lmu.de/~grinberg/}

\author[T.~Scrimshaw]{Travis Scrimshaw}
\address[T. Scrimshaw]{School of Mathematics and Physics, The University of Queensland, St.\ Lucia, QLD 4072, Australia}
\email{tcscrims@gmail.com}
\urladdr{https://people.smp.uq.edu.au/TravisScrimshaw/}

\date{\today}

\keywords{multiline queue, TASEP, R-matrix, symmetric function}
\subjclass[2010]{
60C05,  
05A19,  
16T25,  
05E05}  

\thanks{TS was partially supported by the Australian Research Council DP170102648 and the National Science Foundation RTG grant DMS-1148634.}

\begin{abstract}
Using the description of multiline queues as functions on words, we introduce the notion of a spectral weight of a word by defining a new weighting on multiline queues.
We show that the spectral weight of a word is invariant under a natural action of the symmetric group, giving a proof of the commutativity conjecture of Arita, Ayyer, Mallick, and Prolhac.
We give a determinant formula for the spectral weight of a word, which gives a proof of a conjecture of the first author and Linusson.
\end{abstract}

\maketitle

\tableofcontents

\section{Introduction}
\label{sec:introduction}

One of the fundamental models of particles moving in a 1-dimensional lattice is the asymmetric simple exclusion process (ASEP), and it has received broad attention in many different variations.
The earliest known publication of the ASEP was done to model the dynamics of ribosomes along RNA~\cite{MGP68}.
For statistical mechanics, it is a model for gas particles in a lattice with an induced current, where the exclusion mimics the short-range interactions among the particles.
Despite admitting very simple descriptions of the particle dynamics, the ASEP has very rich macroscopic behaviors, such as
\begin{itemize}
\item boundary-induced phase transitions~\cite{Krug91},
\item spontaneous symmetry breaking with possibly multiple broken symmetry phases~\cite{AHR98,AHR99,CEM01,EFGM95,EPSZ05,GLEMSS95,PK07},
\item describing the formations of shocks~\cite{DJLS93,Ferrari92,FF94,FF94II,Liggett76}, and
\item phase separation and condensation~\cite{EKKM98,JNHWW09,KLMST02,RSS00}.
\end{itemize}
We also refer the reader to~\cite{PEM09,Schutz01,SZ95,TJHJ16} and references therein.

The term exclusion process was coined by Spitzer~\cite{Spitzer70}, where he was focused on an application with Brownian motion with hard-core interactions.
Moreover, it was~\cite{Spitzer70} that initiated the investigation of exclusion processes using probability theory.
However, the applications of the ASEP (and its variations) has since spread to other areas, such as
\begin{itemize}
\item transportation processes in capillary vessels~\cite{Levitt73} or proteins within the cells along actin filaments~\cite{KNL05},
\item anistropic conductors known as solid electrolytes~\cite{CL99},
\item discrete models of traffic flow~\cite{Schad01},
\item partition growth processes~\cite{Lam15},
\item (non)symmetric Macdonald polynomials, Koornwinder polynomials, and (deformed) Knizhnik--Zamolodchikov (KZ) equations~\cite{CdGW15,KT07},
\item random matrix theory~\cite{Johansson00,TW09}, and
\item moments of Askey--Wilson polynomials~\cite{CW11,USW04}.
\end{itemize}

If we prohibit the particles from moving backwards, we obtain the totally asymmetric exclusion process (TASEP), a non-equilibrium stochastic process that has its own vast literature.
For example, we refer the reader to~\cite{AasLin17,AAMP,BE07,BP14,DEHP93,KMO15,KMO16,Liggett99} and references therein.
In this paper, we consider the TASEP on a ring with $n$ sites and $\ell$ species of particles.
Thus, we will consider the states to be words $u$ in the alphabet $\{1, \dotsc, \ell\}$ of length $n$, where we take the indices to be $\ZZ / n \ZZ$.
We will also consider our process to be discrete in time, where our transition map interchanges a pair $u_i u_{i+1}$ with $u_i > u_{i+1}$ to $u_{i+1} u_i$ and is done at a uniform rate.

The steady state of the TASEP on a ring is known in terms of another process using ordinary multiline queues (MLQs) and applying the Ferrari--Martin (FM) algorithm~\cite{FM06,FM07}.
This is a generalization of 2-line queues used by Angel~\cite{Angel06} and the work of Ferrari, Fontes, and Kohayakawa~\cite{FFK94}.
In~\cite{KMO15,KMO16}, the FM algorithm was reformulated in terms of the combinatorial $R$-matrix~\cite{NY97,Shimozono02} and using type $A_{n-1}^{(1)}$ Kirillov--Reshetikhin crystals~\cite{KKMMNN92}.
This interpretation gives a connection with five-vertex models, corner transfer matrices~\cite{Baxter89}, 3D integrable lattice models, and the tetrahedron equation~\cite{Zam80}, yielding a matrix product formula for the steady state distribution different than~\cite{CdGW15,EFM09,PEM09}.

In this paper, we introduce a new weighting of MLQs, which is the weight of the MLQ considered as a tensor product of Kirillov--Reshetikhin crystals.
We also interpret MLQs as functions on words of a fixed length $n$ following~\cite{AAMP}, where it was referred to as the generalized FM algorithm.
This allows us to define the spectral weight or amplitude of a word $u$ to be the sum over all the weight of all ordinary MLQs $\qq$ such that $u = \qq(1^n)$, where $1^n$ is the word $1 \dotsm 1$.
Furthermore, we introduce the notion of a $\sigma$-twisted MLQ, where $\sigma$ is a permutation, although this is implicitly considered in~\cite{AAMP}.
Our main result (Theorem~\ref{thm:permutation}) is that for a fixed permutation $\sigma$, the sum of the weights of all $\sigma$-twisted MLQs $\qq_{\sigma}$ such that $u = \qq_{\sigma}(1^n)$ equals the spectral weight of $u$.
To this end, we construct an action of the symmetric group on MLQs that corresponds, under the usual FM algorithm, to the natural action by letters on words.
We show that does not change the MLQ as a function on words.
This action has previously appeared in a number of different guises, such as in Danilov and Koshevoy~\cite{DanilovKoshevoy} (see also~\cite[Ch.~4]{Gorodentsev2}), van Leeuwen~\cite[Lemma~2.3]{vanLeeuwen-dc}, and Lothaire~\cite[Ch.~5, (5.6.3)]{Loth}.
In the context of Kirillov--Reshetikhin crystals, it can be described as applying a combinatorial $R$-matrix to an MLQ, where the weight remaining invariant is a condition of being a crystal isomorphism.

As a consequence of this action and specializing our weight parameters to $1$, we obtain a proof of the commutativity conjecture of~\cite{AAMP}.
However, we note that the interlacing property of~\cite{AAMP} does not generalize to our weighting of MLQs.
Furthermore, we give a determinant expression for the spectral weight of decreasing words by using the Lindstr\"om--Gessel--Viennot Lemma~\cite{GV85,Lindstrom73}.
By combining these results, we obtain a proof of~\cite[Conj.~3.10]{AasLin17}, which in turn proves a number of other conjectures in~\cite{AasLin17}.

We note that our weighting scheme can be extended to multiline process used to determine the steady state distribution of the totally asymmetric zero range process (TARZP) on a ring, where multiple particles can occupy the same site.
This comes from the fact that the TARZP steady state distribution can also be computed using a tensor product of Kirillov--Reshetikhin crystals (under rank-level duality) using combinatorial $R$-matrices with analogous connections to corner transfer matrices and the tetrahedron equation~\cite{KMO16TARZP,KMO16TARZPII}.
Thus, we expect that a similar description of $\sigma$-twisted multiline process can be defined such that the weighting is invariant under the action of the combinatorial $R$-matrix.
Yet it seems unlikely that our weighting is related to the steady state distribution for the inhomogeneous TASEP~\cite{AM13,AL14} or TARZP~\cite{KMO16II}.

This paper is organized as follows.
In Section~\ref{sec:background}, we give the necessary background and definitions of MLQs and spectral weight.
In Section~\ref{sec:result}, we give our main results, and use them to prove some of the conjectures in~\cite{AasLin17}.
In Section~\ref{sec:tasep}, we describe the connection between MLQs and the TASEP.
In Section~\ref{sec:JT-proofs}, we give a proof of our Jacobi-Trudi-type formula (Theorem~\ref{thm:determinant_form}).
In Section~\ref{sec:thm_proof}, we give a proof of our main theorem (Theorem~\ref{thm:permutation}).
In Section~\ref{sec:remarks}, we give some additional remarks about our results.

\subsection{Acknowledgements}

We thank Atsuo Kuniba for explaining the results in his papers~\cite{KMO15,KMO16II,KMO16,KMO16TARZP,KMO16TARZPII}.
We thank Olya Mandelshtam for useful discussions on the inhomogeneous TASEP.
We thank Jae-Hoon Kwon for pointing out that the $\SymGp{n}$-action on MLQs comes from an $(\mathfrak{sl}_m \oplus \mathfrak{sl}_n)$-action.
We thank the referees for their comments improving this manuscript.
This work benefited from computations using \textsc{SageMath}~\cite{sage,combinat}.

\section{Background and definitions}
\label{sec:background}

Fix a positive integer $n$.
For a nonnegative integer $k$, let $\ive{k}$ denote the set $\set{1, 2, \ldots, k}$, and so $[0] = \emptyset$.
Let $\SymGp{k}$ denote the symmetric group on $\ive{k}$, and let $s_i \in \SymGp{k}$ be the simple transposition of $i$ and $i+1$.
Let $w_0 \in \SymGp{k}$ be the longest element: the permutation $k (k-1) \dotsm 321$ (written in one line notation) that reverses the order of all elements.

We shall refer to the elements $1, 2, \ldots, n \in \ZZ / n \ZZ$ as \defn{sites}.
We visualize them as points on a line that ``wraps around'' cyclically; thus, for example, the sites weakly to the right of a site $i$ are $i, i+1, \ldots, n-1, n, 1, 2, 3, \ldots$ (in this order).

\subsection{Words and queues}

Let $\mcW_n$ be the set of words $u = u_1 \dotsm u_n$ in the ordered alphabet $\mcA := \{1 < 2 < 3 < \cdots \}$.
We consider the indices of letters in a word to be taken modulo $n$ (that is, $u_{k+n} = u_k$ for all $k$).
Thus, if $u$ is a word and $i \in \ZZ / n \ZZ$ is a site, then the $i$-th letter $u_i$ of $u$ is well-defined.
We sometimes refer to a letter $u_i = t$ as a \defn{particle at site $i$ of class $t$}.

The \defn{type} of a word $u$ is the vector $\mm = (m_1, m_2, \ldots)$, where $m_i$ is the number of occurrences of $i$ in $u$.
Let $\ell = \max\{i \mid m_i \neq 0 \}$, which we say is the number of \defn{classes} in $u$ or $\mm$.
A word $u$ or type $\mm$ with $\ell$ classes is \defn{packed} if $m_i \neq 0$ for all $1 \leq i \leq \ell$.
A word $w$ of type $\mm$ is \defn{standard} if $m_i \leq 1$ for all $i$.
We will write $1^n = 1 \dotsm 1$ for the (unique) word of type $(n, 0, \dotsc)$.

We \defn{merge} two adjacent classes $i,i+1$ in a word $u$ to obtain a new word by replacing all occurrences of $j$ by $j-1$ in $u$ for each $j = i+1, i+2, \ldots$ in that order.
We denote the merging of $i$ and $i+1$ in $u$ by $\merge{i} u$.
Note that $\merge{i} u$ is packed whenever $u$ is packed.
For $T = \set{t_1 < \cdots < t_k} \subseteq \ive{\ell-1}$, we set $\bigvee_T u := \merge{t_1} \cdots \merge{t_k} u$.
Similarly, the merging of $i,i+1$ in a type $\mm = (m_1, m_2, \ldots)$ is $\merge{i}(\mm) = (m_1, \dotsc, m_{i-1}, m_i + m_{i+1}, m_{i+2}, \ldots)$.
Likewise, we define $\bigvee_T \mm$ for a type $\mm$.
These operations interact as one would hope:
If the type of a word $u$ is $\mm$, then the type of $\merge{i} u$ is $\merge{i}(\mm)$.

Fix a word $u \in \mcW_n$, and let $\mm = (m_1, m_2, \ldots)$ be the type of $u$.
For each $i \geq 0$, set
\begin{equation}
\label{eq:type_partial_sums}
p_i(\mm) := m_1 + m_2 + \cdots + m_i.
\end{equation}
When $\mm$ is clear, we simply write $p_i$ for this.
(Thus, $p_0 = 0$ and $p_i = n$ for sufficiently large $i$.)

\begin{vershort}
We define an \defn{$r$-queue} $q$ to be any subset of $\ive{n}$ of size $r$. When $r$ is clear, we will simply call $q$ a \defn{queue}.
We equate $q$ with a function from $\mcW_n$ to itself defined as follows.
For any $u \in \mcW_n$, the following algorithm from~\cite[Sec.~4.5]{AAMP} computes $v = q(u)$.
\end{vershort}
\begin{verlong}
We define a \defn{queue} to be any set of sites.
A queue of size $r$ will be called an \defn{$r$-queue}.

We shall now define an action of queues on words\footnote{%
A more formal description will follow after
Example~\ref{ex:first_queue}.}.
Namely, if $q$ is any queue and $u \in \mcW_n$ is a word,
then a new word $v = q(u) \in \mcW_n$ is defined by the following algorithm from~\cite[Sec.~4.5]{AAMP}:
\end{verlong}
Choose an ordering of the sites $i \in \ive{n}$ so that the corresponding values $u_i$ are weakly increasing.
We construct a perfect matching on the sites in $u$ and $v$ as follows.
\begin{description}
\item[Phase~I]
   For each of the last $n - r$ sites $i \in \ive{n}$ in decreasing order (in our ordering), match the first site $j$ weakly to the left (cyclically) of $i$ such that $j \notin q$ that is unmatched and set $v_j = u_i + 1$.
\item[Phase~II]
   For each of the remaining $r$ sites $i \in \ive{n}$ in increasing order (in our ordering), match the site with value $u_i$ with the first site $j$ weakly to the right (cyclically) of $i$ such that $j \in q$ that is unmatched and set $v_j = u_i + 1$.
\end{description}
There is a choice inherent in ordering the sites (since some letters of $u$ can be equal), but we will later see that this choice does not affect the word $v$ obtained.

\begin{example}
\label{ex:first_queue}
We consider the $4$-queue $q = \{1, 4, 8, 9\}$, and let $u = 346613321$.
We order the sites as $598167243$.
To compute $q(u)$, draw the following diagram
(whose upper row shows $u$, whose lower row shows $q(u)$,
and whose middle row represents the set $q$ by placing balls in the positions of its elements):
\[
\begin{tikzpicture}[>=latex,rounded corners,yscale=1.5,xscale=1.2,baseline=0]
\def\passwidth{3pt};
\node (i1) at (1,1) {$3$};
\node (i2) at (2,1) {$4$};
\node (i3) at (3,1) {$6$};
\node (i4) at (4,1) {$6$};
\node (i5) at (5,1) {$1$};
\node (i6) at (6,1) {$3$};
\node (i7) at (7,1) {$3$};
\node (i8) at (8,1) {$2$};
\node (i9) at (9,1) {$1$};
\node (t1) at (1,-1) {$2$};
\node (t2) at (2,-1) {$7$};
\node (t3) at (3,-1) {$7$};
\node (t4) at (4,-1) {$3$};
\node (t5) at (5,-1) {$4$};
\node (t6) at (6,-1) {$4$};
\node (t7) at (7,-1) {$5$};
\node (t8) at (8,-1) {$1$};
\node (t9) at (9,-1) {$1$};
\node[circle,draw=black] (q1) at (1,0) {};
\node[circle,draw=black] (q2) at (4,0) {};
\node[circle,draw=black] (q3) at (8,0) {};
\node[circle,draw=black] (q4) at (9,0) {};
\draw[->,red] (i4) -- (4,0.5) .. controls (3.8,0.2) and (3.5,0) .. (3.1,0) -- (2,0) -- (t2);
\draw[->,red] (i3) -- (t3);
\draw[->,red] (i2) -- (2,0.15) .. controls (1.8,-0.2) and (1.6,-0.25) .. (1.3,-0.25) -- (0,-0.25);
\draw[>->,red] (10,-0.25) -- (8,-0.25) -- (7,-0.25) -- (t7);
\draw[->,red] (i6) -- (t6);
\draw[->,red] (i7) -- (7,0) -- (5,0) -- (t5);
\draw[white,line width=\passwidth] (5,0.28) -- (7.3,0.28) .. controls (7.7,0.28) and (7.85,0.12) .. (q3);  
\draw[->,blue] (i5) -- (5,0.28) -- (7.3,0.28) .. controls (7.7,0.28) and (7.85,0.12) .. (q3);
\draw[white,line width=\passwidth] (q3) -- (t8);  
\draw[->,blue] (q3) -- (t8);
\draw[->,blue] (i9) -- (q4);
\draw[white,line width=\passwidth] (q4) -- (t9);  
\draw[->,blue] (q4) -- (t9);
\draw[white,line width=\passwidth] (1,0.28) -- (3.3,0.28) .. controls (3.7,0.28) and (3.85,0.12) .. (q2);  
\draw[->,blue] (i1) -- (1,0.28) -- (3.3,0.28) .. controls (3.7,0.28) and (3.85,0.12) .. (q2);
\draw[->,blue] (q2) -- (t4);
\draw[white,line width=\passwidth] (i8) -- (8,0.28) -- (10,0.28);  
\draw[->,blue] (i8) -- (8,0.28) -- (10,0.28);
\draw[>->,blue] (0,0.28) -- (0.3,0.28) .. controls (0.7,0.28) and (0.85,0.12) .. (q1);
\draw[white,line width=\passwidth] (q1) -- (t1);  
\draw[->,blue] (q1) -- (t1);
\end{tikzpicture}
\]
\begin{vershort}
where the paths in red correspond to Phase~I and those in blue are from Phase~II.
\end{vershort}
\begin{verlong}
where the paths in red correspond to Phase~I and those in blue are from Phase~II
(and where we have picked the permutation $\tup{i_1, i_2, \ldots, i_n}
= \tup{5, 9, 8, 1, 7, 6, 2, 4, 3}$ out of a total of $2! \cdot 3! \cdot 2! = 24$ permutations
satisfying $u_{i_1} \leq u_{i_2} \leq \cdots \leq u_{i_n}$;
but any other among them would lead to the same result).
\end{verlong}
Hence, we have $q(346613321) = 277344511$.
\end{example}

Let us give a more detailed formulation of the algorithm that will be useful for our proofs.
In the beginning, no letter of $v \in \mcW_n$ is set.
Choose a permutation $\tup{i_1, i_2, \ldots, i_n}$ of $\tup{1, 2, \ldots, n}$ such that $u_{i_1} \leq u_{i_2} \leq \cdots \leq u_{i_n}$.
(This corresponds to the ordering of the sites that had to be chosen in the previous formulation of the algorithm.)
\begin{verlong}
(The result of the algorithm will not depend on this choice, as we will show in Lemma~\ref{lemma:order_indep} below.)
\end{verlong}

\begin{description}
\item[Phase~I]
  For $i = i_n, i_{n-1}, \ldots, i_{\abs{q}+1}$, do the following.
    Find the first site $j$ weakly to the left (cyclically) of $i$ such that $j \notin q$ and $v_j$ is not set.
    Then set $v_j = u_i + 1$.

\item[Phase~II]
  For $i = i_1, i_2, \ldots, i_{\abs{q}}$, do the following.
    Find the first site $j$ weakly to the right (cyclically) of $i$ such that $j \in q$ and $v_j$ is not set.
    Then set $v_j = u_i$.
\end{description}

\begin{remark}
\label{rmk:order-agnostic}
Consider the above algorithm.
Notice that Phase~I sets $v_j$ for all $j \notin q$ (because it contains
$n - \abs{q}$ steps, and sets one such $v_j$ per step),
whereas Phase~II sets $v_j$ for all $j \in q$ (for similar reasons).
Hence, at the end of the algorithm, all letters of $v$ are set, and we never
run out of $j$'s in either phase.

Since Phase~I only deals with $j \notin q$, and Phase~II only with $j\in q$,
the two phases can be arbitrarily interleaved
(\textit{i.e.}, we can perform the steps of the algorithm in any order as long as
the steps of Phase~I (resp.\ Phase~II) are processed in the order $i = i_n, i_{n-1}, \ldots, i_{\abs{q}+1}$
(resp.\ $i = i_1, i_2, \ldots, i_{\abs{q}}$).
\end{remark}

\begin{lemma}
\label{lemma:order_indep}
The resulting word $v = q(u)$ does not depend on the choice of permutation $(i_1, i_2, \dotsc, i_n)$
(as long as $u_{i_1} \leq u_{i_2} \leq \cdots \leq u_{i_n}$ holds).
\end{lemma}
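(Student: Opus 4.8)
The plan is to reduce the statement to a single adjacent transposition of equal letters and then to treat that case according to where the transposition sits relative to the cut at position $\abs{q}$. First I would observe that any two permutations $(i_1, \ldots, i_n)$ and $(i'_1, \ldots, i'_n)$ with $u_{i_1} \le \cdots \le u_{i_n}$ and $u_{i'_1} \le \cdots \le u_{i'_n}$ induce the same partition of $\ive{n}$ into ``class blocks'' (the positions occupied by a fixed letter value form one consecutive interval), and differ by a permutation stabilizing this partition. Since the stabilizer of a partition into consecutive intervals is generated by the adjacent transpositions $s_k$ with $k, k+1$ in a common block, it suffices to show that swapping $i_k$ and $i_{k+1}$ leaves $q(u)$ unchanged whenever $u_{i_k} = u_{i_{k+1}} =: t$; every ordering along such a chain is again weakly increasing, so the algorithm is defined at each step.

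For a swap inside one phase (that is, $k \le \abs{q}-1$ or $k \ge \abs{q}+1$), the two indices $i_k, i_{k+1}$ are processed in the same phase in consecutive steps, and the set of class-$t$ source sites is unchanged. Both steps deposit the same value ($t$ in Phase~II, $t+1$ in Phase~I), so it is enough to check that the \emph{pair} of target sites occupied by these two steps is the same in either processing order: once this is known, every preceding step is literally unchanged, and every subsequent step sees the same set of occupied sites. For this I would prove the elementary fact that, for the greedy rule ``first unoccupied admissible site, cyclically to the right (resp.\ left)'', given two requests and a set $Q'$ of available targets, the set of two targets used does not depend on which request is served first. The proof is a short case analysis on whether the two requests' first choices in $Q'$ coincide; in the coinciding case one uses that, after serving the common first choice $a$, the ``next available target'' computed clockwise (resp.\ counterclockwise) from either request is the same, because both requests lie in the arc ending at $a$ that contains no other element of $Q'$.

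The remaining case $k = \abs{q}$ is the crux. Here $t := u_{i_{\abs{q}}} = u_{i_{\abs{q}+1}}$, so the letter value $t$ straddles the cut, i.e.\ $p_{t-1} < \abs{q} < p_t$; the swap moves one class-$t$ site from Phase~II into Phase~I and another from Phase~I into Phase~II, and changes nothing else, since it only touches positions $\abs{q}$ and $\abs{q}+1$, both inside the class-$t$ block. In particular the processing of every letter $\neq t$ is literally identical before and after. The key is a counting identity: when Phase~I reaches the class-$t$ sources, the letters $> t$ (all of which lie in Phase~I) have occupied exactly $n - p_t$ of the $n - \abs{q}$ sites outside $q$, leaving $p_t - \abs{q}$ of them free, and the number of class-$t$ sources routed through Phase~I is $m_t - (\abs{q} - p_{t-1}) = p_t - \abs{q}$ --- the same number. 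By Remark~\ref{rmk:order-agnostic} the algorithm never runs out, so these class-$t$ sources occupy \emph{all} the remaining sites outside $q$, each receiving the value $t+1$; hence the restriction of $q(u)$ to the complement of $q$ is forced, independently of which class-$t$ sites were routed through Phase~I. The mirror count for Phase~II (the letters $< t$ occupy $p_{t-1}$ of the $\abs{q}$ sites of $q$, leaving $\abs{q} - p_{t-1}$ free, which is exactly the number of class-$t$ Phase~II sources) shows that the restriction of $q(u)$ to $q$ is forced as well. Therefore $q(u)$ is unchanged.

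I expect the across-the-cut case to be the main obstacle: unlike a within-phase swap, it genuinely alters which source sites feed each phase, so it cannot be disposed of by merely commuting adjacent steps. What makes it work is the exact balance, for the straddling class $t$, between the number of its sources in each phase and the number of sites that phase has left free after the extreme letters have been placed --- forcing the class-$t$ sources to fill a predetermined set of sites with a predetermined value. A secondary technical point is making the greedy-matching commutation lemma in the within-phase case fully rigorous on the cyclically ordered sites, but this is routine.
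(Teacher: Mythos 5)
Your proposal is correct, and it diverges from the paper's proof in the two places that matter. The paper also reduces to an adjacent swap of two equal letters $u_{i_k}=u_{i_{k+1}}$ and splits into the cases $k<\abs{q}$, $k>\abs{q}$, $k=\abs{q}$, but it handles them more tersely: for a within-phase swap it simply notes that the two affected letters of $v$ receive the same value, implicitly taking for granted that the \emph{pair} of target sites is the same set in either processing order; your greedy-commutation lemma (case analysis on whether the two first choices in the available set coincide) is exactly the fact being glossed over there, and making it explicit is a genuine improvement in rigor, since equality of that pair is what guarantees all later steps see the same occupied set. For the crux case $k=\abs{q}$, the paper takes a different and shorter route: using the interleaving observation of Remark~\ref{rmk:order-agnostic}, it postpones the last step of Phase~I and the last step of Phase~II to the very end, and observes that their combined effect -- the unique unset site in $q$ gets $t$, the unique unset site outside $q$ gets $t+1$ -- does not depend on which of $i_k,i_{k+1}$ feeds which phase. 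Your counting/saturation argument (the letters $>t$ leave exactly $p_t-\abs{q}$ free sites outside $q$, matched by the $p_t-\abs{q}$ class-$t$ sources in Phase~I, and dually inside $q$) proves more: the restriction of $q(u)$ to each of $q$ and its complement is forced regardless of \emph{which} class-$t$ sites are routed through which phase, not merely under one adjacent transposition across the cut, and it does so without invoking the interleaving remark. The trade-off is length: the paper's interleaving trick disposes of the crossing case in three lines, while your version yields a stronger invariance statement and localizes all the subtlety in the elementary matching lemma. Both are valid proofs of Lemma~\ref{lemma:order_indep}.
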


\begin{proof}
Consider some $k \in \ive{n-1}$ such that $u_{i_k} = u_{i_{k+1}}$.
If we switch the two adjacent entries $i_k$ and $i_{k+1}$ of the
permutation $\tup{i_1, i_2, \ldots, i_n}$,
then the resulting word $v$ is unchanged.
Indeed, if we set $h = u_{i_k} = u_{i_{k+1}}$, then:
\begin{itemize}
 \item If $k < \abs{q}$, then this switch interchanges two consecutive
       steps in Phase~II, causing the corresponding letters $v_j$ to
       get set in a possibly different order; but this does not change $v$
       because these two letters are set to the same value
       (namely, to $h+1$).
 \item If $k > \abs{q}$, then a similar argument works (using Phase~I instead).
 \item If $k = \abs{q}$, then recall from Remark~\ref{rmk:order-agnostic} that
       the two phases can be arbitrarily interleaved.
       In particular, we can first perform all but the last step of Phase~I,
       then perform all but the last step of Phase~II,
       and finally perform the remaining two steps.
       The switch only affects these final two steps.
       However, the effect of these two steps is simply that the unique
       remaining unset $v_j$ with $j \in q$ gets set to $h$,
       and the unique remaining unset $v_j$ with $j \notin q$ gets set to $h+1$.
       The switch clearly does not change this behavior,
       since it does not depend on $i_k$ and $i_{k+1}$.
\end{itemize}
\end{proof}

Lemma~\ref{lemma:order_indep} states that the order between sites $i$ with equal $u_i$ does not matter.

\begin{remark}
\label{rmk:t-splitting}
Let $u$ and $q$ be as before. Let $r = \abs{q}$.
There exists a $t \in \ive{\ell}$ such that
$
p_{t-1} \leq r \leq p_t.
$
The word $v = q(u)$ then has type
\begin{equation}
\label{eq:queue_type_change}
(m_1, \dots, m_{t-1}, r-p_{t-1}, p_{t}-r, m_{t+1}, m_{t+2}, \ldots).
\end{equation}
Note that $p_{t} - r = m_{t} + (p_{t-1} - r)$.
We think of this as splitting the class $t$ into two new classes $t$ and $t+1$.

For all $i$ processed in Phase~I (resp.\ Phase~II) of the algorithm, we have $u_i \geq t$ (resp.~$u_i \leq t$).
The queue $q$ can be reconstructed from $v$ and $t$ as the set of all $j \in \ive{n}$ satisfying $v_j \leq t$.
\end{remark}

\begin{example}
For $u$ in Example~\ref{ex:first_queue}, the type of $u$ is $\mm = (2, 1, 3, 1, 0, 2, 0, \ldots)$ with $p_2 = 3$ and $p_3 = 6$.
Thus, the $t$ in Remark~\ref{rmk:t-splitting} equals $3$.
The word $q(u)$ has type $(2,1,1,2,1,0,2,0,\ldots)$.
\end{example}

We illustrate the situation $v = q(u)$ with a $2 \times n$ array, where the first row is the word $u$
and the second row has a circle labeled $v_j$ for $j \in q$ or a square labeled $v_j$ for $j \notin q$ in position $j$.
Using this convention, we can write Example~\ref{ex:first_queue} as
\begin{equation}
\begin{tikzpicture}[baseline=10]
  \def\ll{0.65}   
  \foreach \i in {5,9} { \node at (\i,\ll) {$1$}; }
  \foreach \i in {8} { \node at (\i,\ll) {$2$}; }
  \foreach \i in {1,6,7} { \node at (\i,\ll) {$3$}; }
  \foreach \i in {2} { \node at (\i,\ll) {$4$}; }
  \foreach \i in {3,4} { \node at (\i,\ll) {$6$}; }
  \foreach \i in {1,4,8,9} { \draw (\i,0) circle (0.3); }
  \foreach \i in {2,3,5,6,7} { \draw (\i-.3,0-.3) rectangle +(0.6,+0.6); }
  \foreach \i in {8,9} { \node at (\i,0) {$1$}; }
  \foreach \i in {1} { \node at (\i,0) {$2$}; }
  \foreach \i in {4} { \node at (\i,0) {$3$}; }
  \foreach \i in {5,6} { \node at (\i,0) {$4$}; }
  \foreach \i in {7} { \node at (\i,0) {$5$}; }
  \foreach \i in {2,3} { \node at (\i,0) {$7$}; }
\end{tikzpicture}
\label{eq:boxes-and-balls-1}
\end{equation}
We call this the \defn{graveyard diagram} of $q$ and $u$.

There is a natural duality in the algorithm above.
For each queue $q$, let $q^*$ be the \defn{contragredient dual queue}, defined by $\tup{i \in q^*} \Longleftrightarrow \tup{n+1-i \notin q}$.
Similarly, for each word $u$ with $\ell$ classes, let $u^*$ be the \defn{contragredient dual word} defined by $u^*_i = \ell + 1 - u_{n+1-i}$.
For a fixed $k \in \ive{\ell}$, we call $\ell + 1 - k$ the \defn{contragredient dual letter} of $k$.
Note that if $q$ is an $r$-queue, then $q^*$ is the $(n-r)$-queue obtained by reflecting $[n] \setminus q$ through the middle of $[n]$.
Similarly, $u^*$ is obtained by reversing the word $u$ and taking the contragredient dual letters.

\begin{lemma}[Contragredient duality]
  \label{le:dual}
  Let $q$ be a queue and $u$ be a word with $\ell$ classes.
  Then we have $\bigl(q(u) \bigr)^* = q^*(u^*)$, \textit{i.e.}, we have $q(u)_i = \ell + 2 - q^*(u^*)_{n+1-i}$ for all $i$.
\end{lemma}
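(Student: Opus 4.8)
The plan is to run the algorithm defining the queue action on both sides at once and exhibit the two computations as mirror images of each other. Let $\phi\colon\ive{n}\to\ive{n}$ be the ``reverse'' bijection $i\mapsto n+1-i$. Its relevant compatibilities are: $j\in q^{*}\iff\phi(j)\notin q$; $\abs{q^{*}}=n-\abs{q}$; and $\phi$ carries the sites weakly to the left of a site $i$ (listed in cyclic order of proximity) to the sites weakly to the right of $\phi(i)$ (in cyclic order of proximity), and vice versa. By Lemma~\ref{lemma:order_indep} we may fix any sorting permutation $\tup{i_{1},\ldots,i_{n}}$ with $u_{i_{1}}\le\cdots\le u_{i_{n}}$ to compute $v:=q(u)$. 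Since $u^{*}_{a}=\ell+1-u_{n+1-a}$, the inequality $u^{*}_{a}\le u^{*}_{b}$ is equivalent to $u_{n+1-a}\ge u_{n+1-b}$, so $\tup{j_{1},\ldots,j_{n}}:=\tup{\phi(i_{n}),\phi(i_{n-1}),\ldots,\phi(i_{1})}$ is a valid sorting permutation for $u^{*}$, and I use it to compute $v':=q^{*}(u^{*})$. With these choices, Phase~I of $q^{*}(u^{*})$ processes the sites $\phi(i_{1}),\ldots,\phi(i_{\abs{q}})$ in this order (the $\phi$-images, in the same order, of the sites $i_{1},\ldots,i_{\abs{q}}$ handled by Phase~II of $q(u)$), and symmetrically Phase~II of $q^{*}(u^{*})$ processes $\phi(i_{n}),\ldots,\phi(i_{\abs{q}+1})$, the mirror images of the sites handled by Phase~I of $q(u)$.

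Next I would show, by induction on the step number $k$ within each phase, that the site matched at the $k$-th step of Phase~II (resp.\ Phase~I) of $q(u)$ is carried by $\phi$ to the site matched at the $k$-th step of Phase~I (resp.\ Phase~II) of $q^{*}(u^{*})$. The inductive hypothesis says that the set of already-matched sites on the $q(u)$ side is the $\phi$-preimage of the set of already-matched sites on the $q^{*}(u^{*})$ side; combined with the compatibilities of $\phi$ above (membership in $q$ versus $q^{*}$, and ``first unmatched to the left'' versus ``first unmatched to the right''), this forces the newly matched sites to correspond as well. Remark~\ref{rmk:order-agnostic} is what makes this clean: each phase only ever touches sites of one membership type, which are exactly the ones the induction tracks, so the two phases may be analyzed independently.

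Finally I would read off the letters. Phase~I of $q(u)$ sets $v_{j}=u_{i}+1$ and Phase~II sets $v_{j}=u_{i}$; the corresponding steps of $q^{*}(u^{*})$ (Phase~II and Phase~I respectively, at the mirrored sites $\phi(i)$ and $\phi(j)$) set $v'_{\phi(j)}=u^{*}_{\phi(i)}$ and $v'_{\phi(j)}=u^{*}_{\phi(i)}+1$. Since $u^{*}_{\phi(i)}=\ell+1-u_{i}$, both cases give $v'_{\phi(j)}=\ell+2-v_{j}$. As $\phi$ is a bijection and every site is matched exactly once on each side, this holds for all sites, i.e.\ $q^{*}(u^{*})_{n+1-i}=\ell+2-q(u)_{i}$ for all $i$; applying $\phi$ twice rewrites this as $q(u)_{i}=\ell+2-q^{*}(u^{*})_{n+1-i}$, the claimed identity. (By Remark~\ref{rmk:t-splitting}, $q(u)$ has $\ell+1$ classes when $\abs{q}<n$, so this identity is exactly $\bigl(q(u)\bigr)^{*}=q^{*}(u^{*})$.)

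I expect the only real obstacle to be the bookkeeping in the induction: checking that the already-matched sets remain $\phi$-mirror images of each other through every step, and that the cyclic ``first unmatched site weakly to the left not in $q$ (resp.\ weakly to the right in $q^{*}$)'' on one side is genuinely the $\phi$-image of its counterpart on the other. Once the two runs of the algorithm are matched up, the letter computation is a one-line substitution.
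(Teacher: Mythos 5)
Your proposal is correct and is essentially the paper's own argument in expanded form: the paper's proof is exactly the observation that Phase~I (resp.\ Phase~II) for $q(u)$ becomes Phase~II (resp.\ Phase~I) for $q^*(u^*)$ under the reversal $i \mapsto n+1-i$ and contragredient relabeling of letters, which is what your mirror-image induction and final letter substitution make precise. The details you flag (the reversed sorting permutation, the mirrored ``left/right'' searches, and the interleaving of phases via Remark~\ref{rmk:order-agnostic}) all check out.
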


Here, we treat $q(u)$ as a word with $\ell+1$ classes, even if it may have only $\ell$ classes (in the degenerate case when $q = \ive{n}$).

\begin{proof}
Phase~I (resp.~II) in the construction of $q(u)$ corresponds to Phase~II (resp.~I) in the construction of $q^*(u^*)$ when the word is reversed and the letters are replaced by their contragredient duals.
Hence the claim follows.
\end{proof}

We note that our construction is a minor variation of the construction given by~\cite[\S 3.1]{AAMP}, which is a reformulation of that of~\cite{FM07}.
We more precisely describe the relationship with~\cite{FM07} in Appendix~\ref{app:queue-relations}, where we also discuss a small variant that has appeared in~\cite{AssSea18}.

\subsection{Multiline queues}

We now give our main definition of a multiline queue and spectral weight.

\begin{dfn}
For $\sigma \in \SymGp{\ell-1}$, a \defn{$\sigma$-twisted multiline queue (MLQ) of type $\mm = \tup{m_1, m_2, \ldots, m_\ell, 0, 0, \ldots}$}, with $\ell$ classes, is a sequence of queues $\qq = (q_1, \dotsc, q_{\ell-1})$ such that $q_i$ is a $p_{\sigma(i)}(\mm)$-queue and $m_{\ell} = n - p_{\ell-1}(\mm)$ (that is, $p_\ell(\mm) = n$).
When $\sigma$ is the identity permutation, we simply call $\qq$ an \defn{(ordinary) MLQ of type $\mm$}.
We also consider $\qq$ as a function on words by
\[
\qq(u) := q_{\ell-1}\bigl( \cdots q_2\bigl( q_1(u) \bigr) \cdots \bigr).
\]
\end{dfn}

\begin{remark}
Our notion of an (ordinary) MLQ is equivalent to what is called a ``discrete MLQ'' in~\cite[\S 2.2]{AasLin17}, where we can recover the labeling of level $k$ from the word $q_k( \cdots q_1(1^n) \cdots )$.
See Appendix~\ref{app:queue-relations} for more details.
We omit the word ``discrete'' as these are the only MLQs in this note.
\end{remark}

We shall now introduce generating functions for queues.

\begin{dfn}
Let $\xx := \{x_1, x_2, \ldots, x_n\}$ be commuting indeterminates indexed by elements of $\ZZ / n \ZZ$.
(Thus, $x_{n+k} = x_k$ for all $k \in \ZZ$.)
The \defn{weight} of a queue $q$ is
\[
  \wt(q) := \prod_{i \in q} x_i.
\]
The \defn{weight} of a $\sigma$-twisted MLQ $\qq = (q_1, \dotsc, q_{\ell-1})$ is
\[
  \wt(\qq) := \prod_{i=1}^{\ell-1} \wt(q_i).
\]
\end{dfn}

\begin{dfn}
For $\sigma \in \SymGp{\ell-1}$ and a packed word $u$ of type $\mm$ with $\ell$ classes, we define the \defn{$\sigma$-spectral weight} or \defn{$\sigma$-amplitude} as
\[
  \swt{u}_{\sigma} := \sum_{\qq} \wt(\qq),
\]
where the sum is over all $\sigma$-twisted MLQs $\qq$ of type $\mm$ satisfying $u = \qq(1^n)$.
(Recall that $1^n$ denotes the word in $\mcW_n$ whose all letters are $1$.)
When $\sigma = \id$ is the identity permutation, we simply call this the \defn{spectral weight} or \defn{amplitude} and denote it by $\swt{u} := \swt{u}_{\id}$.
\end{dfn}

\begin{example}
\label{ex:spectral_weight_example}
Consider $u = 2312$, a word of type $\mm = \tup{1,2,1,0,0,\ldots}$.
We have the following MLQs $\qq$ (of type $\mm$) such that $\qq(1111) = 2312$:
\[
\begin{tikzpicture}[baseline=-30]
  \def\ll{0.75}   
  \foreach \i in {3} { \draw (\i,\ll*-1) circle (0.3); }
  \foreach \i in {1,2,4} { \draw (\i-.3,\ll*-1 -.3) rectangle +(0.6,+0.6); }
  \foreach \i in {1,3,4} { \draw (\i,\ll*-2) circle (0.3); }
  \foreach \i in {2} { \draw (\i-.3,\ll*-2 -.3) rectangle +(0.6,+0.6); }
  \foreach \i in {1,2,3,4} { \node at (\i, \ll*0-.15) {$1$}; }
  \foreach \i in {3} { \node at (\i, \ll*-1) {$1$}; }
  \foreach \i in {1,2,4} { \node at (\i, \ll*-1) {$2$}; }
  \foreach \i in {3} { \node at (\i, \ll*-2) {$1$}; }
  \foreach \i in {1,4} { \node at (\i, \ll*-2) {$2$}; }
  \foreach \i in {2} { \node at (\i, \ll*-2) {$3$}; }
\end{tikzpicture}
\hspace{50pt}
\begin{tikzpicture}[baseline=-30]
  \def\ll{0.75}   
  \foreach \i in {2} { \draw (\i,\ll*-1) circle (0.3); }
  \foreach \i in {1,3,4} { \draw (\i-.3,\ll*-1 -.3) rectangle +(0.6,+0.6); }
  \foreach \i in {1,3,4} { \draw (\i,\ll*-2) circle (0.3); }
  \foreach \i in {2} { \draw (\i-.3,\ll*-2 -.3) rectangle +(0.6,+0.6); }
  \foreach \i in {1,2,3,4} { \node at (\i, \ll*0-.15) {$1$}; }
  \foreach \i in {2} { \node at (\i, \ll*-1) {$1$}; }
  \foreach \i in {1,3,4} { \node at (\i, \ll*-1) {$2$}; }
  \foreach \i in {3} { \node at (\i, \ll*-2) {$1$}; }
  \foreach \i in {1,4} { \node at (\i, \ll*-2) {$2$}; }
  \foreach \i in {2} { \node at (\i, \ll*-2) {$3$}; }
\end{tikzpicture}
\]
and hence we obtain $\swt{2312} = x_1 x_3^2 x_4 + x_1 x_2 x_3 x_4$.
Next, we let $\sigma = s_1$,
and we construct the corresponding $\sigma$-twisted MLQs $\qq$ (of type $\mm$)
\[
\begin{tikzpicture}[baseline=-30]
  \def\ll{0.75}   
  \foreach \i in {1,3,4} { \draw (\i,\ll*-1) circle (0.3); }
  \foreach \i in {2} { \draw (\i-.3,\ll*-1 -.3) rectangle +(0.6,+0.6); }
  \foreach \i in {3} { \draw (\i,\ll*-2) circle (0.3); }
  \foreach \i in {1,2,4} { \draw (\i-.3,\ll*-2 -.3) rectangle +(0.6,+0.6); }
  \foreach \i in {1,2,3,4} { \node at (\i, \ll*0-.15) {$1$}; }
  \foreach \i in {1,3,4} { \node at (\i, \ll*-1) {$1$}; }
  \foreach \i in {2} { \node at (\i, \ll*-1) {$2$}; }
  \foreach \i in {3} { \node at (\i, \ll*-2) {$1$}; }
  \foreach \i in {1,4} { \node at (\i, \ll*-2) {$2$}; }
  \foreach \i in {2} { \node at (\i, \ll*-2) {$3$}; }
\end{tikzpicture}
\hspace{50pt}
\begin{tikzpicture}[baseline=-30]
  \def\ll{0.75}   
  \foreach \i in {1,2,4} { \draw (\i,\ll*-1) circle (0.3); }
  \foreach \i in {3} { \draw (\i-.3,\ll*-1 -.3) rectangle +(0.6,+0.6); }
  \foreach \i in {3} { \draw (\i,\ll*-2) circle (0.3); }
  \foreach \i in {1,2,4} { \draw (\i-.3,\ll*-2 -.3) rectangle +(0.6,+0.6); }
  \foreach \i in {1,2,3,4} { \node at (\i, \ll*0-.15) {$1$}; }
  \foreach \i in {1,2,4} { \node at (\i, \ll*-1) {$1$}; }
  \foreach \i in {3} { \node at (\i, \ll*-1) {$2$}; }
  \foreach \i in {3} { \node at (\i, \ll*-2) {$1$}; }
  \foreach \i in {1,4} { \node at (\i, \ll*-2) {$2$}; }
  \foreach \i in {2} { \node at (\i, \ll*-2) {$3$}; }
\end{tikzpicture}
\]
to obtain $\swt{2312}_{\sigma} = x_1 x_3^2 x_4 + x_1 x_2 x_3 x_4$.
\end{example}

\begin{lemma}
\label{lemma:mlq-type}
Let $\ell \geq 1$ and $\sigma \in \SymGp{\ell-1}$.
Let $\qq$ be a $\sigma$-twisted MLQ.
Then the type of the word $\qq(1^n)$ is the type of $\qq$.
\end{lemma}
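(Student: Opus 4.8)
The plan is to follow how each individual queue changes the \emph{type} of the word it is applied to --- or, more conveniently, how it changes the weakly increasing sequence of partial sums $\bigl(p_0(\nn), p_1(\nn), p_2(\nn), \dotsc\bigr)$ attached to a type $\nn$. This sequence begins with $0$, is eventually constant equal to $n$, and clearly determines $\nn$. The key input is Remark~\ref{rmk:t-splitting}: for a word $u$ and an $r$-queue $q$, the type of $v = q(u)$ is given by~\eqref{eq:queue_type_change} in terms of $\operatorname{type}(u)$ and the index $t$ satisfying $p_{t-1} \leq r \leq p_t$. Reading the partial sums off~\eqref{eq:queue_type_change} shows that the partial-sum sequence of $\operatorname{type}(v)$ is obtained from that of $\operatorname{type}(u)$ by splicing in the single new value $r$ between the entries $p_{t-1}$ and $p_t$; since $p_{t-1} \leq r \leq p_t$, the result is again weakly increasing. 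So the net effect is simply: \emph{applying an $r$-queue inserts $r$, in sorted position, into the partial-sum sequence of the type.}

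Granting this, the rest is a short induction. Write $\mm = (m_1, \dotsc, m_\ell, 0, 0, \dotsc)$ for the type of $\qq = (q_1, \dotsc, q_{\ell-1})$, abbreviate $p_i := p_i(\mm)$, and recall that $q_i$ is a $p_{\sigma(i)}$-queue. Put $u^{(0)} := 1^n$ and $u^{(k)} := q_k\bigl(u^{(k-1)}\bigr)$, so that $u^{(\ell-1)} = \qq(1^n)$. I would prove by induction on $k \in \{0, 1, \dotsc, \ell-1\}$ that the partial-sum sequence of $\operatorname{type}\bigl(u^{(k)}\bigr)$ is the weakly increasing sequence obtained from $(0, n, n, n, \dotsc)$ by inserting the values $p_{\sigma(1)}, \dotsc, p_{\sigma(k)}$ in sorted order. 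The base case $k = 0$ is immediate, since $\operatorname{type}(1^n) = (n, 0, 0, \dotsc)$ has partial-sum sequence $(0, n, n, \dotsc)$; the inductive step is exactly the italicized observation applied to the $p_{\sigma(k)}$-queue $q_k$ (legitimate because $0 \leq p_{\sigma(k)} \leq n$). Finally, taking $k = \ell - 1$ and using that $\sigma$ permutes $\ive{\ell-1}$, we have $\{p_{\sigma(1)}, \dotsc, p_{\sigma(\ell-1)}\} = \{p_1, \dotsc, p_{\ell-1}\}$, so the resulting sequence is $(0, p_1, p_2, \dotsc, p_{\ell-1}, n, n, \dotsc)$, which is precisely the partial-sum sequence $(p_0, p_1, \dotsc, p_{\ell-1}, p_\ell, p_{\ell+1}, \dotsc)$ of $\mm$. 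Hence $\operatorname{type}(\qq(1^n)) = \mm$.

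The only step requiring genuine care is the reduction in the first paragraph: deriving the ``insert $r$ in sorted position'' statement from~\eqref{eq:queue_type_change} is a routine calculation, but one should confirm it behaves correctly in the degenerate cases --- when $r$ coincides with one of the $p_j$, or when the index $t$ of Remark~\ref{rmk:t-splitting} is not unique (this forces a repeated partial sum, equivalently a zero entry in the type, and does no harm). After that, the induction and the appeal to $\sigma$ being a bijection are automatic; in particular, the case $\ell = 1$, where $\qq$ and $\sigma$ are empty, is the base case $k = 0$ and needs no separate treatment.
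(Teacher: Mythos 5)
Your proposal is correct and follows essentially the same route as the paper: the paper's proof also consists of repeatedly applying~\eqref{eq:queue_type_change}, and its alternative formulation tracks exactly the partial-sum (``p-sequence'') of the type, noting that each $r$-queue inserts $r$ into that weakly increasing sequence at the appropriate position, so that the $\sigma$-twist is immaterial since $\sigma$ only permutes the inserted values. No gaps; your care about the degenerate cases ($r$ equal to some $p_j$, repeated partial sums) is consistent with how the paper handles them.
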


\begin{vershort}
\begin{proof}
Repeatedly apply~\eqref{eq:queue_type_change}.
\end{proof}
\end{vershort}

\begin{verlong}
\begin{proof}
Write the $\sigma$-twisted MLQ $\qq$ as $\tup{q_1, \ldots, q_{\ell-1}}$.
Let $\mm = \tup{m_1, m_2, \ldots, m_\ell, 0, 0, \ldots}$ be the type of $\qq$.
Thus, $m_1 + m_2 + \cdots + m_\ell = p_{\ell}(\mm) = n$.
Now, the word $1^n$ has type $\tup{n, 0, 0, \ldots} = \tup{m_1 + m_2 + \cdots + m_\ell, 0, 0, \ldots}$
(since $n = m_1 + m_2 + \cdots + m_\ell$).
Every time we apply one of the queues $q_i$ to this word, the type changes in a simple way (because of~\eqref{eq:queue_type_change}): Namely, the plus sign between $m_{\sigma(i)}$ and $m_{\sigma(i)+1}$ turns into a comma (so, for example, the application of $q_1$ transforms it into $\tup{m_1 + m_2 + \cdots + m_{\sigma(1)}, m_{\sigma(1)+1} + m_{\sigma(1)+2} + \cdots + m_\ell, 0, 0, \ldots}$).
Hence, the action of $\qq$ transforms $1^n$ into a word whose type has all the plus signs replaced by commas -- \textit{i.e.}, whose type is $\tup{m_1, m_2, \ldots, m_\ell, 0, 0, \ldots} = \mm$.
In other words, the type of the word $\qq(1^n)$ is $\mm$.
This proves Lemma~\ref{lemma:mlq-type}.
\end{proof}

\begin{proof}[Alternative proof of Lemma~\ref{lemma:mlq-type}.]
Here is an alternative way of presenting essentially the same argument, using a different point of view:
Since $\tup{q_1, \ldots, q_{\ell-1}} = \qq$ is a $\sigma$-twisted MLQ of type $\mm$, we know that the sizes $\abs{q_1}, \abs{q_2}, \ldots, \abs{q_{\ell-1}}$ of its queues are a permutation of $p_1(\mm), p_2(\mm), \ldots, p_{\ell-1}(\mm)$.
Thus, $p_1(\mm), p_2(\mm), \ldots, p_{\ell-1}(\mm)$ is the weakly increasing permutation of the sequence $\abs{q_1}, \abs{q_2}, \ldots, \abs{q_{\ell-1}}$.
Now, define the \defn{p-sequence} of a type $\nn$ to be the weakly increasing infinite sequence $\tup{p_0(\nn), p_1(\nn), p_2(\nn), \ldots}$ of integers (which uniquely determines $\nn$).
Furthermore, if $u$ is a word of type $\nn$, then we define the p-sequence of $u$ to be the p-sequence of $\nn$.
Then,~\eqref{eq:queue_type_change} shows the following: If $q$ is a queue and $u$ is a word, then the p-sequence of $q(u)$ is obtained from the p-sequence of $u$ by inserting $\abs{q}$ into the p-sequence at the appropriate position (appropriate in the sense that the resulting sequence is still weakly increasing).
Hence, the p-sequence of the word $\qq (1^n)$ is obtained from the p-sequence $\tup{0, n, n, n, \ldots}$ of the word $1^n$ by inserting $\abs{q_1}, \abs{q_2}, \ldots, \abs{q_{\ell-1}}$ at the appropriate positions.
In other words, the p-sequence of the word $\qq (1^n)$ is $\tup{0, p_1(\mm), p_2(\mm), \ldots, p_{\ell-1}(\mm), n, n, \ldots, n}$ (since $p_1(\mm), p_2(\mm), \ldots, p_{\ell-1}(\mm)$ is the weakly increasing permutation of the sequence $\abs{q_1}, \abs{q_2}, \ldots, \abs{q_{\ell-1}}$).
In other words, this p-sequence is $\tup{p_0(\mm), p_1(\mm), p_2(\mm), \ldots}$.
Hence, the type of the word $\qq (1^n)$ is $\mm$.
\end{proof}
\end{verlong}

\begin{example}
Let $\ell \geq 1$.
Let $u$ be a packed word with $\ell$ classes and type $\mm$.
For each $k \in \ive{\ell-1}$, let $q_k$ be the set of all sites $i$ such that $u_i \leq k$.
It is then easy to check that $\qq := \tup{q_1, q_2, \ldots, q_{\ell-1}}$ is an
MLQ of type $\mm$ satisfying $\qq(1^n) = u$ and has weight
$\wt(\qq) = \prod_{j \in \ZZ / n \ZZ} x_j^{\ell - u_j}$.
Hence, $\swt{u} \neq 0$ (as a polynomial over $\ZZ$).
\end{example}

\subsection{Symmetric polynomials}

We also need the \defn{elementary symmetric polynomials} and \defn{complete homogeneous symmetric polynomials}.
Recall that they are defined for each $N \in \NN$ and $k \geq 0$ and any $N$ indeterminates $y_1, y_2, \ldots, y_N$ by
\begin{align*}
e_k(y_1, y_2, \dotsc, y_N) & = \sum_{1 \leq i_1 < \cdots < i_k \leq N} y_{i_1} \dotsm y_{i_k},
\\ h_k(y_1, y_2, \dotsc, y_N) & = \sum_{1 \leq i_1 \leq \cdots \leq i_k \leq N} y_{i_1} \dotsm y_{i_k},
\end{align*}
respectively.
We define $e_k(y_1, \dotsc, y_N) = 0$ and $h_k(y_1, \dotsc, y_N) = 0$ for $k < 0$.
For more details on symmetric polynomials, we refer the reader to~\cite[Ch.~7]{Stanley-EC2}.

\section{Main results}
\label{sec:result}

In this section, we state our main results and use them to prove the commutativity conjecture of~\cite{AAMP} and~\cite[Conj.~3.10]{AasLin17}.

\subsection{\texorpdfstring{$\sigma$}{sigma}-independence of the spectral weight}

\begin{thm}
\label{thm:permutation}
  Let $u$ be a packed word of type $\mm$ with $\ell$ classes.
  For any $\sigma \in \SymGp{\ell-1}$, we have 
  \[
  \swt{u} = \swt{u}_{\sigma}.
  \]
\end{thm}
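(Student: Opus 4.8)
The plan is to reduce Theorem~\ref{thm:permutation} to a purely \emph{local} statement about two consecutive queues, exploiting that $\SymGp{\ell-1}$ is generated by the simple transpositions $s_1, \dots, s_{\ell-2}$. The key intermediate claim is that $\swt{u}_{\tau} = \swt{u}_{\tau s_j}$ for every $\tau \in \SymGp{\ell-1}$ and every $j \in \ive{\ell-2}$; note that $\tau$ and $\tau s_j$ differ only by interchanging the values $\tau(j)$ and $\tau(j+1)$, so $(\tau s_j)$-twisted MLQs of type $\mm$ are exactly $\tau$-twisted MLQs in which the sizes $p_{\tau(j)}(\mm)$ and $p_{\tau(j+1)}(\mm)$ of the $j$-th and $(j+1)$-th queues have been swapped. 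Granting this claim, the theorem follows by chaining equalities: writing an arbitrary $\sigma$ as a product $s_{j_1} \cdots s_{j_k}$ of simple transpositions gives $\swt{u} = \swt{u}_{\id} = \swt{u}_{s_{j_1}} = \swt{u}_{s_{j_1} s_{j_2}} = \cdots = \swt{u}_{\sigma}$.

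The local statement I would isolate is the following \emph{Core Lemma}: for any word $w \in \mcW_n$ and any $a, b \in \{0, 1, \dots, n\}$, there is a bijection $(q_1, q_2) \mapsto (q_1', q_2')$ from the set of pairs of queues with $\abs{q_1} = a$ and $\abs{q_2} = b$ to the set of pairs with $\abs{q_1'} = b$ and $\abs{q_2'} = a$, such that $q_2'\bigl(q_1'(w)\bigr) = q_2\bigl(q_1(w)\bigr)$ and $q_1' \sqcup q_2' = q_1 \sqcup q_2$ as multisets (hence $\wt(q_1')\wt(q_2') = \wt(q_1)\wt(q_2)$); moreover the inverse bijection is given by the same rule with $a$ and $b$ interchanged. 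To deduce $\swt{u}_{\tau} = \swt{u}_{\tau s_j}$ from this, take a $\tau$-twisted MLQ $\qq = (q_1, \dots, q_{\ell-1})$ with $\qq(1^n) = u$, put $w := q_{j-1}\bigl(\cdots q_1(1^n)\cdots\bigr)$, apply the Core Lemma to the pair $(q_j, q_{j+1})$ over this $w$ to get $(q_j', q_{j+1}')$ with swapped sizes, and replace $q_j, q_{j+1}$ in $\qq$ by $q_j', q_{j+1}'$. The resulting sequence $\qq'$ is a $(\tau s_j)$-twisted MLQ; it satisfies $\wt(\qq') = \wt(\qq)$ (only the factors $\wt(q_j)\wt(q_{j+1})$ change, and they are preserved) and $\qq'(1^n) = \qq(1^n) = u$ (the first $j-1$ queues still produce $w$, then $q_{j+1}'(q_j'(w)) = q_{j+1}(q_j(w))$, and the remaining queues are untouched). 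The ``moreover'' clause shows that $\qq \mapsto \qq'$ is a bijection from $\tau$-twisted MLQs producing $u$ onto $(\tau s_j)$-twisted MLQs producing $u$, whence the two spectral weights coincide.

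It remains to establish the Core Lemma, which is the heart of the matter. I would define the swapping rule explicitly; there are two equivalent descriptions: a purely combinatorial one in terms of the queue algorithm and graveyard diagrams (a local rule pushing one queue past another, analogous to a sorting network), and one in terms of the combinatorial $R$-matrix on the type $A_{n-1}^{(1)}$ Kirillov--Reshetikhin crystals $B^{a,1} \otimes B^{b,1}$, under which a size-$r$ queue is the height-$r$ column it determines and $q(-)$ is the associated combination map. In either description the size swap, the multiset identity, and the ``same rule with $a,b$ swapped'' form of the inverse are immediate from the shape of the rule. The genuine obstacle is proving that the two-step composite on words is unchanged, i.e.\ $q_2'\bigl(q_1'(w)\bigr) = q_2\bigl(q_1(w)\bigr)$. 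I would attack this by running the two phases of the queue algorithm through both queues at once, using Remark~\ref{rmk:t-splitting} to control which classes get split and Remark~\ref{rmk:order-agnostic} to interleave the phases freely, and reducing via the merge operation $\merge{i}$ and contragredient duality (Lemma~\ref{le:dual}) to the base cases in which $a$ and $b$ are adjacent in the relevant $p$-sequence of $w$, where the assertion becomes a finite local verification. (From the crystal-theoretic viewpoint this identity is precisely the statement that the $R$-matrix is a crystal isomorphism carrying the distinguished source vertex to the distinguished source vertex; but executing it elementarily is where the real work lies, and is what I would carry out in Section~\ref{sec:thm_proof}.)
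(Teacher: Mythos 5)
Your global strategy is the same as the paper's: generate $\SymGp{\ell-1}$ by simple transpositions, and for each $s_j$ construct a weight-preserving, size-swapping involution on the pair $(q_j,q_{j+1})$ that leaves the composite action on words unchanged; this is exactly the paper's dual configuration $\fraks_j$, which indeed coincides with the combinatorial $R$-matrix (Proposition~\ref{prop:dual_is_R}), and your deduction of $\swt{u}_{\tau}=\swt{u}_{\tau s_j}$ from such a ``Core Lemma'' is correct. The problem is that the Core Lemma itself -- specifically the identity $q_2'\bigl(q_1'(w)\bigr)=q_2\bigl(q_1(w)\bigr)$ -- is the entire content of Theorem~\ref{thm:permutation}, and your plan for proving it does not go through as described. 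Merges and contragredient duality do let one reduce to standard words $w$ (Lemma~\ref{lemma:queue_merge_commute}, Lemma~\ref{le:dual}), but they give no reduction of the queue sizes $(a,b)$ to an ``adjacent'' base case: merging acts on the letters of $w$, not on $\abs{q_1},\abs{q_2}$, and duality merely replaces the sizes by their complements. Even granting such a base case, there is no ``finite local verification'' in sight: $n$ and $w$ remain arbitrary, and the unbalanced (toggled) sites interact with Phases~I and~II globally around the cycle, so interleaving the phases (Remark~\ref{rmk:order-agnostic}) and Remark~\ref{rmk:t-splitting} by themselves give no control over where the letters of the output word land after the swap.

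What the paper does at precisely this point is an indirect argument for which your sketch has no substitute. It first proves the braid relation $\fraks_i\fraks_{i+1}\fraks_i=\fraks_{i+1}\fraks_i\fraks_{i+1}$ (Proposition~\ref{prop:braid}), by encoding an MLQ as a word over $\set{\circ,1,\dotsc,k}$ and invoking the parenthesis-switching identity of Lothaire; it then proves the tracking statement Proposition~\ref{prop:SL.dual}, namely that the dual configuration of $\bigl([u]_k,\,q\bigr)$ has second component $[q(u)]_k$ (via Lemmas~\ref{lem:SL.dual.1}--\ref{lem:SL.dual.3} and duality). Prepending the auxiliary queue $[w]_k$ to $(q_1,q_2)$ and comparing $\fraks_1\fraks_2\fraks_1$ with $\fraks_2\fraks_1\fraks_2$ then yields $[C'(w)]_k=[C(w)]_k$ for all relevant $k$, and Lemma~\ref{lem:SL.reconstruct} reconstructs $C'(w)=C(w)$. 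Nothing equivalent to this (nor a precise appeal to the identification of the Ferrari--Martin algorithm with $R$-matrices in \cite{KMO15}) appears in your proposal; note also that the bare fact that $R$ is a crystal isomorphism gives weight invariance but not, by itself, invariance of the word $\qq(1^n)$ under the swap. So the heart of the theorem is left unproven.
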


We will give the proof of Theorem~\ref{thm:permutation} in Section~\ref{sec:thm_proof}.
Note that we have verified a particular case of Theorem~\ref{thm:permutation} in Example~\ref{ex:spectral_weight_example}.

\subsection{Queue action and merges}
We shall next study the interplay between the action of queues on words and the merging of adjacent classes.

If $w$ is a word of type $\mm$, and if $j \geq 0$, then $p_j(\mm)$ is the number of letters of $w$ that are at most $j$.

For a word $w$ and a nonnegative integer $k$, we let \defn{$\vee^{(k)} w$} be the word obtained from $w$ by decrementing (by $1$) all but the $k$ smallest letters of $w$.
This is only well-defined if these $k$ smallest letters are determined uniquely and the remaining $n-k$ letters are $> 1$.
In other words, this is only well-defined if $k \in \set{ p_j(\mm) \mid j \geq 1 }$, where $\mm$ is the type of $w$.
Note that $\vee^{(k)} w = \merge{j} w$, where $j$ is such that $k = p_j(\mm)$.

\begin{lemma}
\label{lemma:queue_merge_commute}
Let $u \in \mcW_n$ be a word of type $\mm$.
Let $k \in \set{ p_j(\mm) \mid j \geq 1 }$.
If $q$ is a queue, then
\[
\vee^{(k)} q(u) = q(\vee^{(k)} u).
\]
In particular, both $\vee^{(k)} q(u)$ and $\vee^{(k)} u$ are well-defined.
\end{lemma}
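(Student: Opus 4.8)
The statement asserts that the queue action commutes with the merge operation $\vee^{(k)}$. The natural strategy is to unwind both sides using the explicit matching algorithm from Section~\ref{sec:background} and show that the two constructions are literally the same, up to relabeling. First I would fix a permutation $(i_1, i_2, \ldots, i_n)$ of $(1, \ldots, n)$ witnessing the weak-increasing order on $u$, i.e. $u_{i_1} \le u_{i_2} \le \cdots \le u_{i_n}$; by Lemma~\ref{lemma:order_indep} the word $q(u)$ does not depend on this choice. The crucial observation is that the \emph{same} permutation is also order-witnessing for the word $\vee^{(k)} u$, because decrementing all but the $k$ smallest letters of $u$ is a weakly order-preserving operation on letters (it is the composition of a merge $\vee_j$ with $k = p_j(\mm)$, which sends $u_a \mapsto u_a$ when $u_a \le j$ and $u_a \mapsto u_a - 1$ when $u_a > j$, hence preserves the $\le$ relation between any two letters). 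So the algorithm for $q(\vee^{(k)}u)$ can be run with the identical ordering of sites, the identical split into Phase~I sites ($i_{r+1}, \ldots, i_n$ where $r = |q|$) and Phase~II sites ($i_1, \ldots, i_r$), and hence builds the \emph{same} perfect matching between sites.

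\textbf{Key steps.} With the matching being identical on both sides, I would then compare the values assigned. In Phase~I, the algorithm sets $v_j = u_i + 1$; in Phase~II, it sets $v_j = u_i$. So running the algorithm on $\vee^{(k)} u$ produces the word $v'$ with $v'_j = (\vee^{(k)} u)_i + 1$ (Phase~I) or $v'_j = (\vee^{(k)} u)_i$ (Phase~II), where $j$ is matched to $i$. On the other hand, I need to compute $\vee^{(k)} v$ where $v = q(u)$. By Remark~\ref{rmk:t-splitting}, there is a $t$ with $p_{t-1}(\mm) \le r \le p_t(\mm)$, all Phase~I sites $i$ have $u_i \ge t$ and all Phase~II sites $i$ have $u_i \le t$, and $v$ has type given by~\eqref{eq:queue_type_change}. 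The hypothesis $k = p_j(\mm)$ for some $j \ge 1$; I would split into the cases $j < t$, $j = t$, and $j > t$ (equivalently, whether $k < p_{t-1}(\mm)$, $k$ lies between, or $k > p_t(\mm)$ — one must check $k$ is actually a valid partial sum for the type of $v$, which follows since the type of $v$ in~\eqref{eq:queue_type_change} has partial sums $\{p_0, \ldots, p_{t-1}, r, p_t, p_{t+1}, \ldots\}$ and $k \in \{p_0, p_1, \ldots\} \subseteq$ this set unless $k$ happens to equal one of the $p$'s straddling $r$, which still lands in the set). In each case, $\vee^{(k)} v$ decrements exactly the letters of $v$ of value $> j'$ for the appropriate $j'$; tracing which sites $j$ these are and comparing with $v'_j$ via the matching, one sees the decrement on the $v$-side matches exactly the decrement that was already applied to $u$ on the $\vee^{(k)}u$-side. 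Concretely: a site $j$ matched to a Phase-side site $i$ gets value shifted on the $\vee^{(k)} v$ side iff $v_j > j'$, and this should be arranged to coincide precisely with $u_i > j$, using that $v_j$ equals $u_i$ or $u_i + 1$ and that $j'$ is $j$ or $j \pm 1$ depending on the case.

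\textbf{Main obstacle.} The real bookkeeping difficulty is the boundary case around the split class $t$ — i.e., when $j = t$, so $k = p_t(\mm)$ and $\vee^{(k)}$ on the $u$-side merges classes $t, t+1$, while on the $v$-side the class $t$ has already been split into two classes of sizes $r - p_{t-1}$ and $p_t - r$ by $q$, and one must check the composite operation still produces the same word. Here I expect to need Remark~\ref{rmk:t-splitting} carefully, together with the observation that merging $t, t+1$ in $v = q(u)$ exactly undoes (at the level of class labels) the splitting that $q$ introduced, so that $\vee^{(t)} v$ has type $(m_1, \ldots, m_{t-1}, p_t - p_{t-1}, m_{t+2}, \ldots) = $ type of $q(\vee^{(t)} u)$, and the matching/value analysis closes the gap. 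An alternative, possibly cleaner route that sidesteps some of this casework: since $\vee^{(k)} = \merge{j}$ and merging commutes with merging, and since every queue $q$ followed by the merge that "re-fuses its split" can be related to a smaller queue, one might prove the lemma by reducing to the case where $q$ itself is built from the $\le k$ sites — but I would try the direct matching-comparison argument first, as it is the most transparent. Either way, no step is deep; the content is entirely in carefully aligning the two algorithm runs and the three positional cases for where $k$ sits relative to $r$ and the partial sums $p_{t-1}(\mm), p_t(\mm)$.
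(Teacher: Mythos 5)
Your proposal is correct and follows essentially the same route as the paper: fix one ordering permutation, observe that it also witnesses the weak order for $\vee^{(k)}u$ so both algorithm runs produce the identical matching, and then note that the resulting values differ from those of $q(u)$ exactly by decrementing all but the $k$ smallest letters, which is $\vee^{(k)}q(u)$ by definition. The only difference is that the paper states this final comparison directly in terms of ``all but the $k$ smallest letters'' rather than class indices, so the three-way casework you outline (and in particular the $j=t$ boundary case you flag as the main obstacle) never actually arises.
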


\begin{proof}
The word $\vee^{(k)} u$ is well-defined since $k = p_j(\mm)$ for some $j \geq 1$;
furthermore, $\vee^{(k)} q(u)$ is well-defined since the type $\nn$ of $q(u)$
satisfies
\[
k \in \set{ p_j(\mm) \mid j \geq 1 } \subseteq \set{ p_j(\mm) \mid j \geq 1 } \cup \set{\abs{q}} = \set{ p_j(\nn) \mid j \geq 1 }.
\]
Thus, it remains to prove $\vee^{(k)} q(u) = q(\vee^{(k)} u)$.
The permutation $\tup{i_1, i_2, \dotsc, i_n}$ in the construction of $q(u)$
also works for the construction of $q(\vee^{(k)} u)$,
since $(\vee^{(k)} u)_a \leq (\vee^{(k)} u)_b$ whenever $u_a \leq u_b$.
Consequently, the construction of $q(\vee^{(k)} u)$ proceeds exactly like
the construction of $q(u)$ (with the same entries being set in the same
order), except that all but the $k$ smallest letters are now smaller by $1$.
Hence, $q(\vee^{(k)} u)$ is obtained from $q(u)$ by decrementing (by $1$)
all but the $k$ smallest letters of $q(u)$.
However, the word $\vee^{(k)} q(u)$ is obtained from $q(u)$ in exactly the same way.
Therefore, we have $\vee^{(k)} q(u) = q(\vee^{(k)} u)$ as claimed.
\end{proof}

\begin{example}
Consider $q = \set{1,4,5,9,10}$ and $v = 3455313321$:
\[
\begin{tikzpicture}[baseline=-7]
  \def\ll{0.65}   
  \foreach \i in {6,10} { \node at (\i,\ll) {$1$}; }
  \foreach \i in {9} { \node at (\i,\ll) {$2$}; }
  \foreach \i in {1,5,7,8} { \node at (\i,\ll) {$3$}; }
  \foreach \i in {2} { \node at (\i,\ll) {$4$}; }
  \foreach \i in {3,4} { \node at (\i,\ll) {$5$}; }
  \foreach \i in {1,4,5,9,10} { \draw (\i,0) circle (0.3); }
  \foreach \i in {2,3,6,7,8} { \draw (\i-.3,0-.3) rectangle +(0.6,+0.6); }
  \foreach \i in {9,10} { \node at (\i,0) {$1$}; }
  \foreach \i in {1} { \node at (\i,0) {$2$}; }
  \foreach \i in {4,5} { \node at (\i,0) {$3$}; }
  \foreach \i in {6,7} { \node at (\i,0) {$4$}; }
  \foreach \i in {8} { \node at (\i,0) {$5$}; }
  \foreach \i in {2,3} { \node at (\i,0) {$6$}; }
\end{tikzpicture}\ .
\]
Let $u = 3566413321$, and note that $v = \merge{3} u = \vee^{(6)} u$ and
\[
\begin{tikzpicture}[baseline=-7]
  \def\ll{0.65}   
  \foreach \i in {6,10} { \node at (\i,\ll) {$1$}; }
  \foreach \i in {9} { \node at (\i,\ll) {$2$}; }
  \foreach \i in {1,7,8} { \node at (\i,\ll) {$3$}; }
  \foreach \i in {5} { \node at (\i,\ll) {$4$}; }
  \foreach \i in {2} { \node at (\i,\ll) {$5$}; }
  \foreach \i in {3,4} { \node at (\i,\ll) {$6$}; }
  \foreach \i in {1,4,5,9,10} { \draw (\i,0) circle (0.3); }
  \foreach \i in {2,3,6,7,8} { \draw (\i-.3,0-.3) rectangle +(0.6,+0.6); }
  \foreach \i in {9,10} { \node at (\i,0) {$1$}; }
  \foreach \i in {1} { \node at (\i,0) {$2$}; }
  \foreach \i in {4,5} { \node at (\i,0) {$3$}; }
  \foreach \i in {6} { \node at (\i,0) {$4$}; }
  \foreach \i in {7} { \node[red] at (\i,0) {$5$}; }
  \foreach \i in {8} { \node[dgreencolor] at (\i,0) {$6$}; }
  \foreach \i in {2,3} { \node[dgreencolor] at (\i,0) {$7$}; }
\end{tikzpicture}\ ,
\]
where we have $2663344511 = \merge{4} 2773345611 = \vee^{(6)} 2773345611$.
Similarly, let $u' = 4566413421$, and note that $v = \merge{3} u'$ and
\[
\begin{tikzpicture}[baseline=-7]
  \def\ll{0.65}   
  \foreach \i in {6,10} { \node at (\i,\ll) {$1$}; }
  \foreach \i in {9} { \node at (\i,\ll) {$2$}; }
  \foreach \i in {7} { \node at (\i,\ll) {$3$}; }
  \foreach \i in {1,5,8} { \node at (\i,\ll) {$4$}; }
  \foreach \i in {2} { \node at (\i,\ll) {$5$}; }
  \foreach \i in {3,4} { \node at (\i,\ll) {$6$}; }
  \foreach \i in {1,4,5,9,10} { \draw (\i,0) circle (0.3); }
  \foreach \i in {2,3,6,7,8} { \draw (\i-.3,0-.3) rectangle +(0.6,+0.6); }
  \foreach \i in {9,10} { \node at (\i,0) {$1$}; }
  \foreach \i in {1} { \node at (\i,0) {$2$}; }
  \foreach \i in {4} { \node[blue] at (\i,0) {$3$}; }
  \foreach \i in {5} { \node[dgreencolor] at (\i,0) {$4$}; }
  \foreach \i in {6,7} { \node[dgreencolor] at (\i,0) {$5$}; }
  \foreach \i in {8} { \node[dgreencolor] at (\i,0) {$6$}; }
  \foreach \i in {2,3} { \node[dgreencolor] at (\i,0) {$7$}; }
\end{tikzpicture}\ ,
\]
where we have $2663344511 = \merge{4} 2773455611$.
\end{example}

\begin{lemma}
\label{lemma:queue_merge}
  Let $q'$ be a $p_i(\mm)$-queue for some type $\mm$ and some $i \geq 1$.
  Let the type $\mm$ have $\ell$ classes, and let $\sigma \in \SymGp{\ell-1}$.
  Let $\qq$ be a $\sigma$-twisted MLQ of type $\merge{i}\mm$.
  For the word
  $
  u = \qq\bigl( q'(1^n) \bigr),
  $
  we have
  \[
  \qq(1^n) = \merge{i} u.
  \]
\end{lemma}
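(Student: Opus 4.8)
The plan is to start from the identity ``applying $q'$ to $1^n$ is undone by a single merge'' and then propagate this merge through the queues of $\qq$, one at a time, by means of Lemma~\ref{lemma:queue_merge_commute}. Write $\qq = (q_1, q_2, \dotsc, q_N)$, set $v_0 := q'(1^n)$, and set $v_t := q_t(v_{t-1})$ for $1 \le t \le N$, so that $v_N = \qq\bigl(q'(1^n)\bigr) = u$. Applying Remark~\ref{rmk:t-splitting} to the word $1^n$ (whose type is $(n, 0, 0, \dotsc)$) and the queue $q'$ of size $p_i(\mm)$ — for which the quantity ``$t$'' of that remark equals $1$, since $1^n$ has only one class — shows that $v_0$ has type $(p_i(\mm), n - p_i(\mm), 0, 0, \dotsc)$. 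Hence $\merge{1} v_0 = 1^n$; and since $p_i(\mm)$ is the first partial sum of this type, this says precisely that $\vee^{(p_i(\mm))} v_0 = 1^n$.

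I would then observe that $p_i(\mm)$ remains a partial sum of the type of $v_t$ for \emph{every} $t$: this holds at $t = 0$ by the previous paragraph, and by~\eqref{eq:queue_type_change} applying a queue only \emph{inserts} a new value into the (weakly increasing) sequence of partial sums and never deletes an existing one — this is exactly the ``p-sequence'' bookkeeping from the alternative proof of Lemma~\ref{lemma:mlq-type}. Consequently $\vee^{(p_i(\mm))} v_t$ is well-defined for all $t$, and Lemma~\ref{lemma:queue_merge_commute} (applied with $k = p_i(\mm)$) gives $\vee^{(p_i(\mm))} v_t = q_t\bigl(\vee^{(p_i(\mm))} v_{t-1}\bigr)$ for $1 \le t \le N$. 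An induction on $t$, with base case $\vee^{(p_i(\mm))} v_0 = 1^n$, then yields $\vee^{(p_i(\mm))} v_t = q_t\bigl(q_{t-1}(\cdots q_1(1^n) \cdots)\bigr)$ for all $t$, and in particular $\vee^{(p_i(\mm))} u = \qq(1^n)$.

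It remains to identify $\vee^{(p_i(\mm))} u$ with $\merge{i} u$, and for this it suffices to know that the type of $u$ is $\mm$: in that case $p_i(\mm)$ is the $i$-th partial sum of the type of $u$, and $\vee^{(p_i(\mm))} u = \merge{i} u$ by the very definition of $\vee^{(-)}$. To obtain the type of $u$, one checks that $(q', q_1, q_2, \dotsc, q_N)$ is itself a twisted MLQ of type $\mm$: its queue sizes are $\abs{q'} = p_i(\mm)$ together with $\abs{q_1}, \dotsc, \abs{q_N}$, and the latter — since $\qq$ is a twisted MLQ of type $\merge{i}\mm$ — form a permutation of the list $p_1(\mm), \dotsc, p_{\ell-1}(\mm)$ with one copy of $p_i(\mm)$ deleted, so that all together they form a permutation of $p_1(\mm), \dotsc, p_{\ell-1}(\mm)$. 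Since $(q', q_1, \dotsc, q_N)(1^n) = \qq\bigl(q'(1^n)\bigr) = u$, Lemma~\ref{lemma:mlq-type} then shows that $u$ has type $\mm$, and we conclude $\qq(1^n) = \vee^{(p_i(\mm))} u = \merge{i} u$, as desired. (Alternatively, the type of $u$ can be read off directly from the p-sequence bookkeeping in the second paragraph.)

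The one step requiring genuine care — the ``hard part'', modest though it is — is the well-definedness claim in the second paragraph: one must verify that the partial sum $p_i(\mm)$ along which the merge is performed never drops out of the type as the queues $q_1, \dotsc, q_N$ are applied in succession, so that Lemma~\ref{lemma:queue_merge_commute} is legitimately invoked at each step. Once that is settled, the argument is a one-step induction together with the routine check that $(q', q_1, \dotsc, q_N)$ is an MLQ of type $\mm$.
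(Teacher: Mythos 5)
Your proposal is correct and follows essentially the same route as the paper's proof: both augment $\qq$ with $q'$ to a twisted MLQ of type $\mm$ and invoke Lemma~\ref{lemma:mlq-type} to pin down the type of $u$, both propagate the merge through the queues via repeated use of Lemma~\ref{lemma:queue_merge_commute} (with the same observation that $p_i(\mm)$ stays among the partial sums of every intermediate type), and both finish with $\vee^{(p_i(\mm))} q'(1^n) = 1^n$. Your version merely makes the step-by-step induction and the well-definedness bookkeeping more explicit.
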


\begin{proof}
Write the $\sigma$-twisted MLQ $\qq$ as $\tup{q_1, \ldots, q_{\ell-1}}$.
It has type $\merge{i} \mm$.

Set $\qq' := \tup{q', q_1, \ldots, q_{\ell-1}}$; this is easily
seen to be a $\zeta$-twisted MLQ of type $\mm$, for some
$\zeta \in \SymGp{\ell}$
(since the multiset of the $p_k(\merge{i}\mm)$ for $k \geq 1$
is precisely the multiset of the $p_k(\mm)$ for $k \geq 1$
with one copy of $p_i(\mm) = \abs{q'}$ removed).
Furthermore, the definition of $u$ becomes $u = \qq' (1^n)$.
Hence, the type of $u$ is $\mm$ (by Lemma~\ref{lemma:mlq-type}).

Set $k = p_i(\mm)$. Then, $q'$ is a $k$-queue, so that the type
$\nn$ of the word $q'(1^n)$ satisfies $p_1(\nn) = k$.
Hence, any word obtained by actions of queues on $q'(1^n)$
will have its type $\nn$ satisfy
$k \in \set{ p_j(\nn) \mid j \geq 1 } $.

Since the type of $u$ is $\mm$, and since $k = p_i(\mm)$, we have
\begin{equation}
\label{pf.lemma:queue_merge.4}
 \merge{i} u = \vee^{(k)} (u) = \vee^{(k)} \qq\bigl( q' (1^n) \bigr) = \qq\bigl( \vee^{(k)} q' (1^n) \bigr)
\end{equation}
by repeated use of Lemma~\ref{lemma:queue_merge_commute}
(since any word obtained by actions of queues on $q'(1^n)$
will have its type $\nn$ satisfy
$k \in \set{ p_j(\nn) \mid j \geq 1 } $).
It is clear that $\vee^{(k)} q'(1^n) = 1^n$,
and so~\eqref{pf.lemma:queue_merge.4} becomes $\qq(1^n) = \merge{i} u$.
\end{proof}

\subsection{Spectral weights of merged words}

The preceding lemmas will help us establish a rule for products of spectral weights with elementary symmetric polynomials (somewhat similar to the dual Pieri rule, \textit{e.g.}, \cite[Section 7.15]{Stanley-EC2}):

\begin{thm}
\label{thm:merge}
  Let $\mm$ be a type with $m_i \neq 0$ and $m_{i+1} \neq 0$.
  Let $v$ be a packed word of type $\merge{i}\mm$.
  Then,
  \[
  \swt{v} e_{p_i(\mm)}(x_1, \dotsc, x_n) = \sum_u \swt{u},
  \]
where we sum over all $u$ of type $\mm$ such that $v = \merge{i} u$.
\end{thm}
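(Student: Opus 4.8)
The plan is to expand the right-hand side $\sum_u \swt{u}$ by definition, interchange summations, and recognize the inner sum as $\swt{v}$ times a geometric-type sum that collapses to $e_{p_i(\mm)}(x_1,\dots,x_n)$ via Lemma~\ref{lemma:queue_merge}. Write $k := p_i(\mm)$. By definition, $\sum_u \swt{u} = \sum_u \sum_{\qq'} \wt(\qq')$, where $u$ ranges over packed words of type $\mm$ with $\merge{i} u = v$, and $\qq'$ ranges over ordinary MLQs of type $\mm$ with $\qq'(1^n) = u$. I would first argue that the outer sum over $u$ is actually redundant: every ordinary MLQ $\qq'$ of type $\mm$ produces a unique word $u = \qq'(1^n)$ (of type $\mm$ by Lemma~\ref{lemma:mlq-type}), so the double sum is just $\sum_{\qq'} \wt(\qq')$ over all ordinary MLQs $\qq'$ of type $\mm$ such that $\merge{i}\bigl(\qq'(1^n)\bigr) = v$.

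The key structural step is to split each such $\qq'$ as $\qq' = (q', q_1, \dots, q_{\ell-1})$, where $q'$ is its first queue — a $p_1(\mm)$-queue — and $\qq := (q_1, \dots, q_{\ell-1})$ is the remaining data. Wait: the first queue of an ordinary MLQ of type $\mm$ is a $p_1(\mm)$-queue, not a $p_i(\mm)$-queue, so I must be more careful. The correct decomposition: among the queues $q_1, \dots, q_{\ell-1}$ of an ordinary MLQ of type $\mm$, exactly one has size $k = p_i(\mm)$, namely $q_i$ itself (the queues of an ordinary MLQ are $p_1(\mm) < p_2(\mm) < \cdots$, assuming $\mm$ packed so these are strictly increasing; here $m_i, m_{i+1} \neq 0$ guarantees $p_{i-1}(\mm) < p_i(\mm) < p_{i+1}(\mm)$). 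So I would instead pass through $\merge{i}\mm$: an ordinary MLQ of type $\mm$ restricted to its queues other than $q_i$, reindexed, is a $\sigma$-twisted MLQ of type $\merge{i}\mm$ for an appropriate $\sigma$, but the cleaner route is to use Lemma~\ref{lemma:queue_merge} directly. Given a $p_i(\mm)$-queue $q'$ and an ordinary MLQ $\qq$ of type $\merge{i}\mm$, form $\qq' := (q', q_1, \dots, q_{\ell-1})$; by the argument inside the proof of Lemma~\ref{lemma:queue_merge}, this is a $\zeta$-twisted MLQ of type $\mm$ for some $\zeta \in \SymGp{\ell}$, and Lemma~\ref{lemma:queue_merge} gives $\merge{i}\bigl(\qq'(1^n)\bigr) = \qq(1^n)$. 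I would then invoke Theorem~\ref{thm:permutation} to replace the $\zeta$-twisted count by the ordinary one: $\swt{u}_{\zeta}$ contributions sum the same as $\swt{u}$.

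Putting it together: I claim the assignment $(q', \qq) \mapsto \qq'$ is a bijection between pairs consisting of a $k$-queue $q'$ and an ordinary MLQ $\qq$ of type $\merge{i}\mm$ with $\qq(1^n) = v$, on one hand, and $\zeta$-twisted MLQs $\qq'$ of type $\mm$ (for the relevant fixed $\zeta$ inserting $k$ in position $i+1$ of the size-sequence) with $\merge{i}\bigl(\qq'(1^n)\bigr) = v$, on the other; injectivity is clear since $q'$ and $\qq$ are recovered as the relevant components of $\qq'$, and surjectivity follows because any such $\qq'$ has a queue of size $k$ in the required slot which one peels off. Under this bijection $\wt(\qq') = \wt(q')\wt(\qq) = \bigl(\prod_{j \in q'} x_j\bigr)\wt(\qq)$. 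Summing over all $k$-subsets $q' \subseteq \ive{n}$ gives exactly $e_k(x_1,\dots,x_n)$, and summing $\wt(\qq)$ over ordinary MLQs $\qq$ of type $\merge{i}\mm$ with $\qq(1^n) = v$ gives $\swt{v}$ by definition. Hence $\sum_u \swt{u} = \swt{v}\, e_{p_i(\mm)}(x_1,\dots,x_n)$ after using Theorem~\ref{thm:permutation} to pass from the $\zeta$-twisted weight back to $\swt{u} = \swt{u}_{\id}$. The main obstacle I anticipate is pinning down the permutation $\zeta$ and verifying carefully that the bijection is well-defined and exhaustive — in particular that $q'$ genuinely occupies the ``slot'' of size $k$ and that no word $u$ of type $\mm$ with $\merge{i} u = v$ is missed or double-counted; this is where the hypotheses $m_i \neq 0$, $m_{i+1} \neq 0$ (ensuring $p_i(\mm)$ is a strictly interior, non-repeated partial sum) are essential.
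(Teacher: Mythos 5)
Your proposal is correct and is essentially the paper's own argument: the paper also prepends the $p_i(\mm)$-queue $q'$ to an ordinary MLQ $\qq$ of type $\merge{i}\mm$ with $\qq(1^n)=v$, obtaining a twisted MLQ of type $\mm$, and then combines Lemma~\ref{lemma:queue_merge} with Theorem~\ref{thm:permutation} exactly as you do, merely running the computation from the left-hand side rather than the right. The permutation you leave unspecified is explicitly $\zeta = s_{i-1}\dotsm s_2 s_1$ (the first queue takes the size-$p_i(\mm)$ slot), and its precise form is harmless since Theorem~\ref{thm:permutation} applies to every permutation.
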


\begin{proof}
  The type $\mm$ is packed (since $m_i \neq 0$ and $m_{i+1} \neq 0$,
  and since $\merge{i}\mm$ is packed).
  Let $\merge{i}\mm$ have $\ell$ classes; then, $\mm$ has $\ell+1$
  classes.
  First note that
  \[
  \swt{v} e_{p_i(\mm)}(x_1, \dotsc, x_n) = \sum_{(\qq,q')} \wt(\qq) \wt(q'),
  \]
  where we sum over all pairs $(\qq, q')$ such that
  \begin{itemize}
  \item $\qq = (q_1, \dotsc, q_{\ell-1})$ is an MLQ of type $\merge{i}\mm$ such that $v = \qq(1^n)$ and
  \item $q'$ is a $p_i(\mm)$-queue.
  \end{itemize}
  Given any such pair $\tup{\qq, q'}$, we set
  \[
  \theta(\qq, q') := (q', q_1, \dotsc, q_{\ell-1});
  \]
  the result is a $(s_{i-1} \dotsm s_2 s_1)$-twisted MLQ of type $\mm$ with weight
  $\wt\bigl( \theta(\qq, q') \bigr) = \wt(\qq) \wt(q')$.
  But recall that $v = \qq(1^n$); thus, by Lemma~\ref{lemma:queue_merge}, we have
  \[
  v = \qq(1^n) = \merge{i} \qq\bigl( q'(1^n) \bigr) = \merge{i} \bigl( \theta(\qq, q')(1^n) \bigr).
  \]
  Thus, we have defined a weight preserving bijection $\theta$ from the set of all pairs $(\qq, q')$ as above
  to the set of all $(s_{i-1} \dotsm s_1)$-twisted MLQs $\widetilde{\qq}$ of type $\mm$ satisfying $v = \merge{i} \widetilde{\qq} (1^n)$.
  Hence, we have
  \[
  \sum_{(\qq,q')} \wt(\qq) \wt(q')
  = \sum_{\widetilde{\qq}} \wt(\widetilde{\qq})
  = \sum_u \swt{u}_{s_{i-1} \dotsm s_1} ,
  \]
  where the last sum is over all words $u$ of type $\mm$ satisfying $v = \merge{i} u$.
  Finally, we have $\swt{u}_{s_{i-1} \dotsm s_1} = \swt{u}$ for all such $u$ by Theorem~\ref{thm:permutation}.
  Combining all the above equalities, we find $\swt{v} e_{p_i(\mm)}(x_1, \dotsc, x_n) = \sum_u \swt{u}$.
\end{proof}

\begin{remark}
\label{rmk:bijective_proof}
Our proof of Theorem~\ref{thm:permutation} is by constructing a bijection $\omega$, and hence, we can give a bijective proof of Theorem~\ref{thm:merge} by the composition $\omega \circ \theta$.
\end{remark}

\begin{example}
\label{ex:checking_merge_thm}
Suppose $n = 5$.
Let $v = 13234$, and we have that $v = \merge{3} u$ if and only if $u \in \set{13245, 14235}$.
By examining all possible MLQs for these words, we obtain
\begin{align*}
\swt{13234} & = x_1 x_2 x_3^2 x_4 (x_1^2 + x_1 x_4 + x_1 x_5 + x_4 x_5 + x_5^2),
\\ \swt{13245} & = x_1 x_2 x_3^2 x_4 (x_1^2 + x_1x_4 + x_1x_5 + x_4^2 + x_4x_5 + x_5^2)
\\ & \hspace{20pt} \times (x_1x_2x_3 + x_1x_2x_5+x_1x_3x_5+x_2x_3x_5),
\\ \swt{14235} & = x_1x_2x_3^2x_4^2 (x_1^3x_2 + x_1^3x_3 + x_1^3x_5 + x_1^2x_2x_3 + x_1^2x_2x_4 + 2x_1^2x_2x_5
\\ & \hspace{60pt} + x_1^2x_3x_4 + 2x_1^2x_3x_5 + x_1^2x_4x_5 + x_1^2x_5^2 + x_1x_2x_3x_5
\\ & \hspace{60pt} + x_1x_2x_4x_5 + 2x_1x_2x_5^2 + x_1x_3x_4x_5 + 2x_1x_3x_5^2 + x_1x_4x_5^2
\\ & \hspace{60pt} + x_1x_5^3 + x_2x_3x_5^2 + x_2x_4x_5^2 + x_2x_5^3 + x_3x_4x_5^2 + x_3x_5^3).
\end{align*}
(We have factored the expressions for readability only.)
We verify Theorem~\ref{thm:merge} in this case by computing $\swt{13234} e_3(x_1, x_2, x_3, x_4, x_5) = \swt{13245} + \swt{14235}$.
\end{example}

By repeated applications of Theorem~\ref{thm:merge}, we obtain the following:

\begin{cor}
\label{cor:merges}
  Let $T$ be a finite set of positive integers.
  Let $\mm$ be a type such that every $t \in T$ satisfies
  $m_t \neq 0$ and $m_{t+1} \neq 0$.
  Let $v$ be a packed word of type $\bigvee_T \mm$.
  Then,
  \[
  \swt{v} \prod_{t \in T} e_{p_t(\mm)}(x_1, \dotsc, x_n) = \sum_u \swt{u},
  \]
where we sum over all $u$ of type $\mm$ such that $v = \bigvee_T u$.
\end{cor}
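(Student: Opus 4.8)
The plan is to induct on $\abs{T}$, at each step stripping off one element of $T$ and invoking Theorem~\ref{thm:merge}. The base case $T = \emptyset$ is immediate: the left-hand product is empty, $\bigvee_\emptyset\mm = \mm$, and the only word $u$ of type $\mm$ with $v = \bigvee_\emptyset u = u$ is $u = v$, so both sides equal $\swt{v}$.

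For the inductive step, write $T = \set{t_1 < t_2 < \cdots < t_k}$ with $k \geq 1$, set $t := t_k$, $T' := T \setminus \set{t}$ and $\mm' := \merge{t}\mm$, and first record four bookkeeping facts. (a) Every $t' \in T'$ has $m'_{t'} \neq 0$ and $m'_{t'+1} \neq 0$: since $t' < t$, the merge $\merge{t}$ leaves position $t'$ untouched ($m'_{t'} = m_{t'} \neq 0$) and leaves position $t'+1$ equal to $m_{t'+1}$ or, if $t'+1 = t$, to $m_t + m_{t+1}$, in both cases nonzero. (b) For the same reason $p_{t'}(\mm') = p_{t'}(\mm)$ for $t' \in T'$, so $\prod_{t' \in T'} e_{p_{t'}(\mm')}(x_1,\dotsc,x_n) = \prod_{t' \in T'} e_{p_{t'}(\mm)}(x_1,\dotsc,x_n)$. (c) Directly from the definition of $\bigvee$, one has $\bigvee_{T'}\mm' = \bigvee_T\mm$ and $\bigvee_{T'}(\merge{t} w) = \bigvee_T w$ for every word $w$. (d) The type $\mm$ — and hence $\mm'$ and every $\bigvee_S\mm$ — is packed: since $v$ is packed, $\bigvee_T\mm$ is packed; and each merge occurring in $\bigvee_T\mm = \merge{t_1}\cdots\merge{t_k}\mm$ is performed at a position whose two entries in the current type are nonzero (the left entry, being at an index smaller than all subsequent merge indices, is never altered), so each such merge combines two nonzero classes and thus leaves the number of zero entries of the type unchanged; therefore $\mm$ has exactly as many zero entries as $\bigvee_T\mm$, i.e.\ none. (Packedness then passes to $\mm'$ and to every $\bigvee_S\mm$ because merging preserves packedness.)

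Granting these, the computation is short. By (c), $v$ is a packed word of type $\bigvee_{T'}\mm'$, and by (a) the data $(T', \mm', v)$ satisfy the hypotheses of the corollary; so the inductive hypothesis together with (b) gives
\[
\swt{v} \prod_{t' \in T'} e_{p_{t'}(\mm)}(x_1,\dotsc,x_n) = \sum_w \swt{w},
\]
the sum over all words $w$ of type $\mm'$ with $v = \bigvee_{T'} w$, each such $w$ being packed by (d). Multiplying by $e_{p_t(\mm)}(x_1,\dotsc,x_n)$ and applying Theorem~\ref{thm:merge} to each $w$ — legitimate since $m_t \neq 0$, $m_{t+1} \neq 0$, and $w$ is a packed word of type $\merge{t}\mm = \mm'$ — yields
\[
\swt{v} \prod_{t' \in T} e_{p_{t'}(\mm)}(x_1,\dotsc,x_n) = \sum_w \sum_{\substack{u \text{ of type } \mm \\ \merge{t} u = w}} \swt{u}.
\]
The last step is to observe that $(w, u) \mapsto u$ is a bijection from the index set of this double sum onto the set of words $u$ of type $\mm$ with $v = \bigvee_T u$: given such a $u$, the word $w := \merge{t} u$ has type $\merge{t}\mm = \mm'$ and satisfies $\bigvee_{T'} w = \bigvee_T u = v$ by (c), and conversely the pair is recovered from $u$. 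Re-indexing the double sum along this bijection completes the induction.

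No step here is genuinely hard; the whole proof is a matter of keeping the hypotheses of Theorem~\ref{thm:merge} and of the corollary itself valid as $\abs{T}$ shrinks. The point most worth care is (d), checking that every intermediate type stays packed — and the reason it goes through is precisely that the elements of $T$ only ever merge pairs of nonzero classes, so packedness of the fully merged type $\bigvee_T\mm$ descends to $\mm$ and to all the types encountered along the way.
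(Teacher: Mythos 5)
Your proof is correct and is exactly the argument the paper intends: the paper derives Corollary~\ref{cor:merges} simply by ``repeated applications of Theorem~\ref{thm:merge}'', which is precisely your induction on $\abs{T}$, stripping off $t=\max T$ and re-indexing the double sum. Your extra bookkeeping (in particular point (d), that packedness of $\bigvee_T\mm$ forces $\mm$ and all intermediate types to be packed, so the hypotheses of Theorem~\ref{thm:merge} stay valid) correctly fills in the details the paper leaves implicit.
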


\begin{example}
Let $\mm = (1,1,1,1,1,0,0,\ldots)$ and $T = \{1,3\}$.
Then we have $\bigvee_T \mm = (2,2,1,\ldots)$, and we take $v = 12123$.
Then the words $u$ of type $\mm$ such that $v = \bigvee_T u = \vee_1 \vee_3 u$ are
\[
13245,
\qquad
14235,
\qquad
23145,
\qquad
24135.
\]
Note that $v = \vee_1 u'$ for $u' \in \{13234, 23134\}$, both of these possible $u'$ having type $(1, 1, 2, 1, 0, 0, \ldots)$.
From Theorem~\ref{thm:merge} (see also Example~\ref{ex:checking_merge_thm}), we have
\begin{align*}
\swt{12223} e_1(x_1, x_2, x_3, x_4, x_5) & = \swt{13234} + \swt{23134},
\\
\swt{13234} e_3(x_1, x_2, x_3, x_4, x_5) & = \swt{13245} + \swt{14235},
\\
\swt{23134} e_3(x_1, x_2, x_3, x_4, x_5) & = \swt{23145} + \swt{24135}.
\end{align*}
Hence, we verify Corollary~\ref{cor:merges} in this case since
\begin{gather*}
\swt{13245} + \swt{14235} + \swt{23145} + \swt{24135}
 = (\swt{13234} + \swt{23134}) e_3(x_1, x_2, x_3, x_4, x_5)
\\ = \swt{12223} e_3(x_1, x_2, x_3, x_4, x_5) e_1(x_1, x_2, x_3, x_4, x_5).
\end{gather*}
\end{example}

\subsection{A Jacobi-Trudi-like formula for special \texorpdfstring{$u$}{u}}
\label{subsec:JT_formula}

Throughout this subsection, we shall regard the sites $1, 2, \ldots, n$ as elements of $\set{1, 2, \ldots, n}$ rather than of $\ZZ/n\ZZ$.
In particular, they are totally ordered by $1 < 2 < \cdots < n$.

Let $\ell$ and $r$ be positive integers.
Let $B = \set{ b_1 < b_2 < \cdots < b_r} \subseteq \ive{n}$.
Let $v = \tup{v_1, v_2, \ldots, v_r}$ be an $r$-tuple of elements of $\ive{\ell-1}$.
Define a word $u(v) \in \mcW_n$ with $\ell$ classes by
\begin{equation}
\label{eq:weakly_decreasing_construction}
\bigl( u(v) \bigr)_i = \begin{cases}
v_j & \text{if $i = b_j$ for some $j$}, \\
\ell & \text{otherwise}.
\end{cases}
\end{equation}

We shall now state a determinantal formula for the special case of $\swt{u(v)}$
when $v$ is a weakly decreasing $r$-tuple whose entries cover $\ive{\ell-1}$:

\begin{thm}
\label{thm:determinant_form}
Let $\ell$ be a positive integer.
Let $B = \set{ b_1 < b_2 < \cdots < b_r} \subseteq \ive{n}$.
Let $\tup{v_1 \geq v_2 \geq \cdots \geq v_r}$ be a weakly decreasing $r$-tuple of integers such that we have $\set{v_1, v_2, \ldots, v_r} = \ive{\ell-1}$.
For each $j \in \ive{r}$, set $\gamma_j = \ell - v_j$.
(The value $\gamma_j$ can also be described as the number of distinct letters among $v_1, v_2, \dotsc, v_j$.)
Then
\[
\swt{u(v)} = \left(  \prod_{b \in B} x_b \right)  \det \left(  h_{\gamma_j+i-j-1}(x_1,\dotsc,x_{b_j}) \right)_{i, j \in \ive{r}}.
\]
\end{thm}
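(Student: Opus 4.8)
The plan is to realize $\swt{u(v)}$ as a generating function for families of non-intersecting lattice paths and then invoke the Lindström--Gessel--Viennot (LGV) lemma to pass to a determinant. First I would unpack what an MLQ $\qq = (q_1, \dotsc, q_{\ell-1})$ with $\qq(1^n) = u(v)$ looks like in this special case. Since $v$ is weakly decreasing and covers $\ive{\ell-1}$, the type $\mm$ of $u(v)$ has all the non-$\ell$ letters forming a ``staircase'' of multiplicities, and the partial sums $p_1(\mm) < p_2(\mm) < \cdots < p_{\ell-1}(\mm)$ are determined by $B$ and the $\gamma_j$'s; in fact $p_k(\mm)$ equals the number of $j$ with $\gamma_j \le k$, i.e. the number of $j$ with $v_j \ge \ell-k$. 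Because the queue $q_k$ must be a $p_k(\mm)$-queue and, by Remark~\ref{rmk:t-splitting}, is reconstructible from the intermediate word as the set of sites with letter $\le$ (splitting value), the data of $\qq$ with $\qq(1^n)=u(v)$ is equivalent to a nested-type chain of subsets of $\ive{n}$; unwinding the algorithm shows each such $\qq$ corresponds to a family of $r$ lattice paths. Concretely, the $j$-th path should start at a fixed point on the ``level'' axis and end at the column $b_j$, living in rows $1$ through $r$, with horizontal steps weighted by the $x$-variables so that the weight of a path ending at column $b_j$ in the rows it traverses contributes exactly a monomial $h$-type sum $h_{\gamma_j + i - j - 1}(x_1, \dotsc, x_{b_j})$ when it is the $i$-th path.

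The key steps, in order, are: (1) translate the condition ``$\qq(1^n) = u(v)$'' into a combinatorial description of the sequence of queues, using Remark~\ref{rmk:t-splitting} repeatedly to see that $q_k$ is the set of sites whose label in the $k$-th intermediate word is at most $k$, and that knowing $u(v)$ pins down these labels; (2) set up the lattice: sources $\iota_1, \dotsc, \iota_r$ on a vertical line (as suggested by the commented-out Figure~\ref{fig:lattice_path_bijection}), sinks at the points $(b_j, \ast)$, and a weighting where a unit horizontal step in column $c$ carries the factor $x_c$; (3) establish a weight-preserving bijection between MLQs $\qq$ with $\qq(1^n) = u(v)$ and families of $r$ lattice paths from the sources to the sinks (not yet required non-intersecting) such that the generating function of the $j$-th path from $\iota_i$ to sink $b_j$ is $h_{\gamma_j + i - j - 1}(x_1, \dotsc, x_{b_j})$; (4) observe that the MLQ condition forces the paths to be \emph{non-intersecting} (this is exactly the constraint that the queue sizes are strictly increasing and the labels are consistent, so distinct paths cannot share a vertex), and that non-intersecting families automatically have the unique path connectivity $\iota_i \mapsto b_i$ because sources and sinks are sorted; (5) apply the LGV lemma to conclude
\[
\sum_{\qq : \qq(1^n) = u(v)} \wt(\qq) = \det\left( h_{\gamma_j + i - j - 1}(x_1, \dotsc, x_{b_j}) \right)_{i,j \in \ive{r}},
\]
and finally (6) account for the prefactor $\prod_{b \in B} x_b$, which comes from the ``last'' queue $q_{\ell-1}$ — or equivalently from the fixed final column-$b_j$ step of each path — not being captured by the homogeneous pieces.

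The main obstacle I expect is Step (3)–(4): making the bijection between MLQs and lattice paths precise, and in particular verifying that the path for class label $j$ genuinely ranges over all monomials counted by $h_{\gamma_j + i - j - 1}(x_1, \dotsc, x_{b_j})$ — the index shift $\gamma_j + i - j - 1$ has to be matched exactly, and one must check it is $\ge 0$ precisely when the corresponding path exists, with negative values correctly giving $h_{<0} = 0$ and hence vanishing determinant entries. This requires carefully tracking, through Phase~I and Phase~II of the queue algorithm, which sites get which labels and how the freedom in choosing the queues $q_k$ (subject to the fixed output $u(v)$ and fixed sizes $p_k(\mm)$) translates into the freedom in routing each lattice path; the weakly-decreasing, covering hypothesis on $v$ is what keeps this clean, since it guarantees each intermediate merge splits off exactly one new class and the path rows $1, \dotsc, r$ line up with the $r$ queues in a monotone fashion. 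A secondary, more routine check is the non-intersecting condition: I would argue that if two paths crossed, the corresponding MLQ would violate either a queue-size inequality or the requirement that $\qq(1^n) = u(v)$, and conversely every non-intersecting family yields a legitimate MLQ.
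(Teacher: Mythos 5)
Your overall strategy (encode the MLQs counted by $\swt{u(v)}$ as families of lattice paths and apply the Lindstr\"om--Gessel--Viennot lemma) is the same as the paper's, which routes the bijection through semistandard tableaux of pseudo-partition shape before reaching NILPs. But your steps (3)--(4) are not a proof sketch of the hard part -- they \emph{are} the hard part, and the justification you give for them would fail. Non-intersection of the paths does not follow from ``queue sizes strictly increasing and labels consistent''; what is actually needed is a precise interlacing condition on the blocks $q_i^{(j)}$ of the queues, namely $q_i^{(j)} \succeq q_{i-1}^{(j)} \gg q_i^{(j+1)}$, together with the equivalence: $\qq(1^n)$ has type $\mm$ and is weakly decreasing up to level $\ell-1$ if and only if $\qq$ is interlacing (Lemma~\ref{lem:determinant_form.interl-act}). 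Both directions of this equivalence require genuine work. The forward direction rests on Lemma~\ref{lem:determinant_form.interl-A}, whose proof must first rule out any ``wrap-around'' of the cyclic queue algorithm during Phase~II (Claim~\ref{claim:no_wrapping}); this is essential because your lattice paths live in a planar lattice while queues act on $\ZZ/n\ZZ$, and the paper's examples after Lemma~\ref{lem:determinant_form.interl-A} show that dropping any of the hypotheses (at least one letter equal to $t$, weak decrease up to level $t$, exactly $\abs{q}$ small letters) makes the conclusion false. Your proposal never addresses the cyclicity at all. The converse direction (every interlacing MLQ with $q_{\ell-1}=B$ actually outputs $u(v)$) also needs a separate induction tracking how the algorithm dispatches each block; it is not automatic that ``every non-intersecting family yields a legitimate MLQ'' with the correct output.

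Two further inaccuracies in your setup point to the same gap. First, ``knowing $u(v)$ pins down these labels'' is misleading: by Remark~\ref{rmk:t-splitting} each queue $q_k$ is recoverable from the $k$-th \emph{intermediate} word, but the intermediate words are not determined by the final word $u(v)$; the content of the forward direction of the interlacing lemma is precisely that, under the weak-decrease hypothesis, the intermediate words are themselves weakly decreasing with letter-$h$ positions equal to $q_k^{(h)}$. Second, your identification of the queue sizes is reversed: $p_k(\mm)$ is the number of $j$ with $v_j \leq k$, not the number of $j$ with $\gamma_j \leq k$ (for instance $\ell=3$, $v=(2,1,1)$ gives $p_1(\mm)=2$ but only one $j$ with $\gamma_j \leq 1$). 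So while the LGV endgame and the bookkeeping of the prefactor $\prod_{b\in B}x_b$ are fine, the bijection at the heart of the argument is asserted rather than established, and filling it in requires the interlacing characterization and the no-wrap-around analysis that you have not supplied.
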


The proof of this theorem is given in Section~\ref{sec:JT-proofs}.

\subsection{Conclusions for Aas--Linusson MLQs}

We can use Theorem~\ref{thm:determinant_form} to settle two conjectures from~\cite{AasLin17}.

Fix a set of sites $B = \set{b_1 < b_2 < \cdots < b_r}$.
We say that a set $S$ of integers is \defn{lacunar} if $i\in S$ implies $i+1 \notin S$.
Let $S \subseteq \ive{r-1}$ be lacunar, and define the permutation $\sigma_S := \left( \prod_{i \in S} s_i \right) w_0$, where $s_i, w_0 \in \SymGp{r}$.
Note that the elements $\{s_i \mid i \in S\}$ all commute, so their product, and hence $\sigma_S$, is well-defined.
In one-line notation, $\sigma_S$ is the list of all elements of $\ive{r}$ in decreasing order, except that for each $i \in S$, the pair $\tup{i, i+1}$ is sorted back into increasing order.

For each permutation $\tau \in \SymGp{r}$, we let $v_\tau$ be the $r$-tuple $\tup{\tau_1, \tau_2, \ldots, \tau_r}$ (that is, the one-line notation of $\tau$).
To ease notation, we shall identify any $\tau \in \SymGp{r}$ with the corresponding word $u(v_{\tau}) \in \mcW_n$ defined by~\eqref{eq:weakly_decreasing_construction}, where we set $\ell=r+1$.

In~\cite[Conj.~3.10]{AasLin17}, a formula for the spectral weight $\swt{\sigma_S}$ at $x_1 = \cdots = x_n = 1$ is conjectured.
We now prove a version of this conjecture for general $x_i$.

\begin{cor}
\label{cor:special_weight_lacunar}
Let $B = \set{b_1 < b_2 < \cdots < b_r} \subseteq \ive{n}$.
Let $T \subseteq \ive{r-1}$ be lacunar, and let $\psi(T) = \sum_{S \subseteq T} \swt{\sigma_S}$.
Then we have
\begin{align*}
  \psi(T) & = \left(\prod_{t \in T} e_t(x_1, \dotsc, x_n) \right) \swt{ \bigvee_T w_0}
  \\ & = \left( \prod_{b \in B} x_b \right) \left(\prod_{t \in T} e_t(x_1, \dotsc, x_n) \right) \det\bigl(h_{\gamma_j+i-j-1}(x_1, \dotsc, x_{b_j})\bigr)_{i, j \in \ive{r}},
\end{align*}
where $\gamma_j = j - \abs{ \{t \in T \mid t > r - j \} }$.
\end{cor}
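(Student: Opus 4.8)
The plan is to deduce this from Corollary~\ref{cor:merges} (the iterated merge rule) together with Theorem~\ref{thm:determinant_form}, after identifying the set of words appearing in the sum $\psi(T)$. First I would set $\ell = r+1$ and $\mm = (1,1,\dots,1,0,0,\dots)$ (with $r$ ones), so that $w_0 \in \SymGp{r}$ is identified with the word $u(v_{w_0})$ of type $\mm$ placed on the sites $B$, and $\bigvee_T w_0$ is a packed word of type $\bigvee_T \mm$. Since $\mm$ has all entries equal to $1$, every $t \in T \subseteq \ive{r-1}$ satisfies $m_t \neq 0$ and $m_{t+1} \neq 0$, so Corollary~\ref{cor:merges} applies: it gives
\[
\swt{\bigvee_T w_0} \prod_{t \in T} e_{p_t(\mm)}(x_1, \dotsc, x_n) = \sum_u \swt{u},
\]
the sum being over all words $u$ of type $\mm$ with $\bigvee_T u = \bigvee_T w_0$. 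Because $\mm$ is the all-ones type, $p_t(\mm) = t$, which matches the product $\prod_{t \in T} e_t$ in the claim. So the first main step is purely combinatorial: show that the words $u$ of type $\mm$ on sites $B$ with $\bigvee_T u = \bigvee_T w_0$ are exactly the words $\sigma_S$ for $S \subseteq T$, and that this correspondence is a bijection $S \mapsto \sigma_S$. Unpacking the definition of $\vee_t$ (decrement everything above class $t$), the condition $\bigvee_T u = \bigvee_T w_0$ says precisely that $u$ and $w_0$ induce the same linear order after collapsing, for each $t \in T$, the two classes $t, t+1$ — equivalently, $u$ is a permutation of $\ive{r}$ obtained from $w_0 = r(r-1)\cdots 1$ by choosing, for some subset $S$ of the "collapsible" positions $T$, to swap the adjacent pair back into increasing order; that is exactly $u = \sigma_S$. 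The hypothesis that $T$ is lacunar is what guarantees these swaps act on disjoint pairs, so they commute and the $2^{|T|}$ choices are genuinely distinct, giving the bijection. This yields the first displayed equality $\psi(T) = \left(\prod_{t\in T} e_t\right)\swt{\bigvee_T w_0}$.

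For the second equality I would apply Theorem~\ref{thm:determinant_form} to the packed word $v := \bigvee_T w_0$. The point is to check that $\bigvee_T w_0$ has exactly the form $u(v')$ required by that theorem, for the weakly decreasing tuple $v'$ supported on $B$ obtained by collapsing; its "largest class" is $\ell' := \ell - |T| = r+1-|T|$, and the $j$-th entry $v'_j$ of the collapsed decreasing tuple is $v'_j = (\text{original value } r+1-j)$ decremented once for every $t \in T$ with $t \geq r+1-j$, i.e.\ $v'_j = r+1-j - |\{t \in T : t > r-j\}|$. Hence $\gamma_j := \ell' - v'_j = j - |\{t \in T : t > r-j\}|$, which is exactly the $\gamma_j$ in the statement. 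Also $\prod_{b\in B} x_b$ is the prefactor. Plugging into Theorem~\ref{thm:determinant_form} gives
\[
\swt{\bigvee_T w_0} = \left(\prod_{b\in B} x_b\right)\det\bigl(h_{\gamma_j+i-j-1}(x_1,\dots,x_{b_j})\bigr)_{i,j\in\ive{r}},
\]
and multiplying by $\prod_{t\in T} e_t(x_1,\dots,x_n)$ finishes the proof.

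The routine parts are the index bookkeeping for $\gamma_j$ and verifying that $\bigvee_T w_0$ literally matches the template $u(v')$ of Theorem~\ref{thm:determinant_form}; these are just careful substitutions. The one genuine obstacle is the combinatorial bijection in the first step: one must verify both that every $u$ with $\bigvee_T u = \bigvee_T w_0$ is of the form $\sigma_S$ with $S \subseteq T$ (using lacunarity of $T$ to rule out interactions between collapsed pairs and to see that the only freedom is whether to "uncollapse" in increasing vs.\ decreasing order within each pair) and that distinct $S$ give distinct words — again lacunarity, since disjoint transpositions acting on $w_0$ are independent. Once that bijection is nailed down, the corollary is immediate from Corollary~\ref{cor:merges} and Theorem~\ref{thm:determinant_form}.
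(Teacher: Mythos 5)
Your proposal follows essentially the same route as the paper's proof: identify $w_0$ with the word $u(v_{w_0})\in\mcW_n$ of type $\mm=(1,\dotsc,1,n-r,0,\dotsc)$ (note the last nonzero entry $m_{r+1}=n-r$ for the filler letters), apply Corollary~\ref{cor:merges} with $p_t(\mm)=t$, identify the words $u$ of type $\mm$ with $\bigvee_T u=\bigvee_T w_0$ as exactly the $\sigma_S$ for $S\subseteq T$ (using lacunarity), and then invoke Theorem~\ref{thm:determinant_form} for $\bigvee_T w_0$; this is precisely the paper's argument, so the proposal is correct in substance. One bookkeeping slip in the routine part: merging at level $t$ decrements the letters strictly greater than $t$, so the letter $r+1-j$ is decremented for each $t\in T$ with $t\le r-j$, not $t\ge r+1-j$; the correct intermediate formula is $v'_j=(r+1-j)-\abs{\set{t\in T \mid t\le r-j}}$, which is what actually yields $\gamma_j=\ell'-v'_j=j-\abs{\set{t\in T \mid t>r-j}}$ — as written, your $v'_j$ and your $\gamma_j$ are mutually inconsistent, though the final $\gamma_j$ you state is the right one.
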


\begin{proof}
Recall that we identify the permutation $w_0 \in \SymGp{r}$ with the word $u(v_{w_0}) \in \mcW_n$;
the latter word has type
$\mm := \tup{1,1,\ldots,1,n-r,0,0,\ldots}$ (starting with $r$
ones).
Hence, the word $\bigvee_T w_0$ has type $\bigvee_T \mm$.
By Corollary~\ref{cor:merges} (applied to $v = \bigvee_T w_0$), we thus have
\[
\left(\prod_{t \in T} e_t(x_1, \dotsc, x_n) \right) \swt{ \bigvee_T w_0} = \sum \swt{u'} ,
\]
where the sum ranges over all words $u' \in \mcW_n$
of type $\mm$ satisfying
$\bigvee_T u' = \bigvee_T w_0$.
But the words $u' \in \mcW_n$ of type $\mm$ satisfying
$\bigvee_T u' = \bigvee_T w_0$ are precisely the words of the form $\sigma_S$
\begin{verlong}
(recall that this stands for $u(v_{\sigma_S})$)
\end{verlong}
for $S \subseteq T$.
Hence, the $\sum \swt{u'}$ in the above equality rewrites as $\psi(T)$.
This proves the first equality of the corollary.
To prove the second equality, we consider $\ell = r - \abs{T} + 1$ and
the $r$-tuple
\[
v = \bigl(v_j = \ell - j + \abs{\set{t \in T \mid t > r-j}} = \ell - \gamma_j \bigr)_{j=1}^r.
\]
Note that $v_1 = r - \abs{T} = \ell-1$ and $v_r = 1$ and
\[
v_{i+1} = v_i - \begin{cases} 0 & \text{if } r-i \in T, \\ 1 & \text{if } r-i \notin T, \end{cases}
\qquad
\text{for each $i \in \ive{r-1}$.}
\]
Thus, $v$ is weakly decreasing and $\set{v_1, v_2, \ldots, v_r} = \ive{\ell-1}$ and $u(v) = \bigvee_T w_0$.
Hence, the second equality follows from Theorem~\ref{thm:determinant_form}.
\end{proof}

\begin{remark}
We note that $\bigvee_T w_0$ considered in $\mcW_r$ is a weakly decreasing packed word whose descents are in $[r-1] \setminus T$.
When we lift $\bigvee_T w_0$ to $\mcW_n$ under the identification with $\bigvee_T u(v_{w_0}) \in \mcW_n$, we add in the values $r-\abs{T}+1$ at the positions specified by $B$.
\end{remark}

\begin{example}
Consider $n = 8$, $B = \set{1,2,5,6,8}$, and $T = \set{1,4}$.
Then we have $\bigvee_T w_0 = 33{\color{gray}44}21{\color{gray}4}1$, and
\[
\psi(T) = \swt{54{\color{gray}66}32{\color{gray}6}1} + \swt{45{\color{gray}66}32{\color{gray}6}1} + \swt{54{\color{gray}66}31{\color{gray}6}2} + \swt{45{\color{gray}66}31{\color{gray}6}2}.
\]
Applying Theorem~\ref{thm:merge} to $t = 4$ twice and then to $t=1$, we obtain
\begin{align*}
\psi(T) & = e_4(x_1, \dotsc, x_8) \, \bigl( \swt{44{\color{gray}55}32{\color{gray}5}1} + \swt{44{\color{gray}55}31{\color{gray}5}2} \bigr)
\\ & = e_1(x_1, \dotsc, x_8) \, e_4(x_1, \dotsc, x_8) \swt{33{\color{gray}44}21{\color{gray}4}1}.
\end{align*}
Using Theorem~\ref{thm:determinant_form}, this further becomes
\begin{align*}
\psi(T) & = x_1 x_2 x_5 x_6 x_8 \, e_1(x_1, \dotsc, x_8) \, e_4(x_1, \dotsc, x_8)
\\ & \hspace{20pt} \times \det \begin{pmatrix}
1 & 0 & 0 & 0 & 0 \\
h_1[1] & 1 & 1 & 1 & 0 \\
h_2[1] & h_1[2] & h_1[5] & h_1[6] & 1 \\
h_3[1] & h_2[2] & h_2[5] & h_2[6] & h_1[8] \\
h_4[1] & h_3[2] & h_3[5] & h_3[6] & h_2[8] \\
\end{pmatrix},
\end{align*}
where $h_i[k] := h_i(x_1, \dotsc, x_k)$.
\end{example}

Note that the well-known formula for M\"obius inversion on the boolean lattice yields
\[
\swt{\sigma_S} = \sum_{T\subseteq S} (-1)^{|S|-|T|} \psi(T)
\]
for any $S \subseteq \ive{r-1}$.
Therefore, by specializing Corollary~\ref{cor:special_weight_lacunar} at $x_1 = \cdots = x_n = 1$, we obtain~\cite[Conj.~3.10]{AasLin17} (which is a generalization of~\cite[Conj.~3.9]{AasLin17}).
Additionally, this proves~\cite[Conj.~3.6]{AasLin17} (which is a generalization of~\cite[Conj.~3.4]{AasLin17}).

\section{The TASEP connection}
\label{sec:tasep}

We now explain how our proof of Theorem~\ref{thm:permutation} gives a proof of the commutativity conjecture of~\cite{AAMP}.

There are $2^{n-1}$ packed types for words of length $n$, as they are the compositions of $n$ (see \cite[Section 1.2]{Stanley-EC1}).
We let the subset $S \subseteq [n-1]$ correspond to the type of the word obtained by merging $i$ and $i+1$ in $12 \dotsm n$ for each $i \in S$; \textit{i.e.}, $\bigvee_S \mm$ for $\mm = \tup{1, \dotsc, 1, 0, \ldots}$ with $n$ occurrences of $1$.
Denote this type by $\mm_S$.
Note that $\set{p_1(\mm_S), \dotsc, p_{\ell-1}(\mm_S)} = [n-1] \setminus S$, where $\mm_S$ has $\ell$ classes; this is the complement of the usual bijection between subsets of $\ive{n-1}$ and compositions of $n$.
Let $\mcW_S$ denote the set of words of type $\mm_S$.
Let $V_S$ be the vector space over $\RR$ with basis $\set{\epsilon_w \mid w \in \mcW_S}$.

\begin{example}
  For $n = 4$, we have
  \[
  \begin{tikzpicture}[xscale=4.5,yscale=2,thick,>=latex]
  \node (E) at (1,0) {$\mbf{m}_{\emptyset} = (1,1,1,1)$};
  \node (1) at (2,1) {$\mbf{m}_{\{1\}} = (2,1,1)$};
  \node (2) at (1,1) {$\mbf{m}_{\{2\}} = (1,2,1)$};
  \node (3) at (0,1) {$\mbf{m}_{\{3\}} = (1,1,2)$};
  \node (12) at (2,2) {$\mbf{m}_{\{1,2\}} = (3,1)$};
  \node (13) at (1,2) {$\mbf{m}_{\{1,3\}} = (2,2)$};
  \node (23) at (0,2) {$\mbf{m}_{\{2,3\}} = (1,3)$};
  \node (123) at (1,3) {$\mbf{m}_{\{1,2,3\}} = (4)$};
  \draw[->,red] (E) -- (1);
  \draw[->,blue] (E) -- (2);
  \draw[->,dgreencolor] (E) -- (3);
  \draw[->,red] (2) -- (12);
  \draw[->,red] (3) -- (13);
  \draw[->,blue] (3) -- (23);
  \draw[->,blue] (1) -- (12);
  \draw[->,dgreencolor] (1) -- (13);
  \draw[->,dgreencolor] (2) -- (23);
  \draw[->,red] (23) -- (123);
  \draw[->,blue] (13) -- (123);
  \draw[->,dgreencolor] (12) -- (123);
  \end{tikzpicture}
  \]
  where we have drawn an arrow $\mm_S \to \mm_{S \cup \set{i}}$ for each $S \subseteq [n-1]$ and each $i \in [n-1] \setminus S$ (this is the Hasse diagram of the Boolean lattice of subsets of $[n-1]$).
  As an example of the vector space $V_S$, we have $V_{\{2,3\}} = \RR \{ \epsilon_{1222}, \epsilon_{2122}, \epsilon_{2212}, \epsilon_{2221} \}$.
\end{example}

The \defn{totally asymmetric simple exclusion process} (TASEP) is a Markov chain on $\mcW_S$, where $S \subseteq[n-1]$, as follows.
For a state $u \in \mcW_S$, we move to a new state by picking a random $i \in [n]$ and either
\begin{itemize}
\item if $u_i > u_{i+1}$, swap the positions $u_i$ and $u_{i+1}$, or
\item do nothing (\textit{i.e.} stay at $u$).
\end{itemize}
Let $M_S \colon V_S \to V_S$ be the transition matrix of this Markov chain.
Note that these moves preserve the type of the words; thus we could consider this as a Markov chain on $\mcW_n$, where $\mcW_S$ becomes an irreducible component.
For $i \notin S$, we have the merging map $\Phi_i \colon \mcW_S \to \mcW_{S\cup\{i\}}$ given by
$\Phi_i(\epsilon_u) = \epsilon_{\vee^{(i)} u}$.
It is easy to see that $\Phi_i M_S = M_{S\cup \{i\}} \Phi_i$. 

\begin{figure}
\[
\begin{tikzpicture}[>=stealth,thick,scale=0.8]
\node (321) at (3,8) {$321$};
\node (231) at (0,6) {$231$};
\node (312) at (6,6) {$312$};
\node (213) at (0,2) {$213$};
\node (132) at (6,2) {$132$};
\node (123) at (3,0) {$123$};
\draw[->,blue] (123) -- (321); 
\draw[->,dgreencolor] (132) -- (123); 
\draw[->,red] (132) -- (231); 
\draw[->,red] (213) -- (123); 
\draw[->,dgreencolor] (213) -- (312); 
\draw[->,blue] (231) -- (213); 
\draw[->,blue] (312) -- (132); 
\draw[->,dgreencolor] (321) -- (231); 
\draw[->,red] (321) -- (312); 
\end{tikzpicture}
\hspace{35pt}
\begin{tikzpicture}[>=stealth,thick,scale=0.8]
\node (1223) at (0,-2) {$1223$};
\node (3221) at (0,8) {$3221$};
\node (2321) at (2,6) {$2321$};
\node (3212) at (-2,6) {$3212$};
\node (2231) at (3,4) {$2231$};
\node (2312) at (0,4) {$2312$};
\node (3122) at (-3,4) {$3122$};
\node (2213) at (3,2) {$2213$};
\node (2132) at (0,2) {$2132$};
\node (1322) at (-3,2) {$1322$};
\node (1232) at (-2,0) {$1232$};
\node (2123) at (2,0) {$2123$};
\draw[->,blue] (1223) .. controls (2,3) and (1,6) .. (3221);
\draw[->,red] (3221) -- (3212);
\draw[->,dgreencolor] (3221) -- (2321);
\draw[->,dgreencolor] (2321) -- (2231);
\draw[->,red] (2321) -- (2312);
\draw[->,red] (3212) -- (3122);
\draw[->,dgreencolor] (3212) -- (2312);
\draw[->,blue] (3122) -- (1322);
\draw[->,blue] (2312) -- (2132);
\draw[->,blue] (2231) -- (2213);
\draw[->,dgreencolor] (1322) -- (1232);
\draw[->,red] (1322)  .. controls (-1,3) and (-1,6) .. (2321);
\draw[->,dgreencolor] (2132) -- (2123);
\draw[->,red] (2132) -- (1232);
\draw[->,dgreencolor] (2213) .. controls (1,3) and (1,6) .. (3212);
\draw[->,red] (2213) -- (2123);
\draw[->,dgreencolor] (1232) -- (1223);
\draw[->,red] (1232) .. controls (1,0) and (1,3) .. (2231);
\draw[->,red] (2123) -- (1223);
\draw[->,dgreencolor] (2123) .. controls (-1,0) and (-1,3) .. (3122);
\end{tikzpicture}
\]
\caption{The states and transitions for $\mcW_{\emptyset}$ for $n = 3$ (left) and $\mcW_{\set{2}}$ for $n = 4$ (right).
All probabilities of the drawn transitions are $1/n$.}
\end{figure}

Building on work by Ferrari and Martin~\cite{FM06,FM07}, the paper~\cite{AAMP} introduced opposite operators $\Psi_i \colon \mcW_S \to \mcW_{S \setminus \{i\}}$ given by $\Psi_i(\epsilon_u) = \sum_{q} \epsilon_{q(u)}$, where the sum is taken over all $i$-queues $q$, and showed that $\Psi_i M_S = M_{S \setminus \{i\}} \Psi_i$.
Furthermore they proposed the \defn{commutativity conjecture}: that $\Psi_i \Psi_j = \Psi_j \Psi_i$.
By looking at the $(u,v)$ entry of both sides of this equation, the commutativity conjecture is asking whether the number of $(i,j)$-configurations $C$ such that $v = C(u)$ equals the number of $(j,i)$-configurations $C'$ such that $v = C'(u)$. 
Thus, our proof of Theorem~\ref{thm:permutation} shows that $\widetilde{\Psi}_i \widetilde{\Psi}_j = \widetilde{\Psi}_j \widetilde{\Psi}_i$ for the weighted operators $\widetilde{\Psi}_i$ given by
\[
\widetilde{\Psi}_i(\epsilon_u) = \sum_q \wt(q) \epsilon_{q(u)},
\]
where we also sum over all $i$-queues $q$.
Note that $\widetilde{\Psi}_i = \Psi_i$ when we specialize $x_1 = \cdots = x_n = 1$, giving the connection between our MLQs and the multi-species TASEP.
We note that the proof of interlacing given in~\cite{AAMP} is significantly different from our approach.

We have not managed to find a process similar to the TASEP whose transition matrix $\widetilde{M}_S$ would satisfy $\widetilde{M}_S \widetilde{\Psi}_i = \widetilde{\Psi}_i \widetilde{M}_{S \setminus \set{i}}$ for our $\widetilde{\Psi}_i$ operators.
Note however that queues give us \emph{a} random process with this property: for a word $u \in \mcW_S$, a move in the chain is given by
\begin{enumerate}
\item picking a random $i$-queue $q$
\item going to the state $\merge{t} q(u) \in \mcW_S$, where $t = \min\set{k \mid p_k(\mm_S) \geq i}$.
\end{enumerate}


\section{Proof of Theorem~\ref{thm:determinant_form}}
\label{sec:JT-proofs}

The goal of this section is to prove Theorem~\ref{thm:determinant_form}.
Along the way, we shall derive a number of intermediate results, some of which may be of independent interest.

As in Subsection~\ref{subsec:JT_formula}, we shall consider the sites as elements of the totally ordered set $\set{1, 2, \ldots, n}$ (ordered by $1 < 2 < \cdots < n$) throughout this section.
Moreover, we shall use infinitely many distinct commuting indeterminates $\ldots, x_{-2}, x_{-1}, x_0, x_1, x_2, \ldots$ instead of those indexed by $\ZZ/n\ZZ$; thus we do not have $x_{n+k} = x_k$ in this section.

\subsection{Lattice paths and the Lindstr\"{o}m--Gessel--Viennot theorem}

Our arguments will rely on the \defn{Lindstr\"om--Gessel--Viennot (LGV) Lemma}~\cite{GV85,Lindstrom73} and on a re-interpretation of MLQs as a certain kind of semistandard tableaux (of non-partition shape).
This takes inspiration from the ``bully paths'' of~\cite{AasLin17} as well as from the standard proof of the Jacobi--Trudi identities for Schur functions~\cite[First proof of Theorem 7.16.1]{Stanley-EC2}.
We begin with basic definitions.

The \defn{lattice} shall mean the (infinite) directed graph whose vertices are
all pairs of integers (that is, its vertex set is $\ZZ^2$), and whose arcs are
\begin{align*}
(i,j) & \to (i,j+1) & & \text{for all } (i,j) \in \ZZ^2, \qquad \text{and} \\
(i,j) & \to (i+1,j) & & \text{for all } (i,j) \in \ZZ^2.
\end{align*}
The arcs of the first kind are called \defn{north-steps}, whereas the arcs of
the second kind are called \defn{east-steps}.
\begin{verlong}
The vertices of the lattice will just be called \defn{vertices}.
\end{verlong}
We consider the lattice as the usual integer lattice in the Cartesian plane.

For each vertex $v = (i,j) \in \ZZ^2$, we set $\xcoord(v) = i$ and $\ycoord(v) = j$.
We refer to $\xcoord(v)$ (resp.~$\ycoord(v)$) as the \defn{$x$-coordinate} (resp.~\defn{$y$-coordinate}) of $v$.
The \defn{$y$-coordinate} of an east-step $(i,j) \to (i+1, j)$ is defined to be $j$.

For each arc $a$ of the lattice, we define the \defn{weight} of $a$ as the monomial
\[
\wt(a) :=
\begin{cases}
x_j & \text{if $a$ is an east-step } (i,j) \to (i+1,j), \\
1 & \text{if $a$ is a north-step } (i,j) \to (i,j+1).
\end{cases}
\]
Thus, all north-steps have weight $1$, while east-steps with $y$-coordinate $j$ have weight $x_j$.

Fix $k \in \NN$.
A $k$-tuple of vertices of the lattice will be called a \defn{$k$-vertex}.
\begin{verlong}
If $\vv = \tup{A_1, A_2, \dotsc, A_k}$ is a $k$-vertex, and if $\sigma \in \SymGp{k}$ is a permutation, then $\sigma(\vv)$ denotes the $k$-vertex $\tup{A_{\sigma(1)}, A_{\sigma(2)}, \dotsc, A_{\sigma(k)}}$.
\end{verlong}
A \defn{path} simply means a (directed) path in the lattice.
The \defn{weight} of a path $p$, denoted $\wt(p)$, is defined as the product of the weights of all arcs of this path; this weight is a monomial.
If $A$ and $B$ are two vertices, then \defn{$N(A,B)$} shall denote the set of all paths from $A$ to $B$.

It is easy to see (see, \textit{e.g.}, \cite[(2.36)]{Stanley-EC1}) that any two vertices $A = (a,b)$ and $B = (c,d)$ satisfy
\begin{equation}
\label{eq.LGV.single-paths}
\sum_{p \in N(A,B)} \wt(p) = h_{c-a}(x_{b}, x_{b+1}, \dotsc, x_{d}).
\end{equation}

If $\tup{A_1, A_2, \dotsc, A_k}$ and $\tup{B_1, B_2, \dotsc, B_k}$ are two $k$-vertices, then a \defn{non-intersecting lattice path tuple (NILP)} from $\tup{A_1, A_2, \dotsc, A_k}$ to $\tup{B_1, B_2, \dotsc, B_k}$ shall mean a $k$-tuple $\tup{p_1, p_2, \dotsc, p_k}$ of paths such that
\begin{itemize}
\item each $p_i$ is a path from $A_i$ to $B_i$;
\item no two of the paths $p_1, p_2, \dotsc, p_k$ have a vertex in common.
\end{itemize}
(Visually speaking, the paths must neither cross nor touch.)

The \defn{weight} of a NILP $\pp = \tup{p_1, p_2, \dotsc, p_k}$ is the monomial $\wt(\pp)$ defined by
\[
\wt(\pp) := \prod_{i=1}^{k} \wt(p_i).
\]
See Figure~\ref{fig:NILP_example} for an illustration.

If $\uu$ and $\vv$ are two $k$-vertices, then \defn{$N(\uu,\vv)$} denotes the set of all NILPs from $\uu$ to $\vv$.

\begin{figure}[t]
\[
\begin{tikzpicture}
  \draw[densely dotted] (0,0) grid (7.2,7.2);
  \draw[->] (0,0) -- (0,7.2);
  \draw[->] (0,0) -- (7.2,0);
  \foreach \x/\xtext in {0, 1, 2, 3, 4, 5, 6, 7}
     \draw (\x cm,1pt) -- (\x cm,-1pt) node[anchor=north] {$\xtext$};
  \foreach \y/\ytext in {0, 1, 2, 3, 4, 5, 6, 7}
     \draw (1pt,\y cm) -- (-1pt,\y cm) node[anchor=east] {$\ytext$};

  \node[circle,fill=white,draw=black,text=UMNmaroon,inner sep=1pt] (A3) at (2,3) {$A_3$};
  \node[circle,fill=white,draw=black,text=UMNmaroon,inner sep=1pt] (A2) at (3,1) {$A_2$};
  \node[circle,fill=white,draw=black,text=UMNmaroon,inner sep=1pt] (A1) at (5,1) {$A_1$};

  \node[circle,fill=white,draw=black,text=black,inner sep=1pt] (B3) at (4,6) {$B_3$};
  \node[circle,fill=white,draw=black,text=black,inner sep=1pt] (B2) at (5,6) {$B_2$};
  \node[circle,fill=white,draw=black,text=black,inner sep=1pt] (B1) at (6,4) {$B_1$};

  \begin{scope}[thick,>=stealth,darkred]
      \draw (A3) edge[->] (2,4);
      \draw (2,3.6) node[anchor=east] {$1$};
      \draw (2,4) edge[->] (3,4);
      \draw (2.5,4) node[anchor=north] {$x_4$};
      \draw (3,4) edge[->] (3,5);
      \draw (3,4.5) node[anchor=east] {$1$};
      \draw (3,5) edge[->] (4,5);
      \draw (3.5,5) node[anchor=north] {$x_5$};
      \draw (4,5) edge[->] (B3);
      \draw (4,5.4) node[anchor=east] {$1$};
  \end{scope}
  \begin{scope}[thick,>=stealth,dbluecolor]
      \draw (A2) edge[->] (4,1);
      \draw (3.6,1) node[anchor=north] {$x_1$};
      \draw (4,1) edge[->] (4,2);
      \draw (4,1.5) node[anchor=east] {$1$};
      \draw (4,2) edge[->] (4,3);
      \draw (4,2.5) node[anchor=east] {$1$};
      \draw (4,3) edge[->] (4,4);
      \draw (4,3.5) node[anchor=west] {$1$};
      \draw (4,4) edge[->] (5,4);
      \draw (4.5,4) node[anchor=south] {$x_4$};
      \draw (5,4) edge[->] (5,5);
      \draw (5,4.5) node[anchor=west] {$1$};
      \draw (5,5) edge[->] (B2);
      \draw (5,5.4) node[anchor=west] {$1$};
  \end{scope}
  \begin{scope}[thick,>=stealth,dgreencolor]
      \draw (A1) edge[->] (5,2);
      \draw (5,1.6) node[anchor=east] {$1$};
      \draw (5,2) edge[->] (6,2);
      \draw (5.5,2) node[anchor=south] {$x_2$};
      \draw (6,2) edge[->] (6,3);
      \draw (6,2.5) node[anchor=west] {$1$};
      \draw (6,3) edge[->] (B1);
      \draw (6,3.4) node[anchor=west] {$1$};
  \end{scope}
\end{tikzpicture}
\]
\caption{A NILP from the $3$-vertex $\tup{A_1, A_2, A_3}$ to the $3$-vertex $\tup{B_1, B_2, B_3}$ of weight $x_1 x_2 x_4^2 x_5$.
  The weights of the edges of the paths are written next to the edges.}
\label{fig:NILP_example}
\end{figure}

\begin{verlonglong}
A pair $(\uu, \vv)$ of two $k$-vertices $\uu$ and $\vv$ is said to be \defn{nonpermutable} if and only
if every permutation $\sigma \neq \id$ in $\SymGp{k}$ satisfies $N\bigl( \uu,\sigma(\vv) \bigr) = \emptyset$.
Note that we are not requiring that $N(\uu, \vv) \neq \emptyset$ here.

The following fact (a particular case of~\cite[Corollary 2]{GesVie89}) is crucial:

\begin{prop}
\label{prop.LGV.nonper}
Let $k \in \NN$.
Let $(\uu, \vv)$ be a nonpermutable pair of two $k$-vertices $\uu = \tup{A_1, A_2, \dotsc, A_k}$ and $\vv = \tup{B_1, B_2, \dotsc, B_k}$.
Then,
\[
\sum_{\pp \in N(\uu,\vv)} \wt(\pp) = \det\left( \sum_{p \in N(A_i,B_j)} \wt(p) \right)_{i, j \in \ive{k}}.
\]
\end{prop}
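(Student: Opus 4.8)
The statement is precisely the Lindström–Gessel–Viennot lemma in the form that applies to nonpermutable endpoint configurations, so the plan is to give the standard sign-reversing involution argument. First I would expand the determinant on the right-hand side using the Leibniz formula:
\[
\det\left( \sum_{p \in N(A_i,B_j)} \wt(p) \right)_{i,j \in \ive{k}} = \sum_{\sigma \in \SymGp{k}} \operatorname{sgn}(\sigma) \prod_{i=1}^{k} \sum_{p \in N(A_i, B_{\sigma(i)})} \wt(p).
\]
Rewriting each term, this becomes a signed sum over all tuples $\pp = (p_1,\dotsc,p_k)$ of paths where $p_i$ runs from $A_i$ to $B_{\sigma(i)}$ for some permutation $\sigma$; call these "path systems". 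So the right-hand side equals $\sum_{\sigma \in \SymGp{k}} \operatorname{sgn}(\sigma) \sum_{\pp} \wt(\pp)$, summed over path systems with permutation $\sigma$. The left-hand side, $\sum_{\pp \in N(\uu,\vv)} \wt(\pp)$, is exactly the contribution of the non-intersecting path systems, all of which necessarily have $\sigma = \id$ (since the $A_i$ are ordered so that a non-crossing system cannot permute the targets — this is where the combinatorics of the specific $A_i,B_j$ positions enters, but in the abstract setup it follows from nonpermutability together with the fact that intersecting-free systems here must be "planar").

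The core step is the involution. On the set of path systems $\pp$ that have at least one pair of paths sharing a vertex, I would define an involution $\Theta$ that preserves the weight but flips the sign of the associated permutation, thereby cancelling all such terms. Concretely: among all pairs $(i,j)$ with $i < j$ such that $p_i$ and $p_j$ intersect, pick the intersecting pair in some canonical way (e.g. the smallest such $i$, then the smallest such $j$), let $v$ be a canonically chosen common vertex (say the first common vertex along $p_i$), and swap the tails of $p_i$ and $p_j$ after $v$. This produces a new path system $\Theta(\pp)$ in which $p_i$ now ends at $B_{\sigma(j)}$ and $p_j$ ends at $B_{\sigma(i)}$, so the permutation changes by a transposition and the sign flips; the multiset of arcs is unchanged, so $\wt$ is preserved; and $\Theta$ is an involution because the canonical choice of intersecting pair and swap vertex is unaffected by the tail-swap. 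Hence all terms with intersections cancel in pairs, leaving only the non-intersecting systems, which all carry $\sigma = \id$ by the nonpermutability hypothesis (indeed, if $\pp \in N(\uu, \sigma(\vv))$ with $\sigma \neq \id$ then $N(\uu,\sigma(\vv)) \neq \emptyset$, contradicting nonpermutability — wait, more carefully: a non-intersecting system with permutation $\sigma$ is an element of $N(\uu, \sigma(\vv))$, which is empty unless $\sigma = \id$). This gives $\det(\cdots) = \sum_{\pp \in N(\uu,\vv)} \wt(\pp)$.

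The main obstacle — really the only subtle point — is verifying that $\Theta$ is genuinely well-defined and an involution: the canonical selection of "which intersecting pair" and "which common vertex" must be invariant under the swap operation. The standard fix is to choose, among all indices $i$ such that $p_i$ meets some later path, the smallest such $i$; then along $p_i$ take the first vertex $v$ that lies on a later path; then among later paths through $v$ take the one with smallest index $j$. After swapping tails at $v$, the paths $p_i$ and $p_j$ still pass through $v$, the earlier paths $p_1,\dotsc,p_{i-1}$ are untouched and still pairwise non-intersecting and disjoint from $p_i$ up to $v$, so $i$ and $v$ and $j$ are recomputed to the same values — hence $\Theta(\Theta(\pp)) = \pp$. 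I would spell this out carefully, citing~\cite{GesVie89, GV85, Lindstrom73} for the general principle, and note that since this is a well-known lemma (indeed a special case of~\cite[Corollary 2]{GesVie89} as already remarked) a reader may take it as known; but including the involution keeps the section self-contained.
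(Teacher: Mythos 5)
Your proof is correct, and it is the standard sign-reversing (tail-swapping) involution argument for the Lindstr\"om--Gessel--Viennot lemma adapted to the nonpermutability hypothesis: expand the determinant by the Leibniz formula, cancel all intersecting path systems via the canonical tail swap, and observe that a non-intersecting system with permutation $\sigma$ lies in $N\bigl(\uu,\sigma(\vv)\bigr)$, which is empty for $\sigma \neq \id$ by hypothesis. The paper itself does not prove this proposition at all: it invokes it as a particular case of \cite[Corollary 2]{GesVie89} (and, in the compiled text, works instead with Proposition~\ref{prop.LGV.concrete}, whose nonpermutability is verified via Lemma~\ref{lem.LGV.hex}). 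So your argument supplies, self-containedly, exactly the proof the paper outsources to the literature. Your handling of the one genuinely delicate point --- that the canonical choice of the index $i$, the vertex $v$, and the index $j$ is recomputed identically after the swap, so that $\Theta$ is an involution --- is the standard and correct fix; one small fact you use implicitly and could state is that lattice paths here never repeat a vertex (each step increases $\xcoord + \ycoord$... more precisely, each step increases the sum of coordinates by at most one and never decreases it, and in fact strictly increases either $\xcoord$ or $\ycoord$), which is what guarantees that the portion of $p_i$ before $v$ is disjoint from the swapped-in tail. The mid-paragraph aside about the $A_i$ being ``ordered'' and systems being ``planar'' is unnecessary and slightly misleading in this abstract setting, but you immediately supersede it with the correct appeal to nonpermutability, so it does no harm.
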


Next, we state a simple lemma that will help us show that certain pairs of $k$-vertices are nonpermutable:

\begin{lemma}
\label{lem.LGV.hex}
Let $A$, $B$, $A'$ and $B'$ be four vertices of the lattice such that
\[
\xcoord(A') \leq \xcoord(A), \qquad \ycoord(A') \geq \ycoord(A), \qquad
\xcoord(B') \leq \xcoord(B), \qquad \ycoord(B') \geq \ycoord(B).
\]
Let $p$ be a path from $A$ to $B'$. Let $p'$ be a path from $A'$ to $B$.
Then, $p$ and $p'$ have a vertex in common.
\end{lemma}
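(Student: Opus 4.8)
The plan is to prove this by a column-by-column comparison of the two paths, exploiting the fact that the only available steps are north-steps and east-steps, so every path is monotone. Write $A = (a_1,a_2)$, $B = (b_1,b_2)$, $A' = (a_1',a_2')$, $B' = (b_1',b_2')$; the hypotheses then read $a_1' \le a_1$, $a_2' \ge a_2$, $b_1' \le b_1$, $b_2' \ge b_2$, and the mere existence of $p$ forces $a_1 \le b_1'$ and $a_2 \le b_2'$, while the existence of $p'$ forces $a_1' \le b_1$ and $a_2' \le b_2$. For any path $q$ from $(a,b)$ to $(c,d)$ and any integer $x$ with $a \le x \le c$, the vertices of $q$ with $\xcoord$ equal to $x$ form a nonempty block of consecutive lattice points; let $\ell_q(x)$ and $u_q(x)$ be its smallest and largest $\ycoord$. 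I will only need: $\ell_q(a) = b$ and $u_q(c) = d$; the east-step of $q$ joining column $x$ to column $x+1$ sits at height $u_q(x)$, so that $u_q(x) = \ell_q(x+1)$ for $a \le x < c$; and $\ell_q(x) \le u_q(x)$ throughout (so the numbers $\ell_q(a), u_q(a), \ell_q(a{+}1), u_q(a{+}1), \ldots$ form a weakly increasing sequence).

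Then I would argue by contradiction: suppose $p$ and $p'$ share no vertex. Any common vertex would have to lie in a column $x$ present in both paths, i.e. with $a_1 \le x \le b_1'$ (this range lies inside the range $a_1' \le x \le b_1$ of $p'$, by the hypotheses). So for every integer $x$ with $a_1 \le x \le b_1'$ the integer intervals $[\ell_p(x), u_p(x)]$ and $[\ell_{p'}(x), u_{p'}(x)]$ are disjoint, which means that either $u_p(x) < \ell_{p'}(x)$ (call $x$ \emph{low}) or $u_{p'}(x) < \ell_p(x)$ (call $x$ \emph{high}), and not both. Using $\ell_p(a_1) = a_2$ together with $\ell_{p'}(a_1) \ge a_2' \ge a_2$, one checks that the column $a_1$ cannot be high, hence is low; using $u_p(b_1') = b_2'$ together with $u_{p'}(b_1') \le b_2 \le b_2'$, one checks that the column $b_1'$ cannot be low, hence is high. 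If $a_1 = b_1'$ this already contradicts ``not both''; otherwise the low/high label must switch between some consecutive columns $x$ and $x+1$ with $a_1 \le x < b_1'$.

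At such a switch we have $u_p(x) < \ell_{p'}(x)$ (from $x$ low) and $u_{p'}(x+1) < \ell_p(x+1)$ (from $x+1$ high); substituting $\ell_p(x+1) = u_p(x)$ and $\ell_{p'}(x+1) = u_{p'}(x) \ge \ell_{p'}(x) > u_p(x)$ yields the chain $u_p(x) < \ell_{p'}(x+1) \le u_{p'}(x+1) < u_p(x)$, which is absurd. Hence $p$ and $p'$ must meet. The argument is short and elementary, so I do not foresee a real obstacle; the one spot needing care is getting this chain of inequalities at the switching column exactly right, and making sure all four hypotheses on $A,A',B,B'$ are genuinely used — namely, precisely to force the leftmost shared column $a_1$ to be low and the rightmost shared column $b_1'$ to be high, which is what pins the two ends of the low/high pattern. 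A purely topological proof (treating the two staircases as arcs in the plane and invoking the Jordan curve theorem) is also possible, but it is messier to make fully rigorous than this direct count.
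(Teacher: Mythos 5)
Your argument is correct: the column-wise quantities $\ell_q(x)$, $u_q(x)$ are well defined (the vertices of a monotone lattice path in a fixed column form a consecutive block), the identity $u_q(x)=\ell_q(x+1)$ holds at each interior column, disjointness of the two paths forces each shared column to be ``low'' or ``high'', the four hypotheses on $A,A',B,B'$ (together with monotonicity of $\ell_{p'}$ and $u_{p'}$) correctly force column $\xcoord(A)$ to be low and column $\xcoord(B')$ to be high, and the inequality chain at a low-to-high switch is a genuine contradiction. This is a different route from the paper. The paper gives two proofs: a strong induction on $\ell(p)+\ell(p')$ with a fair amount of casework (peeling off the first step of one of the paths), and a sketched geometric proof that confines both paths to a rectangle, extends them to bidirectionally infinite paths by vertical resp.\ horizontal rays, and then scans antidiagonals $x+y=N$, comparing $\xcoord$ of the two paths' unique vertices on each antidiagonal and locating a crossing at the last $N$ where the order is reversed. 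Your proof is closest in spirit to that second argument (both are discrete intermediate-value sweeps), but you sweep vertical columns instead of antidiagonals, stay entirely within the finite shared column range, and use the endpoint hypotheses directly to pin the labels at the two ends, so no extension to infinite paths and no induction is needed; this makes your version arguably the most self-contained of the three. What the paper's antidiagonal version buys in exchange is symmetry between the two paths and the clean ``exactly one vertex per level'' bookkeeping; what the inductive proof buys is that it never needs the interval/column structure at all, only local analysis of first steps.
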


\begin{figure}[t]
\[
\begin{tikzpicture}
  \draw[densely dotted] (0,0) grid (7.2,7.2);
  \draw[->] (0,0) -- (0,7.2);
  \draw[->] (0,0) -- (7.2,0);
  \foreach \x/\xtext in {0, 1, 2, 3, 4, 5, 6, 7}
     \draw (\x cm,1pt) -- (\x cm,-1pt) node[anchor=north] {$\xtext$};
  \foreach \y/\ytext in {0, 1, 2, 3, 4, 5, 6, 7}
     \draw (1pt,\y cm) -- (-1pt,\y cm) node[anchor=east] {$\ytext$};

  \node[circle,fill=white,draw=black,text=UMNmaroon,inner sep=1pt] (A') at (1,3) {$A^{\prime}$};
  \node[circle,fill=white,draw=black,text=UMNmaroon,inner sep=1pt] (A) at (2,1) {$A$};
  
  \node[circle,fill=white,draw=black,text=black,inner sep=1pt] (B') at (5,6) {$B^{\prime}$};
  \node[circle,fill=white,draw=black,text=black,inner sep=1pt] (B) at (6,4) {$B$};
  
  \begin{scope}[thick,>=stealth,darkred]
      \draw (A) edge[->] (3,1);
      \draw (2.6,1) node[anchor=north] {$p$};
      \draw (3,1) edge[->] (3,2);
      \draw (3,2) edge[->] (4,2);
      \draw (4,2) edge[->] (4,3);
      \draw (4,3) edge[->] (4,4);
      \draw (4,4) edge[->] (4,5);
      \draw (4,5) edge[->] (5,5);
      \draw (5,5) edge[->] (B');
      \draw (5,5.4) node[anchor=east] {$p$};
  \end{scope}
  \begin{scope}[thick,>=stealth,dbluecolor]
      \draw (A') edge[->] (2,3);
      \draw (1.6,3) node[anchor=north] {$p^{\prime}$};
      \draw (2,3) edge[->] (2,4);
      \draw (2,4) edge[->] (3,4);
      \draw (3,4) edge[->] (4,4);
      \draw (4,4) edge[->] (5,4);
      \draw (5,4) edge[->] (B);
      \draw (5.4,4) node[anchor=north] {$p^{\prime}$};
  \end{scope}
\end{tikzpicture}
\]
\caption{Illustration for Lemma~\ref{lem.LGV.hex}.}
\label{fig:LGV.hex.example}
\end{figure}

See Figure~\ref{fig:LGV.hex.example} for an illustration of the situation of this lemma.

\begin{proof}[First proof of Lemma~\ref{lem.LGV.hex}.]
If $q$ is any path, then the \defn{length $\ell\left(q\right)$} of $q$ is defined to be the number of arcs
of $q$.

We shall now prove Lemma~\ref{lem.LGV.hex} by strong induction on $\ell(p) + \ell(p')$:

\textit{Induction step:}
Fix $N \in \NN$. Assume (as the induction hypothesis) that Lemma~\ref{lem.LGV.hex} holds whenever $\ell(p) + \ell(p') < N$.
We must now prove that Lemma~\ref{lem.LGV.hex} holds when $\ell(p) + \ell(p') = N$.

So let $A$, $B$, $A'$, $B'$, $p$ and $p'$ be as in Lemma~\ref{lem.LGV.hex}, and let us assume that $\ell(p) + \ell(p') = N$.
We must prove that $p$ and $p'$ have a vertex in common.

Assume the contrary. Thus, $p$ and $p'$ have no vertex in common.

The vertex $A$ belongs to the path $p$, and thus does not belong to the path
$p'$ (since $p$ and $p'$ have no vertex in common). Similarly,
the vertex $A'$ does not belong to the path $p$.

Recall that each arc of the lattice is either an east-step or a north-step.
Thus, the $x$-coordinates of the vertices of a path are always weakly
increasing, and so are the $y$-coordinates. Hence, the existence of a path $p$
from $A$ to $B'$ shows that $\xcoord(A) \leq \xcoord(B')$ and $\ycoord(A) \leq \ycoord(B')$.
Similarly, the existence of a path $p'$ from $A'$ to $B$ yields $\xcoord(A') \leq \xcoord(B)$ and $\ycoord(A') \leq \ycoord(B)$.

Next, we claim that $\ell(p) \neq 0$.

[\textit{Proof:}
Assume the contrary. Thus, $\ell(p) = 0$.
Hence, the path $p$ has no steps.
Therefore, $A=B'$ (since $p$ is a path from $A$ to $B'$).
Hence, $\ycoord(A) = \ycoord(B') \geq \ycoord(B)$, so that $\ycoord(A') \geq \ycoord(A) \geq \ycoord(B)$.
Combining this with $\ycoord(A') \leq \ycoord(B)$, we obtain $\ycoord(A') = \ycoord(B)$.
Combining $\ycoord(A') \geq \ycoord(A)$ with $\ycoord(A') = \ycoord(B) \leq \ycoord(A)$, we obtain $\ycoord(A')  = \ycoord(A)$.
Thus, $\ycoord(A') = \ycoord(A) = \ycoord(B)$.
Therefore, the vertex $A$ lies on the horizontal line that contains $A'$ and $B$.
Furthermore, this vertex $A$ must lie on the line segment between $A'$ and $B$ (since $\xcoord(A') \leq \xcoord( \underbrace{A}_{=B'} ) = \xcoord(B') \leq \xcoord(B)$).

Recall again that the $y$-coordinates of the vertices a path are always weakly increasing.
Moreover, they increase strictly whenever the path makes a north-step.
Hence, if the path $p'$ would have any north-step, then we would have $\ycoord(A') < \ycoord(B)$ (since $p'$ is a path from $A'$ to $B$).
But this would contradict $\ycoord(A') \geq \ycoord(B)$.
Hence, the path $p'$ has no north-step.
Thus, $p'$ consists entirely of east-steps.
Hence, $p'$ contains every vertex on the line segment between $A'$ and $B$.
Therefore, $p'$ contains the vertex $A$ (since the vertex $A$ lies on the line segment between $A'$ and $B$).
This contradicts the fact that $A$ does not belong to the path $p'$.
This contradiction shows that our assumption was false; hence, $\ell(p) \neq0$ is proven.]

Furthermore, we claim that $\ell(p') \neq 0$.

[\textit{Proof:}
Assume the contrary. Thus, $\ell(p') = 0$.
Hence, the path $p'$ has no steps.
Therefore, $A'=B$ (since $p'$ is a path from $A'$ to $B$).
Hence, $\xcoord(A') = \xcoord(B) \geq \xcoord(B')$, so that $\xcoord(A) \geq \xcoord(A') \geq \xcoord (B')$.
Combining this with $\xcoord(A) \leq \xcoord(B')$, we obtain $\xcoord(A) = \xcoord(B')$.
Combining $\xcoord(A) \geq \xcoord(A')$ with $\xcoord(A') \geq \xcoord(B') = \xcoord(A)$, we obtain $\xcoord(A)  = \xcoord(A')$.
Thus, $\xcoord(A') = \xcoord(A) = \xcoord(B')$.
Therefore, the vertex $A'$ lies on the vertical line that contains $A$ and $B'$.
Furthermore, this vertex $A'$ must lie on the line segment between $A$ and $B'$ (since $\ycoord(A') \geq \ycoord(A)$ and $\ycoord(\underbrace{A'}_{=B}) = \ycoord(B) \leq \ycoord(B')$).

Recall again that the $x$-coordinates of the vertices a path are always weakly increasing.
Moreover, they increase strictly whenever the path makes an east-step.
Hence, if the path $p$ would have any east-step, then we would have $\xcoord(A) < \xcoord(B')$ (since $p$ is a path from $A$ to $B'$).
But this would contradict $\xcoord(A) = \xcoord(B')$.
Hence, the path $p$ has no east-step.
Thus, $p$ consists entirely of north-steps.
Hence, $p$ contains every vertex on the line segment between $A$ and $B'$.
Therefore, $p$ contains the vertex $A'$ (since the vertex $A'$ lies on the line segment between $A$ and $B'$).
This contradicts the fact that $A'$ does not belong to the path $p$.
This contradiction shows that our assumption was false; hence, $\ell(p') \neq 0$ is proven.]

Let $P$ be the second vertex of the path $p$.
(This is well-defined, since $\ell(p) \neq 0$.)
Hence, $P$ lies on a path from $A$ to $B'$ (namely, on the path $p$).
Therefore, $\xcoord(A) \leq \xcoord(P) \leq \xcoord(B')$ (since the $x$-coordinates of the vertices a path are
always weakly increasing) and $\ycoord(A) \leq \ycoord(P) \leq \ycoord(B')$ (since the $y$-coordinates of the vertices a path are always weakly increasing).
Let $r$ be the path from $P$ to $B'$ obtained by removing the first arc from $p$.
Thus, $r$ is a subpath of $p$.
Hence, the paths $r$ and $p'$ have no vertex in common (since $p$ and $p'$ have no vertex in common).
Also, $\ell(r) = \ell(p) - 1 < \ell(p)$ and thus $\underbrace{\ell(r)}_{<\ell(p)} + \ell(p') < \ell(p) + \ell(p') = N$.

Moreover, $\xcoord(A') \leq \xcoord(A) \leq \xcoord(P)$.
If we had $\ycoord(A') \geq \ycoord(P)$, then we could apply Lemma~\ref{lem.LGV.hex} to $P$ and $r$ instead of $A$ and $p$ (by the induction hypothesis, since $\ell(r) + \ell(p') < N$).
We thus would conclude that the paths $r$ and $p'$ have a vertex in common; this would contradict the fact that the paths $r$ and $p'$ have no vertex in common.
Hence, we cannot have $\ycoord(A') \geq \ycoord(P)$.
Thus, $\ycoord(A') < \ycoord(P)$.
Hence, $\ycoord(A') \leq \ycoord(P)  -1$ (since $\ycoord(A')$ and $\ycoord(P)$ are integers).

But $P$ is the next vertex after $A$ on the path $p$.
Hence, there is an arc from $A$ to $P$. If this arc was an east-step, then we would have $\ycoord(P) = \ycoord(A)$, which would contradict $\ycoord(A) \leq \ycoord(A') < \ycoord(P)$.
Hence, this arc cannot be an east-step.
Thus, this arc must be a north-step.
Therefore, $\ycoord(P) = \ycoord(A) + 1$ and $\xcoord(P) = \xcoord(A)$.
Combining $\ycoord(A') \leq \ycoord(P) - 1 = \ycoord(A)$ (since $\ycoord(P) = \ycoord(A) + 1$) with $\ycoord(A') \geq \ycoord(A)$, we obtain $\ycoord(A') = \ycoord(A)$.

Let $P'$ be the second vertex on the path $p'$.
(This is well-defined, since $\ell(p') \neq0 $.)
Hence, $P'$ lies on a path from $A'$ to $B$ (namely, on the path $p'$).
Therefore, $\xcoord(A') \leq \xcoord(P') \leq \xcoord(B)$ (since the $x$-coordinates of the vertices a path are always weakly increasing) and $\ycoord(A') \leq \ycoord(P') \leq \ycoord(B)$ (since the $y$-coordinates of the vertices a path are always weakly increasing).
Let $r'$ be the path from $P'$ to $B$ obtained by removing the first arc from $p'$.
Thus, $r'$ is a subpath of $p'$.
Hence, the paths $p$ and $r'$ have no vertex in common (since $p$ and $p'$ have no vertex in common).
Also, $\ell(r') = \ell(p') - 1 < \ell(p')$, and thus $\ell(p) + \underbrace{\ell(r')}_{<\ell(p')} < \ell(p) + \ell(p') = N$.

Moreover, $\ycoord(P') \geq \ycoord(A') \geq \ycoord(A)$.
If we had $\xcoord(P') \leq \xcoord(A)$, then we could apply Lemma~\ref{lem.LGV.hex} to $P'$ and $r'$ instead of $A'$ and $p'$ (by the induction hypothesis, since $\ell(p) + \ell(r') < N$).
We thus would conclude that the paths $p$ and $r'$ have a vertex in common; this would contradict the fact that the paths $p$ and $r'$ have no vertex in common.
Hence, we cannot have $\xcoord(P') \leq \xcoord(A)$.
Thus, $\xcoord(P') > \xcoord(A)$.
Hence, $\xcoord(P') \geq \xcoord(A) + 1$ (since $\xcoord(P')$ and $\xcoord(A)$ are integers).

But $P'$ is the next vertex after $A'$ on the path $p'$.
Hence, there is an arc from $A'$ to $P'$.
If this arc was a north-step, then we would have $\xcoord(P') = \xcoord(A')$, which would contradict $\xcoord(A') \leq \xcoord(A) < \xcoord(P')$.
Hence, this arc cannot be a north-step.
Thus, this arc must be an east-step.
Therefore, $\ycoord(P') = \ycoord(A')$ and $\xcoord(P') = \xcoord(A') + 1$.
Hence, $\xcoord(A') + 1 = \xcoord(P') \geq \xcoord(A) + 1$, so that $\xcoord(A') \geq \xcoord(A)$.
Combining this with $\xcoord(A') \leq \xcoord(A)$, we obtain $\xcoord(A') = \xcoord(A)$.

Now, the vertices $A$ and $A'$ have the same $x$-coordinate (since $\xcoord(A') = \xcoord(A)$) and the same $y$-coordinate (since $\ycoord(A') = \ycoord(A)$).
Hence, these two vertices are equal.
In other words, $A = A'$.
Hence, the vertex $A$ belongs to the path $p'$ (since the vertex $A'$ belongs to the path $p'$).
This contradicts the fact that the vertex $A$ does not belong to the path $p'$.
This contradiction shows that our assumption was false.
Hence, we have shown that $p$ and $p'$ have a vertex in common.

Now, forget that we fixed $A$, $B$, $A'$, $B'$, $p$ and $p'$.
We thus have proven that if $A$, $B$, $A'$, $B'$, $p$ and $p'$ are as in Lemma~\ref{lem.LGV.hex}, and if $\ell(p) + \ell(p') = N$, then $p$ and $p'$ have a vertex in common.
In other words, Lemma~\ref{lem.LGV.hex} holds when $\ell(p) + \ell(p') = N$.
This completes the induction step.
Hence, Lemma~\ref{lem.LGV.hex} is proven.
\end{proof}

\begin{proof}[Second proof of Lemma~\ref{lem.LGV.hex} (sketched).]
Recall that each arc of the lattice is either an east-step or a north-step.
Thus, the $x$-coordinates of the vertices of a path are always weakly increasing, and so are the $y$-coordinates.
Hence, the existence of a path $p$ from $A$ to $B'$ shows that $\xcoord(A) \leq \xcoord(B')$ and $\ycoord(A) \leq \ycoord(B')$.
Similarly, the existence of $p'$ yields $\xcoord(A') \leq \xcoord(B')$ and $\ycoord(A') \leq\ycoord(B)$.

Thus,
\begin{align}
\xcoord(A')  & \leq \xcoord(A)  \leq \xcoord(B') \leq \xcoord(B) \qquad \text{and} \label{pf.lem.LGV.hex.1} \\
\ycoord(A)  & \leq \ycoord(A') \leq \ycoord(B) \leq \ycoord(B'). \label{pf.lem.LGV.hex.2}
\end{align}
Now, consider the rectangle whose four sides are given by the equations
\[
x = \xcoord(A'), \qquad
y = \ycoord(A), \qquad
x = \xcoord(B), \qquad
y = \ycoord(B'),
\]
respectively (all in Cartesian coordinates).\footnote{See Figure~\ref{fig:LGV.hex.pf2} for this rectangle.}
Then, the path $p$ joins two opposite sides of this rectangle (namely, the second and the fourth), whereas
the path $p'$ joins the other two sides of this rectangle; moreover, both paths stay fully within the rectangle (due to~\eqref{pf.lem.LGV.hex.1} and~\eqref{pf.lem.LGV.hex.2}).
Hence, it is geometrically obvious that $p$ and $p'$ must meet; in other words, $p$ and $p'$ have a vertex in common.

\begin{figure}[t]
\[
\begin{tikzpicture}
  \draw[densely dotted] (-1.2,-1.2) grid (8.2,8.2);
  
  \draw (1,-1.2) -- (1,8.2);
  \draw (1,4.3) node[anchor=east] {$x = \xcoord(A')$};
  \draw (-1.2,1) -- (8.2,1);
  \draw (4,1) node[anchor=north] {$y = \ycoord(A)$};
  \draw (6,-1.2) -- (6,8.2);
  \draw (6,2.7) node[anchor=west] {$x = \xcoord(B)$};
  \draw (-1.2,6) -- (8.2,6);
  \draw (3,6) node[anchor=south] {$y = \ycoord(B')$};
  
  \node[circle,fill=white,draw=black,text=UMNmaroon,inner sep=1pt] (A') at (1,3) {$A^{\prime}$};
  \node[circle,fill=white,draw=black,text=UMNmaroon,inner sep=1pt] (A) at (2,1) {$A$};
  
  \node[circle,fill=white,draw=black,text=black,inner sep=1pt] (B') at (5,6) {$B^{\prime}$};
  \node[circle,fill=white,draw=black,text=black,inner sep=1pt] (B) at (6,4) {$B$};
  
  \begin{scope}[thick,>=stealth,darkred]
      \draw (A) edge[->] (3,1);
      \draw (2.6,1) node[anchor=north] {$p$};
      \draw (3,1) edge[->] (3,2);
      \draw (3,2) edge[->] (4,2);
      \draw (4,2) edge[->] (4,3);
      \draw (4,3) edge[->] (4,4);
      \draw (4,4) edge[->] (4,5);
      \draw (4,5) edge[->] (5,5);
      \draw (5,5) edge[->] (B');
      \draw (5,5.4) node[anchor=east] {$p$};
  \end{scope}
  \begin{scope}[thick,>=stealth,dashed,darkred]
      \draw (2,0) edge[->] (A);
      \draw (2,-1) edge[->] (2,0);
      \draw (2,-1.2) edge[->] (2,-1);
      \draw (2,-0.3) node[anchor=west] {$\widetilde{p}$};
      \draw (B') edge[->] (5,7);
      \draw (5,7) edge[->] (5,8);
      \draw (5,8) edge (5,8.2);
      \draw (5,7.3) node[anchor=west] {$\widetilde{p}$};
  \end{scope}
  \begin{scope}[thick,>=stealth,dbluecolor]
      \draw (A') edge[->] (2,3);
      \draw (1.6,3) node[anchor=north] {$p^{\prime}$};
      \draw (2,3) edge[->] (2,4);
      \draw (2,4) edge[->] (3,4);
      \draw (3,4) edge[->] (4,4);
      \draw (4,4) edge[->] (5,4);
      \draw (5,4) edge[->] (B);
      \draw (5.4,4) node[anchor=north] {$p^{\prime}$};
  \end{scope}
  \begin{scope}[thick,>=stealth,dashed,dbluecolor]
      \draw (0,3) edge[->] (A');
      \draw (-1,3) edge[->] (0,3);
      \draw (-1.2,3) edge[->] (-1,3);
      \draw (-0.3,3) node[anchor=north] {$\widetilde{p^{\prime}}$};
      \draw (B) edge[->] (7,4);
      \draw (7,4) edge[->] (8,4);
      \draw (8,4) edge (8.2,4);
      \draw (7.3,4) node[anchor=south] {$\widetilde{p^{\prime}}$};
  \end{scope}
\end{tikzpicture}
\]
\caption{Illustration for the Second proof of Lemma~\ref{lem.LGV.hex}.}
\label{fig:LGV.hex.pf2}
\end{figure}

[This last geometric argument can also be substituted by a purely combinatorial one.
Namely: Assume the contrary. Hence, the paths $p$ and $p'$ have no vertex in common.

We extend the path $p$ to a bidirectionally infinite path $\widetilde{p}$ by vertical steps (\textit{i.e.}, arcs of the form $(i,j) \to (i,j+1)$) in both directions (so the path $\widetilde{p}$ first reaches $A$ through infinitely many north-steps, then proceeds to $B'$ along $p$, and then leaves $B'$ along infinitely many north-steps).\footnote{These new steps are the dashed red steps in Figure~\ref{fig:LGV.hex.pf2}.}
We extend the path $p'$ to a bidirectionally infinite path $\widetilde{p'}$ by horizontal steps (\textit{i.e.}, arcs of the form $(i,j) \to (i+1,j)$) in both directions.\footnote{These new steps are the dashed blue steps in Figure~\ref{fig:LGV.hex.pf2}.}
The resulting infinite paths $\widetilde{p}$ and $\widetilde{p'}$ have no vertex in common (indeed, the paths $p$ and $p'$ stay within the rectangle discussed above, and have no vertex in common; meanwhile, the new
steps we have added to them to obtain $\widetilde{p}$ and $\widetilde{p'}$ escape this rectangle normally in four different directions, whence they intersect neither each other nor $p$ or $p'$).
Recall that each arc of the lattice is either an east-step or a north-step.
Thus, if a vertex $C$ runs through a path, the value $\xcoord(C) + \ycoord(C)$ is incremented by $1$ with each step.
Hence, if $q$ is a bidirectionally infinite path, then, for each $N \in \ZZ$, there is a unique vertex $C$ of $q$ satisfying $\xcoord(C) + \ycoord(C) = N$.
Denote this vertex $C$ by $h_N(q)$.
Thus, the vertices of $q$ are $\ldots, h_{-1}(q), h_{0}(q), h_1(q), \ldots$ in this order.

All sufficiently low $N \in \ZZ$ satisfy $\xcoord\bigl( h_N(\widetilde{p'}) \bigr) < \xcoord\bigl( h_N(\widetilde{p}) \bigr)$, whereas all sufficiently high $N \in \ZZ$ satisfy $\xcoord\bigl( h_N(\widetilde{p'}) \bigr) > \xcoord\bigl( h_N(\widetilde{p}) \bigr)$.
Hence, the set of all $N \in \ZZ$ that satisfy $\xcoord\bigl( h_N(\widetilde{p'}) \bigr) < \xcoord\bigl( h_N(\widetilde{p}) \bigr)$ is a nonempty set of integers that is bounded from above.
Therefore, this set has a maximum.
Let $M$ be this maximum.
Thus, $M \in \ZZ$ satisfies $\xcoord\bigl( h_{M}(\widetilde{p'}) \bigr) < \xcoord\bigl( h_{M}(\widetilde{p}) \bigr)$ but $\xcoord\bigl( h_{M+1}(\widetilde{p'}) \bigr) \geq \xcoord\bigl( h_{M+1}(\widetilde{p}) \bigr)$.

But the point $h_{M+1}(\widetilde{p'})$ is either the eastern or the northern neighbor of $h_{M}(\widetilde{p'})$; hence, $\xcoord\bigl( h_{M+1}(\widetilde{p'}) \bigr) \leq \xcoord\bigl( h_{M}(\widetilde{p'}) \bigr) + 1$.
Also, the point $h_{M+1}(\widetilde{p})$ is either the eastern or the northern neighbor of $h_{M}(\widetilde{p})$; thus, $\xcoord\bigl( h_{M+1}(\widetilde{p}) \bigr) \geq \xcoord\bigl( h_{M}(\widetilde{p}) \bigr)$.
Hence,
\[
\xcoord\bigl( h_{M+1}(\widetilde{p'}) \bigr) \leq \underbrace{\xcoord\bigl( h_{M}(\widetilde{p'}) \bigr)}_{<\xcoord\bigl( h_{M}(\widetilde{p}) \bigr)} + 1 < \underbrace{\xcoord\bigl( h_{M}(\widetilde{p}) \bigr)}_{\leq\xcoord\bigl( h_{M+1}(\widetilde{p}) \bigr)} + 1 \leq \xcoord\bigl( h_{M+1}(\widetilde{p}) \bigr) + 1.
\]
Therefore, $\xcoord\bigl( h_{M+1}(\widetilde{p'}) \bigr) \leq \xcoord\bigl( h_{M+1}(\widetilde{p}) \bigr)$ (since both sides are integers).
Combining this with $\xcoord\bigl( h_{M+1}(\widetilde{p'}) \bigr) \geq \xcoord\bigl( h_{M+1}(\widetilde{p}) \bigr)$, we obtain $\xcoord\bigl( h_{M+1}(\widetilde{p'}) \bigr) = \xcoord\bigl( h_{M+1}(\widetilde{p}) \bigr)$.
Hence, $h_{M+1}(\widetilde{p'}) = h_{M+1}(\widetilde{p})$ (since $\xcoord\bigl( h_{M+1}(\widetilde{p'}) \bigr) -\ycoord\bigl( h_{M+1}(\widetilde{p'}) \bigr) = M + 1 = \xcoord\bigl( h_{M+1}(\widetilde{p}) \bigr) - \ycoord( h_{M+1}(\widetilde{p}) \bigr)$ as well).
Thus, the paths $\widetilde{p}$ and $\widetilde{p'}$ have a vertex in common (namely, $h_{M+1}(\widetilde{p'}) = h_{M+1}(\widetilde{p})$).
This contradicts the fact that they don't.
This contradiction shows that our assumption was false.
Hence, the paths $p$ and $p'$ have a vertex in common.]
\end{proof}

We can now specialize Proposition~\ref{prop.LGV.nonper} to a form that is most
suited for our applications:\footnote{%
Proposition~\ref{prop.LGV.concrete} is also the particular case of Corollary~4 in \url{https://math.stackexchange.com/questions/2870640} (applied to $\omega_{a} = \wt(a)$).
The proof we are giving here is precisely the proof from \url{https://math.stackexchange.com/questions/2870640}.}
\end{verlonglong}

We now shall state a folklore result, which follows from the Lindstr\"om--Gessel--Viennot lemma:\footnote{%
See the proof of Corollary~4 in \url{https://math.stackexchange.com/questions/2870640} (applied to $\omega_{a} = \wt(a)$) for a detailed derivation of Proposition~\ref{prop.LGV.concrete}.}

\begin{prop}
\label{prop.LGV.concrete}
Let $k \in \NN$.
Let $\uu = \tup{A_1, A_2, \dotsc, A_k}$ and $\vv = \tup{B_1,B_2, \dotsc, B_k}$ be two $k$-vertices such that
\begin{align*}
\xcoord(A_1) & \geq \xcoord(A_2) \geq \cdots \geq \xcoord(A_k), \\
\ycoord(A_1) & \leq \ycoord(A_2) \leq \cdots \leq \ycoord(A_k), \\
\xcoord(B_1) & \geq \xcoord(B_2) \geq \cdots \geq \xcoord(B_k), \\
\ycoord(B_1) & \leq \ycoord(B_2) \leq \cdots \leq \ycoord(B_k).
\end{align*}
Then,
\[
\sum_{\pp \in N(\uu,\vv)} \wt(\pp) = \det\left( \sum_{p\in N(A_i,B_j) } \wt(p) \right)_{i, j \in \ive{k}} .
\]
\end{prop}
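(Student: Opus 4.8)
The plan is to derive Proposition~\ref{prop.LGV.concrete} from the general (signed) form of the Lindström--Gessel--Viennot lemma, in the guise of Proposition~\ref{prop.LGV.nonper}: that statement says that whenever the pair $(\uu,\vv)$ is \emph{nonpermutable} --- meaning $N(\uu,\sigma(\vv)) = \emptyset$ for every permutation $\sigma \neq \id$ in $\SymGp{k}$ --- the displayed determinant identity holds verbatim, with no extra terms. So the entire task reduces to checking that the four monotonicity hypotheses on the $x$- and $y$-coordinates of $A_1,\dots,A_k$ and $B_1,\dots,B_k$ force $(\uu,\vv)$ to be nonpermutable; once that is done, we simply quote Proposition~\ref{prop.LGV.nonper} and we are finished.

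To establish nonpermutability, I would fix a permutation $\sigma \neq \id$ and show $N(\uu,\sigma(\vv)) = \emptyset$. Since $\sigma$ is not the identity, it has an inversion: there are indices $i < j$ with $\sigma(i) > \sigma(j)$. Suppose, toward a contradiction, that $\pp = (p_1,\dots,p_k) \in N(\uu,\sigma(\vv))$; then $p_i$ is a path from $A_i$ to $B_{\sigma(i)}$ and $p_j$ is a path from $A_j$ to $B_{\sigma(j)}$. From $i < j$ the hypotheses give $\xcoord(A_j) \le \xcoord(A_i)$ and $\ycoord(A_j) \ge \ycoord(A_i)$; from $\sigma(i) > \sigma(j)$ they give $\xcoord(B_{\sigma(i)}) \le \xcoord(B_{\sigma(j)})$ and $\ycoord(B_{\sigma(i)}) \ge \ycoord(B_{\sigma(j)})$. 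These are precisely the eight coordinate inequalities needed to apply Lemma~\ref{lem.LGV.hex} with $(A,B,A',B') = (A_i,\, B_{\sigma(j)},\, A_j,\, B_{\sigma(i)})$, taking $p = p_i$ (a path from $A$ to $B'$) and $p' = p_j$ (a path from $A'$ to $B$). The lemma then forces $p_i$ and $p_j$ to share a vertex, contradicting the very definition of a NILP. Hence $N(\uu,\sigma(\vv)) = \emptyset$ for all $\sigma \neq \id$, so $(\uu,\vv)$ is nonpermutable, and Proposition~\ref{prop.LGV.nonper} yields the claim.

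The only genuinely substantive ingredient is Lemma~\ref{lem.LGV.hex}: the assertion that two monotone (north/east) lattice paths whose endpoints ``interleave'' in the above sense must intersect. I expect that to be the main obstacle, although it is a classical and intuitively obvious topological fact, and the excerpt already supplies two proofs of it --- one by strong induction on the combined length $\ell(p) + \ell(p')$ (peel off the first step of $p$ or of $p'$, track the coordinate inequalities, and invoke the inductive hypothesis), and one geometric/combinatorial (extend $p$ to a bi-infinite path by vertical rays and $p'$ by horizontal rays, parametrize both by the anti-diagonal coordinate $\xcoord + \ycoord$, and note that the difference of their $x$-coordinates along this parametrization changes sign and hence vanishes, producing a common vertex). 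Everything else --- extracting an inversion of $\sigma$ and matching up the coordinate inequalities --- is routine bookkeeping, and the determinant expansion itself is handled once and for all by Proposition~\ref{prop.LGV.nonper}.
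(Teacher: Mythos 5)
Your proposal is correct and coincides with the paper's own argument: the paper likewise reduces Proposition~\ref{prop.LGV.concrete} to Proposition~\ref{prop.LGV.nonper} by verifying nonpermutability, picking an inversion $i<j$ with $\sigma(i)>\sigma(j)$ and applying Lemma~\ref{lem.LGV.hex} with exactly the same choice $(A,B,A',B')=(A_i,B_{\sigma(j)},A_j,B_{\sigma(i)})$, $p=p_i$, $p'=p_j$ to force a common vertex. No gaps; nothing further is needed.
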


The situation of this proposition is illustrated in Figure~\ref{fig:NILP_example}.

\begin{verlonglong}
\begin{proof}[Proof of Proposition~\ref{prop.LGV.concrete}.]
This will follow immediately from Proposition~\ref{prop.LGV.nonper}
once we have shown that $\left( \uu,\vv\right)  $ is nonpermutable.
Thus, it remains to show the latter fact.

Let $\sigma\neq\id$ be a permutation in $\SymGp{k}$.
We must show that $N\bigl(\uu, \sigma(\vv) \bigr) = \emptyset$.
Indeed, let $\pp\in N\bigl( \uu,\sigma(\vv) \bigr)$.

The permutation $\sigma$ has an inversion (since $\sigma \neq \id$).
In other words, there exist some $i<j$ in $\ive{k}$ such that $\sigma(i) > \sigma(j)$.
Consider such $i$ and $j$.

Write $\pp$ in the form $\pp = \tup{p_1, p_2, \dotsc, p_k}$.
Thus, $\tup{p_1, p_2, \dotsc, p_k}$ is a NILP from $\uu$ to $\sigma(\vv)$ (since $\tup{p_1, p_2, \dotsc, p_k} = \pp\in N\bigl( \uu, \sigma(\vv) \bigr)$).

By the definition of a NILP, this implies that
\begin{itemize}
\item $p_i$ is a path from $A_i$ to $B_{\sigma(i)}$;

\item $p_j$ is a path from $A_j$ to $B_{\sigma(j)}$;

\item the paths $p_i$ and $p_j$ have no vertex in common.
\end{itemize}

However, the assumptions of Proposition~\ref{prop.LGV.concrete} yield
$\xcoord(A_j) \leq \xcoord(A_i)$,
$\ycoord(A_j) \geq \ycoord(A_i)$,
$\xcoord(B_{\sigma(i)}) \leq \xcoord(B_{\sigma(j)})$, and
$\ycoord(B_{\sigma(i)}) \geq \ycoord(B_{\sigma(j)})$.
Hence, Lemma~\ref{lem.LGV.hex} (applied to $A = A_i$, $B = B_{\sigma(j)}$, $A' = A_j$, $B' = B_{\sigma(i)}$, $p = p_i$ and $p' = p_j$) shows that $p_i$ and $p_j$ have a vertex in common.
This contradicts the fact that the paths $p_i$ and $p_j$ have no vertex in common.

Thus we have found a contradiction for each $\pp\in N\bigl( \uu,\sigma(\vv) \bigr)$.
Hence, $N\bigl(\uu, \sigma(\vv) \bigr) = \emptyset$, as we desired.
\end{proof}
\end{verlonglong}

\subsection{Pseudo-partitions and tableaux}

We shall next introduce the concepts of pseudo-partitions and their corresponding semistandard tableaux;
we will then express a generating function for these tableaux by a determinantal formula (Theorem~\ref{thm.tableau.jt}) akin to the Jacobi--Trudi formula for Schur functions (and, like the latter, the proof will rely on Proposition~\ref{prop.LGV.concrete}).
A pseudo-partition is similar to the concept of a partition, except it allows entries to increase by $1$.
The semistandard tableaux of a pseudo-partition shape are defined just like for partitions.
The generating function in our determinantal formula is going to be a sum over the semistandard tableaux of a fixed pseudo-partition shape with given rightmost entries in each row.
Later we will translate these tableaux into MLQs that yield a specific word when applied to $1^n$.

Let us formalize these definitions.
A \defn{pseudo-partition} shall mean a $k$-tuple $\lambda = \tup{\lambda_1, \lambda_2, \dotsc, \lambda_k}$ of positive integers (for some $k \in \NN$) such that
\[
\text{each } i \in \ive{k-1} \text{ satisfies } \lambda_i + 1 \geq \lambda_{i+1}.
\]
For example, both $\tup{5,3,4,2,2}$ and $\tup{6,2,3,4,1}$ are pseudo-partitions.

The \defn{diagram $\diag{\lambda}$} of a pseudo-partition $\lambda = \tup{\lambda_1, \lambda_2, \dotsc,\lambda_k}$ is defined as the set
$\set{  \left(  i,j\right)  \in \ive{k} \times \set{ 1,2,3,\ldots }  \mid  j \leq \lambda_i }$.
This diagram is drawn in the plane like usual Young diagrams, in English notation.
\begin{verlong}
For example, the diagram of the pseudo-partition $\tup{3,2,3,1}$ is drawn as
\[
\ydiagram{3,2,3,1}.
\]
\end{verlong}

If $\lambda = \tup{\lambda_1, \lambda_2, \dotsc, \lambda_k}$ is a pseudo-partition, then a \defn{tableau of shape $\lambda$} is a map $T \colon \diag{\lambda} \to \set{1,2,3,\ldots}$.
For each such tableau $T$ and each $(i,j) \in \diag{\lambda}$, we refer to the value $T(i,j)$ as the \defn{entry of $T$ in cell $\left(i,j\right)$}.
As usual, we represent a tableau $T$ of shape $\lambda$ by placing the entry $T(i,j)$ into the box corresponding to $(i,j) \in \diag{\lambda}$.
\begin{verlong}
For example, the following are two tableaux of shape $\tup{3,2,3,1}$:
\begin{equation}
\label{eq.tableau.exas-long}
\begin{ytableau} 2 & 5 & 3 \\ 1 & 1 \\ 2 & 4 & 1 \\ 7 \end{ytableau}\qquad
\text{and}\qquad
\begin{ytableau} 1 & 1 & 3 \\ 2 & 3 \\ 4 & 5 & 5 \\ 5 \end{ytableau}.
\end{equation}
\end{verlong}

A tableau $T$ of shape $\lambda$ is said to be \defn{semistandard} if and only if
\begin{itemize}
\item the entries of $T$ weakly increase along each row (\textit{i.e.}, we have $T(i,j_1) \leq T(i,j_2)$ whenever $(i,j_1) \in \diag{\lambda}$ and $(i,j_2) \in \diag{\lambda}$ satisfy $j_1 < j_2$);

\item the entries of $T$ strictly increase down each column (\textit{i.e.}, we have $T(i_1,j) < T(i_2,j)$ whenever $(i_1,j) \in \diag{\lambda}$ and $(i_2,j) \in \diag{\lambda}$ satisfy $i_1 < i_2$).
\end{itemize}

\begin{verlong}
For example, the second tableau in~\eqref{eq.tableau.exas-long} is semistandard, while the first is not.
\end{verlong}

If $\lambda = \tup{\lambda_1, \lambda_2, \dotsc, \lambda_k}$ is a pseudo-partition, and if $T$ is a tableau of shape $\lambda$,
then the \defn{surface} of $T$ is defined to be the $k$-tuple $(s_1, s_2, \dotsc, s_k)$, where $s_i$ is the rightmost entry of the $i$-th row of $T$ (that is, $s_i = T(i,\lambda_i)$).

If $T$ is a tableau of shape $\lambda$ whose entries belong to $\ive{n}$, then the \defn{weight} of $T$ is defined as the monomial
\[
\wt(T) := \prod_{(i,j) \in \diag{\lambda}} x_{T(i,j)}
\]
(that is, the product of $x_{d}$ for $d$ ranging over all entries of $T$).
\begin{verlong}
For example, the two tableaux in~\eqref{eq.tableau.exas-long} have weights $x_1^{3}x_2^{2}x_{3}x_{4}x_{5}x_{7}$ and $x_1^{2}x_2x_{3}^{2}x_{4}x_{5}^{3}$, respectively (assuming that $n \geq 7$).
\end{verlong}
Let $\SST(\lambda, s)$ denote the set of all semistandard tableaux of shape $\lambda$ and surface $s$.

\begin{example}
The diagram of the pseudo-partition $\tup{3,2,3,1}$ is drawn as
\[
\ydiagram{3,2,3,1}\,.
\]
The following are two tableaux of shape $\tup{3,2,3,1}$:
\[
\label{eq.tableau.exas}
\begin{ytableau} 1 & 2 & 5 \\ 2 & 3 \\ 3 & 4 & 5 \\ 7 \end{ytableau}\qquad
\text{and}\qquad
\begin{ytableau} 1 & 1 & 4 \\ 2 & 3 \\ 4 & 5 & 5 \\ 5 \end{ytableau}\,.
\]
The right tableau is semistandard, while the left one is not (the two $5$'s in the rightmost column).
These tableaux have surfaces $(5,3,5,7)$ and $(4,3,5,5)$, respectively, and weights $x_1 x_2^2 x_3^2 x_4 x_5^2 x_7$ and $x_1^2 x_2 x_3 x_4^2 x_5^3$, respectively.
\end{example}

We now state the main result of this subsection.

\begin{thm}
\label{thm.tableau.jt}
Let $\lambda = \tup{\lambda_1, \lambda_2, \dotsc, \lambda_k}$ be a pseudo-partition.
Let $s = \tup{s_1, s_2, \dotsc, s_k}$ be a strictly increasing sequence of elements of $\ive{n}$.
Then,
\[
\sum_{T \in \SST(\lambda, s)} \wt(T) = \left(  \prod_{i=1}^{k} x_{s_i} \right)  \det\left(  h_{\lambda_j-j+i-1}(  x_1,x_2,\ldots,x_{s_j})  \right)_{i, j \in \ive{k}} .
\]
\end{thm}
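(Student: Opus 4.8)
The plan is to prove this by setting up a weight-preserving bijection between semistandard tableaux of shape $\lambda$ and surface $s$, and non-intersecting lattice path tuples (NILPs) between two carefully chosen $k$-vertices, and then invoking Proposition~\ref{prop.LGV.concrete} together with the single-path generating function~\eqref{eq.LGV.single-paths}. This is exactly the strategy of the standard lattice-path proof of the Jacobi--Trudi identity (as in \cite[First proof of Theorem 7.16.1]{Stanley-EC2}), adapted to the pseudo-partition setting where row lengths may increase by one. First I would choose, for each row $i\in\ive{k}$, the starting vertex $A_i := (\lambda_i - i + 1,\; 1)$ and the ending vertex $B_i := (\lambda_i - i + 1 + (s_i - 1),\; s_i) = (\lambda_i - i + s_i,\; s_i)$. (These specific shears by $i$ are what turn the generating function over paths into the Jacobi--Trudi determinant; the reader should check the exact offsets against~\eqref{eq.LGV.single-paths}.) The point is that a path from $A_i$ to $B_i$ records the $i$-th row of the tableau: an east-step at $y$-coordinate $\nu$ corresponds to an entry equal to $\nu$, and the monotonicity of $y$-coordinates along the path encodes the weakly increasing condition along the row.

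The key steps, in order, are: (1) Verify that the two $k$-vertices $\uu = (A_1,\dots,A_k)$ and $\vv=(B_1,\dots,B_k)$ satisfy the hypotheses of Proposition~\ref{prop.LGV.concrete}. Here $\xcoord(A_i) = \lambda_i - i + 1$; since $\lambda$ is a pseudo-partition we have $\lambda_i + 1 \ge \lambda_{i+1}$, i.e. $\lambda_i - i + 1 \ge \lambda_{i+1} - (i+1) + 1$, so the $x$-coordinates of the $A_i$ are weakly decreasing, and the $y$-coordinates are all $1$ (hence weakly increasing). For the $B_i$: the $y$-coordinates are $s_i$, which are strictly increasing by hypothesis; and $\xcoord(B_i) = \lambda_i - i + s_i$, so $\xcoord(B_i) - \xcoord(B_{i+1}) = (\lambda_i - \lambda_{i+1}) - 1 + (s_i - s_{i+1}) \le 0 - 1 + 1 + \cdots$ — wait, this needs care: $\lambda_i - \lambda_{i+1} \ge -1$ and $s_i - s_{i+1} \le -1$, so the difference is $\ge -1 + \cdots$, which does not immediately give weakly decreasing. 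I would instead absorb the factor $\prod_i x_{s_i}$ by using the truncated endpoints $B_i' := (\lambda_i - i + s_i - 1,\, s_i)$ (so the last east-step, contributing $x_{s_i}$, is stripped off and accounted for separately), and then $\xcoord(B_i') - \xcoord(B_{i+1}') = (\lambda_i - \lambda_{i+1}) + (s_i - s_{i+1}) \le (-1)\cdot 0$... still I must double check; the honest statement is that one should pick the offsets so that $\xcoord(B_i')$ is weakly decreasing, which works because $\lambda_i - \lambda_{i+1} \le$ something and $s_i < s_{i+1}$ forces $s_i \le s_{i+1}-1$, giving $\xcoord(B_i') - \xcoord(B_{i+1}') = (\lambda_i - i) - (\lambda_{i+1}-(i+1)) + (s_i - s_{i+1}) - 0$; since $\lambda_i - i \ge \lambda_{i+1}-(i+1) - 0$ is false in general ($\lambda_i+1\ge\lambda_{i+1}$ gives $\lambda_i - i \ge \lambda_{i+1} - i - 1 = \lambda_{i+1}-(i+1)$, so it IS true), and $s_i - s_{i+1} \le -1 < 0$, the total is $\le -1 < 0$. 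Good — so the truncated $\vv'$ satisfies the hypotheses. (2) Establish the weight-preserving bijection $T \leftrightarrow \pp$ between $\SST(\lambda,s)$ and $N(\uu,\vv')$: read off row $i$ of $T$ as the path whose east-steps occur at the $y$-coordinates given by the entries of that row (in weakly increasing order), except the last entry $s_i$ which corresponds to the stripped east-step; the column-strict condition of $T$ translates precisely into the non-intersecting condition on the paths, which is the heart of the Lindström--Gessel--Viennot correspondence. (3) Apply Proposition~\ref{prop.LGV.concrete} to get $\sum_{\pp \in N(\uu,\vv')} \wt(\pp) = \det\big(\sum_{p\in N(A_i,B_j')}\wt(p)\big)_{i,j}$, then use~\eqref{eq.LGV.single-paths} to evaluate each entry as $h_{\xcoord(B_j') - \xcoord(A_i)}(x_1,\dots,x_{s_j}) = h_{\lambda_j - j + s_j - 1 - (\lambda_i - i + 1)}(x_1,\ldots,x_{s_j})$ — wait, the lower index of the $h$ in~\eqref{eq.LGV.single-paths} is the $y$-range $\set{b,\ldots,d}$, here $\set{1,\ldots,s_j}$, and the number of east-steps is $\xcoord(B_j') - \xcoord(A_i) = \lambda_j - j - (\lambda_i - i) = \lambda_j - j + i - \lambda_i$; hmm, that has a $\lambda_i$, which is wrong. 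So I need $\xcoord(A_i)$ to NOT depend on $\lambda_i$ in the way that survives. Let me reconsider: the standard trick uses $A_i = (1-i, 1)$ or similar — i.e. the start $x$-coordinate should be $-i$ (or $1-i$), independent of $\lambda$, and the end $x$-coordinate $\lambda_i - i$, so that the east-count is $\lambda_i - i - (1-i) = \lambda_i - 1$ for the diagonal, and off-diagonal $\lambda_j - j - (1-i) = \lambda_j - j + i - 1$. So the correct choice is $A_i := (1 - i,\, 1)$ and $B_i' := (\lambda_i - i,\, s_i)$, and $B_i := (\lambda_i - i + 1,\, s_i)$, giving row $i$ of $T$ with $\lambda_i$ entries the last of which ($s_i$) is stripped.

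With the corrected endpoints, step (1) becomes: $\xcoord(A_i) = 1-i$ is strictly (hence weakly) decreasing, $\ycoord(A_i)=1$ constant; $\xcoord(B_i') = \lambda_i - i$ is weakly decreasing since $\lambda_i + 1 \ge \lambda_{i+1} \iff \lambda_i - i \ge \lambda_{i+1} - (i+1)$, and $\ycoord(B_i') = s_i$ strictly increasing. Then Proposition~\ref{prop.LGV.concrete} applies, each matrix entry is $\sum_{p \in N(A_i, B_j')}\wt(p) = h_{\xcoord(B_j') - \xcoord(A_i)}(x_1, \ldots, x_{s_j}) = h_{(\lambda_j - j) - (1 - i)}(x_1,\ldots,x_{s_j}) = h_{\lambda_j - j + i - 1}(x_1,\ldots,x_{s_j})$ by~\eqref{eq.LGV.single-paths}, and the overall factor $\prod_i x_{s_i}$ comes from the stripped final east-steps. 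The main obstacle I anticipate is being careful and completely explicit about the bijection in step (2) — in particular, checking that the column-strict inequalities of a semistandard tableau are equivalent to the non-touching condition of the lattice paths (rather than merely non-crossing), given the sheared coordinate system, and handling the edge effects from the stripped last column so that the bookkeeping of the $x_{s_j}$ factors is exactly right. Everything else is a routine verification of inequalities and a direct appeal to the cited results.
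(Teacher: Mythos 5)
Your corrected plan is essentially the paper's own proof: the same reduction to Proposition~\ref{prop.LGV.concrete} and~\eqref{eq.LGV.single-paths}, the same row-by-row reading of east-step $y$-coordinates, the same stripping of the last entry of each row to produce the $\prod_i x_{s_i}$ factor, and the same monotonicity checks from the pseudo-partition condition and the strict increase of $s$; your endpoints $A_i=(1-i,1)$, $B_i'=(\lambda_i-i,s_i)$ differ from the paper's $A_i=(k-i,1)$, $B_i=(\lambda_i+k-i-1,s_i)$ only by a harmless horizontal translation. The one detail to spell out when you execute step (2) is the point the paper flags explicitly: since columns of $\diag{\lambda}$ may have gaps, column-strictness across a gap is not given directly by the non-touching condition but follows from the strictly increasing surface $s$.
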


We remark that the determinant in Theorem~\ref{thm.tableau.jt} is an instance of a \defn{multi-Schur function} as defined in~\cite[(SCHUR.2.2)]{LLPT18}.

\begin{figure}[t]
\[
\begin{ytableau} 1 & 1 & 2 \\ 2 & 3 \\ 4 & 5 & 5 \\ 6 \end{ytableau}
\quad \longleftrightarrow \quad
\begin{tikzpicture}[baseline=3.5cm]
  \draw[densely dotted] (-1,0) grid (6.2,6.2);
  \draw[->] (-1,0) -- (-1,6.2);
  \draw[->] (-1,0) -- (6.2,0);
  \foreach \x/\xtext in {-1, 0, 1, 2, 3, 4, 5, 6}
     \draw (\x cm,1pt) -- (\x cm,-1pt) node[anchor=north] {$\xtext$};
  \foreach \y/\ytext in {0, 1, 2, 3, 4, 5, 6}
     \draw (-1cm+1pt,\y cm) -- (-1cm-1pt,\y cm) node[anchor=east] {$\ytext$};
  \foreach \x in {1, 2, 3, 4}
    \node[circle,fill=white,draw=black,text=UMNmaroon,inner sep=1pt] (A\x) at (4-\x,1) {$A_{\x}$};

  \node[circle,fill=white,draw=black,text=black,inner sep=1pt] (B4p) at (1+4-4-1,6) {$B_4$};
  \node[circle,fill=black,inner sep=1.5pt] (B4) at (1+4-4,6) {};
  \node[circle,fill=white,draw=black,text=black,inner sep=1pt] (B3p) at (3+4-3-1,5) {$B_3$};
  \node[circle,fill=black,inner sep=1.5pt] (B3) at (3+4-3,5) {};
  \node[circle,fill=white,draw=black,text=black,inner sep=1pt] (B2p) at (2+4-2-1,3) {$B_2$};
  \node[circle,fill=black,inner sep=1.5pt] (B2) at (2+4-2,3) {};
  \node[circle,fill=white,draw=black,text=black,inner sep=1pt] (B1p) at (3+4-1-1,2) {$B_1$};
  \node[circle,fill=black,inner sep=1.5pt] (B1) at (3+4-1,2) {};

  \begin{scope}[thick,>=stealth,UQpurple]
      \draw (A4) edge[->] (0,2);
      \draw (0,1.6) node[anchor=east] {$1$};
      \draw (0,2) edge[->] (0,3);
      \draw (0,2.5) node[anchor=east] {$1$};
      \draw (0,3) edge[->] (0,4);
      \draw (0,3.5) node[anchor=east] {$1$};
      \draw (0,4) edge[->] (0,5);
      \draw (0,4.5) node[anchor=east] {$1$};
      \draw (0,5) edge[->] (B4p);
      \draw (0,5.4) node[anchor=east] {$1$};
      \draw (B4p) edge[->,color=black] (B4);
      \draw[color=black] (0.6,6) node[anchor=south] {$x_6$};
  \end{scope}
  \begin{scope}[thick,>=stealth,darkred]
      \draw (A3) edge[->] (1,2);
      \draw (1,1.6) node[anchor=east] {$1$};
      \draw (1,2) edge[->] (1,3);
      \draw (1,2.5) node[anchor=east] {$1$};
      \draw (1,3) edge[->] (1,4);
      \draw (1,3.5) node[anchor=east] {$1$};
      \draw (1,4) edge[->] (2,4);
      \draw (1.5,4) node[anchor=north] {$x_4$};
      \draw (2,4) edge[->] (2,5);
      \draw (2,4.5) node[anchor=east] {$1$};
      \draw (2,5) edge[->] (B3p);
      \draw (2.4,5) node[anchor=north] {$x_5$};
      \draw (B3p) edge[->,color=black] (B3);
      \draw[color=black] (3.6,5) node[anchor=south] {$x_5$};
  \end{scope}
  \begin{scope}[thick,>=stealth,dbluecolor]
      \draw (A2) edge[->] (2,2);
      \draw (2,1.6) node[anchor=east] {$1$};
      \draw (2,2) edge[->] (3,2);
      \draw (2.5,2) node[anchor=north] {$x_2$};
      \draw (3,2) edge[->] (B2p);
      \draw (3,2.4) node[anchor=east] {$1$};
      \draw (B2p) edge[->,color=black] (B2);
      \draw[color=black] (3.6,3) node[anchor=south] {$x_3$};
  \end{scope}
  \begin{scope}[thick,>=stealth,dgreencolor]
      \draw (A1) edge[->] (4,1);
      \draw (3.6,1) node[anchor=north] {$x_1$};
      \draw (4,1) edge[->] (5,1);
      \draw (4.6,1) node[anchor=north] {$x_1$};
      \draw (5,1) edge[->] (B1p);
      \draw (5,1.4) node[anchor=east] {$1$};
      \draw (B1p) edge[->,color=black] (B1);
      \draw[color=black] (5.6,2) node[anchor=south] {$x_2$};
  \end{scope}
  
\end{tikzpicture}
\]
\begin{vershort}
\caption{A semistandard tableau of surface $(2,3,5,6)$ (left) and the corresponding NILP (right).
  Note that the final horizontal steps are fixed and correspond to the surface.}
\end{vershort}
\begin{verlong}
\caption{A semistandard tableau of surface $(2,3,5,6)$ (left) and the corresponding NILP (right).
  The black horizontal steps don't belong to the paths of the NILP; they stand for the last entries of the rows of the tableau.}
\end{verlong}
\label{fig:tableau_to_NILP}
\end{figure}

\begin{vershort}
\begin{proof}[Proof of Theorem~\ref{thm.tableau.jt}.]
The proof is similar to the usual bijection relating semistandard tableaux and NILPs that is used in proving the Jacobi--Trudi identity.
For example, see~\cite[First proof of Thm. 7.16.1]{Stanley-EC2}.\footnote{Note that in this proof, those paths go west instead of east, but this is just a reflection across the $y$-axis.}
Thus, we define two $k$-vertices $\uu = (A_1, \dotsc, A_k)$ and $\vv = (B_1, \dotsc, B_k)$ with
\[
A_i = (k-i,1)  \qquad \text{and} \qquad B_i = (\lambda_i+k-i-1, s_i).
\]
We can then define a bijection $\Phi \colon N(\uu, \vv) \to \SST(\lambda, s)$ by requiring that if $\pp = \tup{p_1, p_2, \dotsc, p_k} \in N(\uu, \vv)$ is a NILP, then the first $\lambda_i - 1$ entries of the $i$-th row of the tableau $\Phi(\pp)$ will be the $y$-coordinates of the east-steps of the path $p_i$ (while the last entry of the $i$-th row must of course be $s_i$).
Then, Proposition~\ref{prop.LGV.concrete} and~\eqref{eq.LGV.single-paths} yield Theorem~\ref{thm.tableau.jt}.
We comment on the differences to~\cite[First proof of Thm. 7.16.1]{Stanley-EC2}:
\begin{itemize}
\item The points $B_1, B_2, \ldots, B_k$ no longer lie on a horizontal line; but the conditions of Proposition~\ref{prop.LGV.concrete} are still satisfied since $s$ is strictly increasing and $\lambda$ is a pseudo-partition.
\item In the tableau $\Phi(\pp)$, only the first $\lambda_i - 1$ entries of the $i$-th row come from the path $p_i$; the last entry is $s_i$. Thus the $\prod_{i=1}^k x_{s_i}$ factor in $\sum_{T \in \SST(\lambda, s)} \wt(T)$.
\item Since $\lambda$ is only a pseudo-partition, a column of $\diag{\lambda}$ can have gaps between cells. Thus, we need to argue that if a tableau $T$ of shape $\lambda$ with surface $s$ has weakly increasing rows and its columns are strictly increasing between consecutive rows (\textit{i.e.}, we have $T(i, j) < T(i+1, j)$ whenever both entries exist), then the columns of $T$ are also strictly increasing across gaps (\textit{i.e.}, we have $T(i_1, j) < T(i_2, j)$ for all $i_1 < i_2$). This is an easy exercise using the assumption that $s$ is strictly increasing.
\end{itemize}
\end{proof}
\end{vershort}

\begin{verlong}
\begin{proof}[Proof of Theorem~\ref{thm.tableau.jt}.]
Let us define two $k$-vertices $\uu = \tup{A_1, A_2, \dotsc, A_k}$ and $\vv = \tup{B_1, B_2, \dotsc, B_k}$ by
\[
A_i = (k-i,1)  \qquad \text{and} \qquad B_i = (\lambda_i+k-i-1, s_i).
\]
The conditions of Proposition~\ref{prop.LGV.concrete} are satisfied (since $\lambda$ is a pseudo-partition and $s$ is weakly increasing).
Thus, Proposition~\ref{prop.LGV.concrete} yields
\begin{align}
\sum_{\pp \in N(\uu,\vv)} \wt(\pp) & = \det\left(  \sum_{p \in  N(A_i,B_j)}\wt(p) \right)_{i, j \in \ive{k}} \nonumber \\
 & = \det\left( h_{\lambda_j-j+i-1}(x_1, x_2, \dotsc, x_{s_j}) \right)_{i, j \in \ive{k}},
\label{pf.thm.tableau.jt.1}
\end{align}
where the second equality follows from the fact each $i,j \in \ive{k}$ satisfy
\[
\sum_{p \in N(A_i,B_j)} \wt(p) = h_{\lambda_j-j+i-1}(x_1, x_2, \dotsc, x_{s_j})
\]
(by~\eqref{eq.LGV.single-paths}, applied to $A = A_i$ and $B = B_j$).

Next, let us define a bijection
\[
\Phi \colon N(\uu, \vv) \to \SST(\lambda, s).
\]

Indeed, let $\tup{p_1, p_2, \dotsc, p_k} \in N(\uu,\vv)$ be arbitrary.
Thus, each $p_i$ is a path from $A_i$ to $B_i$, and no two of the paths $p_1, p_2, \dotsc, p_k$ have a vertex in common.

For each $i \in \ive{k}$, the path $p_i$ is a path from $A_i = (k-i, 1)$ to $B_i = (\lambda_i+k-i-1,s_i)$; thus, it must contain exactly $(\lambda_i+k-i-1) - (k-i) = \lambda_i - 1$ east-steps.

Let $p_{i,1},p_{i,2}, \dotsc, p_{i,\lambda_i-1}$ be the $y$-coordinates of these east-steps (from first to last).
Also, set $p_{i,\lambda_i} = s_i$.
Thus,
\begin{equation}
\label{pf.thm.tableau.jt.row-weak}
1 \leq p_{i,1} \leq p_{i,2} \leq \cdots \leq p_{i,\lambda_i-1} \leq p_{i,\lambda_i} = s_i
\end{equation}
for each $i \in \ive{k}$.
Note that for each $i \in \ive{k}$,
\begin{subequations}
\begin{gather}
\label{pf.thm.tableau.jt.pi-vert-1}
\text{the path }p_i\text{ contains the vertices } (k-i+j-1, p_{i,j})  \text{ for all } j \in \ive{\lambda_i}
\\
\label{pf.thm.tableau.jt.pi-vert-2}
\text{and the vertices } (k-i+j, p_{i,j}) \text{ for all } j \in \ive{\lambda_i-1}.
\end{gather}
\end{subequations}

Now, let $T$ be the tableau of shape $\lambda$ that sends each $(i,j) \in \diag{\lambda}$  to $p_{i,j}$.
Then, the entries of $T$ are weakly increasing along each row by~\eqref{pf.thm.tableau.jt.row-weak}.
We shall next show that the entries of $T$ are strictly increasing down each column.

First, we claim that if $i \in \ive{k-1}$, then, for all $j \in \ive{\lambda_i} \cap \ive{\lambda_{i+1}}$, we have
\begin{equation}
\label{pf.thm.tableau.jt.col-strict-no-gap}
p_{i,j} < p_{i+1,j}.
\end{equation}

\begin{subproof}
We will show~\eqref{pf.thm.tableau.jt.col-strict-no-gap} holds.
Let $i \in \ive{k-1}$.
We must prove~\eqref{pf.thm.tableau.jt.col-strict-no-gap} for each $j \in \ive{\lambda_i}  \cap \ive{\lambda_{i+1}}  $.
To do so, we assume the contrary, and pick the \emph{smallest} $j$ for which~\eqref{pf.thm.tableau.jt.col-strict-no-gap} fails.
Thus, $p_{i,j} \geq p_{i+1,j}$.
Assume that $j > 1$ (the argument for the case $j=1$ is similar).
Thus, the minimality of $j$ forces $p_{i,j-1} < p_{i+1,j-1}$.
Finally,~\eqref{pf.thm.tableau.jt.row-weak} yields $p_{i+1,j-1} \leq p_{i+1,j}$.
Hence, $p_{i,j-1} < p_{i+1,j-1} \leq p_{i+1,j} \leq p_{i,j}$.
No two of the paths $p_1, p_2, \dotsc, p_k$ have a vertex in common.
Hence, $p_i$ and $p_{i+1}$ have no vertex in common.
Note that~\eqref{pf.thm.tableau.jt.pi-vert-1} yields that $p_i$ contains the vertex $\tup{k-i+j-1, p_{i,j}}$.
Also,~\eqref{pf.thm.tableau.jt.pi-vert-2} (applied to $j-1$ instead of $j$) yields that $p_i$ contains the vertex $\tup{k-i+j-1, p_{i,j-1}}$.
However, the vertex $\tup{k-i+j-1, p_{i+1,j}}$ lies on the vertical line segment connecting the two vertices $\tup{k-i+j-1, p_{i,j}}$ and $\tup{k-i+j-1,p_{i,j-1}}$ (since $p_{i,j-1} \leq p_{i+1,j} \leq p_{i,j}$).
Hence, $p_i$ must contain the former vertex (since $p_i$ contains the latter two vertices).

From~\eqref{pf.thm.tableau.jt.row-weak}, we obtain $p_{i,j} \leq s_i < s_{i+1}$ (since $s$ is strictly increasing).
If we had $j = \lambda_{i+1}$, then we would have $p_{i+1,j} = p_{i+1,\lambda_{i+1}} = s_{i+1}$ (by the definition of
$p_{i+1,\lambda_{i+1}}$), which would contradict $p_{i+1,j} \leq p_{i,j} < s_{i+1}$.
Hence, $j \neq \lambda_{i+1}$. Thus, $j \in \ive{\lambda_{i+1}-1}$ (since $j \in \ive{\lambda_{i+1}}$).
Therefore,~\eqref{pf.thm.tableau.jt.pi-vert-2} (applied to $i+1$ instead of $i$) shows that $p_{i+1}$ contains the vertex $\tup{k-(i+1)+j, p_{i+1,j}} = \tup{k-i+j-1, p_{i+1,j}}$.
So we have shown that both $p_i$ and $p_{i+1}$ contain the vertex $\tup{k-i+j-1, p_{i+1,j}}$.
This contradicts the fact that $p_i$ and $p_{i+1}$ have no vertex in common.

We have assumed that $j > 1$; but the same argument works for $j = 1$ with minor 
changes
(we now need to use $\tup{k-i+j-1, 1}$ instead of $\tup{k-i+j-1, p_{i,j-1}}$).
Thus, we always obtain a contradiction.
This contradiction shows that our assumption was false; hence,~\eqref{pf.thm.tableau.jt.col-strict-no-gap} is proven.
\end{subproof}

Next, we claim that the entries of $T$ are strictly increasing down each column.

\begin{subproof}
Let $i_1,i_2\in\ive{k}$ be such that $i_1 < i_2$.
Let $j \in \ive{\lambda_{i_1}} \cap \ive{\lambda_{i_2}}$.
We must show that $T( i_1,j) < T(i_2, j)$.
In other words, we must prove that $p_{i_1,j} < p_{i_2,j}$ (since $T(i,j) = p_{i,j}$ for all $i$).

If the column $j$ of $\diag{\lambda}$ has no gaps between row $i_1$ and row $i_2$ (that is, if we have $(i,j) \in \diag{\lambda}$ for each $i \in \set{i_1, i_1+1, \dotsc, i_2}$), then this follows from~\eqref{pf.thm.tableau.jt.col-strict-no-gap}.
Hence, without loss of generality, we assume that column $j$ of $\diag{\lambda}$ has at least one gap between row $i_1$ and row $i_2$.
In other words, there exists some $q \in \set{i_1, i_1+1, \dotsc, i_2}$ such that $(q,j) \notin \diag{\lambda}$.
Consider the \emph{largest} such $q$.
Clearly, $i_1 < q < i_2$.
Also, $\lambda_{q} < j$ (since $(q,j) \notin \diag{\lambda}$).

The maximality of $q$ shows that column $j$ of $\diag{\lambda}$ has no gaps between row $q$ and row $i_2$.
Hence,~\eqref{pf.thm.tableau.jt.col-strict-no-gap} shows that $p_{q,j}<p_{i_2,j}$ (since $q<i_2$).

From~\eqref{pf.thm.tableau.jt.row-weak}, we obtain $p_{i_1,j} \leq s_{i_1} \leq s_{q}$ (since $i_1 < q$, but the sequence $s$ is weakly increasing).
Also, $p_{q,\lambda_{q}} = s_{q}$ (by the definition of $p_{q,\lambda_{q}}$), whence $s_{q} = p_{q,\lambda_{q}} \leq p_{q,j}$ (by~\eqref{pf.thm.tableau.jt.row-weak}, since $\lambda_{q} < j$).
Hence, $p_{i_1,j} \leq s_{q} \leq p_{q,j} < p_{i_2,j}$. This proves our claim.
\end{subproof}

Altogether, we thus know that $T$ is a semistandard tableau of shape $\lambda$ and surface $s$ (since $T(i, \lambda_i) = p_{i,\lambda_i} = s_i$ for all $i \in \ive{k}$).
That is, $T \in \SST(\lambda, s)$.
So let us set
\[
\Phi(p_1, p_2, \dotsc, p_k) = T.
\]
Thus, the map $\Phi$ is defined.

It is easy to see that this map $\Phi$ is injective (indeed, $\tup{p_1,p_2,\dotsc,p_k}$ can be reconstructed from $T$, since the $i$-th row of $T$ encodes the $y$-coordinates of all the east-steps of $p_i$) and surjective (indeed, the reverse of the above construction turns every $T \in \SST(\lambda, s)$ into a $k$-tuple $\tup{p_1, p_2, \dotsc, p_k}$ of paths $p_i$ from $A_i$ to $B_i$; furthermore, the strict increase of the entries of $T$ down columns forces these paths $p_1,p_2,\ldots,p_k$ to have no vertices in common).
Thus, $\Phi$ is a bijection.
Moreover, it is easy to see that
\[
\wt\bigl(  \Phi(\pp) \bigr) = \left( \prod_{i=1}^k x_{s_i} \right) \wt(\pp)
\]
for each $\pp \in N(\uu,\vv)$.
Hence,
\begin{align*}
\sum_{T \in \SST(\lambda, s)} \wt(T)  &  = \sum_{\pp\in N(\uu,\vv)  }\left( \prod_{i=1}^k x_{s_i} \right)  \wt(\pp)
= \left( \prod_{i=1}^k x_{s_i} \right) \sum_{\pp\in N(\uu,\vv)  } \wt(\pp) \\
&  \overset{\eqref{pf.thm.tableau.jt.1}}{=} \left(  \prod_{i=1}^k x_{s_i} \right)  \det\left( h_{\lambda_j-j+i-1}(x_1, x_2, \dotsc, x_{s_j}) \right)_{i, j \in \ive{k}}.
\end{align*}
\end{proof}

We note that during the construction of the bijection $\Phi$, the fixed surface $s$ condition on the semistandard tableaux translates into requiring each path $p_i$ to having a final arc $B_i \to (\lambda_i + k - i, s_i)$.
Otherwise the rest of the proof is similar to the usual Jacobi--Trudi bijection relating semistandard tableaux and NILPs.
\end{verlong}

See Figure~\ref{fig:tableau_to_NILP} for an example of the bijection $\Phi$ used in the proof of Theorem~\ref{thm.tableau.jt}.

\subsection{Interlacing MLQs}

Let us introduce some further notations now.

First, we define two binary relations $\succeq$ and $\gg$ on the powerset of $\ive{n}$:

\begin{itemize}
\item Given two subsets $A = \set{a_1 > \cdots > a_{\alpha}}$ and $B = \set{b_1 > \cdots > b_{\beta}}$ of $\ive{n}$, we say that \defn{$A\succeq B$} if and only if $\alpha =\beta$ and every $1 \leq k \leq \alpha$ satisfies $a_k \geq b_k$.

\item Given two subsets $A$ and $B$ of $\ive{n}$, we say that \defn{$A \gg B$} if and only if every $a \in A$ and $b \in B$ satisfy $a > b$. (For nonempty $A$ and $B$, this is equivalent to $\min A > \max B$).
\end{itemize}

For example, $\set{10,8} \gg \set{4,6,7} \succeq \set{4,5,7} \succeq \set{2,5,6}  \gg \set{1}$.
Note that $A \gg \emptyset$ and $\emptyset \gg A$ for any subset $A$ of $\ive{n}$ (for vacuous reasons), so that $\gg$ is not a partial order (but it becomes a partial order if we forbid $\emptyset$).

The following criterion (proof left to the reader) will be useful:

\begin{lemma}
\label{lem:determinant_form.gale1}
Let $A, B \subseteq \ive{n}$.
We have $A \succeq B$ if and only if there exists a bijection $\phi \colon B \to A$ satisfying $\phi(b) \geq b$ for each $b \in B$.
\end{lemma}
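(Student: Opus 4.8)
The plan is to prove the two implications separately; the forward one is immediate from the definition of $\succeq$, and the backward one comes down to a short counting argument, so I do not expect any real obstacle here.

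First I would handle the ``only if'' direction. Assume $A \succeq B$, and write $A = \set{a_1 > \cdots > a_\alpha}$ and $B = \set{b_1 > \cdots > b_\beta}$. By the definition of $\succeq$, this means $\alpha = \beta$ and $a_k \geq b_k$ for all $k \in \ive{\alpha}$. Then the map $\phi \colon B \to A$ sending $b_k \mapsto a_k$ for each $k$ is a well-defined bijection, and it satisfies $\phi(b_k) = a_k \geq b_k$; so it is the required $\phi$.

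For the ``if'' direction, I would start from a given bijection $\phi \colon B \to A$ with $\phi(b) \geq b$ for all $b \in B$. Since a bijection exists, $\abs{A} = \abs{B}$, i.e.\ $\alpha = \beta$, so it remains only to show $a_k \geq b_k$ for every $k \in \ive{\alpha}$. Fix such a $k$. The $k$ elements $b_1, b_2, \dotsc, b_k$ are all $\geq b_k$, so their images $\phi(b_1), \dotsc, \phi(b_k)$ are $k$ pairwise distinct elements of $A$, each satisfying $\phi(b_i) \geq b_i \geq b_k$. Hence the set $\set{a \in A \mid a \geq b_k}$ has at least $k$ elements. But this set is upward-closed in the chain $a_1 > a_2 > \cdots > a_\alpha$ (if $a_j \geq b_k$ then $a_i \geq b_k$ for all $i \leq j$), so having size $\geq k$ forces it to contain $a_k$; therefore $a_k \geq b_k$, as needed.

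The only points requiring a little care are the bookkeeping in the counting step and the degenerate case $A = B = \emptyset$, where $\phi$ is the empty map and $A \succeq B$ holds vacuously; both are covered by the argument above with $\alpha = \beta = 0$. One could alternatively deduce the ``if'' direction from Hall's marriage theorem applied to the bipartite graph on $B$ and $A$ with an edge joining $b$ and $a$ whenever $a \geq b$, but the direct count above is shorter.
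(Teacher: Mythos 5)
Your proof is correct. Note that the paper explicitly leaves this lemma to the reader (``proof left to the reader''), so there is no official argument to compare against; your forward direction is the obvious index-matching map, and your converse via counting the images of $b_1, \dotsc, b_k$ (which shows $\abs{\set{a \in A \mid a \geq b_k}} \geq k$ and hence $a_k \geq b_k$, since that set is a prefix of the decreasing list $a_1 > \cdots > a_\alpha$) is exactly the standard argument one would expect here, with the empty case handled.
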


Fix some $\ell \in \NN$ and a type $\mm = \tup{m_1,m_2, \dotsc, m_{\ell}, 0, 0, \ldots}$ with $\leq \ell$ classes.
Assume that $m_i > 0$ for all $i \in \ive{\ell-1}$ (but $m_{\ell}$ may be $0$).

Our next few definitions concern MLQs.
Consider an (ordinary) MLQ $\qq = \tup{q_1,\ldots,q_{\ell-1}}$ of type $\mm$.
Thus, for each $i \in \ive{\ell-1}$, we have $\abs{q_i} = p_i(\mm) = m_1 + m_2 + \cdots + m_i$.
Hence, for each $i \in \ive{\ell-1}$, we can subdivide the set $q_i$ into $i$ blocks: the block containing the largest $m_1$ elements; the block containing the next-largest $m_2$ elements; and so on, until the block containing the smallest $m_i$ elements.
Denote these $i$ blocks by $q_i^{(1)}, q_i^{(2)}, \dotsc, q_i^{(i)}$, respectively.
Pictorially, we can thus write $q_i$ as
\[
\bigl\{ \underbrace{a_1 > \cdots > a_{p_1(\mm)}}_{=q_i^{(1)}} > \underbrace{a_{p_1(\mm)+1} > \cdots > a_{p_2(\mm)}}_{=q_i^{(2)}} > \cdots > \underbrace{a_{p_{i-1}(\mm)+1} > \cdots > a_{p_i(\mm)}}_{=q_i^{(i)}} \bigr\}.
\]
Thus, $q_i^{(1)}, q_i^{(2)}, \dotsc, q_i^{(i)}$ are pairwise disjoint nonempty subsets of $\ive{n}$ satisfying
\begin{subequations}
\label{eq.determinant_form.qij}
\begin{align}
\label{eq.determinant_form.qij.1}
  q_i  & = q_i^{(1)} \cup q_i^{(2)} \cup \cdots \cup q_i^{(i)}
\qquad \text{and}
\\ \label{eq.determinant_form.qij.2}
  q_i^{(1)} & \gg q_i^{(2)} \gg \cdots \gg q_i^{(i)}
\qquad \text{and}
\\ \label{eq.determinant_form.qij.3}
  \abs{q_i^{(j)}} & = m_j \text{ for all } j \in \ive{i}.
\end{align}
\end{subequations}
Thus, we have defined nonempty queues $q_i^{(j)} \subseteq \ive{n}$ for all $i \in \ive{\ell-1}$ and $j \in \ive{i}$ whenever $\qq = \tup{q_1, \dotsc, q_{\ell-1}}$ is an MLQ of type $\mm$.

\begin{example}
\label{ex:qij_generic}
Consider $n = 15$.
Let $\mm = \tup{4,2,2}$ and $\qq = \tup{q_1, q_2, q_3}$, where
\[
q_1 = \set{2,4,9,12},
\qquad
q_2 = \set{1,5,6,8,12,15},
\qquad
q_3 = \set{1,2,4,5,8,9,13,14}.
\]
Therefore, we have
\begin{gather*}
q_1^{(1)} = \set{2,4,9,12},
\\
q_2^{(2)} = \set{1,5},
\hspace{50pt}
q_2^{(1)} = \set{6,8,12,15},
\\
q_3^{(3)} = \set{1,2},
\hspace{50pt}
q_3^{(2)} = \set{4,5},
\hspace{50pt}
q_3^{(1)} = \set{8,9,13,14}.
\end{gather*}
Similar to the graveyard diagram~\eqref{eq:boxes-and-balls-1}, we define the graveyard diagram of an MLQ $\qq = \tup{q_1, q_2, \ldots, q_{\ell-1}}$ as a matrix with $\ell-1$ rows, whose entries are circles and squares; its row $i$ has each element $p \in q_i$ represented as a circle at position $p$ labeled by the letter $\bigl( q_i( \cdots q_1(1^n) \cdots ) \bigr)_p$ and each element of $\ive{n} \setminus q_i$ represented as an unlabeled square (\textit{i.e.}, we suppress filling the squares with an $i+1$).
The $\qq$ in this example is represented by the following graveyard diagram:
\[
\begin{tikzpicture}[scale=0.8,baseline=-30]
\foreach \x in {2,4,9,12} {
    \node[blue] at (\x,0) {$1$};
    \draw (\x,0) circle (0.3);
}
\foreach \x in {1,3,5,6,7,8,10,11,13,14,15}
    \draw (\x-0.3, -0.3) rectangle (\x+0.3, +0.3);
\foreach \x in {6,12,15} {
    \node[blue] at (\x,-1) {$1$};
    \draw (\x,-1) circle (0.3);
}
  \node[darkred] at (5,-1) {$1$};
  \draw (5,-1) circle (0.3);
  \node[darkred] at (1,-1) {$2$};
  \draw (1,-1) circle (0.3);
  \node[blue] at (8,-1) {$2$};
  \draw (8,-1) circle (0.3);
\foreach \x in {2,3,4,7,9,10,11,13,14}
    \draw (\x-0.3, -1-0.3) rectangle (\x+0.3, -1+0.3);
  \node[dgreencolor] at (1,-2) {$1$};
  \draw (1,-2) circle (0.3);
  \node[dgreencolor] at (2,-2) {$2$};
  \draw (2,-2) circle (0.3);
  \node[darkred] at (4,-2) {$3$};
  \draw (4,-2) circle (0.3);
  \node[darkred] at (5,-2) {$1$};
  \draw (5,-2) circle (0.3);
\foreach \x in {8,13} {
    \node[blue] at (\x,-2) {$1$};
    \draw (\x,-2) circle (0.3);
}
  \node[blue] at (9,-2) {$2$};
  \draw (9,-2) circle (0.3);
  \node[blue] at (14,-2) {$3$};
  \draw (14,-2) circle (0.3);
\foreach \x in {3,6,7,10,11,12,15}
    \draw (\x-0.3, -2-0.3) rectangle (\x+0.3, -2+0.3);
\end{tikzpicture}\ .
\]
Note that the labeling is determined solely by the diagram and carries no additional data.
\end{example}

\begin{dfn}
We say that the MLQ $\qq = \tup{q_1, \dotsc, q_{\ell-1}}$ is \defn{interlacing} if each $i \in \set{2,3,\dotsc,\ell-1}$ satisfies
\begin{equation}
\label{eq.determinant_form.interlacing.def}
q_i^{(1)} \succeq q_{i-1}^{(1)} \gg
q_i^{(2)} \succeq q_{i-1}^{(2)} \gg
q_i^{(3)} \succeq q_{i-1}^{(3)} \gg \cdots \gg
q_i^{(i)}
\end{equation}
\end{dfn}

Note that we can rewrite~\eqref{eq.determinant_form.interlacing.def} as $q_i^{(j)} \succeq q_{i-1}^{(j)} \gg q_i^{(j+1)}$ for all $j \in \ive{i-1}$.

\begin{example}
Consider $n = 15$.
Let $\mm = \tup{3,2,4,0,\ldots}$ and $\qq = \tup{q_1, q_2, q_3}$, where
\[
q_1 = \set{9,12,13},
\qquad
q_2 = \set{7,8,11,12,14},
\qquad
q_3 = \set{1,3,5,6,8,10,11,14,15}.
\]
Therefore, we have
\begin{gather*}
q_1^{(1)} = \set{9,12,13},
\\
q_2^{(2)} = \set{7,8},
\hspace{50pt}
q_2^{(1)} = \set{11,12,14},
\\
q_3^{(3)} = \set{1,3,5,6},
\hspace{50pt}
q_3^{(2)} = \set{8,10},
\hspace{50pt}
q_3^{(1)} = \set{11,14,15}.
\end{gather*}
Thus $\qq$ is interlacing.
In this case, the elements of $q_i^{(j)}$ will be labeled by $j$ (as we shall see more generally in the proof of Lemma~\ref{lem:determinant_form.interl-act} below), and so the graveyard diagram of $\qq$ is
\[
\begin{tikzpicture}[scale=0.8,baseline=-30]
\foreach \x in {9,10,13} {
    \node[blue] at (\x,0) {$1$};
    \draw (\x,0) circle (0.3);
}
\foreach \x in {1,...,8,11,12,14,15}
    \draw (\x-0.3, -0.3) rectangle (\x+0.3, +0.3);
\foreach \x in {7,8} {
    \node[darkred] at (\x,-1) {$2$};
    \draw (\x,-1) circle (0.3);
}
\foreach \x in {11,12,14} {
    \node[blue] at (\x,-1) {$1$};
    \draw (\x,-1) circle (0.3);
}
\foreach \x in {1,2,3,4,5,6,9,10,13,15}
    \draw (\x-0.3, -1-0.3) rectangle (\x+0.3, -1+0.3);
\foreach \x in {1,3,5,6} {
    \node[dgreencolor] at (\x,-2) {$3$};
    \draw (\x,-2) circle (0.3);
}
\foreach \x in {8,10} {
    \node[darkred] at (\x,-2) {$2$};
    \draw (\x,-2) circle (0.3);
}
\foreach \x in {11,14,15} {
    \node[blue] at (\x,-2) {$1$};
    \draw (\x,-2) circle (0.3);
}
\foreach \x in {2,4,7,9,12,13}
    \draw (\x-0.3, -2-0.3) rectangle (\x+0.3, -2+0.3);
\end{tikzpicture}\ .
\]
\end{example}

Define a pseudo-partition
\begin{equation}
\label{eq.determinant_form.interlacing.lam}
\lambda^{\mm} := (
  \underbrace{1,1,\ldots,1}_{m_{\ell-1}\text{ times}},
  \underbrace{2,2,\ldots,2}_{m_{\ell-2}\text{ times}},
  \ldots,
  \underbrace{\ell-1,\ell-1,\ldots,\ell-1}_{m_1\text{ times}}
).
\end{equation}
Thus, the columns of the diagram $\diag{\lambda^{\mm}}$ are aligned to the bottom and have lengths $p_{\ell-1}(\mm), p_{\ell-2}(\mm), \dotsc, p_1(\mm)$ (from left to right).

For the following lemma, note that $\mm$ is fixed but $\qq$ is not.

\begin{lemma}
\label{lem:determinant_form.bij1}
Let $\ell$, $\mm$, and $m_i$ be as above.
Then, there is an injection
\[
P \colon \set{ \text{MLQs of type } \mm } \to \set{ \text{tableaux of shape } \lambda^{\mm} }
\]
with the following properties:
\begin{enumerate}
\item[(a)] We have $\wt\bigl(  P(\qq) \bigr) = \wt(\qq)$ for each MLQ $\qq$ of type $\mm$.

\item[(b)] If $\qq = \tup{q_1, q_2, \dotsc, q_{\ell-1}}$ is an MLQ of type $\mm$, then the surface of $P(\qq)$ is the list of all elements of $q_{\ell-1}$ in increasing order.

\item[(c)] The map $P$ restricts to a bijection
\[
\overline{P} \colon \set{\text{interlacing MLQs of type } \mm}  \to \set{  \text{semistandard tableaux of shape } \lambda^{\mm}}.
\]
\end{enumerate}
(In the border case $\ell = 1$, we interpret $q_0$ as the empty set whenever $\qq = \tup{q_1, q_2, \dotsc, q_{\ell-1}}$ is an MLQ.)
\end{lemma}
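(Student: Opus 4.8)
The plan is to construct the injection $P$ explicitly and then read every claim off the construction. First I set up notation for the shape. By~\eqref{eq.determinant_form.interlacing.lam}, the diagram $\diag{\lambda^{\mm}}$ has $p_{\ell-1}(\mm)$ rows, which split -- from top to bottom -- into $\ell-1$ consecutive \emph{row-blocks} $R_1, R_2, \dotsc, R_{\ell-1}$, where $R_v$ consists of the $m_{\ell-v}$ rows of length exactly $v$; column $c$ of $\diag{\lambda^{\mm}}$ occupies precisely the rows lying in $R_c \cup R_{c+1} \cup \cdots \cup R_{\ell-1}$. Given an MLQ $\qq = \tup{q_1, \dotsc, q_{\ell-1}}$ of type $\mm$, I define $P(\qq)$ as follows: for each $c \in \ive{\ell-1}$ and each $v \in \set{c, c+1, \dotsc, \ell-1}$, fill the $m_{\ell-v}$ cells of column $c$ that lie in the rows of $R_v$, from top to bottom in increasing order, with the elements of the block $q_{c+\ell-1-v}^{(\ell-v)}$. (One checks $1 \leq \ell-v \leq c+\ell-1-v \leq \ell-1$ and $\abs{q_{c+\ell-1-v}^{(\ell-v)}} = m_{\ell-v}$, so this block is well-defined and fills the strip.) Equivalently, column $c$ of $P(\qq)$, read downward, is the concatenation of the increasing lists of $q_{\ell-1}^{(\ell-c)}, q_{\ell-2}^{(\ell-c-1)}, \dotsc, q_{c}^{(1)}$. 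Since $\tup{i,j} \mapsto \tup{c,v} = \tup{i-j+1, \ell-j}$ is a bijection from $\set{\tup{i,j} : 1 \leq j \leq i \leq \ell-1}$ onto the pairs indexing strips, each block $q_i^{(j)}$ fills exactly one strip, so the multiset of entries of $P(\qq)$ is $\bigcup_i q_i$; this is part~(a). Part~(b) follows because the rightmost cell of a length-$v$ row lies in column $v$, whose strip inside $R_v$ carries $q_{\ell-1}^{(\ell-v)}$ increasingly; as $v$ runs from $1$ to $\ell-1$ and $q_{\ell-1}^{(1)} \gg q_{\ell-1}^{(2)} \gg \cdots \gg q_{\ell-1}^{(\ell-1)}$, the surface of $P(\qq)$ lists $q_{\ell-1}$ in increasing order.

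Injectivity of $P$ is immediate: from $P(\qq)$ one recovers each $q_i^{(j)}$ as the entry set of the strip $(\text{column } i-j+1) \cap R_{\ell-j}$, and hence $q_i = q_i^{(1)} \cup \cdots \cup q_i^{(i)}$. For part~(c) I will first prove that an MLQ $\qq$ of type $\mm$ is interlacing iff $P(\qq)$ is semistandard. Note $\diag{\lambda^{\mm}}$ has no gaps inside any column (as $\lambda^{\mm}$ is weakly increasing). Looking at rows: the $k$-th row of $R_v$ carries, in columns $1, 2, \dotsc, v$, the $k$-th smallest elements of $q_{\ell-v}^{(\ell-v)}, q_{\ell-v+1}^{(\ell-v)}, \dotsc, q_{\ell-1}^{(\ell-v)}$; so all rows are weakly increasing iff $q_{j+1}^{(\ell-v)} \succeq q_j^{(\ell-v)}$ for every admissible $v$ and $j$ (by the definition of $\succeq$ -- compare $k$-th smallest elements; see also Lemma~\ref{lem:determinant_form.gale1}), which is exactly the family of relations $q_i^{(j')} \succeq q_{i-1}^{(j')}$ appearing in~\eqref{eq.determinant_form.interlacing.def}. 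Looking at columns: within each strip the entries strictly increase automatically, and across the boundary between the $R_v$- and $R_{v+1}$-strips of column $c$ they strictly increase iff $q_{c+\ell-2-v}^{(\ell-v-1)} \gg q_{c+\ell-1-v}^{(\ell-v)}$, which is exactly the family $q_{i-1}^{(j')} \gg q_i^{(j'+1)}$ in~\eqref{eq.determinant_form.interlacing.def}. Together these two families make up the interlacing condition, so the equivalence holds in both directions.

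Finally I prove that $\overline{P}$ is surjective onto the semistandard tableaux of shape $\lambda^{\mm}$; with injectivity and the equivalence above, this shows $\overline{P}$ is a bijection. Given a semistandard tableau $T$ of that shape, define $q_i^{(j)}$ to be the entry set of the strip $(\text{column } i-j+1) \cap R_{\ell-j}$ in $T$ and $q_i := \bigcup_{j=1}^i q_i^{(j)}$. Because a column of $T$ strictly increases and $R_{\ell-j-1}$ lies directly above $R_{\ell-j}$, for any $x \in q_i^{(j+1)}$ and $y \in q_i^{(j)}$ we get $x = T(a', i-j) < T(a, i-j) \leq T(a, i-j+1) = y$ with $a' \in R_{\ell-j-1}$, $a \in R_{\ell-j}$; hence $q_i^{(1)} \gg q_i^{(2)} \gg \cdots \gg q_i^{(i)}$. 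So the $q_i^{(j)}$ are disjoint of sizes $m_j$, $\abs{q_i} = p_i(\mm)$, and $\qq := \tup{q_1, \dotsc, q_{\ell-1}}$ is an MLQ of type $\mm$ whose canonical blocks~\eqref{eq.determinant_form.qij} are exactly these $q_i^{(j)}$; by construction $P(\qq) = T$, and since $T$ is semistandard, $\qq$ is interlacing by the equivalence just proved. The main obstacle I anticipate is purely organizational: fixing the dictionary $c = i-j+1$, $v = \ell-j$ between (column, row-block) pairs and (queue, block) pairs cleanly enough that each relation in~\eqref{eq.determinant_form.interlacing.def} matches a single semistandardness inequality; once that dictionary is in place, each verification is a one-line comparison of indices. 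The degenerate case $\ell = 1$ is trivial ($\lambda^{\mm}$ empty; the unique, vacuously interlacing MLQ is the empty one; $P$ sends it to the empty tableau; the empty surface lists the elements of $q_0 = \emptyset$ in order).
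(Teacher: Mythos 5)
Your proof is correct and follows essentially the same route as the paper: your column-by-column filling (column $c$ reading downward as $q_{\ell-1}^{(\ell-c)}, q_{\ell-2}^{(\ell-c-1)}, \dotsc, q_c^{(1)}$) is exactly the paper's construction of $P(\qq)$, and your matching of row-weakness with the $\succeq$ relations and column-strictness with the $\gg$ relations, plus the reverse construction for surjectivity, mirrors the paper's argument. The only difference is presentational -- you spell out the index dictionary and the $\gg$-chain check for the strips of a semistandard $T$ more explicitly than the paper does.
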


\begin{proof}
We define a map $P$ as follows.
Let $\qq = \tup{q_1, q_2, \dotsc, q_{\ell-1}}$ be any MLQ of type $\mm$.
Let $\widetilde{q}_i^{(j)}$ denote the entries of $q_i^{(j)}$ written vertically in a column, strictly increasing from top to bottom.
For example, if $q_i^{(j)} = \set{2,5,6}$, then
\[
\widetilde{q}_i^{(j)} = \raisebox{10pt}{$\begin{ytableau} 2 \\ 5 \\ 6 \end{ytableau}$}\ .
\]
We construct $P(\qq)$ as the following tableau of shape $\lambda^{\mm}$:
\begin{equation}
\label{pf.lem:determinant_form.bij1.visual}
\begin{array}[c]{r|c|ccc@{\hspace{30pt}}c@{\hspace{35pt}}c@{\hspace{35pt}}}
\cline{2-2}%
m_{\ell-1} \text{ rows } \left\{ \vphantom{\dfrac{2^2}{2^2}}\right.
    & \widetilde{q}_{\ell-1}^{(\ell-1)} &  &  &  &  & \\\cline{2-3}
m_{\ell-2} \text{ rows } \left\{ \vphantom{\dfrac{2^2}{2^2}}\right.
    & \widetilde{q}_{\ell-2}^{(\ell-2)} & \widetilde{q}_{\ell-1}^{(\ell-2)} & \multicolumn{1}{|c}{} &  &  & \\\cline{2-4}
m_{\ell-3} \text{ rows } \left\{  \vphantom{\dfrac{2^2}{2^2}}\right.
    & \widetilde{q}_{\ell-3}^{(\ell-3)} & \widetilde{q}_{\ell-2}^{(\ell-3)} & \multicolumn{1}{|c}{\widetilde{q}_{\ell-1}^{(  \ell-3)}} & \multicolumn{1}{|c}{} &  & \\\cline{2-5}%
\vdots & \mathbf{\vdots} & \mathbf{\vdots} & \multicolumn{1}{|c}{\vdots} & \multicolumn{1}{|c|}{\ddots} &  & \\\cline{2-6}
m_2 \text{ rows } \left\{  \vphantom{\dfrac{2^2}{2^2}}\right.
    & \widetilde{q}_2^{(2)} & \widetilde{q}_{3}^{(2)} & \multicolumn{1}{|c}{\widetilde{q}_{4}^{(2)}} & \multicolumn{1}{|c}{\cdots} & \multicolumn{1}{|c}{\widetilde{q}_{\ell-1}^{(2)}} & \multicolumn{1}{|c}{}\\\cline{2-7}
m_1 \text{ rows } \left\{ \vphantom{\dfrac{2^2}{2^2}}\right.
    & \widetilde{q}_1^{(1)} & \widetilde{q}_2^{(1)} & \multicolumn{1}{|c}{\widetilde{q}_{3}^{(1)}} & \multicolumn{1}{|c}{\cdots} & \multicolumn{1}{|c}{\widetilde{q}_{\ell-2}^{(1)}} & \multicolumn{1}{|c|}{\widetilde{q}_{\ell-1}^{(1)}}\\\cline{2-7}
\end{array}
\end{equation}
Formally speaking, this is the tableau of shape $\lambda^{\mm}$ whose $\ell-1$ columns are given as follows:
For each $j \in \ive{\ell-1}$, the $j$-th column consists of the columns $\widetilde{q}_{\ell-1}^{(\ell-j)}, \widetilde{q}_{\ell-2}^{(\ell-j-1)}, \dotsc, \widetilde{q}_j^{(1)}$ stacked atop each other (with $\widetilde{q}_{\ell-1}^{(\ell-j)}$ at the very top, $\widetilde{q}_{\ell-2}^{(\ell-j-1)}$ coming next under it, and so on).
Note that for each $i \in \ive{\ell-1}$, the parts $\widetilde{q}_i^{(i)}, \widetilde{q}_{i+1}^{(i)}, \dotsc, \widetilde{q}_{\ell-1}^{(i)}$ of columns $1,2,\ldots,\ell-i$ align with each other horizontally (and together form the topmost $m_i$ among the bottommost $m_1+m_2+\cdots+m_i$ rows of $P(\qq)$).

Thus, the map $P \colon \set{ \text{MLQs of type } \mm } \to \set{\text{tableaux of shape } \lambda^{\mm}}$ is defined.
This map $P$ is an injection, because the MLQ $\qq$ can be recovered
from the tableau $P(\qq)$ (indeed, each of the elements
of each of the queues of $\qq$ lands in some predictable cell of
$P(\qq)$).

We shall now prove the three properties we claimed about this injection $P$ to complete the proof.

Property~(a) is clear, since the entries of $q_1, q_2, \dotsc, q_{\ell-1}$ are in 1-to-1 correspondence with the entries of the tableau $P(\qq)$.

Furthermore, the surface of $P(\qq)$ is simply
\[
q_{\ell-1}^{(\ell-1)} \cup q_{\ell-1}^{(\ell-2)} \cup \cdots \cup q_{\ell-1}^{(2)} \cup q_{\ell-1}^{(1)} = q_{\ell-1}
\]
written in increasing order from top to bottom because of~\eqref{eq.determinant_form.qij.1} and~\eqref{eq.determinant_form.qij.2}.
In other words, the surface of $P(\qq)$ is the list of all elements of $q_{\ell-1}$ in increasing order.
This proves~(b).

Let $\qq$ be an MLQ of type $\mm$.
Recall that the columns $\widetilde{q}_i^{(j)}$ are strictly increasing.
Hence, from~\eqref{pf.lem:determinant_form.bij1.visual}, we see the following:
\begin{itemize}
\item The entries of the tableau $P(\qq)$ are weakly increasing along each row if and only if all $i \in \ive{\ell-1}$ and $j \in \ive{i-1}$ satisfy $q_i^{(j)} \succeq q_{i-1}^{(j)}$.
\item The entries of the tableau $P(\qq)$ are strictly increasing down each column if and only if all $i \in \ive{\ell-1}$ and $j \in \ive{i-1}$ satisfy $q_{i-1}^{(j)} \gg q_i^{(j+1)}$.
  (Here, we are also tacitly using the fact that the sets $q_i^{(j)}$ are nonempty.
  This ensures that the southern neighbor of a cell in $\widetilde{q}_i^{(j)}$ belongs either to $\widetilde{q}_i^{(j)}$ again or to $\widetilde{q}_{i-1}^{(j-1)}$, rather than (say) to $\widetilde{q}_{i-2}^{(j-2)}$.)
\end{itemize}
Combining these two observations, we conclude that the tableau $P(\qq)$ is semistandard if and only if the sets $q_i^{(j)}$ satisfy~\eqref{eq.determinant_form.interlacing.def}.
In other words, the tableau $P(\qq)$ is semistandard if and only if $\qq$ is interlacing.
Moreover, it is clear that given any semistandard tableau $T$ of shape $\lambda^{\mm}$, we can construct an interlacing MLQ $\qq$ of type $\mm$ satisfying $P(\qq) = T$.
(Namely, we can construct this $\qq$ by recovering the sets $q_i^{(j)}$ from the appropriate cells of $T$ in~\eqref{pf.lem:determinant_form.bij1.visual} and combining them to obtain queues $q_i$ and an MLQ $\qq$.)
Hence, the map $P$ restricted to interlacing MLQs is surjective onto the set of semistandard tableaux of shape $\lambda^{\mm}$, and since $P$ is injective, we have a bijection.
This proves~(c).
\end{proof}

\begin{cor}
\label{cor:determinant_form.bij1c}
Let $\ell$, $\mm$, and $m_i$ be as above.
Let $k \in \NN$ and $\lambda_1, \lambda_2, \ldots, \lambda_k$ be such that $\lambda^{\mm} = \tup{\lambda_1, \lambda_2, \dotsc, \lambda_k}$.
Let $S = \set{s_1 < s_2 < \cdots < s_k}$ be a $k$-queue.
Then,
\begin{align*}
&  \sum_{\substack{\qq=\tup{q_1,q_2,\ldots,q_{\ell-1}} \text{ is an interlacing}\\\text{MLQ of type $\mm$ with } q_{\ell-1}=S}} \wt(\qq) \\
&  = \left(  \prod_{i=1}^{k}x_{s_i}\right)
    \det\left(h_{\lambda_j-j+i-1}(x_1, x_2, \dotsc, x_{s_j}) \right)_{i, j \in \ive{k}} .
\end{align*}
\end{cor}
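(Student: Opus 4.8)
The plan is to derive Corollary~\ref{cor:determinant_form.bij1c} by feeding the bijection $\overline{P}$ of Lemma~\ref{lem:determinant_form.bij1}(c) into the determinantal formula of Theorem~\ref{thm.tableau.jt}. First I would set $s := \tup{s_1, s_2, \dotsc, s_k}$ and note that, for any MLQ $\qq = \tup{q_1, \dotsc, q_{\ell-1}}$ of type $\mm$, the queue $q_{\ell-1}$ is a $p_{\ell-1}(\mm)$-queue, and $\lambda^{\mm}$ (as defined in~\eqref{eq.determinant_form.interlacing.lam}) has exactly $p_{\ell-1}(\mm)$ rows; since the hypothesis forces $\lambda^{\mm} = \tup{\lambda_1, \dotsc, \lambda_k}$, we get $k = p_{\ell-1}(\mm)$, so $q_{\ell-1}$ and $S$ both have cardinality $k$. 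Then Lemma~\ref{lem:determinant_form.bij1}(b) says that an interlacing MLQ $\qq$ of type $\mm$ satisfies $q_{\ell-1} = S$ if and only if the surface of $\overline{P}(\qq)$ equals $s$. Hence $\overline{P}$ restricts to a bijection from the interlacing MLQs $\qq$ of type $\mm$ with $q_{\ell-1} = S$ onto $\SST(\lambda^{\mm}, s)$, and by Lemma~\ref{lem:determinant_form.bij1}(a) this restricted bijection is weight-preserving, so
\[
\sum_{\substack{\qq = \tup{q_1, \dotsc, q_{\ell-1}} \text{ interlacing}\\\text{MLQ of type }\mm\text{ with } q_{\ell-1} = S}} \wt(\qq) = \sum_{T \in \SST(\lambda^{\mm}, s)} \wt(T).
\]

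Next I would check that the hypotheses of Theorem~\ref{thm.tableau.jt} hold for $\lambda = \lambda^{\mm}$ and this $s$: the tuple $\lambda^{\mm}$ is a pseudo-partition (its entries are in fact weakly increasing, so certainly $\lambda_i + 1 \geq \lambda_{i+1}$), and $s = \tup{s_1 < s_2 < \cdots < s_k}$ is by hypothesis a strictly increasing sequence of elements of $\ive{n}$ (because $S$ is a $k$-queue). Applying Theorem~\ref{thm.tableau.jt} then gives
\[
\sum_{T \in \SST(\lambda^{\mm}, s)} \wt(T) = \left( \prod_{i=1}^{k} x_{s_i} \right) \det\left( h_{\lambda_j - j + i - 1}(x_1, x_2, \dotsc, x_{s_j}) \right)_{i, j \in \ive{k}},
\]
and combining the two displayed equalities yields the corollary.

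There is essentially no hard step: the substance of the corollary is already packaged into Lemma~\ref{lem:determinant_form.bij1} and Theorem~\ref{thm.tableau.jt}, and what remains is the bookkeeping of matching the condition "$q_{\ell-1} = S$" on the MLQ side with "surface $= s$" on the tableau side, plus confirming that the cited results apply. The closest thing to an obstacle is ensuring the index $k$ really is the number of rows of $\lambda^{\mm}$ (equivalently $k = p_{\ell-1}(\mm)$); this is what the hypothesis $\lambda^{\mm} = \tup{\lambda_1, \dotsc, \lambda_k}$ guarantees, and if the cardinalities of $S$ and $q_{\ell-1}$ did not agree then both sides would be trivial (no interlacing MLQ of type $\mm$ could have $q_{\ell-1} = S$).
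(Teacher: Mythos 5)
Your proposal is correct and follows essentially the same route as the paper's own proof: transfer the tableau sum in Theorem~\ref{thm.tableau.jt} (with $\lambda = \lambda^{\mm}$ and surface $s$) through the weight-preserving bijection $\overline{P}$ of Lemma~\ref{lem:determinant_form.bij1}, using property~(b) to match the condition $q_{\ell-1}=S$ with the surface condition. The extra check that $k = p_{\ell-1}(\mm)$ is harmless bookkeeping already implicit in the hypothesis $\lambda^{\mm} = \tup{\lambda_1,\dotsc,\lambda_k}$.
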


\begin{proof}
Consider the bijection $\overline{P}$ from Lemma~\ref{lem:determinant_form.bij1}(c), and recall the properties~(a) and~(b).
Hence, we can substitute $\overline{P}(\qq) = P(\qq)$ for $T$ in the sum on the left hand side of Theorem~\ref{thm.tableau.jt} (applied to $\lambda = \lambda^{\mm}$).
\begin{vershort}
We thus obtain
\begin{align*}
\sum_{T \in \SST(\lambda^{\mm}, s)} \wt(T)
  & = \sum_{\substack{\qq=\tup{q_1, q_2, \dotsc, q_{\ell-1}} \text{ is an interlacing}\\\text{MLQ of type } \mm \text{ such that} \\\text{the surface of }P(\qq) \text{ is } s}} \wt\bigl(  P(\qq) \bigr) \\
&  = \sum_{\substack{\qq = \tup{q_1,q_2,\dotsc,q_{\ell-1}} \text{ is an interlacing}\\\text{MLQ of type } \mm \text{ with } q_{\ell-1} = S}} \wt(\qq)
\end{align*}
where the second equality follows from Property~(a) and Property~(b) and that $s = S$ (where we consider $s$ as a set).
\end{vershort}
\begin{verlong}
We thus obtain
\begin{align*}
\sum_{T \in \SST(\lambda^{\mm}, s)} \wt(T)
  & = \sum_{\substack{\qq=\tup{q_1, q_2, \dotsc, q_{\ell-1}} \text{ is an interlacing}\\\text{MLQ of type } \mm \text{ such that} \\\text{the surface of } \overline{P}(\qq) \text{ is } s}} \wt\bigl(  P(\qq) \bigr) \\
&  = \sum_{\substack{\qq= \tup{q_1, q_2, \dotsc,q_{\ell-1}} \text{ is an interlacing}\\\text{MLQ of type } \mm \text{ such that}\\\text{the list of all elements of } q_{\ell-1} \text{ in} \\\text{increasing order is } s}} \wt(\qq) \\
&  = \sum_{\substack{\qq = \tup{q_1,q_2,\dotsc,q_{\ell-1}} \text{ is an interlacing}\\\text{MLQ of type } \mm \text{ with } q_{\ell-1} = S}} \wt(\qq)
\end{align*}
where the second equality follows from Property~(a) and Property~(b) and the last equality is since the list of all elements of $q_{\ell-1}$ in increasing order is $s$ if and only if $q_{\ell-1}=S$.
\end{verlong}
Therefore, the claim of Corollary~\ref{cor:determinant_form.bij1c} follows from Theorem~\ref{thm.tableau.jt}.
\end{proof}

Next, we shall connect the interlacingness of an MLQ with its action on the word $1^n$.
First, we need another notion:
If $u \in \mcW_n$ and $t \in \NN$, then we say that $u$ is \defn{weakly decreasing up to level $t$} if and only if every two sites $i < j$ in $\ive{n}$ satisfying $u_j \leq t$ must satisfy $u_i \geq u_j$.
Equivalently, $u$ is weakly decreasing up to level $t$ if and only if $u$ becomes weakly decreasing when all letters larger than $t$ are removed.
For example, the word ${\color{gray}5}4{\color{gray}55}4332{\color{gray}5}221{\color{gray}5}$ is weakly decreasing up to level $4$ (and up to any level $\leq 4$).
We also need the following auxiliary lemma about what queues can yield weakly decreasing words.

\begin{lemma}
\label{lem:determinant_form.interl-A}
Let $u \in \mcW_n$ and $t > 0$.
Let $q$ be a queue.
Assume that the word $q(u)$ has at least one letter equal to $t$, is weakly decreasing up to level $t$, and has exactly $\abs{q}$ letters that are at most $t$.
Then:

\begin{enumerate}
\item[(a)] The word $u$ is weakly decreasing up to level $t-1$.

\item[(b)] For each $h \in \ive{t-1}$, we have
\[
\set{ p \in \ive{n} \mid \bigl( q(u) \bigr)_p = h }
\succeq
\set{ p \in \ive{n} \mid u_p = h }
\gg
\set{ p \in \ive{n} \mid \bigl( q(u) \bigr)_p = h+1 } .
\]
\end{enumerate}
\end{lemma}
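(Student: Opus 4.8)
The plan is to reduce both parts to a careful run of Phase~II of the queue algorithm. Throughout write $v := q(u)$, let $\mm = \tup{m_1, m_2, \ldots}$ be the type of $u$, and set $r := \abs{q}$; we may assume $t \geq 2$, since both parts are vacuous when $t = 1$. The first thing I would do is normalize the situation: from the three hypotheses one shows that $q = \set{p \mid v_p \leq t}$ and that $t$ itself may be taken as the level appearing in Remark~\ref{rmk:t-splitting}, so that $p_{t-1}(\mm) < r \leq p_t(\mm)$ and $v$ has type $\tup{m_1, \ldots, m_{t-1}, a, b, m_{t+1}, m_{t+2}, \ldots}$ with $a := r - p_{t-1}(\mm) \geq 1$ and $b := p_t(\mm) - r \geq 0$. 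Concretely, $\set{p \mid v_p \leq t}$ has size $r$ by the ``exactly $\abs{q}$ letters at most $t$'' hypothesis, and comparing it with the size-$r$ set $\set{p \mid v_p \leq t'}$ produced by Remark~\ref{rmk:t-splitting} (for its level $t'$) forces the two to coincide; the ``at least one letter equal to $t$'' hypothesis then rules out $t > t'$ and yields the strict inequality $p_{t-1}(\mm) < r$. Since $v$ is weakly decreasing up to level $t$ and $q = \set{p \mid v_p \leq t}$, the word read off $v$ along the sites $q = \set{g_1 < g_2 < \cdots < g_r}$ is weakly decreasing, hence equals $t^a (t-1)^{m_{t-1}} \cdots 2^{m_2} 1^{m_1}$. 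Writing $N_h := r - p_{h-1}(\mm)$, this gives the band description $\set{p \mid v_p = h} = \set{g_{N_{h+1}+1}, \ldots, g_{N_h}}$ for $1 \leq h \leq t-1$ and $\set{p \mid v_p = t} = \set{g_1, \ldots, g_a}$; note also that $N_h \geq 1$ for all $h \leq t-1$, since otherwise every site of $q$ would carry a value $\leq t-1$, contradicting $a \geq 1$.

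Next I would run Phase~II by value. Fix an ordering $\tup{i_1, \ldots, i_n}$ of the sites with $u_{i_1} \leq \cdots \leq u_{i_n}$; by Lemma~\ref{lemma:order_indep} the word $v$ is independent of this choice. Since $r \geq p_{t-1}(\mm)$, every site with value $\leq t-1$ occurs among $i_1, \ldots, i_r$ and so is handled in Phase~II; hence for each $c \leq t-1$ the Phase~II sites of value $c$ are exactly the sites of $\set{p \mid u_p = c}$, and they are processed in one consecutive block. In Phase~II every processed site gets matched to a site of $q$, and a site $g \in q$ ends up carrying the value of the Phase~II site matched to it; since all of $q$ is matched during Phase~II, it follows that right before the value-$c$ block the already-matched sites of $q$ are exactly $\set{p \mid v_p < c}$, which by the band description is $\set{g_{N_c+1}, \ldots, g_r}$ --- equivalently, the sites of $q$ still available at that moment are $\set{g_1, \ldots, g_{N_c}}$. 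Over the value-$c$ block these sites collectively get matched onto $\set{p \mid v_p = c} = \set{g_{N_{c+1}+1}, \ldots, g_{N_c}}$, so that $g_1, \ldots, g_{N_{c+1}}$ stay available throughout that block.

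The step I expect to be the main obstacle is the following ``no wrap-around'' estimate, which uses the cyclic matching rule essentially. Fix $c \leq t-1$ and a value-$c$ site $\sigma$, and let $g_\sigma \in \set{p \mid v_p = c}$ be the site of $q$ matched to $\sigma$. The rule selects, among the sites of $q$ available when $\sigma$ is processed, the smallest one that is $\geq \sigma$ if one exists and otherwise the smallest available site (which is then $< \sigma$). The key observation is that $g_1, \ldots, g_{N_{c+1}}$ are all available at that moment and each is strictly smaller than $g_\sigma$ (whose index exceeds $N_{c+1}$). Hence if some $g_j$ with $j \leq N_{c+1}$ had $g_j \geq \sigma$, the smallest available site $\geq \sigma$ --- namely $g_\sigma$ --- would satisfy $g_\sigma \leq g_j < g_\sigma$, absurd; so $g_{N_{c+1}} < \sigma$. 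But then $g_1 < \sigma$, so if $\sigma$ had no available site $\geq \sigma$ it would be matched to $g_1$, contradicting $g_\sigma \neq g_1$. Therefore $g_\sigma \geq \sigma$ and $\sigma > g_{N_{c+1}}$. Consequently $\sigma \mapsto g_\sigma$ is a bijection $\set{p \mid u_p = c} \to \set{p \mid v_p = c}$ with $g_\sigma \geq \sigma$, so Lemma~\ref{lem:determinant_form.gale1} gives $\set{p \mid v_p = c} \succeq \set{p \mid u_p = c}$; in particular (when $m_c \neq 0$) every site of $\set{p \mid u_p = c}$ is $\leq g_{N_c}$.

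Finally I would assemble the two parts. For (b), fix $h \in \ive{t-1}$: the relation $\set{p \mid v_p = h} \succeq \set{p \mid u_p = h}$ is given by the previous paragraph (both sides empty if $m_h = 0$), while every $\sigma \in \set{p \mid u_p = h}$ has $\sigma > g_{N_{h+1}}$ and every element of $\set{p \mid v_p = h+1}$ is $\leq g_{N_{h+1}}$ (it lies in $\set{g_{N_{h+2}+1}, \ldots, g_{N_{h+1}}}$ when $h+1 \leq t-1$, and in $\set{g_1, \ldots, g_a}$ with $a = N_{h+1}$ when $h+1 = t$); hence $\set{p \mid u_p = h} \gg \set{p \mid v_p = h+1}$. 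For (a), suppose $i < j$ with $u_j \leq t-1$ and $u_i < u_j$, and put $h' := u_i < h := u_j$ (so $h', h \in \ive{t-1}$ and $m_{h'}, m_h \neq 0$); by the previous paragraph $i > g_{N_{h'+1}}$ and $j \leq g_{N_h}$, and since $h' \leq h-1$ gives $p_{h'}(\mm) \leq p_{h-1}(\mm)$ and hence $N_{h'+1} \geq N_h$, we obtain $i > g_{N_{h'+1}} \geq g_{N_h} \geq j$, contradicting $i < j$; thus $u$ is weakly decreasing up to level $t-1$. So, apart from the wrap-around estimate of the third paragraph, the whole argument is bookkeeping with the band decomposition of $q$ set up in the first two paragraphs.
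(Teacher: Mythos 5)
Your proof is correct and follows essentially the same route as the paper's: both analyze Phase~II of the queue algorithm, establish a no-wrap-around statement for the sites of value at most $t-1$, and then deduce (a) and the chain in (b) (with Lemma~\ref{lem:determinant_form.gale1} giving $\succeq$). The only difference is organizational: you make the structure of $q(u)$ on $q$ explicit via the band decomposition $\set{g_{N_{h+1}+1},\ldots,g_{N_h}}$ and obtain the sharper bounds $g_{N_{c+1}} < \sigma \leq g_\sigma$ directly, whereas the paper proves the corresponding steps (no wrapping, part (a), and $B \gg C$) by separate minimal-counterexample contradiction arguments.
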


\begin{example}
\mbox{}
\begin{itemize}
\item Let $n = 9$, $u = {\color{gray}3}221{\color{gray}3}11{\color{gray}33}$, $t = 3$ and $q = \set{1, 2, 3, 5, 6, 7}$.
Then, $q(u) = 322{\color{gray}4}111{\color{gray}44}$ satisfies all assumptions of
Lemma~\ref{lem:determinant_form.interl-A}.
Thus, part (a) of the lemma says that $u$ is weakly decreasing up to level $2$ (which is evident).
Part (b) of the lemma, applied to $h = 1$, says that
\[
\set{ 5, 6, 7 } \succeq \set{ 4, 6, 7 } \gg \set{ 2, 3 }.
\]

\item The assumption that $q(u)$ has at least one letter equal to $t$ cannot be removed from Lemma~\ref{lem:determinant_form.interl-A}.
Indeed, if $n = 4$, $u = {\color{gray}33}12$, $t = 3$ and $q = \set{1, 3}$, then $q(u) = 2{\color{gray}4}1{\color{gray}4}$ satisfies all assumptions except for this one, but the claim of Lemma~\ref{lem:determinant_form.interl-A}(a) does not hold.

\item The assumption that $q(u)$ is weakly decreasing up to level $t$ cannot be removed from Lemma~\ref{lem:determinant_form.interl-A}.
Indeed, if $n = 4$, $u = 12{\color{gray}33}$, $t = 3$ and $q = \set{1, 2, 3}$, then $q(u) = 123{\color{gray}4}$ satisfies all assumptions except for this one, but the claim of Lemma~\ref{lem:determinant_form.interl-A}(a) does not hold.

\item The assumption that $q(u)$ has exactly $\abs{q}$ letters that are at most $t$ cannot be removed from Lemma~\ref{lem:determinant_form.interl-A}.
Indeed, if $n = 5$, $u = {\color{gray}3}2112$, $t = 3$ and $q = \set{2, 3, 5}$, then $q(u) = 321{\color{gray}4}1$ satisfies all assumptions except for this one, but the claim of Lemma~\ref{lem:determinant_form.interl-A}(a) does not hold.
\end{itemize}
\end{example}

\begin{proof}[Proof of Lemma~\ref{lem:determinant_form.interl-A}.]
Fix a permutation $\tup{i_1, i_2, \dotsc, i_n}$ of $\tup{1, 2, \dotsc, n}$ such that $u_{i_1}\leq u_{i_2}\leq \cdots \leq u_{i_n}$.
From the definition of $q(u)$ and since $q(u)$ has precisely $\abs{q}$ letters at most $t$,
we see that
all letters set during Phase~II of the algorithm computing $q(u)$ are at most $t$,
while all letters set during Phase~I are strictly greater than $t$
(see also Remark~\ref{rmk:t-splitting}).
Therefore, it is sufficient to consider the letters set during Phase~II.
We first show that no ``wrapping around the cycle'' can occur during Phase~II.
Let $j_{\kappa}$ denote the site set in the $\kappa$-th step of Phase~II
(that is, the site $j$ found at the time when $i = i_\kappa$).
Hence, $\set{j_1, j_2, \ldots, j_{\abs{q}}} = q$.

\begin{claim}
\label{claim:no_wrapping}
We have $j_{\kappa} \geq i_{\kappa}$ for all $\kappa \leq \abs{q}$ satisfying $u_{i_{\kappa}} \leq t-1$.
\end{claim}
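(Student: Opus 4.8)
The plan is to argue by contradiction. Suppose that some $\kappa \le \abs{q}$ with $h := u_{i_\kappa} \le t-1$ has $j_\kappa < i_\kappa$, so that Phase~II ``wraps around the cycle'' when processing the site $i_\kappa$. First I would unpack what this means: since $j_\kappa$ is the first site of $q$ that is weakly to the right (cyclically) of $i_\kappa$ and still unset at the time step $\kappa$ is performed, the inequality $j_\kappa < i_\kappa$ forces every site of $q \cap \set{i_\kappa, i_\kappa+1, \ldots, n}$ and every site of $q \cap \set{1, 2, \ldots, j_\kappa - 1}$ to have already been set, i.e., to lie in $\set{j_1, j_2, \ldots, j_{\kappa-1}}$. (Phase~I only sets sites outside $q$, so it plays no role here.)

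Next I would line up the two facts that collide. On one hand, every site $p \in q$ that was set before step $\kappa$ satisfies $\bigl(q(u)\bigr)_p = u_{i_{\kappa'}} \le u_{i_\kappa} = h$ for the corresponding $\kappa' < \kappa$, because Phase~II sets $v_{j_{\kappa'}} = u_{i_{\kappa'}}$ and the permutation $(i_1, \ldots, i_n)$ lists the letters of $u$ in weakly increasing order; hence every site of $q \cap \set{1,2,\ldots,j_\kappa - 1}$ carries a letter $\le h$ in $q(u)$. On the other hand, by the hypotheses (the ``letter equal to $t$'' and the ``exactly $\abs{q}$ letters at most $t$'' conditions, as in Remark~\ref{rmk:t-splitting}) the sites of $q$ are exactly the sites of $q(u)$ whose letter is $\le t$, and at least one of them carries the letter $t \ge h+1$; so $\set{p \in q \mid \bigl(q(u)\bigr)_p \ge h+1}$ is nonempty. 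Since $q(u)$ is weakly decreasing up to level $t$ and $\bigl(q(u)\bigr)_{j_\kappa} = h$, any such site $p$ must satisfy $p < j_\kappa$. Choosing one, say $b$, we get $b \in q \cap \set{1,2,\ldots,j_\kappa-1}$, hence $\bigl(q(u)\bigr)_b \le h$, contradicting $\bigl(q(u)\bigr)_b \ge h+1$. This contradiction establishes Claim~\ref{claim:no_wrapping}.

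The one genuinely delicate point is choosing the right invariant. A more naive attack, tracking where each value-$h$ site is matched during Phase~II, gets tangled in the freedom of ordering equal letters; the argument above avoids this by not following individual matchings past step $\kappa$ at all. Instead it uses only that a wrap fills an entire cyclic arc of $q$-sites with letters $\le h$, while the weak-decreasingness of $q(u)$ — together with the presence of a letter $t$, which is precisely why that hypothesis cannot be dropped — forces every large letter of $q(u)$ to sit inside that same arc. Once the claim is in hand, parts (a) and (b) of Lemma~\ref{lem:determinant_form.interl-A} should follow without much more work: part (b) by feeding the matching $i_\kappa \mapsto j_\kappa$, restricted to each value class, into the $\succeq$-criterion of Lemma~\ref{lem:determinant_form.gale1}, and part (a) by an analogous ``no overtaking'' observation about the sites of $u$ with letters at most $t-1$.
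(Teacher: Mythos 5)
Your argument is correct and takes essentially the same route as the paper's proof: a wrap at step $\kappa$ forces every $q$-site on the arc preceding $j_\kappa$ to have already been set to a letter $\leq u_{i_\kappa} \leq t-1$, while the guaranteed letter $t$ occupies some $q$-site, and weak decrease up to level $t$ makes these two facts incompatible. The only (cosmetic) differences are that the paper anchors the contradiction at $\min q$ after assuming $\kappa$ minimal, whereas you anchor it at $j_\kappa$ and the arc $q \cap \set{1, \ldots, j_\kappa - 1}$ and never need minimality of $\kappa$.
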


\begin{subproof}
Assume the contrary.
Thus, there exists a $\kappa \leq \abs{q}$ satisfying $u_{i_{\kappa}} \leq t-1$ such that $j_{\kappa} < i_{\kappa}$ (\textit{i.e.}, the path of $u_{i_{\kappa}}$ ``wraps around the cycle'').
Without loss of generality, assume $\kappa$ is minimal.
Then either $j_{\kappa} = \min q$ or $\min q$ has already been set on a previous step of Phase~II, and in either case, we have $\bigl( q(u) \bigr)_{\min q} < t$.
Since $q(u)$ contains at least one $t$ (which is set during Phase~II), there exists an $h > \kappa$ such that $\bigl( q(u) \bigr)_{j_h} = t$ with $j_h > \min q$.
However, this contradicts that $q(u)$ is weakly decreasing up to level $t$.
\end{subproof}

Next, suppose Lemma~\ref{lem:determinant_form.interl-A}(a) does not hold.
Thus, there exists a $k$ such that $i_{k'} < i_k$ and $u_{i_{k'}} < u_{i_k} \leq t-1$ for some $k' < k$.
Without loss of generality, assume $k$ is minimal with this property.
Since $q(u)_{j_{\kappa}} = u_{i_{\kappa}}$ for all $\kappa \leq \abs{q}$,
we thus have $q(u)_{j_{k'}} < q(u)_{j_k} \leq t-1$.
From our assumption that $q(u)$ is weakly decreasing up to level $t$, we have $j_{k'} > j_k$.
Claim~\ref{claim:no_wrapping} shows that $j_k \geq i_k$.
Hence, $i_{k'} < i_k \leq j_k \leq j_{k'}$.
Thus, during Phase~II, at the time when $i = i_{k'}$, the first site $j$ weakly to the right of $i$ such that $j \in q$ and $(q(u))_j$ is not set cannot be $j_{k'}$ (because $j_k$ comes before it, and $(q(u))_{i_k}$ is still not set).
This contradicts the definition of $j_{k'}$.
This proves Lemma~\ref{lem:determinant_form.interl-A}(a).

Now we prove Lemma~\ref{lem:determinant_form.interl-A}(b).
Let $h \in \ive{t-1}$, and set
\begin{align*}
A & = \set{ p \in \ive{n} \mid \bigl( q(u) \bigr)_p = h },
\\
B & = \set{ p \in \ive{n} \mid u_p = h },
\\
C & = \set{ p \in \ive{n} \mid \bigl( q(u) \bigr)_p = h+1 }.
\end{align*}

We first show $A \succeq B$.
Define a map $\phi \colon B \to A$ by $\phi(i_k) = j_k$, where $i_k \in B$.
The map $\phi$ is well-defined (\textit{i.e.}, we have $j_k \in A$ for all applicable $k$) and bijective from the definition of Phase~II (see also Remark~\ref{rmk:t-splitting} for why $\abs{A} = \abs{B}$).
We have $\phi(i_k) = j_k \geq i_k$ by Claim~\ref{claim:no_wrapping}.
Hence $A \succeq B$ by Lemma~\ref{lem:determinant_form.gale1}.

To prove $B \gg C$, assume the contrary.
Thus, there exist $b \in B$ and $c \in C$ such that $b \leq c$.
Let $k$ be minimal such that $i_k \in B$ and $i_k \leq c$.
Since $q(u)$ is weakly decreasing up to level $t$ and $h < t$, we must have $A \gg C$.
Hence, we have $i_k \leq c < j_k$ since $j_k \in A$.
\begin{vershort}
However, this is a contradiction since $q(u)_c$ would instead have been set during the $k$-th step of Phase~II.
\end{vershort}
\begin{verlong}
But during Phase~II, at the time when $i = i_k$,
the letter $(q(u))_c$ is not set yet (since $(q(u))_c = h+1$ is
larger than $u_{i_k} = h$).
Thus, at this time, the first site $j$ weakly to the right of $i$ such that $j \in q$ and $(q(u))_j$ is not set cannot be $j_k$ (because $c$ comes before it, and $(q(u))_c$ is still not set).
This contradicts the definition of $j_k$.
\end{verlong}
This shows $B \gg C$.
\end{proof}

Now we prove our main technical lemma:

\begin{lemma}
\label{lem:determinant_form.interl-act}
Let $\ell$ be a positive integer.
Let $\mm = \tup{m_1, m_2, \dotsc, m_{\ell}, 0, 0, \ldots}$ be such that $m_i > 0$ for all $1 \leq i < \ell$.
Let $\qq = \tup{q_1, q_2, \dotsc, q_{\ell-1}}$ be an ordinary MLQ with $\ell-1$ queues.
Then, the following are equivalent:
\begin{enumerate}
\item[($\alpha$)] The word $\qq(1^n)$ has type $\mm$ and is weakly decreasing up to level $\ell-1$.
\item[($\beta$)] The MLQ $\qq$ has type $\mm$ and is interlacing.
\end{enumerate}
\end{lemma}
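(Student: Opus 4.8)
The plan is to fix the ordinary MLQ $\qq = \tup{q_1, \dots, q_{\ell-1}}$ and abbreviate $w_k := q_k\bigl(\cdots q_1(1^n)\cdots\bigr)$ for $0 \le k \le \ell-1$, so that $w_0 = 1^n$ and $w_{\ell-1} = \qq(1^n)$. First I would dispose of the ``has type $\mm$'' clauses: since $\qq$ is an ordinary MLQ it has some type $\mm'$, and $\qq(1^n)$ has type $\mm'$ by Lemma~\ref{lemma:mlq-type}; hence ``$\qq(1^n)$ has type $\mm$'' is equivalent to ``$\qq$ has type $\mm$'', and only the equivalence of the two remaining clauses has to be proved, under the standing assumption that $\qq$ has type $\mm$. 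Granting this, Lemma~\ref{lemma:mlq-type} applied to $\tup{q_1,\dots,q_k}$ gives that $w_k$ has type $\mm^{(k)} := \tup{m_1, \dots, m_k, n - p_k(\mm), 0, \dots}$, and a short calculation with Remark~\ref{rmk:t-splitting} (using $\abs{q_k} = p_k(\mm)$ and the type of $w_{k-1}$) shows that the splitting level of $q_k$ acting on $w_{k-1}$ is $k$; consequently $q_k = \set{p \in \ive{n} : (w_k)_p \le k}$ and $w_k$ has exactly $\abs{q_k} = p_k(\mm)$ letters that are $\le k$. For each $k \in \ive{\ell-1}$ let $(\star_k)$ be the assertion that $\set{p : (w_k)_p = j} = q_k^{(j)}$ for all $j \in \ive{k}$. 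Since $q_k^{(1)} \gg \dots \gg q_k^{(k)}$, $\abs{q_k^{(j)}} = m_j$, $q_k = \set{p : (w_k)_p \le k}$, and $w_k$ has exactly $m_j$ letters equal to $j$, one checks directly that $(\star_k)$ is equivalent to ``$w_k$ is weakly decreasing up to level $k$''. Thus clause ($\alpha$) is precisely $(\star_{\ell-1})$, and I will show that $(\star_{\ell-1})$, the conjunction of all of $(\star_1), \dots, (\star_{\ell-1})$, and interlacingness of $\qq$ are equivalent.

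For ($\alpha$) $\Rightarrow$ ($\beta$) I would argue by downward induction on $k$. The base case $k = \ell-1$ is ($\alpha$). For the step, assume $(\star_k)$ with $k \ge 2$; then $w_k$ is weakly decreasing up to level $k$, has a letter equal to $k$ (as $m_k > 0$), and has exactly $\abs{q_k}$ letters $\le k$, so Lemma~\ref{lem:determinant_form.interl-A} applies with $u = w_{k-1}$, $q = q_k$, $t = k$. Part~(a) gives that $w_{k-1}$ is weakly decreasing up to level $k-1$, i.e.\ $(\star_{k-1})$ holds; and part~(b), after substituting the identifications $\set{p : (w_k)_p = h} = q_k^{(h)}$ and $\set{p : (w_k)_p = h+1} = q_k^{(h+1)}$ (from $(\star_k)$) and $\set{p : (w_{k-1})_p = h} = q_{k-1}^{(h)}$ (from $(\star_{k-1})$), reads $q_k^{(h)} \succeq q_{k-1}^{(h)} \gg q_k^{(h+1)}$ for all $h \in \ive{k-1}$ --- which is precisely the interlacing condition relating $q_k$ and $q_{k-1}$. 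Running $k$ from $\ell-1$ down to $2$ then produces every interlacing condition together with all of $(\star_1), \dots, (\star_{\ell-1})$, so $\qq$ is interlacing.

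For ($\beta$) $\Rightarrow$ ($\alpha$) I would argue by upward induction on $k$, proving $(\star_k)$. The base $k=1$ is immediate: $1^n$ is constant, so applying $q_1$ sets every site of $q_1$ to $1$ during Phase~I\!I and every site outside $q_1$ to $2$ during Phase~I, whence $\set{p : (w_1)_p = 1} = q_1 = q_1^{(1)}$. For the step, assume $(\star_{k-1})$, so the letter-$h$ sites of $w_{k-1}$ are exactly $q_{k-1}^{(h)}$ for $h \in \ive{k-1}$. I would compute $w_k = q_k(w_{k-1})$ using the Phase~I\!I site ordering in which the letter-$1$ sites are processed first (in decreasing order of position), then the letter-$2$ sites, and so on through the letter-$(k-1)$ sites, and finally $m_k$ of the letter-$k$ sites; by Lemma~\ref{lemma:order_indep} the resulting word is independent of this choice. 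The key point --- proved by an inner induction on $h$ --- is that the processing of the letter-$h$ sites matches them bijectively onto $q_k^{(h)}$ with no wrapping around the cycle: the block $q_k^{(h)}$ lies entirely to the right of every lower block $q_k^{(h+1)}, \dots, q_k^{(k)}$ by the $\gg$ relations, and the inequality $q_k^{(h)} \succeq q_{k-1}^{(h)}$ supplies, via the bijection $\phi$ of Lemma~\ref{lem:determinant_form.gale1}, enough room for each letter-$h$ site to be matched to a still-unoccupied site of $q_k^{(h)}$ weakly to its right. Hence once Phase~I\!I has processed the letters $1, \dots, k-1$, the occupied sites of $q_k$ are exactly $q_k^{(1)} \cup \dots \cup q_k^{(k-1)}$ (carrying the values $1, \dots, k-1$), the remaining $m_k$ letter-$k$ sites of Phase~I\!I fill the last block $q_k^{(k)}$ with the value $k$, and Phase~I sets every site of $\ive{n} \setminus q_k$ to $k+1$. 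This is exactly $(\star_k)$, and taking $k = \ell-1$ shows that $\qq(1^n)$ is weakly decreasing up to level $\ell-1$, which is ($\alpha$).

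I expect the main obstacle to be the ``no wrapping around the cycle'' claim used in the $(\beta) \Rightarrow (\alpha)$ step. It is the mirror image of Claim~\ref{claim:no_wrapping} from the proof of Lemma~\ref{lem:determinant_form.interl-A}, but it must now be extracted from the $\succeq$-inequalities rather than from weak decreasingness of the output word, and it needs a careful accounting of which sites of $q_k$ are already occupied at each stage of Phase~I\!I. Everything else is routine bookkeeping; in particular, the degenerate case $m_\ell = 0$ (where $q_{\ell-1} = \ive{n}$ and the last queue acts as the identity on $w_{\ell-2}$) is subsumed by the same argument and needs no separate treatment.
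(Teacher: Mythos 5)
Your proposal is correct and follows essentially the same route as the paper: the type clauses are dispatched via Lemma~\ref{lemma:mlq-type}, the forward direction rests on parts (a) and (b) of Lemma~\ref{lem:determinant_form.interl-A}, and the converse is the same Phase~II left-to-right matching argument in which $q_k^{(h)} \succeq q_{k-1}^{(h)}$ guarantees reachability and $q_{k-1}^{(h)} \gg q_k^{(h+1)}$ rules out stopping early or wrapping. The only difference is organizational -- you unroll the paper's single induction on $\ell$ into an upward and a downward induction on the queue index with the explicit invariant $(\star_k)$, which is the same identification $q_k^{(j)} = \set{p \mid (w_k)_p = j}$ the paper uses inside its induction step.
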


\begin{example}
\mbox{}
\begin{itemize}
\item We require that $m_i > 0$ for all $1 \leq i \leq \ell - 1$.
  To see why, consider $u = 2{\color{gray}4}1{\color{gray}4}$, which is of type $\mm = \tup{1,1,0,2,0,\ldots}$ and is weakly decreasing up to level $3$.
  However, the MLQ $\qq = \tup{\set{2}, \set{3,4}, \set{1,3}}$ is not interlacing but $\qq(1111) = u$:
  \[
  \begin{tikzpicture}
  \def\ystep{-0.85};
  \node at (2,0) {$1$};
  \draw (2,0) circle (0.3);
  \foreach \x in {1,3,4} {
      \draw (\x-0.3, -0.3) rectangle (\x+0.3, +0.3);
      \node[color=gray] at (\x, 0) {$2$};
  }
  \node at (3,\ystep) {$1$};
  \draw (3,\ystep) circle (0.3);
  \node at (4,\ystep) {$2$};
  \draw (4,\ystep) circle (0.3);
  \foreach \x in {1,2} {
      \draw (\x-0.3, \ystep-0.3) rectangle (\x+0.3, \ystep+0.3);
      \node[color=gray] at (\x, \ystep) {$3$};
  }
  \node at (1,2*\ystep) {$2$};
  \draw (1,2*\ystep) circle (0.3);
  \node at (3,2*\ystep) {$1$};
  \draw (3,2*\ystep) circle (0.3);
  \foreach \x in {2,4} {
      \draw (\x-0.3, 2*\ystep-0.3) rectangle (\x+0.3, 2*\ystep+0.3);
      \node[color=gray] at (\x, 2*\ystep) {$4$};
  }
  \end{tikzpicture}
  \]
\item The interlacing of the MLQ (equivalently the weakly decreasing up to level $\ell - 1$) is necessary as seen by Example~\ref{ex:qij_generic}.
\end{itemize}
\end{example}

\begin{proof}[Proof of Lemma~\ref{lem:determinant_form.interl-act}.]
We prove the claim by induction on $\ell$.
The base case of $\ell = 1$ is trivial.
Thus, we assume the claim holds for $\ell$.
Let $\mm = \tup{m_1, m_2, \ldots, m_{\ell+1}, 0, \ldots}$ with $m_i > 0$ for all $1 \leq i < \ell + 1$.
Let $\qq = \tup{q_1, q_2, \dotsc, q_{\ell}}$ be an ordinary MLQ.
The word $u := \qq(1^n)$ has type $\mm$ if and only if $\qq$ has type $\mm$ by Lemma~\ref{lemma:mlq-type}.
Thus we can assume that both of these statements hold,
and it is remains to show that $u = \qq(1^n)$ is weakly decreasing up to level $\ell$ if and only if $\qq$ is interlacing.
We shall prove the $\Longrightarrow$ and $\Longleftarrow$ directions separately.

First, let us prepare.
Let $\qq' := \tup{q_1, \dotsc, q_{\ell-1}}$ and $u' := \qq'(1^n)$.
Thus, $u = \qq(1^n) = q_{\ell} (u')$.
Both the word $u'$ and the ordinary MLQ $\qq'$ have type
\[
\mm' := (m_1, m_2, \dotsc, m_{\ell-1}, n - p_{\ell-1}(\mm), 0, \ldots)
\]
since $\abs{q_{\ell}} = p_{\ell}(\mm)$.

Recall the definition of $q_i^{(h)}$ given by~\eqref{eq.determinant_form.qij}.

$\Longrightarrow:$
Suppose $u = \qq(1^n)$ is weakly decreasing up level $\ell$.
Thus, Lemma~\ref{lem:determinant_form.interl-A}(a) (applied to $u'$, $q_\ell$ and $\ell$ instead of $u$, $q$ and $t$) shows that $u'$ is weakly decreasing up to level $\ell-1$
(since $u = q_{\ell}(u')$ has type $\mm$ with $m_{\ell} > 0$
and $p_{\ell}(\mm) = \abs{q_{\ell}}$).
Hence, by our induction assumption, $\qq'$ is interlacing.
Since $u'$ has type $\mm'$ and is weakly decreasing up to level $\ell-1$, we see that $q_{\ell-1}^{(h)} = \{p \mid u'_p = h\}$ for all $h \in \ive{\ell-1}$.
Similarly, we have $q_\ell^{(h)} = \{p \mid u_p = h\}$ for all $h \in \ive{\ell}$.
Therefore, by Lemma~\ref{lem:determinant_form.interl-A}(b), we obtain $q_\ell^{(h)} \succeq q_{\ell-1}^{(h)} \gg q_\ell^{(h+1)}$ for each $h \in \ive{\ell-1}$.
Hence, $\qq$ is interlacing.

$\Longleftarrow:$
Suppose the MLQ $\qq$ is interlacing.
Thus, $\qq'$ is also interlacing.
Hence, by our induction assumption, the word $u' = \qq'(1^n)$ is weakly decreasing up to level $\ell - 1$;
recall that this word has type $\mm'$.
Hence, writing $u'$ as $u'_1 u'_2 \cdots u'_n$, we have
$q_{\ell-1}^{(h)} = \set{p \mid u'_p = h}$ for all $h \in \ive{\ell-1}$.
\begin{vershort}
We compute $u = q_{\ell}(u')$ by choosing the permutation such that for each $h$, we pick all letters $h$ in $u'$ from left to right.
Therefore, the interlacingness of $\qq$ shows that $q_{\ell}^{(h)} = \set{p \mid u_p = h}$ for all $h \in \ive{\ell}$.
Hence, the word $u$ is weakly decreasing up to level $\ell$.
\end{vershort}
\begin{verlong}

Now, we compute $u = q_{\ell} (u')$ using the algorithm constructing $q_{\ell} (u')$,
choosing the permutation
$\tup{i_1, i_2, \ldots, i_n}$ of $\tup{1, 2, \ldots, n}$
in such a way that $(u_{i_1}, 1) \leq (u_{i_2}, 2) \leq \cdots \leq (u_{i_n}, n)$
in lexicographic order
(thus, in Phase~II, we pick our sites $i$ in order of increasing letters in $u$,
while breaking ties by moving left to right).
For each $k \in \ive{n}$, we let $j_k$ be the site $j$ found at the step at which $i = i_k$.

The interlacingness relation $q_{\ell}^{(h)} \succeq q_{\ell-1}^{(h)} \gg q_{\ell}^{(h+1)}$,
combined with the fact that $q_{\ell-1}^{(h)} = \{p \mid u'_p = h\}$ for all $h \in \ive{\ell-1}$,
reveals that throughout Phase~II of our algorithm,
whenever the site $i_k$ belongs to $q_{\ell-1}^{(h)}$,
the corresponding site $j_k$ will belong to $q_{\ell}^{(h)}$
(and, more precisely, as the $i$ in our algorithm runs through the
sites in $q_{\ell-1}^{(h)}$ from left to right,
the $j$ will run through the sites in $q_{\ell}^{(h)}$ from left to right);
indeed, the $q_{\ell}^{(h)} \succeq q_{\ell-1}^{(h)}$ part guarantees that
each site in $q_{\ell-1}^{(h)}$ is reachable from the corresponding
site in $q_{\ell}^{(h)}$ by moving right, whereas the
$q_{\ell-1}^{(h)} \gg q_{\ell}^{(h+1)}$ part ensures that the search for $j$
will actually reach this site
(rather than already stopping at some other site in $q_{\ell}$ further left of it).
(Strictly speaking, this argument is a strong induction on $k$, as we are
using that all the previous sites have been correctly dispatched.)
Hence, the converse also holds:
If $j_k \in q_{\ell}^{(h)}$, then $i_k \in q_{\ell-1}^{(h)}$
(since $q_{\ell}^{(h)} \succeq q_{\ell-1}^{(h)}$ yields
$\abs{q_{\ell}^{(h)}} = \abs{q_{\ell-1}^{(h)}}$).

As a consequence, we have $u_p = h$ for each $p \in q_{\ell}^{(h)}$ for each $h \in \ive{\ell-1}$
(since $p = j_k$ for some $k \in \ive{n}$ found during Phase~II of our
algorithm, and thus
$u_p = u_{j_k} = ( q_{\ell}(u') )_{j_k} = u'_{i_k} = h$ since the corresponding
$i_k$ lies in $q_{\ell-1}^{(h)} = \{p \mid u'_p = h\}$).
Hence, $q_{\ell}^{(h)} = \{p \mid u_p = h\}$ for each $h \in \ive{\ell-1}$, and thus also for $h = \ell$
(by the exclusion principle: the letters of $u$ that are $\leq \ell$ appear in the positions $p \in q_{\ell}$).
Therefore, $u$ is weakly decreasing up to level $\ell$.
\end{verlong}
\end{proof}

\subsection{Proof of Theorem~\ref{thm:determinant_form}}

We are now ready to prove Theorem~\ref{thm:determinant_form}.

\begin{proof}[Proof of Theorem~\ref{thm:determinant_form}.]
Write the $r$-tuple $\tup{v_1, v_2, \dotsc, v_r}$ in the form
\begin{equation}
\label{pf.thm:determinant_form.v=}
\tup{v_1, v_2, \dotsc, v_r} = (
  \underbrace{\ell-1,\ldots,\ell-1}_{m_{\ell-1}\text{ times}},
  \underbrace{\ell-2,\ldots,\ell-2}_{m_{\ell-2}\text{ times}},
  \dotsc,
  \underbrace{1,\ldots,1}_{m_1\text{ times}}
)
\end{equation}
for some $m_1, m_2, \ldots, m_{\ell-1} > 0$
(we can do this, since $\set{v_1, v_2, \dotsc, v_r} = \ive{\ell-1}$ and $v_1 \geq v_2 \geq \cdots \geq v_r$).
\begin{verlong}
Thus, the first $m_{\ell-1}$ of the entries of the $r$-tuple $\tup{v_1 \geq v_2 \geq \cdots \geq v_r}$ equal $\ell-1$; the next $m_{\ell-2}$ of its entries equal $\ell-2$; the next $m_{\ell-3}$ of its entries equal $\ell-3$; and so on.
\end{verlong}
Also set $m_{\ell} = n - r$, and define a type $\mm = \tup{m_1, m_2, \dotsc, m_{\ell}, 0, 0, \ldots}$.
Then, the word $u(v)$ has type $\mm$
\begin{vershort}
by the construction of $u(v)$.
\end{vershort}
\begin{verlong}
(since the entries of $u(v)$ are precisely $v_1, v_2, \dotsc, v_r$ and also $n-r$ entries equal to $\ell$).
\end{verlong}
Additionally, the word $u(v)$ is packed (since $m_1, m_2, \ldots, m_{\ell-1} > 0$).
Furthermore, the definition of $\mm$ shows that $r = m_1 + m_2 + \cdots + m_{\ell-1} = p_{\ell-1}(\mm)$.
\begin{verlong}
Each $i \in \ive{\ell}$ satisfies
\[
m_i =
  \begin{cases}
  \abs{ \set{ j \in \ive{r} \mid v_j = i}} & \text{if } i < \ell,\\
  n - r & \text{if } i = \ell
  \end{cases}
\]
(by~\eqref{pf.thm:determinant_form.v=} and the definition of $m_{\ell}$).
\end{verlong}

\begin{vershort}
The word $u(v)$ is weakly decreasing up to level $\ell-1$.
\end{vershort}
\begin{verlong}
The word $u(v)$ is weakly decreasing up to level $\ell-1$ since $v_1 \geq v_2 \geq \cdots \geq v_r$ and $\set{v_1, v_2, \ldots, v_r} = \ive{\ell-1}$.
\end{verlong}
Moreover, the letters $\leq \ell-1$ of this word lie in the positions $j \in B$.
Therefore, for an MLQ $\qq = \tup{q_1, \ldots, q_{\ell-1}}$ of type $\mm$, the following are equivalent:
\begin{enumerate}
\item We have $u(v) = \qq(1^n)$.
\item The word $\qq(1^n)$ has type $\mm$ and is weakly decreasing up to level $\ell-1$ and its letters $\leq \ell-1$ lie in the positions $j \in B$.
\item The word $\qq(1^n)$ has type $\mm$ and is weakly decreasing up to level $\ell-1$ and we have $q_{\ell-1} = B$.
\item The MLQ $\qq$ has type $\mm$ and is interlacing, and we have $q_{\ell-1} = B$.
\end{enumerate}
It is straightforward to see $(1) \Longleftrightarrow (2) \Longleftrightarrow (3)$, and we have $(3) \Longleftrightarrow (4)$ by Lemma~\ref{lem:determinant_form.interl-act}.

Define the pseudo-partition $\lambda^{\mm}$ by~\eqref{eq.determinant_form.interlacing.lam}.
Comparing~\eqref{eq.determinant_form.interlacing.lam} with~\eqref{pf.thm:determinant_form.v=}, we obtain
\[
\lambda^{\mm} = \tup{\ell-v_1, \ell-v_2, \dotsc, \ell-v_r}
= \tup{\gamma_1, \gamma_2, \ldots, \gamma_r}
\]
(since $\ell - v_j = \gamma_j$ for all $j \in \ive{r}$).

The definition of $\swt{u(v)}$ yields
\begin{align*}
\swt{u(v)} & = \sum_{\substack{\qq \text{ is an MLQ of type } \mm; \\ u(v)=\qq(1^n)}}\wt(\qq) \\
&  = \sum_{\substack{\qq = \tup{q_1, q_2, \dotsc, q_{\ell-1}} \text{ is an interlacing} \\\text{MLQ of type } \mm \text{ with } q_{\ell-1}=B}} \wt(\qq) \qquad \text{(by $(1) \Longleftrightarrow (4)$)} \\
&  = \left(  \prod_{i=1}^r x_{b_i} \right) \det\biggl( h_{\gamma_j-j+i-1}\left(x_1,x_2,\ldots,x_{b_j}\right)  \biggr)_{i, j \in \ive{r}} ,
\end{align*}
where the last equality is from Corollary~\ref{cor:determinant_form.bij1c} (applied to $r$, $\gamma_i$, $B$ and $b_i$ instead of $k$, $\lambda_i$, $S$ and $s_i$).
\begin{verlong}
Hence the claim follows since $\prod_{i=1}^r x_{b_i} = \prod_{b \in B} x_b$.
\end{verlong}
\end{proof}

\section{Proof of Theorem~\ref{thm:permutation}}
\label{sec:thm_proof}

In this section, we shall prove Theorem~\ref{thm:permutation} by relying on the
braid relation for dual configurations. We need some definitions first.

An \defn{$(r_1,r_2)$-configuration} shall mean a pair $C = (q_1, q_2)$, where $q_1$ is an $r_1$-queue and $q_2$ is an $r_2$-queue.
As usual, we consider $C$ as a function on words by $C(u) := q_2\bigr(q_1(u)\bigr)$, and we define the weight of $C$ by $\wt(C) := \wt(q_1) \wt(q_2)$.
We construct the \defn{dual}\footnote{This is a different duality than the contragredient duality of Lemma~\ref{le:dual}.} $(r_2,r_1)$-configuration to $C$, which we denote by $C'$, as follows.

We begin by constructing a sequence of parentheses as follows: For each $j = 1, 2, \dotsc, n$ (in that order), we write
\begin{itemize}
\item an opening parenthesis `$($' if $j \in q_1$ and $j \notin q_2$;
\item a closing parenthesis `$)$' if $j \notin q_1$ and $j \in q_2$;
\item a matched pair of parentheses `$()$' if $j \in q_1$ and $j \in q_2$;
\item nothing otherwise.
\end{itemize}
We say that these parentheses are \defn{contributed} by the site $j$.
Next, we match as many parentheses as possible following the \defn{standard parenthesis-matching algorithm}:
every time you find an opening parenthesis to the left of a closing one, with only frozen parentheses between them, you match these two parentheses and declare them frozen.
Here, we understand our sequence of parentheses as being written on a circle -- so the last opening parenthesis can be matched with the first closing parenthesis if all parentheses ``between them'' (\textit{i.e.}, to the right of the former and to the left of the latter) are frozen.

At the end of this algorithm, there will be exactly $\abs{r_1 - r_2}$ parentheses left unmatched; these unmatched parentheses will be opening if $r_1 > r_2$ and closing if $r_1 < r_2$. The unmatched parentheses will not depend on the order in which we perform the matching.
This can be easily seen by considering the parentheses as an infinite shifted-periodic Motzkin path (see Figure~\ref{fig:balanced_motzkin}), where a ``nothing'' corresponds to a flat step.
If $r_1 > r_2$, then the unmatched positions correspond to down steps in the Motzkin path such that the path never returns to the same height.
When $r_1 < r_2$, the unmatched positions correspond to the up steps where the Motzkin path first obtains a certain height.

We define \defn{$\SP(q_1, q_2)$} to be the following data:
\begin{itemize}
\item the sequence of parentheses obtained from $q_1$ and $q_2$;
\item the matching obtained by the algorithm; and
\item the correspondence between sites and \emph{closing} parentheses (\textit{i.e.}, which site contributes which closing parenthesis).
\end{itemize}

Note that each $j \in \ZZ/n\ZZ$ that belongs to both $q_1$ and $q_2$ contributes a pair of parentheses `$()$`, which can be immediately matched (to each other) at the beginning of the algorithm.
Thus, if we disregard these pairs of parentheses, the outcome of the algorithm does not change (apart from these pairs disappearing).
Hence, if we skip the sites $j \in q_1 \cap q_2$ in the definition of our sequence of parentheses, then the outcome of the algorithm (specifically, the set of unmatched parentheses) will be the same.
The sequence of parentheses obtained as above, but skipping these sites $j$, will be called the \defn{reduced parenthesis sequence}.

A site $j \in \ZZ/n\ZZ$ is \defn{unbalanced} if it contributes an unmatched parenthesis; otherwise we say $j$ is \defn{balanced}.
There are exactly $\abs{r_1 - r_2}$ unbalanced sites.

We construct $C' = (q'_1, q'_2)$ as follows.
For balanced $j$, we have $j \in q'_i$ if and only if $j \in q_i$ for $i=1,2$.
For unbalanced $j$, we have $j \in q'_i$ if and only if $j \in q_{3-i}$ for $i = 1,2$.
Note that $C$ and $C'$ have the same balanced sites, since the parentheses that were matched for $C$ remain matched in $C'$ and the remaining parentheses are all of one kind.
Hence, we have $C'' = C$. Also, we have $\wt(C) = \wt(C')$.

\begin{example}
\label{ex:parentheses_form}
Consider the configuration
\[
C = (q_1, q_2) = (\{1,2,5,6,8,11,13,14,17,18,19\}, \{2,12,15,16,18,19,20\})
\]
which is given pictorially in Figure~\ref{fig:balanced}.
Converting $C$ to a parenthesis sequence, we obtain
\[
( \quad () \quad \cdot \quad \cdot \quad ( \quad ( \quad \cdot \quad ( \quad \cdot \quad \cdot \quad ( \quad ) \quad ( \quad ( \quad ) \quad ) \quad ( \quad () \quad () \quad ),
\]
which results in 4 unpaired $($'s from positions $\{1,5,6,8\}$.
Therefore, we have $q_1' = q_1 \setminus \{1,5,6,8\}$ and $q_2' = q_2 \cup \{1,5,6,8\}$.
Alternatively, the dual configuration $C'$ is given by sliding all of the circles not in a shaded box from the upper level to the lower level.
\end{example}

\begin{figure}[t]
\[
\begin{tikzpicture}[scale=0.75]
  \def\sc{0.85}   
  \def\ll{2}   
  \def\l{1}   
  \draw[fill=blue!30] (1.5*\sc,\l-.5) rectangle(4.5*\sc,\ll+.5);
  \draw[fill=blue!30] (6.5*\sc,\l-.5) rectangle(7.5*\sc,\ll+.5);
  \draw[fill=blue!30] (8.5*\sc,\l-.5) rectangle(20.5*\sc,\ll+.5);
  \foreach \i in {1,2,5,6,8,11,13,14,17,18,19} { \draw[fill=white] (\i*\sc,\ll) circle (0.3); }
  \foreach \i in {3,4,7,9,10,12,15,16,20} { \draw[fill=white] (\i*\sc-.3,\ll-.3) rectangle +(0.6,+0.6); }
  \foreach \i in {2,12,15,16,18,19,20} { \draw[fill=white] (\i*\sc,\l) circle (0.3); }
  \foreach \i in {1,3,4,5,6,7,8,9,10,11,13,14,17} { \draw[fill=white] (\i*\sc-.3,\l-.3) rectangle +(0.6,+0.6); }
\end{tikzpicture}
\]
\caption{We draw a $\bigcirc$ in position $i$ in row $j$ corresponding to $i \in q_j$ and a $\square$ if $i \notin q_j$.
The shaded boxes mark the balanced sites.}
\label{fig:balanced}
\end{figure}

\begin{figure}[t]
\[
\begin{tikzpicture}[scale=0.45]
\draw[-,thin,red!50] (0,0) -- (24,0);
\draw[-,thin,red!50] (1,1) -- (24,1);
\draw[-,thin,red!50] (8,2) -- (24,2);
\draw[-,thin,red!50] (9,3) -- (24,3);
\draw[-,thin,red!50] (11,4) -- (24,4);
\draw[-,thin,red!50] (14,5) -- (16,5);
\draw[-,thin,red!50] (18,5) -- (22,5);
\draw[-,thick,dashed] (-1,1) -- (0,0);
\draw[-,dotted] (1,0) -- (1,1);
\draw[-,dotted] (3,0) -- (3,1);
\draw[-,dotted] (4,0) -- (4,1);
\draw[-,dotted] (5,0) -- (5,1);
\draw[-,dotted] (6,0) -- (6,2);
\draw[-,dotted] (7,0) -- (7,2);
\draw[-,dotted] (8,0) -- (8,2);
\draw[-,dotted] (9,0) -- (9,3);
\draw[-,dotted] (10,0) -- (10,4);
\draw[-,dotted] (11,0) -- (11,4);
\draw[-,dotted] (12,0) -- (12,5);
\draw[-,dotted] (13,0) -- (13,4);
\draw[-,dotted] (14,0) -- (14,5);
\draw[-,dotted] (15,0) -- (15,6);
\draw[-,dotted] (16,0) -- (16,5);
\draw[-,dotted] (17,0) -- (17,4);
\draw[-,dotted] (18,0) -- (18,5);
\draw[-,dotted] (20,0) -- (20,5);
\draw[-,dotted] (22,0) -- (22,5);
\draw[-,dotted] (23,0) -- (23,4);
\draw[-,thick] (0,0) -- (1,1) -- (2,2) -- (3,1) -- (4,1) -- (5,1) -- (6,2) -- (7,2) -- (8,2) -- (9,3) -- (10,4) -- (11,4) -- (12,5) -- (13,4) -- (14,5) -- (15,6) -- (16,5) -- (17,4) -- (18,5) -- (19,6) -- (20,5) -- (21,6) -- (22,5) -- (23,4);
\draw[-,thick,dashed] (23,4) -- (24,5);
\fill[blue] (0,0) circle (4pt);
\fill[blue] (5,1) circle (4pt);
\fill[blue] (8,2) circle (4pt);
\fill[blue] (9,3) circle (4pt);
\fill[blue] (23,4) circle (4pt);
\end{tikzpicture}
\]
\caption{The Motzkin path corresponding to the configuration in Figure~\ref{fig:balanced}.
The corresponding unbalanced up-steps are marked by dots.}
\label{fig:balanced_motzkin}
\end{figure}

Recall the notations introduced just before Lemma~\ref{le:dual}.

\begin{remark}
\label{rmk:balanced-dual-let}
Let $C = (q_1, q_2)$ be an $(r_1, r_2)$-configuration.
Thus, the dual configuration of the $(n-r_1, n-r_2)$-configuration $C^* = \tup{q_1^*, q_2^*}$ is obtained from the dual configuration $\tup{q_1', q_2'}$ of $C$ by
\begin{equation}
 (C^*)' = \bigl( (q_1')^*, (q_2')^* \bigr) .
 \label{eq.rmk:balanced-dual-let.dual}
\end{equation}
To see this, we compute the dual configurations of both $C$ and $C^*$ using the reduced parenthesis sequences. The sequence for $C^*$ is obtained from that for $C$ by reflecting both the parentheses (\textit{i.e.}, opening become closing and vice versa) and the sequence itself. Thus, the parenthesis matching algorithm proceeds in the same way.

In addition, applying Lemma~\ref{le:dual} twice, we obtain
\[
C(u) = q_2\bigl( q_1(u) \bigr) = q_2\bigl( (q^*_1(u^*))^* \bigr) = \bigl( q_2^*\bigl( q_1^*(u^*) \bigr) \bigr)^* = \bigl( C^*(u^*) \bigr)^* .
\]
\end{remark}

Fix $k \geq 1$.
In the following, we simplify our terminology and say that an MLQ
is a $k$-tuple of queues (without any restriction on their sizes).
We want to define an action of $\SymGp{k}$ on MLQs.
For each $i \in \ive{k-1}$, we define a map $\fraks_i \colon \set{\text{MLQs}} \to \set{\text{MLQs}}$ by
\[
\fraks_i(q_1, q_2, \dotsc, q_k) = (q_1, \dotsc, q_{i-1}, q'_i, q'_{i+1}, q_{i+2}, \dotsc, q_k),
\]
where $\tup{q'_i, q'_{i+1}}$ is the dual configuration of $\tup{q_i, q_{i+1}}$.
From the definition of a dual configuration, it is clear that $\fraks_i \fraks_i \qq = \qq$.
It is also clear from the definition that $\fraks_i \fraks_j \qq = \fraks_j \fraks_i \qq$ if $\abs{i - j} > 1$.
Thus, the following proposition shows that $\fraks_i$ defines an action of $\SymGp{k}$ on the set of all MLQs.

\begin{prop} \label{prop:braid}
We have
\[
\fraks_i \fraks_{i+1} \fraks_i \qq
	   = \fraks_{i+1} \fraks_i \fraks_{i+1} \qq
\]
for any MLQ $\qq = \tup{q_1, \dotsc, q_k}$ and any $i \in \set{1, 2, \ldots, k-2}$.
\end{prop}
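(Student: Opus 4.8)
The plan is to prove the identity directly from the definition of the dual configuration, after first normalizing the triple $\qq=(q_1,q_2,q_3)$. The first step is to dispose of \emph{inert} sites, i.e. sites $j$ lying in all three of $q_1,q_2,q_3$ or in none of them. In any of the pairwise dual-configuration operations that occur ($\fraks_1$ acting on a current pair $(Q_1,Q_2)$, $\fraks_2$ acting on $(Q_2,Q_3)$, etc.), such a site contributes either a matched pair \verb!()! — frozen at once and matched to itself — or no parenthesis at all; hence it is always balanced, never changes its queue-memberships, and never disturbs the matching of any other site's parentheses. Therefore we may delete all inert sites and relabel the rest, which strictly decreases $n$, and an induction on $n$ reduces us to the case $q_1\cup q_2\cup q_3=\ive{n}$ and $q_1\cap q_2\cap q_3=\emptyset$; equivalently, every site lies in exactly one or exactly two of the queues, so it carries a label in $\{\{1\},\{2\},\{3\},\{1,2\},\{1,3\},\{2,3\}\}$.

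Next I would track how these labels evolve along the two composites $\fraks_1\fraks_2\fraks_1$ and $\fraks_2\fraks_1\fraks_2$. A useful preliminary check: each $\fraks_i$ swaps two adjacent sizes, and since $s_1s_2s_1=s_2s_1s_2$ is the longest element of $\SymGp{3}$, both composites send the size triple $(r_1,r_2,r_3)=(\abs{q_1},\abs{q_2},\abs{q_3})$ to $(r_3,r_2,r_1)$, so the two final configurations at least have matching sizes. It then remains to show that the two final triples of queues coincide, and the cleanest bookkeeping uses the shifted-periodic Motzkin-path picture of Figure~\ref{fig:balanced_motzkin}: a pairwise operation is exactly the ``push the unmatched steps to one side'' move on such a path, and one can read off precisely which sites get moved (the unmatched up-steps, resp.\ down-steps) and how the height profile transforms. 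Comparing the bijections on sites induced by the two composites — a site that is pushed from $q_1$ through $q_2$ to $q_3$ on one side must be pushed the same way on the other — then yields the claim. Conceptually this is the Yang--Baxter relation for the combinatorial $R$-matrix on the fundamental type-$A$ crystals $\bigwedge^{r_i}\CC^{n}$ placed side by side (the sites playing the role of the $\mathfrak{sl}_n$-weight direction), for which there are classical proofs via associativity of the plactic monoid or confluence of jeu de taquin; the self-contained argument amounts to re-proving this particular instance.

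The main obstacle is the global, cyclic nature of the parenthesis-matching algorithm: whether a given site contributes a matched or an unmatched parenthesis depends on the \emph{entire} circular arrangement, so the braid relation cannot be verified site by site — local patterns that agree after one application of $\fraks$ may diverge after the next. The key lemma to isolate is thus a clean, iterable description of which sites are moved by a pairwise operation purely in terms of the Motzkin-path heights, robust enough to be composed three times. Once this is in hand, and once the wrap-around in the matching is dealt with by working throughout with the bi-infinite shifted-periodic Motzkin path rather than the circle, the verification that the two final triples coincide comes down to a careful case analysis organized around the six possible site-labels and how consecutive pairs of them interact.
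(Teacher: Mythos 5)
Your write-up is a plan rather than a proof: the actual content of the braid relation is never established. The reductions you do carry out (deleting sites lying in all three or in none of $q_1,q_2,q_3$, and checking that both composites reverse the size triple) are correct but peripheral. The crux --- showing that the two triples obtained from $\fraks_1\fraks_2\fraks_1$ and $\fraks_2\fraks_1\fraks_2$ coincide --- is deferred to an unstated ``key lemma'' describing which sites move in terms of Motzkin-path heights, plus ``a careful case analysis organized around the six possible site-labels and how consecutive pairs of them interact.'' Neither is supplied, and your own (correct) observation that balancedness is a global property of the circular arrangement, so that ``local patterns that agree after one application of $\fraks$ may diverge after the next,'' is exactly the reason a case analysis on consecutive label pairs cannot work as stated; you never resolve this tension. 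Likewise, the remark that the statement ``is'' the Yang--Baxter relation for combinatorial $R$-matrices on $\bigwedge^{r_i}\CC^n$ does not close the gap, because you neither prove that the dual-configuration map agrees with the combinatorial $R$-matrix (that identification is a separate nontrivial statement, Proposition~\ref{prop:dual_is_R} in the paper) nor give a self-contained replacement for the crystal-theoretic fact.

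For comparison, the paper does not prove the braid relation from scratch. It encodes an MLQ as a word $\word(\qq)$ over $\set{\circ,1,\dotsc,k}$ and proves the intertwining $\word(\fraks_i\qq)=\sigma_i\bigl(\word(\qq)\bigr)$ with Lothaire's operators $\sigma_i$, so that the braid relation is imported from \cite[Ch.~5, (5.6.3)]{Loth} (or van Leeuwen). The cyclic-versus-linear issue that you flag is handled there by a short but essential argument you are missing: when $\abs{q_i}\neq\abs{q_{i+1}}$ there exists an unbalanced site, both $\fraks_i$ and $\sigma_i$ commute with cyclic rotation, and after rotating an unbalanced site to position $1$ no matched pair wraps around the circle, so the circular and linear matching algorithms literally coincide (the equal-size case being trivial since $\fraks_i$ is then the identity). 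If you want a proof along your lines, you should either supply that rotation argument and then cite the known braid relation for the $\sigma_i$, or genuinely carry out the Motzkin-path analysis of a triple application --- which is a substantial piece of combinatorics that your proposal only names.
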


\begin{proof}
We shall deduce the claim from~\cite[Ch.~5, (5.6.3)]{Loth}.

Let $A$ be the $\tup{k+1}$-element set $\set{\circ, 1, 2, \ldots, k}$.
Let $A^*$ denote the set of all words on the alphabet $A$
(of any finite length).

We construct a $k \times n$-matrix $M_{\qq} \in A^{k \times n}$ from $\qq$ by setting the $\tup{i, j}$-th
entry to $i$ if $j \in q_i$ and $\circ$ otherwise.
We then construct a word $\word(\qq) \in A^*$ by reading $M_{\qq}$ from top-to-bottom, left-to-right (\textit{i.e.}, column by column).
For example, if $n = 5$, $k = 3$, then
\begin{align*}
\qq = \tup{\set{1, 3}, \set{2},
\set{2, 5}}
& \longleftrightarrow
 M_{\qq}
 =
 \begin{array}{ccccc}
  1 & \circ & 1 & \circ & \circ \\
  \circ & 2 & \circ  & \circ & \circ \\
  \circ & 3  & \circ & \circ & 3
 \end{array}
 \\ & \longrightarrow
 \word(\qq) = 1 \circ \circ \circ 2 3 1 \circ \circ \circ \circ \circ \circ \circ 3 .
\end{align*}
Clearly, an MLQ $\qq$ is uniquely determined by $\word(\qq)$ since $n$ is fixed.
In other words, the map $\word \colon \set{\text{MLQs}} \to A^*$
is injective.

Following~\cite[\S5.5]{Loth}, for each $i \in \set{1, 2, \ldots, k-1}$, we give an operator
$\sigma_i \colon A^* \to A^*$.
This operator $\sigma_i$ acts on a word $p \in A^*$ 
as follows:
\begin{enumerate}
 \item Treat all letters $i$ in $p$ as opening parentheses `$($',
       all letters $i+1$ as closing parentheses `$)$',
       and consider all other letters to be frozen.
       Now, match as many parentheses as possible
       according to the standard parenthesis-matching algorithm,
       but treating the word as a word in the usual sense
       (\textit{i.e.}, not written on a circle, but having a beginning
       and an end).
       The result is independent of the choices in the algorithm,
       and is always a word whose non-frozen
       part (\textit{i.e.}, the word obtained by removing
       all frozen letters) is
       \begin{equation}
       \label{eq:reduced_parens}
       \underbrace{))\cdots)}_{a\text{ parentheses}}
       \underbrace{((\cdots(}_{b\text{ parentheses}}
       \end{equation}
       for some integers $a, b \geq 0$.
 \item Now, replace the non-frozen part~\eqref{eq:reduced_parens} by
       \[
       \underbrace{))\cdots)}_{b\text{ parentheses}}
       \underbrace{((\cdots(}_{a\text{ parentheses}}
       \]
       while keeping all frozen letters in their places.
       The resulting word is $\sigma_i p$.
\end{enumerate}
From~\cite[Eq.~(5.6.3)]{Loth}, these operators
$\sigma_i$ satisfy
\begin{equation}
 \sigma_i \sigma_{i+1} \sigma_i
 = \sigma_{i+1} \sigma_i \sigma_{i+1}
 \label{pf.prop:braid.loth-eq}
\end{equation}
for all $i \in \set{1, 2, \ldots, k-2}$.
(See Remark~\ref{rmk:braid.leeuwen} for another reference for this identity.)

Let $\zeta \colon \mcW_n \to \mcW_n$ be the cyclic shift map that sends each word $w_1 w_2 \cdots w_n$ to $w_2 w_3 \cdots w_n w_1$.
We also abuse the notation $\zeta$ for the map that sends each queue $q$ to the queue $\zeta q = \set{ i - 1 \mid i \in q }$ (recall that $0 = n$ as sites).
This map $\zeta$ shall act on MLQs entrywise (since an MLQ is a tuple of queues).
Clearly,
\begin{equation}
 \word(\zeta \qq) = \zeta^k \word(\qq)
 \label{pf.prop:braid.word-zeta}
\end{equation}
for any MLQ $\qq = (q_1, \ldots, q_k)$.

Now, we claim that
\begin{equation}
 \word(\fraks_i \qq) = \sigma_i\bigl( \word(\qq) \bigr)
 \qquad \text{ for each MLQ } \qq \text{ and each } i .
 \label{pf.prop:braid.inter}
\end{equation}
Note that it is sufficient to show that $\word(\fraks_1 \qq) = \sigma_1\bigl( \word(\qq) \bigr)$ for $\qq = (q_1, q_2)$
(because the definition of $\sigma_i$ only relies on the letters $i$ and $i+1$, while all other letters stay in their places and have no effect).

Thus, let $\qq = (q_1, q_2)$.
We want to show $\word(\fraks_1 \qq) = \sigma_1\bigl( \word(\qq) \bigr)$.
If $\abs{q_1} = \abs{q_2}$, then $\fraks_1 \qq = \qq$ by the definition of $\fraks_1$.
Moreover, we have $\sigma_1\bigl( \word(\qq) \bigr) = \word(\qq)$ in this case, since the word $\word(\qq)$ has as many letters $1$ as it has letters $2$, but the map $\sigma_1$ leaves such words unchanged.
Hence, the claim holds when $\abs{q_1} = \abs{q_2}$.
Thus, we assume that $\abs{q_1} \neq \abs{q_2}$.
Therefore, there exists at least one unbalanced site for the configuration $\qq = (q_1, q_2)$.

The operator $\fraks_1$ commutes with the cyclic shift map $\zeta$ on MLQs because the standard parenthesis-matching algorithm used in the definition of dual configurations is clearly invariant under cyclic shift.
The operator $\sigma_1$ commutes with the cyclic shift map $\zeta$ on words in $A^*$ by~\cite[Prop.~5.6.1]{Loth}.
Hence, and because of~\eqref{pf.prop:braid.word-zeta}, we can apply $\zeta$ to $\qq$ any number of times without loss of generality.
By our assumption that at least one unbalanced site $j$ exists, we can thus assume without loss of generality that the site $1$ is unbalanced for the configuration $\qq = (q_1, q_2)$, since otherwise we can cyclically shift site $j$ until it is $1$.
Thus, in the standard parenthesis-matching algorithm used in the construction of the dual configuration $\fraks_1 \qq$, the site $1$ induces a parenthesis that stays unmatched throughout the algorithm.
Hence, no two parentheses that get matched to each other during this algorithm have the parenthesis from site $1$ lying between them.
Therefore, it does not matter whether we regard the sequence of parentheses as written on a cycle or on a straight line (as the wrapping-around is not used in the matching process).
Hence, the standard parenthesis-matching algorithm used in the construction of the dual configuration $\fraks_1 \qq$ proceeds exactly the same way as the standard parenthesis-matching algorithm used when applying $\sigma_1$ to $\word(\qq)$ (ignoring the letters that are neither $i$ nor $i+1$). That is, the parentheses that get matched in the former are the same as those that get matched in the latter.
Now, recall that $\fraks_1$ merely toggles the unbalanced sites between $q_1$ and $q_2$, whereas $\sigma_1$ switches the number of unmatched `$)$'s with the number of unmatched `$($'s (which, in the case of $\word(\qq)$, boils down to just turning each unmatched `$)$' into a `$($' or vice versa, because one of these numbers is $0$).
Since the sites of the unmatched parentheses are precisely the unbalanced sites, this shows that the two maps agree -- that is, we have $\word(\fraks_1 \qq) = \sigma_1\bigl( \word(\qq) \bigr)$.
This proves~\eqref{pf.prop:braid.inter}.

The equality~\eqref{pf.prop:braid.inter} can be rewritten as the
commutative diagram
\[
\xymatrix{
 \set{\text{MLQs}} \ar[r]^{\fraks_i} \ar[d]_{\word} & \set{\text{MLQs}} \ar[d]^{\word} \\
 A^* \ar[r]_{\sigma_i} & A^*
}
\]
for all $i \in \set{1, 2, \dotsc, k-1}$.
In view of the injectivity of the map $\word \colon \set{\text{MLQs}} \to A^*$,
this diagram allows us to translate~\eqref{pf.prop:braid.loth-eq} into
$\fraks_i \fraks_{i+1} \fraks_i = \fraks_{i+1} \fraks_i \fraks_{i+1}$.
\end{proof}

\begin{example}
We perform the parenthesis-matching algorithm to the parenthesis sequence from Example~\ref{ex:parentheses_form}:
\[
( \quad \underbracket{( \quad )} \quad ( \quad ( \quad ( \quad \underbracket{( \quad )} \quad \underbracket{( \quad \underbracket{( \quad )} \quad )} \quad \underbracket{( \quad \underbracket{( \quad )} \quad \underbracket{( \quad )} \quad )}.
\]
As noted in the proof of Proposition~\ref{prop:braid}, none of the unpaired parentheses are nested inside of a matched pair of parentheses.
Hence, when we consider the corresponding dual configuration, we obtain the parenthesis sequence and matching
\[
) \quad \underbracket{( \quad )} \quad ) \quad ) \quad ) \quad \underbracket{( \quad )} \quad \underbracket{( \quad \underbracket{( \quad )} \quad )} \quad \underbracket{( \quad \underbracket{( \quad )} \quad \underbracket{( \quad )} \quad )}.
\]
\end{example}

\begin{remark}
Our letters $1, \ldots, k$ correspond to the letters $a_k, \ldots, a_1$ in~\cite{Loth},
since the definition of $\sigma_i$ in~\cite{Loth} involves $a_i$ rather
than $i+1$ as closing parenthesis and $a_{i+1}$ rather than $i$ as opening one.
Also,~\cite{Loth} does not include the letter $\circ$ in the alphabet,
but this makes no difference to the proof, since all letters $\circ$ are always frozen.
\end{remark}

\begin{remark}
The operator $\sigma_i$ is essentially a combination of co-plactic operators.
Moreover, it corresponds to the Weyl group action on a tensor product of crystals~\cite{BS17}.
Note that the bracketing rule given above is precisely the usual signature rule (see, \textit{e.g.},~\cite[Sec.~2.4]{BS17} for a description) for computing tensor products.
This arises from considering the MLQ as a binary $m \times n$ matrix and the natural $(\mathfrak{sl}_m \oplus \mathfrak{sl}_n)$-action.
\end{remark}

\begin{remark}
\label{rmk:braid.leeuwen}
The identity~\eqref{pf.prop:braid.loth-eq} can also be derived from \cite[Lemma 2.3]{vanLeeuwen-dc}. Indeed, to each word $p = p_1 p_2 \cdots p_d \in A^*$, we assign a $\tup{k+1} \times d$-matrix $B_p$ (a ``binary matrix'' in the parlance of \cite{vanLeeuwen-dc}), whose $j$-th column has an entry $1$ in its $k+1-p_j$-th position (we are treating $\circ$ as $0$ here) and entries $0$ everywhere else. Clearly, $p$ is uniquely determined by $B_p$. Now, van Leeuwen notices in \cite[before Proposition 1.3.5]{vanLeeuwen-dc} that the ``upward'' and ``downward'' moves between two consecutive rows of a binary matrix correspond to changing unmatched parentheses in a certain parenthesis sequence. When the binary matrix is $B_p$ and the two consecutive rows are the rows $k-i$ and $k-i+1$, this latter sequence is exactly the sequence of parentheses constructed in our definition of $\sigma_i$. Thus, applying the operator $\sigma_i$ to a word $p \in A^*$ is tantamount to applying van Leeuwen's operator $\sigma_{k-i}^{\updownarrow}$ to the binary matrix $B_p$. Hence,~\eqref{pf.prop:braid.loth-eq} follows from the relation $\sigma_{k-i}^{\updownarrow} \sigma_{k-(i+1)}^{\updownarrow} \sigma_{k-i}^{\updownarrow} = \sigma_{k-(i+1)}^{\updownarrow} \sigma_{k-i}^{\updownarrow} \sigma_{k-(i+1)}^{\updownarrow}$ between the latter operators on binary matrices, but the latter relation is part of \cite[Lemma 2.3]{vanLeeuwen-dc}.
\end{remark}

Next, we define two queues corresponding to a word $w \in \mcW_n$ of type $\mm$.
Namely, for $k \in \set{p_i(\mm) \mid i \geq 0}$, let $[w]_k$ denote the set of the indices $i \in \ive{n}$
corresponding to the $k$ smallest letters $w_i$ of $w$.

Our proof of Theorem~\ref{thm:permutation} will rely on a connection between dual configurations and the action of queues on words.
We begin with a string of lemmas.

\begin{lemma} \label{lem:SL.reconstruct}
Let $w, w' \in \mcW_n$ be two words of the same type $\mm$.
Assume that $[w]_k = [w']_k$ for each $k \in \set{p_i(\mm) \mid i \geq 0}$.
Then, $w = w'$.
\end{lemma}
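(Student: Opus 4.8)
The plan is to translate the hypothesis about the sets $[w]_k$ into a statement about the locations of the small letters, and then read off each individual letter of $w$ (and of $w'$) from those location sets.

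First I would record the following observation: for each $j \geq 0$ we have
\[
[w]_{p_j(\mm)} = \set{ i \in \ive{n} \mid w_i \leq j }.
\]
Indeed, since $w$ has type $\mm$, it has exactly $p_j(\mm) = m_1 + m_2 + \cdots + m_j$ letters that are $\leq j$, and all remaining letters are $> j$; hence the $p_j(\mm)$ smallest letters of $w$ are precisely the letters $\leq j$ (there is no ambiguity, since there is no tie at the threshold). Thus the index set $[w]_{p_j(\mm)}$ of the $p_j(\mm)$ smallest letters equals the set of sites carrying a letter $\leq j$. The same identity holds for $w'$ (which has the same type $\mm$).

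Next, fix a site $i \in \ive{n}$ and an integer $j \geq 1$. Then $w_i = j$ holds if and only if $w_i \leq j$ but $w_i \nleq j-1$, i.e., if and only if $i \in [w]_{p_j(\mm)}$ and $i \notin [w]_{p_{j-1}(\mm)}$. Since $p_j(\mm)$ and $p_{j-1}(\mm)$ both lie in $\set{p_i(\mm) \mid i \geq 0}$, the hypothesis gives $[w]_{p_j(\mm)} = [w']_{p_j(\mm)}$ and $[w]_{p_{j-1}(\mm)} = [w']_{p_{j-1}(\mm)}$, so this condition holds for $w$ exactly when it holds for $w'$. Therefore $w_i = j \iff w'_i = j$ for every $j \geq 1$. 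As every letter of $w$ and of $w'$ is a positive integer, this forces $w_i = w'_i$ for each $i$, whence $w = w'$.

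There is no genuine obstacle here: the argument is a direct unwinding of the definitions. The only point requiring a moment's care is that $[w]_k$ is only defined when $k$ is one of the partial sums $p_i(\mm)$ — but that is precisely the set of values of $k$ for which we invoke the hypothesis, so the argument is legitimate.
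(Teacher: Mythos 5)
Your proof is correct and takes essentially the same route as the paper: both identify the sites carrying the letter $j$ with the difference set $[w]_{p_j(\mm)} \setminus [w]_{p_{j-1}(\mm)}$ and conclude letter by letter. Your extra preliminary observation that $[w]_{p_j(\mm)}$ is the set of sites with letters $\leq j$ just makes explicit what the paper uses implicitly.
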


\begin{proof}
Fix some $i \geq 1$.
The sites containing the letter $i$ in $w$ are the elements of $[w]_{p_i(\mm)} \setminus [w]_{p_{i-1}(\mm)}$
(since $w$ has type $\mm$).
Likewise,
the sites containing the letter $i$ in $w'$ are the elements of $[w']_{p_i(\mm)} \setminus [w']_{p_{i-1}(\mm)}$.
Since our assumption ($[w]_k = [w']_k$) yields $[w]_{p_i(\mm)} \setminus [w]_{p_{i-1}(\mm)}
= [w']_{p_i(\mm)} \setminus [w']_{p_{i-1}(\mm)}$,
we conclude that these are the same sites.
Since this holds for all letters $i$, we have $w = w'$.
\end{proof}

\begin{lemma} \label{lem:SL.dual.1}
Let $\tup{q_1, q_2}$ be a configuration with $\abs{q_1} \leq \abs{q_2}$.
Let $i \in q_1$.
Let $j$ be the first site weakly to the right of $i$ that belongs to $q_2$.
Then, $\SP(q_1 \setminus \set{i}, q_2 \setminus \set{j})$ is obtained from $\SP(q_1, q_2)$ by removing a pair of matched parentheses.\footnote{The matching of all other parentheses remains the same, and so does the correspondence between sites and closing parentheses (except for the one we removed).}
Moreover, the closing parenthesis of that pair is contributed by $j \in q_2$.
\end{lemma}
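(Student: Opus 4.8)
The plan is to argue entirely in terms of the parenthesis sequences. First I would record how the parenthesis sequence of $\SP(q_1 \setminus \set{i}, q_2 \setminus \set{j})$ differs from that of $\SP(q_1, q_2)$. Only the sites $i$ and $j$ change their membership in the two queues, so only the parentheses they contribute can change, and a short case analysis on whether $i \in q_2$ (and hence whether $j = i$) settles it. If $i \in q_2$, then $j = i$ (since $i$ is itself weakly to the right of $i$ and lies in $q_2$); here site $i$ contributes $()$ in $\SP(q_1, q_2)$ and nothing in the new $\SP$, so the effect is to delete the matched pair $()$ contributed by $i = j$. If $i \notin q_2$, then $j \neq i$; site $i$ contributes $($ in $\SP(q_1,q_2)$ and nothing in the new one, while site $j$ (which is in $q_2$, and possibly in $q_1$) contributes $)$ or $()$ in $\SP(q_1,q_2)$ and respectively nothing or $($ in the new one, i.e.\ it loses exactly its closing parenthesis. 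So in all cases the new parenthesis sequence is the old one with the opening parenthesis contributed by $i$ deleted and one closing parenthesis contributed by $j$ deleted.

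Next I would identify the matched pair $P_j$ of $\SP(q_1,q_2)$ whose closing parenthesis is the one contributed by $j$, and show that deleting $P_j$ from the sequence yields the very same shortened string. The key observation is that, by minimality of $j$, every site strictly to the right of $i$ and strictly to the left of $j$ on the cycle lies outside $q_2$, hence contributes either $($ or nothing; together with site $i$ these opening parentheses form a maximal run $R$ of $($'s standing immediately to the left of the closing parenthesis contributed by $j$ (when $i = j$, the run is just the single $($ of the pair $()$ contributed by $i$). Because there is no closing parenthesis between $i$ and $j$, the standard matching must pair the closing parenthesis of $j$ with one of the opening parentheses in $R$; let $k \in q_1$ be the site contributing that opening parenthesis, so $P_j$ consists of the $($ contributed by $k$ and the $)$ contributed by $j$. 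Since the $($ contributed by $i$ and the $($ contributed by $k$ both lie inside the run $R$ of identical characters, deleting either of them (together with the $)$ of $j$) produces the same parenthesis string; hence deleting $P_j$ from $\SP(q_1,q_2)$ produces exactly the parenthesis sequence of $\SP(q_1 \setminus \set{i}, q_2 \setminus \set{j})$.

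It then remains to check that the other two pieces of the $\SP$ data transform by the same deletion. For the matching: deleting a matched pair from a (cyclic) parenthesis sequence leaves the remaining pairs equal to the standard matching of the shortened sequence — this follows from the order-independence of the matching algorithm already noted in the text, and in the case $\abs{q_1} < \abs{q_2}$ one may also cut the cycle at an unmatched closing parenthesis to reduce it to the familiar linear statement. For the correspondence between sites and closing parentheses: after deleting $P_j$ the surviving closing parentheses are exactly those contributed by $q_2 \setminus \set{j}$, each still carrying its own $)$, which is precisely the correspondence defining $\SP(q_1 \setminus \set{i}, q_2 \setminus \set{j})$. Combining the three parts yields the lemma, and the closing parenthesis of the removed pair $P_j$ is contributed by $j \in q_2$ by construction.

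I expect the main obstacle to be purely bookkeeping rather than conceptual: handling the cyclic wrap-around cleanly (the arc from $i$ to $j$ may pass through site $n$), treating the degenerate case $i = j$ uniformly with the rest, and writing down a clean justification of the ``deleting a matched pair preserves the rest of the matching'' fact in the circular setting instead of taking it for granted.
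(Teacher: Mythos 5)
Your proof is correct and follows essentially the same route as the paper's: by minimality of $j$ only opening parentheses lie between the parenthesis contributed by $i$ and the closing parenthesis contributed by $j$, so the latter's matched partner sits in that run of identical symbols, and deleting the opening parenthesis of $i$ yields the same string (hence, since $\SP$ records sites only for closing parentheses, the same data) as deleting the partner together with $j$'s closing parenthesis. The differences are cosmetic --- your explicit case split on $i=j$ and your explicit check that removing a matched pair restricts the matching --- though note the harmless slip that when $j\in q_1\cap q_2$ the run immediately to the left of $j$'s closing parenthesis also contains $j$'s own opening parenthesis, so it is this full run (not quite your $R$) in which the partner lies.
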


\begin{proof}
The assumption $\abs{q_1} \leq \abs{q_2}$ implies that all closing parentheses in $\SP(q_1, q_2)$ are matched.
In particular, the closing parenthesis $\gamma$ contributed by $j \in q_2$ is matched.
Let $\alpha$ be the opening parenthesis contributed by $i \in q_1$.
All parentheses between $\alpha$ and $\gamma$ are opening (by the minimality of $j$).
We know that $\gamma$ is matched to an opening parenthesis $\beta$.
Hence, the parenthesis sequence $\SP(q_1, q_2)$ between $\alpha$ and $\gamma$ (inclusive) is
\[
\begin{array}{cccccc}
( & ( & ( & \cdots & ( & ) \\
\alpha & & & & \beta & \gamma,
\end{array}
\]
where $\alpha$ and $\beta$ may be the same parenthesis.
Therefore $\SP(q_1 \setminus \set{i}, q_2 \setminus \set{j})$ is obtained from $\SP(q_1, q_2)$ by removing $\alpha$ and $\gamma$.
Yet this is tantamount to removing $\beta$ and $\gamma$ from $\SP(q_1, q_2)$, since both times we are simply shortening the string of opening parentheses before $\gamma$ by $1$ (and since we do not store the positions of the opening parentheses).
\end{proof}

\begin{lemma} \label{lem:SL.dual.3}
Let $u \in \mcW_n$ be a word of type $\mm$.
Let $k = p_{\alpha}(\mm)$ for some $\alpha$.
Let $q$ be a queue such that $k \leq \abs{q}$.
Each site $j \in \ive{q(u)}_k$ contributes a matched closing parenthesis to $\SP(\ive{u}_k, q)$.
\end{lemma}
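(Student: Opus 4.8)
The plan is to build $\SP([u]_k, q)$ up from a chain of smaller parenthesis-systems, one for each step of Phase~II in the algorithm computing $q(u)$, and to use Lemma~\ref{lem:SL.dual.1} to see that every such step simply adjoins a matched pair whose closing parenthesis is contributed by the site produced at that step.

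First I would fix notation: set $v := q(u)$, and let $t$ be the integer of Remark~\ref{rmk:t-splitting}, so that $p_{t-1}(\mm) \le \abs{q} \le p_t(\mm)$ and $q = \set{ j \in \ive{n} \mid v_j \le t }$. Fix a permutation $\tup{i_1, i_2, \dots, i_n}$ of $\tup{1, 2, \dots, n}$ with $u_{i_1} \le u_{i_2} \le \dots \le u_{i_n}$, and for each $\kappa \le \abs{q}$ let $j_\kappa \in q$ be the site chosen at the step $i = i_\kappa$ of Phase~II. Since $k = p_\alpha(\mm)$ for some $\alpha$, there is no tie across the $k$-th smallest letter of $u$ (we have $u_{i_k} \le \alpha < u_{i_{k+1}}$ when $k < n$), so $[u]_k = \set{i_1, \dots, i_k}$. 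Combining the type formula~\eqref{eq:queue_type_change} with the fact that Phase~I sets only letters $> t$ while Phase~II step $\kappa$ sets $v_{j_\kappa} = u_{i_\kappa}$, I would check that $[q(u)]_k = \set{j_1, \dots, j_k}$ (splitting off the trivial subcase $k = \abs{q}$, where $[q(u)]_k = q$). In particular $j_\kappa \in q$ for each $\kappa \le k$, so $j_\kappa$ does contribute a closing parenthesis to $\SP([u]_k, q)$; the point is to show it is matched.

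Next, for $\kappa = 1, 2, \dots, k$, I would apply Lemma~\ref{lem:SL.dual.1} to the configuration $\bigl( \set{i_\kappa, i_{\kappa+1}, \dots, i_k},\ q \setminus \set{j_1, \dots, j_{\kappa-1}} \bigr)$ with the distinguished element $i_\kappa$: by the definition of Phase~II, the first site weakly to the right (cyclically) of $i_\kappa$ lying in $q \setminus \set{j_1, \dots, j_{\kappa-1}}$ is exactly $j_\kappa$, and the size hypothesis $\abs{\set{i_\kappa, \dots, i_k}} = k - \kappa + 1 \le \abs{q} - (\kappa-1) = \abs{q \setminus \set{j_1, \dots, j_{\kappa-1}}}$ holds because $k \le \abs{q}$. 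Lemma~\ref{lem:SL.dual.1} then tells me that $\SP\bigl( \set{i_{\kappa+1}, \dots, i_k},\ q \setminus \set{j_1, \dots, j_\kappa} \bigr)$ is obtained from $\SP\bigl( \set{i_\kappa, \dots, i_k},\ q \setminus \set{j_1, \dots, j_{\kappa-1}} \bigr)$ by deleting a matched pair whose closing parenthesis is the one contributed by $j_\kappa$. Reading this chain of $k$ deletions backwards, from $\kappa = k$ down to $\kappa = 1$, and using the footnote of Lemma~\ref{lem:SL.dual.1} — adjoining the deleted pair leaves the matching of all other parentheses and the site-to-closing-parenthesis correspondence untouched — I would conclude that in $\SP([u]_k, q) = \SP\bigl( \set{i_1, \dots, i_k}, q \bigr)$ the closing parenthesis contributed by each $j_\kappa$ ($\kappa \le k$) is still matched. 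Since $[q(u)]_k = \set{j_1, \dots, j_k}$, this is precisely the assertion of the lemma.

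The main obstacle I anticipate is the identification step in the second paragraph: verifying cleanly that $[u]_k = \set{i_1, \dots, i_k}$ and $[q(u)]_k = \set{j_1, \dots, j_k}$. This rests on $k$ being of the shape $p_\alpha(\mm)$ (so the $k$-th and $(k+1)$-st smallest letters of $u$ are genuinely different) and on the explicit description of the type of $q(u)$ from Remark~\ref{rmk:t-splitting}, and it is intertwined with the observation — also needed to match the hypotheses of Lemma~\ref{lem:SL.dual.1} — that the site produced at Phase~II step $\kappa$ is the first site weakly to the right of $i_\kappa$ once the earlier sites $j_1, \dots, j_{\kappa-1}$ have been removed from $q$. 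Once these identifications are in place, the remaining induction over the $k$ deletions is routine.
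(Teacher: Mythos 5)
Your proposal is correct and follows essentially the same route as the paper's proof: fix an ordering with $u_{i_1}\leq\cdots\leq u_{i_n}$, identify $[q(u)]_k=\set{j_1,\dots,j_k}$, and then iterate Lemma~\ref{lem:SL.dual.1} along the chain of configurations $\bigl(\set{i_\kappa,\dots,i_k},\,q\setminus\set{j_1,\dots,j_{\kappa-1}}\bigr)$, using that Phase~II sends $i_\kappa$ to the first available site $j_\kappa$ of the depleted queue. The paper packages the same iteration as a forward induction (its Claim on $\SP(q_1^{(p)},q_2^{(p)})$ being $\SP([u]_k,q)$ minus $p$ matched pairs), whereas you read the chain of deletions backwards; the two are equivalent, and your extra care in verifying $[u]_k=\set{i_1,\dots,i_k}$, $[q(u)]_k=\set{j_1,\dots,j_k}$ and the size hypothesis of Lemma~\ref{lem:SL.dual.1} is exactly what the paper uses implicitly.
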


\begin{proof}
Choose a permutation $(i_1, i_2, \dotsc, i_n)$ of $(1, 2, \dotsc, n)$ such that $u_{i_1} \leq u_{i_2} \leq \cdots \leq u_{i_n}$.
Use this permutation to construct $q(u)$ as per the definition.
For each $p \in \ive{n}$, let $j_p$ denote the site that is set while processing $i = i_p$ when constructing $q(u)$.
Thus, $(q(u))_{j_p} = u_{i_p}$ whenever $p \leq \abs{q}$ (since these entries of $q(u)$ are set in Phase~II),
and $(q(u))_{j_p} = u_{i_p} + 1$ otherwise (Phase~I).
Therefore, $\ive{q(u)}_k = \set{j_1, j_2, \ldots, j_k}$ (since $k \leq \abs{q}$).
Thus, we only need to prove that each of the sites $j_1, j_2, \ldots, j_k$ contributes a matched closing parenthesis to $\SP(\ive{u}_k, q)$.
Denote $q_1^{(p)} := q_1 \setminus \{i_1, i_2, \dotsc, i_p\}$ and $q_2^{(p)} := q_2 \setminus \{j_1, j_2, \dotsc, j_p\}$.

\begin{claim}
\label{claim:matching_SP}
Let $p \in \set{0, 1, \ldots, k}$.
Then, $\SP(q_1^{(p)}, q_2^{(p)})$ is obtained from $\SP(q_1 , q_2)$ by removing $p$ pairs of matched parentheses.
The closing parentheses of these $p$ pairs are contributed by $j_1, j_2, \ldots, j_p$.
\end{claim}

\begin{subproof}
We shall prove Claim~\ref{claim:matching_SP} by induction on $p$.
The base case ($p = 0$) is obvious.
For the induction step, we assume that Claim~\ref{claim:matching_SP} holds for $p-1$.
The site $j_p$ is chosen in Phase~II (since $p \leq k \leq \abs{q}$), and therefore $j_p$ is the first site weakly to the right of $i_p$ that belongs to $q_2^{(p-1)}$.
Hence, Lemma~\ref{lem:SL.dual.1}
(applied to $q_1^{(p-1)}$, $q_2^{(p-1)}$, $i_p$ and $j_p$ instead of $q_1$, $q_2$, $i$ and $j$)
implies that $\SP(q_1^{(p)}, q_2^{(p)})$ is obtained from $\SP(q_1^{(p-1)}, q_2^{(p-1)})$ by removing a pair of matched parentheses.
Moreover, the closing parenthesis of that pair is contributed by $j_p \in q_2^{(p-1)}$.
Thus, Claim~\ref{claim:matching_SP} follows from the induction hypothesis.
This completes the induction step.
\end{subproof}

The above claim applied to $p = k$ shows that $\SP(q_1^{(k)}, q_2^{(k)})$ is obtained from $\SP(q_1 , q_2)$ by removing $k$ pairs of matched parentheses, and that the closing parentheses of these $k$ pairs are contributed by $j_1, j_2, \ldots, j_k$.
Hence, each of the sites $j_1, j_2, \ldots, j_k$ contributes a matched closing parenthesis to $\SP(\ive{u}_k, q)$.
\end{proof}

\begin{prop} \label{prop:SL.dual}
Let $u \in \mcW_n$ be a word of type $\mm$.
Let $k = p_{\alpha}(\mm)$ for some $\alpha$.
Let $q$ be a queue.
The dual configuration of $\tup{ [u]_k , q }$ has the form $\tup{q^{\dagger} , [q(u)]_k }$, where $q^{\dagger}$ is some queue.
\end{prop}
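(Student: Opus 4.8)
The plan is to unfold the definition of the dual configuration of $\tup{[u]_k, q}$ and show that its second component is exactly $[q(u)]_k$; the first component can then just be named $q^{\dagger}$, so no separate work is needed for it. Write $C = \tup{q_1, q_2}$ with $q_1 = [u]_k$ and $q_2 = q$, and let $C' = \tup{q_1', q_2'}$ be its dual. We must prove $q_2' = [q(u)]_k$. The key structural fact is that $C$ and $C'$ share the same set of balanced sites, and at a balanced site $j$ we have $j \in q_i'$ iff $j \in q_i$, while at an unbalanced site $j$ we swap: $j \in q_i'$ iff $j \in q_{3-i}$. So to identify $q_2'$ it suffices to understand which sites are balanced and, among the unbalanced ones, where the unmatched parentheses fall.

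First I would reduce to the case $\abs{q_1} \leq \abs{q_2}$, i.e. $k \leq \abs{q}$. If $k > \abs{q}$, I would invoke contragredient duality (Lemma~\ref{le:dual}) together with Remark~\ref{rmk:balanced-dual-let}: passing from $u$ to $u^*$ and $q$ to $q^*$ turns $[u]_k$ into (a reflection of) the complementary queue, swaps the two rows of the configuration, and by~\eqref{eq.rmk:balanced-dual-let.dual} the dual of $C^*$ is $\bigl((q_1')^*, (q_2')^*\bigr)$; since $[q(u)]_k$ corresponds under $*$ to $[q^*(u^*)]_{n-k}$-type data (using $C(u) = (C^*(u^*))^*$ from Remark~\ref{rmk:balanced-dual-let}), the case $k > \abs{q}$ follows formally from the case $k \le \abs{q}$ applied to $u^*, q^*$. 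So assume $k \le \abs{q}$. Then all closing parentheses of $\SP(q_1, q_2)$ are matched, so the only unmatched parentheses are opening ones (contributed by sites in $q_1$), and the unbalanced sites are all in $q_1 \setminus q_2$. Consequently no site of $q_2$ is unbalanced, which already tells us $q_2' = (q_2 \setminus \set{\text{unbalanced sites in }q_2}) \cup \set{\text{unbalanced sites in }q_1} = q_2 \cup U$, where $U \subseteq q_1 \setminus q_2$ is the set of unbalanced sites. Wait — more carefully: $q_2' $ agrees with $q_2$ on balanced sites, and on an unbalanced site $j$ we have $j \in q_2'$ iff $j \in q_1$; since every unbalanced $j$ lies in $q_1$, this gives $q_2' = q_2 \cup U$ where $U$ is the set of all unbalanced sites. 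Since $\abs{U} = \abs{q_2} - \abs{q_1} = \abs{q} - k$, we get $\abs{q_2'} = \abs{q} + (\abs{q}-k)$ — that is wrong by sizes; let me recheck. Actually $C'$ is an $(r_2, r_1)$-configuration, so $\abs{q_2'} = r_1 = k$. The correct statement: on unbalanced $j$, $j \in q_2'$ iff $j \in q_1$, and unbalanced sites are exactly the $r_2 - r_1 = \abs{q}-k$ unmatched opening parentheses, all lying in $q_1\setminus q_2$; on balanced $j$, $j \in q_2'$ iff $j \in q_2$. So $q_2' = \set{\text{balanced } j \in q_2}$, which has size $\abs{q_2} - 0 = \abs{q}$?? — no: $q_2 \subseteq \set{\text{balanced sites}}$ forces all of $q_2$ to be balanced, giving $\abs{q_2'}\ge \abs{q}$, contradiction unless I have the swap backwards. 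The resolution is that the unmatched parentheses when $r_1 < r_2$ are \emph{closing}, contributed by sites in $q_2 \setminus q_1$; I should recheck the excerpt's convention and orient accordingly. In any case: $q_2' = \set{j \in q_2 : j \text{ balanced}} \cup \set{j \in q_1 : j \text{ unbalanced}}$, and exactly one of these two pieces is empty depending on the sign of $r_1 - r_2$.

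The heart of the proof, and the main obstacle, is then to show this set equals $[q(u)]_k$. For this I would use Lemma~\ref{lem:SL.dual.3}: when $k \le \abs{q}$, each site $j \in [q(u)]_k$ contributes a \emph{matched closing} parenthesis to $\SP([u]_k, q)$, hence is balanced. Combined with $[q(u)]_k \subseteq q$ (which holds since the $k$ smallest letters of $q(u)$ sit in positions of $q$, by Remark~\ref{rmk:t-splitting} applied with $t$ such that $k = p_{\alpha}(\mm) = p_{t-1}(\nn)$ or similar), this shows $[q(u)]_k \subseteq \set{j \in q : j\text{ balanced}} \subseteq q_2'$. Since both sides have size $k$, equality follows — but I must double-check the size bookkeeping: $\abs{[q(u)]_k} = k = r_1 = \abs{q_2'}$, so the inclusion of $k$-element sets is an equality. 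To make the inclusion argument watertight I would spell out: $[q(u)]_k$ consists of the sites set in the first $k$ steps of Phase~II (the steps for $i = i_1, \dots, i_k$), and Claim~\ref{claim:matching_SP} inside the proof of Lemma~\ref{lem:SL.dual.3} already identifies these as the closing parentheses of the $k$ matched pairs peeled off; so they are balanced and lie in $q$. The remaining care is purely the sign/orientation of ``unmatched parentheses are opening vs. closing'' and the corresponding half of $q_2'$ being empty — I would settle this once at the start by re-reading the definition of the dual configuration, and then the whole argument goes through. I do not expect the first component $q^{\dagger}$ to require any argument beyond ``it is whatever the dual spits out,'' which is all the statement asks.
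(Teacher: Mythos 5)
Your proposal is correct and follows essentially the same route as the paper: reduce the case $k > \abs{q}$ to $k \leq \abs{q}$ via contragredient duality (Lemma~\ref{le:dual} and Remark~\ref{rmk:balanced-dual-let}), then in the case $k \leq \abs{q}$ use Lemma~\ref{lem:SL.dual.3} to see that every site of $[q(u)]_k$ contributes a matched closing parenthesis, hence is a balanced site of $q$ and so lies in $q_2'$, and conclude by comparing cardinalities. Your mid-proof wobble about whether the unmatched parentheses are opening or closing resolves exactly as you suspected (for $r_1 < r_2$ they are closing, contributed by sites of $q \setminus [u]_k$), and it does not affect the key inclusion, which is the same one the paper uses.
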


\begin{proof}
Let $\tup{q'_1, q'_2}$ be the dual configuration of $\tup{ [u]_k , q }$.
We must then prove that $q'_2 = \ive{q(u)}_k$.
Note that $\abs{q'_2} = \abs{\ive{u}_k} = k = \abs{\ive{q(u)}_k}$.

Suppose $k \leq \abs{q}$.
Then, Lemma~\ref{lem:SL.dual.3} shows that each site $j \in \ive{q(u)}_k$ contributes a matched closing parenthesis to $\SP(\ive{u}_k, q)$.
Therefore, all of these sites are balanced, and hence belong to $q'_2$.
Thus, $q'_2 \supseteq \ive{q(u)}_k$, whence $q'_2 = \ive{q(u)}_k$
(since $\abs{q'_2} = \abs{\ive{q(u)}_k}$).

Now suppose $k > \abs{q}$.
Let $\ell$ be the number of classes in $u$, and consider the contragredient duals $q^*$ and $u^*$ as in Lemma~\ref{le:dual}.
Thus, $n-k < n - \abs{q} = \abs{q^*}$.
Hence, applying the $k \leq \abs{q}$ case (proven above)
to $n-k$, $u^*$ and $q^*$ instead of $k$, $u$ and $q$,
we see that
$\tup{ [u^*]_{n-k} , q^* }' = \tup{ q^\dagger , [q^*(u^*)]_{n-k} }$
for some queue $q^\dagger$.
Now,
\begin{align*}
 \bigl( (q'_1)^*, (q'_2)^* \bigr)
 &= \bigl( ([u]_k)^* , q^* \bigr)'
 = \tup{ [u^*]_{n-k} , q^* }' \\
 & = \tup{ q^{\dagger} , [q^*(u^*)]_{n-k} }
 = \tup{ q^{\dagger} , \bigl[\bigl(q(u) \bigr)^* \bigr]_{n-k} } ,
\end{align*}
where
\begin{itemize}
 \item the first equality follows from~\eqref{eq.rmk:balanced-dual-let.dual};
 \item the second equality is because $([u]_k)^* = [u^*]_{n-k}$; and
 \item the fourth equality follows from Lemma~\ref{le:dual}.
\end{itemize}
Therefore, $(q'_2)^* = \bigl[\bigl( q(u) \bigr)^* \bigr]_{n-k} = ([q(u)]_k)^*$, so that
$q'_2 = [q(u)]_k$.
Hence, Proposition~\ref{prop:SL.dual} is proven in the case when $k > \abs{q}$.
\end{proof}

\begin{proof}[Proof of Theorem~\ref{thm:permutation}.]
Recall that any permutation in $\SymGp{\ell-1}$ is a product of simple transpositions $s_1, s_2, \ldots, s_{\ell-2}$.
Hence, in order to prove Theorem~\ref{thm:permutation}, it suffices to show that $\swt{u}_{\sigma} = \swt{u}_{\sigma s_i}$ for each $\sigma \in \SymGp{\ell-1}$ and $i \in \ive{\ell-2}$.
Then, Theorem~\ref{thm:permutation} follows by induction on length, \textit{i.e.} the minimal number of simple transpositions needed to write $\sigma$.

In order to prove $\swt{u}_{\sigma} = \swt{u}_{\sigma s_i}$, we need to show, for a $\sigma$-twisted MLQ $\qq$ of type $\mm$ satisfying $u = \qq (1^n)$, that $\fraks_i \qq$ is a $\sigma s_i$-twisted MLQ of type $\mm$ satisfying $u = (\fraks_i \qq) (1^n)$ (since this will show that $\fraks_i$ bijects the former MLQs to the latter).
The only nontrivial part is showing $u = (\fraks_i\qq) (1^n)$.
More generally, we will show that $(\fraks_i\qq)(w) = \qq(w)$ for any word $w \in \mcW_n$.
The proof of this claim reduces to showing that for any configuration $C = \tup{q_1, q_2}$ and any word $w \in \mcW_n$ the dual configuration $\fraks_1 C = C' = (q'_1, q'_2)$ of $C$ satisfies $C'(w) = C(w)$.

Each word $w$ can be obtained from a standard word by a sequence of
merges (each of which sends a word $u$ to $\vee^{(k)} u$ for some
$k \in \set{ p_j(\mm) \mid j \geq 1 }$, where $\mm$ is the type of
$u$). Lemma~\ref{lemma:queue_merge_commute} shows that these merges
commute with the action of a queue (and thus of an MLQ).
Hence, it is sufficient to consider standard words $w$.
Thus, assume that $w$ is standard of type $\mm$.
It is straightforward to see
(using Equation~\eqref{eq:queue_type_change} and $\abs{q'_2} = \abs{q_1}$ and $\abs{q'_1} = \abs{q_2}$)
that the words
$C(w) = q_2\bigl( q_1(w) \bigr)$
and
$C'(w) = q'_2\bigl( q_1'(w) \bigr)$
have the same type.
Let $\nn$ be this type.
We shall now show that
$[C'(w)]_k = [C(w)]_k$
for all $k \in \set{ p_i(\nn) \mid i \geq 0}$.
According to Lemma~\ref{lem:SL.reconstruct}, this will yield $C'(w) = C(w)$, and thus our proof will be complete.

Let $k \in \set{ p_i(\nn) \mid i \geq 0}$.
Thus, $k \in \set{0, 1, \ldots, n} = \set{ p_i(\mm) \mid i \geq 0 }$ (since $w$ is standard).
Hence, $[w]_k$ is well-defined.
Note that $q_i(w)$ and $q'_i(w)$ are also standard words.
Using Proposition~\ref{prop:SL.dual} to compute dual
configurations, we can see how the MLQ
$\qq = \tup{[w]_k, q_1, q_2}$ transforms under the action of
$\fraks_1 \fraks_2 \fraks_1$: Namely, we have
\begin{align*}
\tup{[w]_k, q_1, q_2} & \overset{\fraks_1}{\longmapsto} \tup{\ast, [q_1(w)]_k, q_2}
\\ & \overset{\fraks_2}{\longmapsto} \tup{\ast, \ast, \bigl[ q_2\bigl( q_1(w) \bigr) \bigr]_k}
\\ & \overset{\fraks_1}{\longmapsto} \tup{\ast, \ast, \bigl[ q_2\bigl( q_1(w) \bigr) \bigr]_k},
\end{align*}
where $\ast$ denotes some queue.
Likewise, the action of $\fraks_2 \fraks_1 \fraks_2$ is given by
\begin{align*}
\tup{[w]_k, q_1, q_2} & \overset{\fraks_2}{\longmapsto} \tup{[w]_k, q'_1, q'_2}
\\ & \overset{\fraks_1}{\longmapsto} \tup{\ast, [q'_1(w)]_k , q'_2}
\\ & \overset{\fraks_2}{\longmapsto} \tup{\ast, \ast, \bigl[ q'_2\bigl( q'_1(w) \bigr) \bigr]_k}.
\end{align*}
(See Figure~\ref{fig:crossing_diagrams} for the actions depicted using crossing diagrams.)
Yet, the two maps are equal by Proposition~\ref{prop:braid}.
Thus, the resulting MLQs must be identical:
\[
\tup{\ast, \ast, \bigl[ q'_2\bigl(q'_1(w) \bigr) \bigr]_k}
=
\tup{\ast, \ast, \bigl[ q_2\bigl(q_1(w) \bigr) \bigr]_k}.
\]
Hence, we have
\[
\bigl[ q'_2\bigl( q'_1(w) \bigr) \bigr]_k = \bigl[ q_2\bigl( q_1(w) \bigr) \bigr]_k.
\]
In other words, $[C'(w)]_k = [C(w)]_k$.
\end{proof}

\begin{figure}
\begin{gather*}
\begin{tikzpicture}[xscale=3.5]
\node (i1) at (0,0) {$[w]_k$};
\node (i2) at (0,-1) {$q_1$};
\node (i3) at (0,-2) {$q_2$};
\node (s11) at (1,0) {$\ast$};
\node (s12) at (1,-1) {$[q_1(w)]_k$};
\node (s13) at (1,-2) {$q_2$};
\node (s21) at (2,0) {$\ast$};
\node (s22) at (2,-1) {$\ast$};
\node (s23) at (2,-2) {$\bigl[ q_2\bigl( q_1(w) \bigr) \bigr]_k$};
\node (t1) at (3,0) {$\ast$};
\node (t2) at (3,-1) {$\ast$};
\node (t3) at (3,-2) {$\bigl[ q_2\bigl( q_1(w) \bigr) \bigr]_k$};
\path[-] (i2) edge (s11) (s11) edge (s21) (s21) edge (t2);
\path[-] (i3) edge (s13) (s13) edge (s22) (s22) edge (t1);
\path[-,red,thick] (i1) edge (s12) (s12) edge (s23) (s23) edge (t3);
\end{tikzpicture}
\\
\begin{tikzpicture}[xscale=3.5]
\node (i1) at (0,0) {$[w]_k$};
\node (i2) at (0,-1) {$q_1$};
\node (i3) at (0,-2) {$q_2$};
\node (s11) at (1,0) {$[w]_k$};
\node (s12) at (1,-1) {$q'_1$};
\node (s13) at (1,-2) {$q'_2$};
\node (s21) at (2,0) {$\ast$};
\node (s22) at (2,-1) {$[q'_1(w)]_k$};
\node (s23) at (2,-2) {$q'_2$};
\node (t1) at (3,0) {$\ast$};
\node (t2) at (3,-1) {$\ast$};
\node (t3) at (3,-2) {$\bigl[ q'_2\bigl( q'_1(w) \bigr) \bigr]_k$};
\path[-] (i2) edge (s13) (s13) edge (s23) (s23) edge (t2);
\path[-] (i3) edge (s12) (s12) edge (s21) (s21) edge (t1);
\path[-,red,thick] (i1) edge (s11) (s11) edge (s22) (s22) edge (t3);
\end{tikzpicture}
\end{gather*}
\caption{Crossing diagrams representing the action of $\fraks_1 \fraks_2 \fraks_1$ (top) and $\fraks_2 \fraks_1 \fraks_2$ (bottom).}
\label{fig:crossing_diagrams}
\end{figure}

\begin{remark}
Theorem~\ref{thm:permutation} for the special case of $x_1 = \cdots = x_n = 1$ was proven in~\cite{AAMP} using different techniques.
We also sketch an alternative direct approach in the FPSAC extended abstract version of this work~\cite{AGS18}.
\end{remark}

\section{Final remarks}
\label{sec:remarks}

We conclude by giving some additional examples, remarks, and comments about our results.
We begin with an example to illustrate the proof of Theorem~\ref{thm:merge} in more detail.

\begin{example}
\label{ex:merge_theorem}
In order to compute $\swt{135452}$, we need to examine MLQs of type $(1,1,1,1, 2, 0, 0, \ldots)$.
We take a particular MLQ $\qq$ and add the $5$-queue $\set{1,2,3,5,6}$ as follows:
\[
\qq = \;
\begin{tikzpicture}[baseline=66,scale=0.75,every node/.style={inner sep=2pt}]
\node at (0, 4){2};\node at (1, 4){2};\node at (2, 4){2};\node[circle, draw=black] at (3, 4){1};\node at (4, 4){2};\node at (5, 4){2};\node at (0, 3){3};\node at (1, 3){3};\node[circle, draw=black] at (2, 3){2};\node at (3, 3){3};\node at (4, 3){3};\node[circle, draw=black] at (5, 3){1};\node[circle, draw=black] at (0, 2){1};\node[circle, draw=black] at (1, 2){3};\node at (2, 2){4};\node at (3, 2){4};\node[circle, draw=black] at (4, 2){2};\node at (5, 2){4};\node[circle, draw=black] at (0, 1){1};\node[circle, draw=black] at (1, 1){3};\node at (2, 1){5};\node[circle, draw=black] at (3, 1){4};\node at (4, 1){5};\node[circle, draw=black] at (5, 1){2};
\end{tikzpicture}
\; \xrightarrow{\hspace{30pt}} \;
\begin{tikzpicture}[baseline=66,scale=0.75,every node/.style={inner sep=2pt}]
\node[circle, draw=black] at (0, 5){1};\node[circle, draw=black] at (1, 5){1};\node[circle, draw=black] at (2, 5){1};\node at (3, 5){2};\node[circle, draw=black] at (4, 5){1};\node[circle, draw=black] at (5, 5){1};\node at (0, 4){2};\node at (1, 4){2};\node at (2, 4){3};\node[circle, draw=black] at (3, 4){1};\node at (4, 4){2};\node at (5, 4){2};\node at (0, 3){3};\node at (1, 3){4};\node[circle, draw=black] at (2, 3){2};\node at (3, 3){3};\node at (4, 3){3};\node[circle, draw=black] at (5, 3){1};\node[circle, draw=black] at (0, 2){1};\node[circle, draw=black] at (1, 2){3};\node at (2, 2){4};\node at (3, 2){4};\node[circle, draw=black] at (4, 2){2};\node at (5, 2){5};\node[circle, draw=black] at (0, 1){1};\node[circle, draw=black] at (1, 1){3};\node at (2, 1){5};\node[circle, draw=black] at (3, 1){4};\node at (4, 1){6};\node[circle, draw=black] at (5, 1){2};
\end{tikzpicture}
\; = \widetilde{\qq}.
\]
Thus, we obtain a $(s_4 s_3 s_2 s_1)$-twisted MLQ $\widetilde{\qq}$ of type $(1, 1, 1, 1, 1, 1, 0, \ldots)$.
Furthermore, note that $135452 = \merge{5} 135462$.
Now, by Theorem~\ref{thm:permutation}, such MLQs $\widetilde{\qq}$ are in bijection with ordinary MLQs contributing to, in this case, $\swt{135462}$.
In more detail, let $R_i(\widetilde{\qq})$ be the MLQ formed by taking the configuration $C = (\widetilde{q}_i, \widetilde{q}_{i+1})$ and replacing it with the dual configuration.
By taking $R_4 R_3 R_2 R_1(\widetilde{\qq})$ to bring the top row to the bottom, we obtain the ordinary MLQ as follows:
\begin{align*}
\; \widetilde{\qq} & \xrightarrow[\hspace{15pt}]{R_1} \;
\begin{tikzpicture}[xscale=0.7,yscale=0.7,every node/.style={inner sep=0.8pt},baseline=55]
\node at (0, 5){2};\node at (1, 5){2};\node[circle, draw=black] at (2, 5){1};\node at (3, 5){2};\node at (4, 5){2};\node at (5, 5){2};\node[circle, draw=black] at (0, 4){2};\node[circle, draw=black] at (1, 4){2};\node at (2, 4){3};\node[circle, draw=black] at (3, 4){1};\node[circle, draw=black] at (4, 4){2};\node[circle, draw=black] at (5, 4){2};\node at (0, 3){3};\node at (1, 3){4};\node[circle, draw=black] at (2, 3){2};\node at (3, 3){3};\node at (4, 3){3};\node[circle, draw=black] at (5, 3){1};\node[circle, draw=black] at (0, 2){1};\node[circle, draw=black] at (1, 2){3};\node at (2, 2){4};\node at (3, 2){4};\node[circle, draw=black] at (4, 2){2};\node at (5, 2){5};\node[circle, draw=black] at (0, 1){1};\node[circle, draw=black] at (1, 1){3};\node at (2, 1){5};\node[circle, draw=black] at (3, 1){4};\node at (4, 1){6};\node[circle, draw=black] at (5, 1){2};
\end{tikzpicture}
\; \xrightarrow[\hspace{15pt}]{R_2} \;
\begin{tikzpicture}[xscale=0.7,yscale=0.7,every node/.style={inner sep=0.8pt},baseline=55]
\node at (0, 5){2};\node at (1, 5){2};\node[circle, draw=black] at (2, 5){1};\node at (3, 5){2};\node at (4, 5){2};\node at (5, 5){2};\node at (0, 4){3};\node[circle, draw=black] at (1, 4){2};\node at (2, 4){3};\node at (3, 4){3};\node at (4, 4){3};\node[circle, draw=black] at (5, 4){1};\node[circle, draw=black] at (0, 3){3};\node at (1, 3){4};\node[circle, draw=black] at (2, 3){2};\node[circle, draw=black] at (3, 3){3};\node[circle, draw=black] at (4, 3){3};\node[circle, draw=black] at (5, 3){1};\node[circle, draw=black] at (0, 2){1};\node[circle, draw=black] at (1, 2){3};\node at (2, 2){4};\node at (3, 2){4};\node[circle, draw=black] at (4, 2){2};\node at (5, 2){5};\node[circle, draw=black] at (0, 1){1};\node[circle, draw=black] at (1, 1){3};\node at (2, 1){5};\node[circle, draw=black] at (3, 1){4};\node at (4, 1){6};\node[circle, draw=black] at (5, 1){2};
\end{tikzpicture}
\allowdisplaybreaks
\\[1.3em] &
\; \xrightarrow[\hspace{15pt}]{R_3} \;
\begin{tikzpicture}[xscale=0.7,yscale=0.7,every node/.style={inner sep=0.8pt},baseline=55]
\node at (0, 5){2};\node at (1, 5){2};\node[circle, draw=black] at (2, 5){1};\node at (3, 5){2};\node at (4, 5){2};\node at (5, 5){2};\node at (0, 4){3};\node[circle, draw=black] at (1, 4){2};\node at (2, 4){3};\node at (3, 4){3};\node at (4, 4){3};\node[circle, draw=black] at (5, 4){1};\node[circle, draw=black] at (0, 3){3};\node at (1, 3){4};\node at (2, 3){4};\node at (3, 3){4};\node[circle, draw=black] at (4, 3){2};\node[circle, draw=black] at (5, 3){1};\node[circle, draw=black] at (0, 2){1};\node[circle, draw=black] at (1, 2){3};\node[circle, draw=black] at (2, 2){4};\node[circle, draw=black] at (3, 2){4};\node[circle, draw=black] at (4, 2){2};\node at (5, 2){5};\node[circle, draw=black] at (0, 1){1};\node[circle, draw=black] at (1, 1){3};\node at (2, 1){5};\node[circle, draw=black] at (3, 1){4};\node at (4, 1){6};\node[circle, draw=black] at (5, 1){2};\end{tikzpicture}
\; \xrightarrow[\hspace{15pt}]{R_4} \;
\begin{tikzpicture}[xscale=0.7,yscale=0.7,every node/.style={inner sep=0.8pt},baseline=55]
\node at (0, 5){2};\node at (1, 5){2};\node[circle, draw=black] at (2, 5){1};\node at (3, 5){2};\node at (4, 5){2};\node at (5, 5){2};\node at (0, 4){3};\node[circle, draw=black] at (1, 4){2};\node at (2, 4){3};\node at (3, 4){3};\node at (4, 4){3};\node[circle, draw=black] at (5, 4){1};\node[circle, draw=black] at (0, 3){3};\node at (1, 3){4};\node at (2, 3){4};\node at (3, 3){4};\node[circle, draw=black] at (4, 3){2};\node[circle, draw=black] at (5, 3){1};\node[circle, draw=black] at (0, 2){1};\node[circle, draw=black] at (1, 2){3};\node at (2, 2){5};\node[circle, draw=black] at (3, 2){4};\node[circle, draw=black] at (4, 2){2};\node at (5, 2){5};\node[circle, draw=black] at (0, 1){1};\node[circle, draw=black] at (1, 1){3};\node[circle, draw=black] at (2, 1){5};\node[circle, draw=black] at (3, 1){4};\node at (4, 1){6};\node[circle, draw=black] at (5, 1){2};
\end{tikzpicture}
\; = \widetilde{\qq}',
\end{align*}
which contributes to $\swt{135462}$.
\end{example}

There is the natural cyclic symmetry on our spectral weights.

\begin{prop}
  Let $C_n \subseteq \SymGp{n}$ denote the cyclic group of order $n$ generated by the $n$-cycle $(1 \; 2 \; \dotsm \; n)$ and $u \in \mcW_n$.
  We have $\swt{u}_{\tau}\sigma = \swt{u \sigma}_{\tau}$ for any $\sigma \in C_n$ and $\tau \in \SymGp{\ell-1}$, where $\SymGp{n}$ acts on $\mcW_n$ from the right by $(u_1 \dotsm u_n) \sigma = u_{\sigma(1)} \dotsm u_{\sigma(n)}$, that is to say permutations act on positions, and similarly on monomials in $\xx$.
\end{prop}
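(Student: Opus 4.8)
The plan is to reduce to the case where $\sigma$ is the generator $c := (1\,2\,\cdots\,n)$ of $C_n$ and then to exploit the cyclic equivariance of the queue algorithm. Since the action of $\SymGp{n}$ on words in $\mcW_n$ (and on monomials in $\xx$) is a \emph{right} action, the identity $\swt{u}_{\tau}\sigma = \swt{u\sigma}_{\tau}$, once known for $\sigma = c$ and \emph{all} words $u$, propagates to every power $c^k$ (by an easy induction: $\swt{u c^k}_{\tau} = \swt{(u c^{k-1}) c}_{\tau} = \swt{u c^{k-1}}_{\tau}\, c = \cdots = \swt{u}_{\tau}\, c^k$) and hence to every element of $C_n = \langle c\rangle$. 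So it suffices to treat $\sigma = c$. For this $\sigma$ one has $u\sigma = \zeta u$, where $\zeta\colon \mcW_n \to \mcW_n$ is the cyclic shift $\zeta(w_1 w_2 \cdots w_n) = w_2 w_3 \cdots w_n w_1$ already used in the proof of Proposition~\ref{prop:braid}; and on monomials the action of $\sigma$ is the substitution $x_j \mapsto x_{j-1}$ (indices mod $n$). Recall also that $\zeta$ acts on queues by $\zeta q = \set{i - 1 \mid i \in q}$ and entrywise on MLQs, with $\zeta\qq := (\zeta q_1, \dotsc, \zeta q_{\ell-1})$.

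The key step is the commutation $\zeta\bigl(q(w)\bigr) = (\zeta q)(\zeta w)$ for every queue $q$ and every word $w \in \mcW_n$. This holds because the algorithm computing $q(w)$ (order the sites so the corresponding values of $w$ are weakly increasing, then run the two cyclic matching phases) refers only to the \emph{cyclic} order of the sites and to the values of $w$; thus relabelling each site $i$ as $i - 1$ turns the entire run of the algorithm for $(q, w)$ into the run for $(\zeta q, \zeta w)$, and Lemma~\ref{lemma:order_indep} guarantees that the choice of ordering introduces no ambiguity. Iterating over the queues, we get $\zeta\bigl(\qq(w)\bigr) = (\zeta\qq)(\zeta w)$ for every MLQ $\qq$ and every word $w$.

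Now fix a packed word $u$ of type $\mm$ with $\ell$ classes and a permutation $\tau \in \SymGp{\ell-1}$. Since $\abs{\zeta q_i} = \abs{q_i}$ for each $i$, the map $\qq \mapsto \zeta\qq$ carries $\tau$-twisted MLQs of type $\mm$ to $\tau$-twisted MLQs of type $\mm$, and it is a bijection because $\zeta$ is invertible on queues. As $\zeta(1^n) = 1^n$, the commutation above shows that $\zeta$ restricts to a bijection from $\set{\qq \mid \qq \text{ a }\tau\text{-twisted MLQ of type } \mm,\ \qq(1^n) = u}$ onto the analogous set with $u$ replaced by $\zeta u$. Moreover $\wt(\zeta q_i) = \prod_{j \in q_i} x_{j-1}$ is precisely the image of $\wt(q_i) = \prod_{j \in q_i} x_j$ under the monomial action of $\sigma$, so $\wt(\zeta\qq) = \wt(\qq)\,\sigma$. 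Summing over all $\qq$ with $\qq(1^n) = u$ then yields $\swt{\zeta u}_{\tau} = \sum_{\qq}\wt(\zeta\qq) = \bigl(\sum_{\qq}\wt(\qq)\bigr)\sigma = \swt{u}_{\tau}\,\sigma$, which is the desired statement for $\sigma = c$.

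I expect the only genuinely delicate point to be the bookkeeping in the commutation $\zeta\bigl(q(w)\bigr) = (\zeta q)(\zeta w)$ — tracking the ``weakly to the left/right, cyclically'' clauses under the shift $i \mapsto i-1$ — but this is of the same flavour as the cyclic-shift invariance already invoked in the proof of Proposition~\ref{prop:braid}, so no new ideas should be needed; everything else is a routine transport-of-structure argument along the bijection $\qq \mapsto \zeta\qq$.
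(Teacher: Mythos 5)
Your proposal is correct and follows essentially the same route as the paper: the paper's proof rests on the commutation $q(u\sigma) = \bigl((\sigma q)(u)\bigr)\sigma$ for all $\sigma \in C_n$, which is exactly your identity $\zeta\bigl(q(w)\bigr) = (\zeta q)(\zeta w)$ (your $\zeta$ being $\sigma^{-1}$ acting on queues), followed by the same weight-tracking bijection $\qq \mapsto \zeta\qq$ on the MLQs contributing to the spectral weight. The only cosmetic difference is your reduction to the generator $c$, which the paper avoids by stating the commutation for arbitrary $\sigma \in C_n$ directly.
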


\begin{proof}
  For a queue $q$, define $\sigma q := \{ \sigma(i) \mid i \in q\}$.
  It is clear from the definition of a queue that $q(u \sigma) = \bigl( (\sigma q)(u) \bigr) \sigma$.
  Hence, for any $\tau$-twisted MLQ $\qq$ of type $\mm$, we have
  $\qq (u \sigma) = \bigl( (\sigma \qq)(u) \bigr) \sigma$,
  where the action of $C_n$ on MLQs is obtained by acting on each queue separately.
  Thus, in particular, for any $\tau$-twisted MLQ $\qq$ of type $\mm$, we have
  $\qq (1^n) = \bigl( (\sigma \qq)(1^n) \bigr) \sigma$.
  From here, the claim follows by an obvious bijection
  (given by the action of $\sigma$) between the sums defining
  $\swt{u}_{\tau} \sigma$ and $\swt{u \sigma}_{\tau}$.
\end{proof}

Let $B^{r,s}$, where $r \in [n-1]$ and $s \in \ZZ_{>0}$, be a \defn{Kirillov--Reshetikhin (KR) crystal} in type $A_{n-1}^{(1)}$~\cite{KKMMNN92}.
Recall from~\cite{NY97,Shimozono02} that the combinatorial $R$-matrix is the unique crystal isomorphism
\[
R \colon B^{r_1,s_1} \otimes B^{r_2,s_2} \to B^{r_2,s_2} \otimes B^{r_1,s_1}.
\]
There is a well-known (in slightly different terminology) bijection $\Xi_r$ relating $r$-queues with an element of the KR crystal $B^{r,1}$ in type $A_{n-1}^{(1)}$ by considering $q = \{b_1 < \cdots < b_r\}$  as a single-column Young tableau of height $r$.
In~\cite{KMO15}, this was extended to a bijection $\Xi$ between multiline queues of type $\mm$ with $\ell+1$ classes and $B^{p_1,1} \otimes B^{p_2,1} \otimes \dotsm \otimes B^{p_{\ell},1}$ by
\[
\Xi(\qq) = \Xi_{p_1}(q_1) \otimes \Xi_{p_2}(q_2) \otimes \dotsm \otimes \Xi_{p_{\ell}}(q_{\ell}).
\]
Thus, by comparing the description of the combinatorial $R$-matrix for $B^{r_1,1}$ and $B^{r_2,1}$ from~\cite{NY97}, we obtain that taking the dual configuration is equivalent to applying the combinatorial $R$-matrix under $\Xi$.

\begin{prop}
\label{prop:dual_is_R}
Let $C$ be an $(r_1, r_2)$-configuration. Then
\[
\Xi(C') = R\bigl( \Xi(C) \bigr),
\]
where $C'$ is the dual configuration of $C$.
\end{prop}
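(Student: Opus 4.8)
The plan is to deduce Proposition~\ref{prop:dual_is_R} from the known explicit description of the combinatorial $R$-matrix on a tensor product of two single columns in type $A_{n-1}^{(1)}$, and then to observe that this description is verbatim the construction of the dual configuration from Section~\ref{sec:thm_proof}. We may assume $r_1 \geq r_2$: the case $r_1 < r_2$ follows by applying the $r_1 \geq r_2$ case to $C'$ in place of $C$ (using $C'' = C$ and the fact that $R^{-1}$ is itself the combinatorial $R$-matrix for the reversed pair). I would then recall from~\cite{NY97} (see also~\cite{Shimozono02}) the rule computing
\[
R \colon B^{r_1,1} \otimes B^{r_2,1} \to B^{r_2,1} \otimes B^{r_1,1}
\]
on single columns: identifying $\Xi_{r_1}(q_1)$ and $\Xi_{r_2}(q_2)$ with the subsets $q_1, q_2 \subseteq \ive{n}$, one runs through the values $1, 2, \ldots, n$ cyclically, recording an opening parenthesis at each value in $q_1 \setminus q_2$, a closing parenthesis at each value in $q_2 \setminus q_1$, a matched pair at each value in $q_1 \cap q_2$, and nothing otherwise; one matches parentheses by the standard (cyclic) matching algorithm, leaving exactly $r_1 - r_2$ unmatched opening parentheses; and one moves the positions of these unmatched parentheses out of the first column and into the second. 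The output is $R\bigl(\Xi_{r_1}(q_1)\otimes\Xi_{r_2}(q_2)\bigr) = \Xi_{r_2}(q_1^{\dagger})\otimes\Xi_{r_1}(q_2^{\dagger})$, where $q_1^{\dagger}$ (resp.\ $q_2^{\dagger}$) is obtained from $q_1$ (resp.\ $q_2$) by deleting (resp.\ adjoining) those positions.

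Comparing this with the definitions in Section~\ref{sec:thm_proof}, this is exactly the construction of $\SP(q_1,q_2)$ and of the dual configuration $C' = (q'_1,q'_2)$: the balanced sites are unchanged, and the unbalanced sites (the positions of the unmatched parentheses) are toggled between the two queues. Hence $q_1^{\dagger} = q'_1$ and $q_2^{\dagger} = q'_2$, and therefore $\Xi(C') = R(\Xi(C))$, as claimed.

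The main obstacle will be the bookkeeping with conventions. The descriptions of the combinatorial $R$-matrix in the literature are typically phrased via jeu de taquin / column insertion, or via local energy functions and winding numbers, rather than directly as a cyclic parenthesis matching, so some care is needed to extract the parenthesis rule and, in particular, to verify the cyclic wrap-around, which is the genuinely affine ingredient. A clean way to handle the latter: establish the rule first for configurations having an unbalanced site at position $1$ — where the circular matching in the definition of $C'$ agrees with the ordinary linear one — and then bootstrap to the general case using that both $C \mapsto C'$ and $R$ commute with the cyclic shift $\zeta$ (the former observed in the proof of Proposition~\ref{prop:braid}, the latter a standard property of combinatorial $R$-matrices for Kirillov--Reshetikhin crystals), exactly as in the passage around~\eqref{pf.prop:braid.inter}. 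One should also dispatch the degenerate case $r_1 = r_2$ separately: there $C' = C$ since $\abs{q_1} = \abs{q_2}$ forces every parenthesis to be matched, while the combinatorial $R$-matrix on $B^{r,1}\otimes B^{r,1}$ is the identity (this tensor product is multiplicity-free as a classical crystal, so the identity is the unique crystal automorphism), whence both sides agree.
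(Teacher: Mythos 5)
Your proposal is correct and follows essentially the same route as the paper, which establishes Proposition~\ref{prop:dual_is_R} precisely by comparing the explicit single-column description of the combinatorial $R$-matrix from~\cite{NY97} with the parenthesis-matching construction of the dual configuration under $\Xi$. The extra scaffolding you add (reducing the cyclic matching to the linear one via equivariance under the cyclic shift, as in the proof of Proposition~\ref{prop:braid}, and the separate treatment of $r_1 = r_2$ and $r_1 < r_2$) is sound and only makes explicit the bookkeeping the paper leaves to the reader.
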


Note that Proposition~\ref{prop:dual_is_R} gives another proof of Proposition~\ref{prop:braid} since the combinatorial $R$-matrix is well-known to satisfy the Yang-Baxter equation.

\begin{example}
Suppose $n = 9$.
Consider the $(4,6)$-configuration and dual $(6,4)$-configuration
\[
C = (\set{1,4,5,6}, \set{2,3,4,6,7,8}),
\qquad\quad
C' = (\set{1,3,4,5,6,8}, \set{2,4,6,7}).
\]
We have
\[
\begin{tikzpicture}[xscale=6,yscale=4,>=stealth]
\node (C) at (0,0) { 
\begin{tikzpicture}[scale=0.7]
  \def\sc{0.85}   
  \def\ll{2}   
  \def\l{1}   
  \foreach \i in {1,4,5,6} { \draw[fill=white] (\i*\sc,\ll) circle (0.3); }
  \foreach \i in {2,3,7,8,9} { \draw[fill=white] (\i*\sc-.3,\ll-.3) rectangle +(0.6,+0.6); }
  \foreach \i in {2,3,4,6,7,8} { \draw[fill=white] (\i*\sc,\l) circle (0.3); }
  \foreach \i in {1,5,9} { \draw[fill=white] (\i*\sc-.3,\l-.3) rectangle +(0.6,+0.6); }
\end{tikzpicture}
};
\node (Cp) at (0,-1) { 
\begin{tikzpicture}[scale=0.7]
  \def\sc{0.85}   
  \def\ll{2}   
  \def\l{1}   
  \foreach \i in {1,3,4,5,6,8} { \draw[fill=white] (\i*\sc,\ll) circle (0.3); }
  \foreach \i in {2,7,9} { \draw[fill=white] (\i*\sc-.3,\ll-.3) rectangle +(0.6,+0.6); }
  \foreach \i in {2,4,6,7} { \draw[fill=white] (\i*\sc,\l) circle (0.3); }
  \foreach \i in {1,3,5,8,9} { \draw[fill=white] (\i*\sc-.3,\l-.3) rectangle +(0.6,+0.6); }
\end{tikzpicture}
};
\node (KR) at (1,0) {$\young(1,4,5,6) \otimes \young(2,3,4,6,7,8)$};
\node (KRp) at (1,-1) {$\young(1,3,4,5,6,8) \otimes \young(2,4,6,7)$};
\draw[<->] (C) -- (Cp) node[midway,anchor=east] {dual};
\draw[<->] (C) -- (KR) node[midway,anchor=south] {$\Xi$};
\draw[<->] (KR) -- (KRp) node[midway,anchor=west] {$R$};
\draw[<->] (Cp) -- (KRp) node[midway,anchor=north] {$\Xi$};
\end{tikzpicture}
\]
\end{example}

Since the combinatorial $R$-matrix satisfies the Yang--Baxter equation, we have an action of the symmetric group $\SymGp{\ell}$ acting on MLQs with $\ell + 1$ queues.
Given the description of the combinatorial $R$-matrix using the Robinson--Schensted--Knuth (RSK) bijection~\cite{Shimozono02}, this $\SymGp{\ell}$-action can be considered as corresponding to the one given by van Leeuwen~\cite[Lemma~2.3]{vanLeeuwen-dc}.\footnote{This could also be considered as an interpretation of the Littlewood--Richardson rule, along with the fact that $s_{\lambda} s_{\mu}$, where $s_{\lambda}$ and $s_{\mu}$ are Schur functions corresponding to rectangles, is multiplicity free~\cite{Stembridge01}, and using Howe duality~\cite[Ch.~9,App.~B]{BS17}.}
Furthermore, the $\SymGp{\ell}$-action on MLQs has been considered by Danilov and Koshevoy~\cite{DanilovKoshevoy} in a different context (see also~\cite[Ch.~4]{Gorodentsev2}).
Unlike the combinatorial $R$-matrix perspective, they do not have the natural interpretation of the weight since $\wt(\qq) = \wt\bigl( \Xi(\qq) \bigr)$, where the crystal weight is the usual tableaux weight.

Next, we describe how to interpret our action from looking at the corner transfer matrix described in~\cite{KMO15}, which can be given diagrammatically by
\[
\begin{tikzpicture}[>=latex,scale=0.5]
\draw[->] (0,1) node[anchor=east] {$\mathbf{b}_{\ell}$} -- (1,1) -- (1,8);
\foreach \x in {1,2} {
  \draw[->] (0,\x+1) node[anchor=east] {$\mathbf{b}_{\ell-\x}$} -- (\x+1,\x+1) -- (\x+1,8);
}
\foreach \x in {1,2,3} {
  \draw[->] (0,8-\x) node[anchor=east] {$\mathbf{b}_{\x}$} -- (8-\x,8-\x) -- (8-\x,8);
}
\node at (4,7.5) {$\cdots$};
\node at (0.5,4) {$\vdots$};
\end{tikzpicture}
\]
where every crossing is a combinatorial $R$-matrix and $\mathbf{b}_i \in B^{p_i,1}$.
Our action effectively does the following on the corner transfer matrix:
\[
\begin{tikzpicture}[>=latex,scale=0.5,baseline=50]
\draw[->] (0,1) node[anchor=east] {$\mathbf{b}_{\ell}$} -- (1,1) -- (1,7);
\draw[->] (0,6) node[anchor=east] {$\mathbf{b}_1$} -- (6,6) -- (6,7);
\draw[->] (-1,3) node[anchor=east] {$\mathbf{b}_i$} -- (0,4) -- (4,4) -- (4,7) -- (3,8);
\draw[->] (-1,4) node[anchor=east] {$\mathbf{b}_{i+1}$} -- (0,3) -- (3,3) -- (3,7) -- (4,8);
\node at (2,6.5) {$\cdots$};
\node at (0.5,2) {$\vdots$};
\node at (5,6.5) {$\cdots$};
\node at (0.5,5) {$\vdots$};
\end{tikzpicture}
\qquad = \qquad
\begin{tikzpicture}[>=latex,scale=0.5,baseline=50]
\draw[->] (0,1) node[anchor=east] {$\mathbf{b}_{\ell}$} -- (1,1) -- (1,7);
\draw[->] (0,6) node[anchor=east] {$\mathbf{b}_1$} -- (6,6) -- (6,7);
\draw[->] (0,4) node[anchor=east] {$\mathbf{b}_i$} -- (4,4) -- (4,7);
\draw[->] (0,3) node[anchor=east] {$\mathbf{b}_{i+1}$} -- (3,3) -- (3,7);
\node at (2,6.5) {$\cdots$};
\node at (0.5,2) {$\vdots$};
\node at (5,6.5) {$\cdots$};
\node at (0.5,5) {$\vdots$};
\end{tikzpicture}
\]
where the equality comes from applying the Yang--Baxter equation and $R^2 = \id$.

It could be interesting to see if our proof has any implications for the work of Danilov and Koshevoy~\cite{DanilovKoshevoy} or van Leeuwen~\cite{vanLeeuwen-dc}.

\appendix

\section{Connections with other constructions}
\label{app:queue-relations}

We describe how our definition of $q(u)$ (when $q$ is a queue and $u$ a word) is connected to the construction of~\cite{FM07}, which was coined bully paths in~\cite{AasLin17}, and the tableaux combinatorics of~\cite{AssSea18}.

To clarify the connection, we recall our visualization of a queue $q$ in~\eqref{eq:boxes-and-balls-1} by a row of circles and squares.
We modify this visualization slightly:
Namely, we still represent each $i \in q$ by a circle as in Example~\ref{ex:first_queue}, but we no longer represent the $i \notin q$ by squares.
We refer to the circles as ``boxes''.

\subsection{Connection with Ferrari--Martin and Aas--Linusson MLQs}

We first connect our MLQs with those from~\cite{FM07}.
We will use the language of~\cite{AasLin17}, where we are only considering ``discrete MLQs'' as we consider our ring to have a finite number of sites.

Consider a MLQ $\qq = (q_1, q_2, \dotsc, q_{\ell})$.
A \defn{labeling} of $\qq$ is a sequence of maps $\ff = (f_1, \dotsc, f_{\ell})$, where $f_i \colon q_i \to \ive{i}$.
We represent this by placing an $f_i(j)$, which we call the \defn{label}, inside of the circle corresponding to $j \in q_i$.
The \defn{canonical labeling} $\ff_{\qq}$ of $\qq$ is the labeling $(f_1, \dotsc, f_{\ell})$ defined by
\begin{equation}
f_k(j) = \bigl( q_k( \cdots q_1(1^n) \cdots ) \bigr)_j.
\label{eq.AasLin-lab.formula}
\end{equation}
For example, the labeling of each MLQ in Example~\ref{ex:merge_theorem} is precisely the circled values.
When $\qq$ is an ordinary MLQ, we can also construct this canonical labeling recursively as follows:
\begin{enumerate}
\item Set $q_0 = \emptyset$, and let $f_0 : q_0 \to \ive{0}$ be the trivial map.
\item For each $k = 0, 1, \ldots, \ell-1$:
\begin{itemize}
\item Suppose $f_k \colon q_k \to \ive{k}$ is already defined. Let $\tup{j_1, j_2, \ldots, j_r}$ be a list of all elements of $q_k$ in the order of increasing label in $f_k$; that is, $f_k(j_1) \leq f_k(j_2) \leq \cdots \leq f_k(j_r)$.
      (The relative order between elements with equal label is immaterial.)
\item For $i = i_1, i_2, \ldots, i_r$, do the following:
      Find the first site $j$ weakly to the right (cyclically) of $i$ such that $j \in q_{k+1}$ and $f_{k+1}(j)$ is not set; then set $f_{k+1}(j) = f_k(i)$.
\item For all sites $j \in q_{k+1}$ for which $f_{k+1}(j)$ is not set yet, set $f_{k+1}(j) = k+1$.
\end{itemize}
This defines $f_{k+1} \colon q_{k+1} \to \ive{k+1}$.
\end{enumerate}
\begin{verlong}
In order to see that this recursive construction yields the same labeling $\ff_\qq$ as the equality~\eqref{eq.AasLin-lab.formula}, we can proceed as follows:

If $r$ is a queue and $I$ is a set, then a \defn{labeling} of $r$ by $I$ means a map $f \colon r \to I$.
Of course, we refer to a value $f \tup{i}$ of such a labeling as the \defn{label} of $i$ in $f$.
To represent such a labeling $f$, we label each of the boxes of $r$ with its corresponding value (i.e., we label the box in position $i$ by $f\tup{i}$).
Thus, a labeling of an MLQ $\qq = (q_1, q_2, \dotsc, q_{\ell})$ is a sequence of labelings $f_i \colon q_i \to \ive{i}$ of each of its queues $q_i$; and its visual representation is obtained by stacking the visual representations of the $f_i$ one atop another.

Now, fix a $k \in \NN$ and two queues $r$ and $q$ with $\abs{r} \leq \abs{q}$. Also fix a labeling $f \colon r \to \ive{k}$ of $r$ by $\ive{k}$.
Then, we can define a labeling \defn{$q(f)$} of $q$ by $\ive{k+1}$ as follows:
\begin{enumerate}
\item Let $\tup{i_1, i_2, \ldots, i_{\abs{r}}}$ be a list of all elements of $r$ in the order of increasing label in $f$ (that is, $f(i_1) \leq f(i_2) \leq \cdots \leq f(i_{\abs{r}})$.
 (The relative order between elements with equal label is immaterial.)

\item Initialize a labeling $g$ of $q$ by $\ive{k+1}$. For now, none of its values is set.

\item For $i = i_1, i_2, \ldots, i_{\abs{r}}$, do the following.
    Find the first site $j$ weakly to the right (cyclically) of $i$ such that $j \in q$ and $g\tup{j}$ is not set.
    Then set $g\tup{j} = f\tup{i}$.

\item For all sites $j \in q$ for which $g \tup{j}$ is not set yet, set $g \tup{j} = k+1$.

\item Define $q(f)$ to be the resulting labeling $g$.
\end{enumerate}

The connection between this labeling and the action of queues on words is simple.
Indeed, let $k \in \NN$, let $r$ be a queue, and let $f \colon r \to \ive{k}$ be a labeling.
Then we can can assign to $f$ a word $\omega_k f \in \mcW_n$ as follows:
For each $j \in r$, we set $\tup{\omega_k f}_j = f \tup{j}$.
All other letters of $\omega_k f$ shall be $k+1$.
Now, it is easy to see that if $q$ is a queue satisfying $\abs{r} \leq \abs{q}$, then
\begin{equation}
q \tup{ \omega_k f } = \omega_{k+1} \tup{ q(f) } .
\label{eq.AasLin-lab.two-actions}
\end{equation}
Thus, the action of a queue on a labeling can be computed in terms of its action on a word.
(The converse is not true, since the condition $\abs{r} \leq \abs{q}$ is restrictive.
Thus, our action of queues on words, with its two phases, is more general than their
action on labelings.)

Now, let $\qq = (q_1, q_2, \dotsc, q_{\ell})$ be an ordinary MLQ
(so that $\abs{q_1} \leq \abs{q_2} \leq \cdots \leq \abs{q_{\ell}}$),
and consider the labeling $\ff_\qq = (f_1, \dotsc, f_{\ell})$ constructed by the above recursive procedure
(not by~\eqref{eq.AasLin-lab.formula}).
Then, the recursive procedure clearly yields
\begin{align*}
f_i &= q_i \tup{ f_{i-1} } \qquad \text{for all } i \in \ive{\ell} ,
\end{align*}
where $f_0$ is understood to be the (trivial) labeling of the empty queue $\varnothing$.
In view of~\eqref{eq.AasLin-lab.two-actions}, this entails
\[
\omega_i \tup{ f_i } = q_i \tup{ \omega_{i-1} \tup{ f_{i-1} } } \qquad \text{for all } i \in \ive{\ell} .
\]
Thus, by induction,
\[
\omega_i \tup{ f_i } = q_i( \cdots q_1(1^n) \cdots )
 \qquad \text{for all } i \in \set{0, 1, \dotsc, \ell}
\]
(since $\omega_0 \tup{ f_0 } = 1^n$).
Hence,
\[
f_i(j) = \bigl( q_i( \cdots q_1(1^n) \cdots ) \bigr)_j
 \qquad \text{for all } i \in \set{0, 1, \dotsc, \ell}
        \text{ and } j \in q_i .
\]
But this is precisely the equality~\eqref{eq.AasLin-lab.formula}.
Hence, we have shown that the recursive construction yields the same labeling $\ff_\qq$ as the equality~\eqref{eq.AasLin-lab.formula}.

\end{verlong}
Note that the canonical labeling $\ff_{\qq}$ are the elements in the circles in the graveyard diagram of $\qq$ (as in Example~\ref{ex:qij_generic}).

Now consider the labeling procedure in~\cite[\S 2.2]{AasLin17} given by \defn{$k$-bully paths}.
Note that a $k$-bully path from one queue to the next precisely corresponds to the path of a letter $k$ under Phase~II of our definition of a queue as a function on words.
For example, the bully paths would correspond to the blue paths in Example~\ref{ex:first_queue}.
In addition, this is exactly the recursive labeling procedure given above.
Thus, the labeling $\ff_{\qq}$ is equivalently constructed following the labeling procedure of~\cite{AasLin17} using bully paths.

\subsection{Connection with Kohnert diagrams and Assaf--Searles theory}

Next we relate the action of queues on words with the \defn{Kohnert labelings} in~\cite[Def.~2.5]{AssSea18} and the \defn{thread decomposition} in~\cite[Def.~3.5]{AssSea18}.
We remark that the thread decomposition is the same as the Kohnert labeling when the shape is an antipartition (\textit{i.e.} a weakly increasing sequence of positive integers).
Roughly speaking, Kohnert diagrams are MLQs built of queues that live on a half-line (instead of a circle), and the construction of the Kohnert labeling (and the thread decomposition) is a standardization of the bully path construction.

In more detail (and using the notations of~\cite{AssSea18}):
If $\alpha$ is a weak composition, and if $D \in \KM(\alpha)$ is a Kohnert diagram, then we view the columns of $D$ as queues.
This time, our queues are subsets of $\NN$ (or $\ZZ$) instead of $\ZZ/n\ZZ$; thus, there is no ``wrapping around''.
We consider the reflection of $D$ across the line $x = y$ as an MLQ $\qq_D = (q_1, \dotsc, q_{\ell})$:
namely, a cell in row $i$ and column $j$ in the reflected diagram corresponds to a $j \in q_i$,
and $\ell$ is the number of columns in $D$.
We then apply the bully path construction to the boxes of this reflected Kohnert diagram.
To obtain the thread decomposition we need to distinguish paths with a fixed label such that these paths are also constructed by the bully path algorithm, where we consider the labels to be decreasing from left to right.
Hence, this can be considered as a standardization of our construction or, equivalently, as fixing specific permutations for how the queues act on words.

\begin{example}
\label{ex:thread_decomp}
Consider the thread decomposition of the Kohnert diagram
\[
\begin{tikzpicture}[scale=0.6, baseline=-40]
\draw[thick] (0,0.2) -- (0,-4.2);
\foreach \x/\y in {0/0, 0/-1, 1/-1, 0/-2, 1/-2, 2/-2, 1/-3} {
    \draw (\x,\y) rectangle (\x+1,\y-1);
}
\foreach \x/\y in {0/0, 1/-1} {
    \node at (\x+0.5,\y-0.5) {$4$};
    \draw (\x+0.5-0.35,\y-0.5+0.35) rectangle (\x+0.5+0.35,\y-0.5-0.35);
}
\foreach \x/\y in {0/-1, 1/-2, 2/-2}
    \node at (\x+0.5,\y-0.5) {$3$};
\foreach \x/\y in {0/-2, 1/-3}
    \node at (\x+0.5,\y-0.5) {$2$};
\foreach \x in {0,1,2}
    \draw (\x+0.5,-2.5) circle (0.35);
\foreach \x/\y in {0/-1, 1/-3}
    \draw[shift={(\x+0.5,\y-0.5)},rotate=45] (-0.3,0.3) rectangle (0.3,-0.3);
\end{tikzpicture}
\]
given in~\cite[Fig.~11]{AssSea18}.
Thus, the corresponding MLQ is $\tup{\set{2}, \set{1,2,3}, \set{2,3,4}}$;
we can draw it using bully paths as follows:
\[
\begin{tikzpicture}[>=latex]
\def\xs{1.5}
\def\ys{1.1}
\foreach \y in {0,1,2}
    \node[draw,circle,minimum size=0.35cm] (2\y) at (2*\xs,-\y*\ys) {};
\foreach \x/\y in {1/1, 3/2}
    \node[draw,rectangle,rotate=45,minimum size=0.35cm] (\x\y) at (\x*\xs,-\y*\ys) {};
\foreach \x/\y in {3/1, 4/2}
    \node[draw,rectangle,minimum size=0.35cm] (\x\y) at (\x*\xs,-\y*\ys) {};
\draw[very thick, blue, ->] (20) -- (21);
\draw[very thick, blue, ->] (21) -- (22);
\draw[very thick, darkred, ->, rounded corners] (11) -- (1*\xs,-1.45*\ys) -- (3*\xs,-1.45*\ys) -- (32);
\draw[very thick, dgreencolor, ->, rounded corners] (31) -- (3*\xs,-1.45*\ys) -- (4*\xs,-1.45*\ys) -- (42);
\foreach \pt in {(1*\xs,0), (3*\xs,0), (4*\xs,0), (4*\xs,-1*\ys), (1*\xs,-2*\ys)}
    \fill[black] \pt circle(0.075);
\end{tikzpicture}
\]
where each bully path matches with a thread in the decomposition.
Note that for each fixed $k$, the $k$-bully paths must be constructed from left to right.\footnote{In terms of our permutation $(i_1, i_2, \dotsc, i_n)$ for Phase~II, for $u_{i_j} = u_{i_{j+1}} = \cdots = u_{i_{j'}} = k$, we must have $i_j < i_{j+1} < \cdots < i_{j'}$.}
If not, the diamond and square in the bottom row would be interchanged.
\end{example}

Note that the distinction between $\NN$ and $\ZZ/n\ZZ$ never arises since, for $\qq_D$ and all $i$, we have
\[
\abs{\set{ j \in q_i \mid j \geq k }} \leq \abs{\set{  j \in q_{i+1} \mid j \geq k }},
\]
(by~\cite[Lemma~2.2]{AssSea18}), which means that there is no ``wrapping'' around the cylinder.

To obtain a Kohnert labeling from a Kohnert diagram of height $K$ (for this, we require $n \gg 1$), we can construct an MLQ
\[
\widetilde{\qq}_D = (\widetilde{q}_1, \widetilde{q}_2, \dotsc, \widetilde{q}_{\widetilde{\ell}}, q_1, q_2, \dotsc, q_{\ell})
\]
from the MLQ $\qq_D = (q_1, q_2, \dotsc, q_{\ell})$ to obtain the correct labelings, where $\ell + \ell'$ is the largest label appearing in the Kohnert labeling.
Indeed, a label added in column $i$ comes from a $k \in q$ with $k > K$ for sufficiently many queues $q$ before $q_i$.
In particular, when a smaller label appears, it must be in the bottom row of the Kohnert diagram, which would correspond to the bully path wrapping around the cylinder.
We then only consider the labels in the regime $k \in q_i$ for all $1 \leq k \leq K$ and all $1 \leq i \leq \ell$.
We leave the precise details for the interested reader.

\begin{example}
We consider the Kohnert labeling from Example~\ref{ex:thread_decomp}.
The following MLQ, given as a graveyard diagram, is an MLQ that gives the corresponding Kohnert labeling:
\[
\begin{tikzpicture}[scale=0.8]
\foreach \pt in {(6,-1), (2,-2),(6,-2), (1,-3),(2,-3),(3,-3), (2,-4),(3,-4),(4,-4)}
    \draw \pt circle (0.35);
\foreach \x/\y in {1/0,2/0,3/0,4/0,5/0,6/0, 1/-1,2/-1,3/-1,4/-1,5/-1, 1/-2,3/-2,4/-2,5/-2, 4/-3,5/-3,6/-3, 1/-4,5/-4,6/-4}
    \draw (\x-0.35,\y+0.35) rectangle (\x+0.35,\y-0.35);
\foreach \x/\y in {6/-1, 6/-2, 1/-3, 2/-4}
    \node[blue] at (\x,\y) {$2$};
\foreach \x/\y in {2/-2, 2/-3, 3/-4}
    \node[darkred] at (\x,\y) {$3$};
\foreach \x/\y in {3/-3, 4/-4}
    \node[dgreencolor] at (\x,\y) {$4$};
\draw[dashed,thick] (4.5,0.4) -- (4.5,-4.4);
\end{tikzpicture}
\]
where we have suppressed the $(i+1)$'s that appear in row $i$.
Note that all circles that appear to the left of the dashed line correspond to the Kohnert labeling and that the labels match.
\end{example}

\begin{verlong}
\section{Appendix: Old proof of Theorem~\ref{thm:permutation}}
\label{sec:thm_proof-old6}

This section contains a previous proof of Theorem~\ref{thm:permutation}.
This proof uses similar ideas to the proof given in Section~\ref{sec:thm_proof} above,
but differs in its details (in particular, it defines the dual configuration
in a different way, although the two definitions can be shown to be equivalent).

We need some definitions first.

An \defn{$(r_1,r_2)$-configuration} shall mean a pair $C = (q_1, q_2)$, where $q_1$ is an $r_1$-queue and $q_2$ is an $r_2$-queue.
As usual, we consider $C$ as a function on words by $C(u) := q_2\bigr(q_1(u)\bigr)$, and we define the weight of $C$ by $\wt(C) := \wt(q_1) \wt(q_2)$.
We construct the \defn{dual}\footnote{This is a different duality than the contragredient duality of Lemma~\ref{le:dual}.} $(r_2,r_1)$-configuration to $C$, which we denote by $C'$, as follows.

We first consider the case $r_1 = r_2$, in which case we define $C' = C$.
Thus, assume $r_1 \neq r_2$.

For any two sites $i$ and $j$, let $\inter[i,j]$ denote a closed (cyclic) interval from $i$ to $j$.
This is the set $\set{i, i+1, \ldots, j}$ when $i \leq j$ and the set $\set{i, i+1, \ldots, n, 1, 2, \ldots, j}$ when $i > j$.\footnote{We trivially consider an empty interval to be balanced, and note that if $j = i-1$, then $\inter[i,j] = \ive{n}$.}
Let $c^{\uparrow}[i,j]$ (resp.~$c^{\downarrow}[i,j]$) denote the number of $\ell \in \inter[i,j]$ such that $\ell \in q_1$ (resp.~$\ell \in q_2$).

We say that a closed cyclic interval $\inter[i,j]$ is \defn{balanced} if $c^{\uparrow}[i,j] = c^{\downarrow}[i,j]$ and for each $k \in \inter[i,j]$, we have $c^\uparrow[i,k] \geq c^\downarrow[i,k]$.
Equivalently, $\inter[i,j]$ is balanced if and only if $c^{\downarrow}[i,j] = c^{\uparrow}[i,j]$ and for each $k \in \inter[i,j]$, we have $c^\downarrow[k,j] \geq c^\uparrow[k,j]$.
A set $S$ of sites is \defn{balanced} if it is a disjoint union of balanced intervals.
For $i \in \ive{n}$, we say that $i$ is \defn{balanced} if $i$ belongs to some balanced interval, and \defn{unbalanced} otherwise.

The following facts are straightforward:
\begin{enumerate}
 \item For any balanced interval $\mcI$, we have $\lvert q_1 \cap \mcI \rvert = \lvert q_2 \cap \mcI \rvert$.
 \item The empty interval is balanced.
 \item The set $\ive{n}$ of all sites is not balanced, since $r_1 \neq r_2$.
 \item If $A$ and $B$ are two balanced sets, then the set $A \cap B$
       is balanced.\footnote{To prove this, first check it when $A$ and $B$
       are balanced intervals.}
 \item If $A$ and $B$ are balanced intervals, and if $A \cup B$ is an
       interval, then $A \cup B$ is balanced.\footnote{This is obvious
       when one of $A$ and $B$ contains the other. Otherwise, argue
       that $A \cap B$, $A \setminus B$ and $B \setminus A$ are balanced.}
 \item Thus, the union of all balanced intervals (\textit{i.e.}, the set of all balanced
       $i \in \ive{n}$) is also the disjoint union of all maximal balanced intervals
       (where ``maximal'' means ``maximal under inclusion'').
 \item For $r_1 < r_2$ and $j \in \ive{n}$ unbalanced, we have $j \notin q_1$ and $j \in q_2$.
 \item For $r_1 > r_2$ and $j \in \ive{n}$ unbalanced, we have $j \in q_1$ and $j \notin q_2$.
 \item There are exactly $\abs{r_1 - r_2}$ unbalanced sites $i$.
\end{enumerate}

We construct $C' = (q'_1, q'_2)$ by letting $q'_i \cap \mcI = q_i \cap \mcI$ for $i=1,2$ and each balanced interval $\mcI$ of $C$.
For unbalanced $j$, we have $j \in q'_i$ if and only if $j \in q_{3-i}$ for $i = 1,2$.
Note that $C$ and $C'$ have the same balanced intervals.
It is clear that $C'' = C$ and $\wt(C) = \wt(C')$.

(We could adapt this construction to the case $r_1 = r_2$, but we would need to be more precise about defining intervals, since the set of all sites can be written as $\inter[i, i-1]$ for any value of $i$. If done correctly, this results in every $i \in \ive{n}$ being balanced when $r_1 = r_2$, and thus the construction yields $C' = C$ as we defined.)

\begin{example}
Consider the configuration $C$ given in Figure~\ref{fig:balanced}.
The dual configuration $C'$ is given by sliding all of the circles not boxed from the upper level to the lower level.
In particular, we have $q_1' = q_1 \setminus \{1,5,6,8\}$ and $q_2' = q_2 \cup \{1,5,6,8\}$.
\end{example}

\begin{figure}[t]
\[
\begin{tikzpicture}[scale=0.75]
  \def\sc{0.85}   
  \def\ll{2}   
  \def\l{1}   
  \draw[fill=blue!30] (1.5*\sc,\l-.5) rectangle(4.5*\sc,\ll+.5);
  \draw[fill=blue!30] (6.5*\sc,\l-.5) rectangle(7.5*\sc,\ll+.5);
  \draw[fill=blue!30] (8.5*\sc,\l-.5) rectangle(20.5*\sc,\ll+.5);
  \foreach \i in {1,2,5,6,8,11,13,14,17,18,19} { \draw[fill=white] (\i*\sc,\ll) circle (0.3); }
  \foreach \i in {3,4,7,9,10,12,15,16,20} { \draw[fill=white] (\i*\sc-.3,\ll-.3) rectangle +(0.6,+0.6); }
  \foreach \i in {2,12,15,16,18,19,20} { \draw[fill=white] (\i*\sc,\l) circle (0.3); }
  \foreach \i in {1,3,4,5,6,7,8,9,10,11,13,14,17} { \draw[fill=white] (\i*\sc-.3,\l-.3) rectangle +(0.6,+0.6); }
\end{tikzpicture}
\]
\caption{We draw a $\bigcirc$ in position $i$ in row $j$ corresponding to $i \in q_j$ and a $\square$ if $i \notin q_j$.
The maximal balanced intervals are boxed.}
\label{fig:balanced-old6}
\end{figure}

Recall the notations introduced just before Lemma~\ref{le:dual}.

\begin{remark}
\label{rmk:balanced-dual-let-old6}
Let $C = (q_1, q_2)$ be an $(r_1, r_2)$-configuration.
Then, the interval $\mcI$ of $C$ is balanced if and only if the interval $\mcI^{\operatorname{refl}} = \set{ n + 1 - i \mid i \in \mcI }$ of the $(n-r_1, n-r_2)$-configuration $C^* = (q_1^*, q_2^*)$ is balanced.
In other words, the balanced intervals of $C$ are exactly the balanced intervals of $C^*$ but reflected through the middle of $\ive{n}$.
Thus, the dual configuration of $C^* = \tup{q_1^*, q_2^*}$ is obtained from the dual configuration $\tup{q_1', q_2'}$ of $C$ by
\begin{equation}
 (C^*)' = \bigl( (q_1')^*, (q_2')^* \bigr) .
 \label{eq.rmk:balanced-dual-let.dual-old6}
\end{equation}
In addition, applying Lemma~\ref{le:dual} twice, we obtain
\[
C(u) = q_2\bigl( q_1(u) \bigr) = q_2\bigl( (q^*_1(u^*))^* \bigr) = \bigl( q_2^*\bigl( q_1^*(u^*) \bigr) \bigr)^* = \bigl( C^*(u^*) \bigr)^* .
\]
In particular, for $u \in \set{1,2}^n$, we have $C(u)_i = 5 - C^*(u^*)_{n+1-i}$, where we treat $u$ and $C^*(u^*)$ as a word with $2$ and $4$ classes, respectively.
\end{remark}

Fix $k \geq 1$.
In the following, we simplify our terminology and say that an MLQ
is a $k$-tuple of queues (without any restriction on their sizes).
We want to define an action of $\SymGp{k}$ on MLQs.
For each $i \in \ive{k-1}$, we define a map $\fraks_i \colon \set{\text{MLQs}} \to \set{\text{MLQs}}$ by
\[
\fraks_i(q_1, q_2, \dotsc, q_k) = (q_1, \dotsc, q_{i-1}, q'_i, q'_{i+1}, q_{i+2}, \dotsc, q_k),
\]
where $\tup{q'_i, q'_{i+1}}$ is the dual configuration of $\tup{q_i, q_{i+1}}$.
From the definition of a dual configuration, it is clear that $\fraks_i \fraks_i \qq = \qq$.
It is also clear from the definition that $\fraks_i \fraks_j \qq = \fraks_j \fraks_i \qq$ if $\abs{i - j} > 1$.
Thus, the following proposition shows that $\fraks_i$ defines an action of $\SymGp{k}$ on the set of all MLQs.

\begin{prop} \label{prop:braid-old6}
We have
\[
\fraks_i \fraks_{i+1} \fraks_i \qq
	   = \fraks_{i+1} \fraks_i \fraks_{i+1} \qq
\]
for any MLQ $\qq = \tup{q_1, \dotsc, q_k}$ and any $i \in \set{1, 2, \ldots, k-2}$.
\end{prop}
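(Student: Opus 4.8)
The plan is to run the same strategy as the proof of Proposition~\ref{prop:braid} in Section~\ref{sec:thm_proof}: encode MLQs as words on the alphabet $A = \set{\circ, 1, 2, \ldots, k}$ via the map $\word$, show that each $\fraks_i$ corresponds under $\word$ to Lothaire's operator $\sigma_i$ of~\cite[\S5.5]{Loth}, and then transport the braid relation $\sigma_i \sigma_{i+1} \sigma_i = \sigma_{i+1} \sigma_i \sigma_{i+1}$ of~\cite[Ch.~5, (5.6.3)]{Loth} back to $\fraks_i$ using the injectivity of $\word$. Concretely, first I would recall that $\word(\qq)$ is obtained by reading the $k \times n$ matrix $M_{\qq}$ (entry $i$ where $j \in q_i$, and $\circ$ otherwise) column by column, and that $\word$ is injective since $n$ is fixed. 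Then the whole proposition reduces to the single intermediate claim $\word(\fraks_i \qq) = \sigma_i\bigl(\word(\qq)\bigr)$ for every MLQ $\qq$ and every $i$; and since $\sigma_i$ and $\fraks_i$ only see the rows $i, i+1$ (all other letters of the word, and all other queues, being frozen), this claim in turn reduces to the case $\qq = (q_1, q_2)$ with $k = 2$.

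The main obstacle is exactly this reduction to Lothaire's operator, because in this appendix the dual configuration $\tup{q_1', q_2'}$ is defined through \emph{balanced intervals} rather than through the linear parenthesis-matching algorithm that defines $\sigma_1$. I would handle it in two steps. If $\abs{q_1} = \abs{q_2}$, then every site is balanced, so $\fraks_1 \qq = \qq$, while $\sigma_1$ fixes any word with equally many letters $1$ and $2$; hence the claim is trivial, and we may assume $\abs{q_1} \neq \abs{q_2}$, so that at least one unbalanced site exists. The remaining point is that the balanced sites of $(q_1, q_2)$ are precisely the sites contributing a matched parenthesis in the linear parenthesis sequence used by $\sigma_1$. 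The genuine subtlety here is cyclic-versus-linear: balanced intervals are defined on $\ZZ/n\ZZ$, whereas $\word(\qq)$ is read as a finite word with a beginning and an end. To reconcile them I would use that both $\fraks_1$ and $\sigma_1$ commute with the cyclic shift $\zeta$ — for $\fraks_1$ because the notion of a balanced interval is manifestly shift-invariant, and for $\sigma_1$ by~\cite[Prop.~5.6.1]{Loth} — together with $\word(\zeta \qq) = \zeta^{k}\word(\qq)$. Thus, without loss of generality, I may cyclically rotate so that an unbalanced site sits at position $1$; then no maximal balanced interval (equivalently, no matched parenthesis pair) can wrap around that position, so the balanced-interval matching and the linear parenthesis matching produce literally the same set of matched sites. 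Finally, $\fraks_1$ toggles precisely the unbalanced sites between $q_1$ and $q_2$, whereas $\sigma_1$ turns each unmatched closing parenthesis into an opening one or vice versa (one of the two counts being $0$ here); since the unmatched parentheses occupy exactly the unbalanced sites, the two operations agree, giving $\word(\fraks_1 \qq) = \sigma_1\bigl(\word(\qq)\bigr)$.

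An alternative, shorter route — worth noting but which I would only sketch — is to prove directly that the balanced-interval definition of $C'$ coincides with the parenthesis-matching definition of the dual configuration used in Section~\ref{sec:thm_proof} (a maximal balanced interval is exactly a maximal block of consecutive sites whose contributed parentheses all get matched among themselves), and then simply invoke Proposition~\ref{prop:braid} verbatim. Either way, once $\word \circ \fraks_i = \sigma_i \circ \word$ is in hand, the commutative square it expresses, combined with the injectivity of $\word$ and~\cite[Ch.~5, (5.6.3)]{Loth}, yields $\fraks_i \fraks_{i+1} \fraks_i = \fraks_{i+1} \fraks_i \fraks_{i+1}$ as MLQ operators, completing the proof of Proposition~\ref{prop:braid-old6}.
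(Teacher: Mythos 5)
Your proposal is correct and takes essentially the same route as the paper's own proof: encoding MLQs via $\word$, reducing to the two-row case, treating $\abs{q_1}=\abs{q_2}$ separately, using commutation with the cyclic shift (via \cite[Prop.~5.6.1]{Loth} and shift-invariance of balanced intervals) to place an unbalanced site at position $1$ so that the balanced-interval matching agrees with the linear parenthesis matching, and then transporting Lothaire's braid relation through the injective map $\word$. The only step you leave implicit -- that matched parenthesis pairs correspond exactly to balanced intervals, so that unmatched parentheses sit precisely at the unbalanced sites -- is the same short argument the paper spells out.
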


\begin{proof}
We shall deduce the claim from~\cite[Ch.~5, (5.6.3)]{Loth}.

Let $A$ be the $\tup{k+1}$-element set $\set{\circ, 1, 2, \ldots, k}$.
Let $A^*$ denote the set of all words on the alphabet $A$
(of any finite length).

We construct a $k \times n$-matrix $M_{\qq} \in A^{k \times n}$ from $\qq$ by setting the $\tup{i, j}$-th
entry to $i$ if $j \in q_i$ and $\circ$ otherwise.
We then construct a word $\word(\qq) \in A^*$ by reading $M_{\qq}$ from top-to-bottom, left-to-right (\textit{i.e.}, column by column).
For example, if $n = 5$, $k = 3$, then
\begin{align*}
\qq = \tup{\set{1, 3}, \set{2},
\set{2, 5}}
& \longleftrightarrow
 M_{\qq}
 =
 \begin{array}{ccccc}
  1 & \circ & 1 & \circ & \circ \\
  \circ & 2 & \circ  & \circ & \circ \\
  \circ & 3  & \circ & \circ & 3
 \end{array}
 \\ & \longrightarrow
 \word(\qq) = 1 \circ \circ \circ 2 3 1 \circ \circ \circ \circ \circ \circ \circ 3 .
\end{align*}
Clearly, an MLQ $\qq$ is uniquely determined by $\word(\qq)$ since $n$ is fixed.
In other words, the map $\word \colon \set{\text{MLQs}} \to A^*$
is injective.

Now, for each $i \in \set{1, 2, \ldots, k-1}$, we recall the operator
$\sigma_i \colon A^* \to A^*$ from~\cite[\S5.5]{Loth}.
This operator $\sigma_i$ acts on a word $p \in A^*$ 
as follows:
\begin{enumerate}
 \item Treat all letters $i$ in $p$ as opening parentheses `$($',
       all letters $i+1$ as closing parentheses `$)$',
       and consider all other letters to be frozen.
       Now, match as many parentheses as possible
       according to the standard parenthesis-matching
       algorithm (\textit{i.e.}, every time you find an opening
       parenthesis to the left of a closing one, with only
       frozen letters between them, you match these two
       parentheses and declare them frozen).
       Notice that this algorithm is non-deterministic, but
       the outcome is independent of the steps chosen;
       the result is always a word whose non-frozen
       part (\textit{i.e.}, the word obtained by removing
       all frozen letters) is
       $\underbrace{))\cdots)}_{a\text{ parentheses}}
       \underbrace{((\cdots(}_{b\text{ parentheses}}$
       for some integers $a, b \geq 0$.
       We call this the \defn{reduced signature} of $p$.
 \item Now, replace this non-frozen part by
       $\underbrace{))\cdots)}_{b\text{ parentheses}}
       \underbrace{((\cdots(}_{a\text{ parentheses}}$
       while keeping all frozen letters in their places.
       The resulting word is $\sigma_i p$.
\end{enumerate}
From~\cite[Eq.~(5.6.3)]{Loth}, these operators
$\sigma_i$ satisfy
\begin{equation}
 \sigma_i \sigma_{i+1} \sigma_i
 = \sigma_{i+1} \sigma_i \sigma_{i+1}
 \label{pf.prop:braid.loth-eq-old6}
\end{equation}
for all $i \in \set{1, 2, \ldots, k-2}$.

Let $\zeta \colon \mcW_n \to \mcW_n$ be the cyclic shift map that sends each word $w_1 w_2 \cdots w_n$ to $w_2 w_3 \cdots w_n w_1$.
We also abuse the notation $\zeta$ for the map that sends each queue $q$ to the queue $\zeta q = \set{ i - 1 \mid i \in q }$ (recall that $0 = n$ as sites).
This map $\zeta$ shall act on MLQs entrywise (since an MLQ is a tuple of queues).
Clearly,
\begin{equation}
 \word(\zeta \qq) = \zeta^k \word(\qq)
 \label{pf.prop:braid.word-zeta-old6}
\end{equation}
for any MLQ $\qq = (q_1, \ldots, q_k)$.

Now, we claim that
\begin{equation}
 \word(\fraks_i \qq) = \sigma_i\bigl( \word(\qq) \bigr)
 \qquad \text{ for each MLQ } \qq \text{ and each } i .
 \label{pf.prop:braid.inter-old6}
\end{equation}
Note that it is sufficient to show that $\word(\fraks_1 \qq) = \sigma_1\bigl( \word(\qq) \bigr)$ for $\qq = (q_1, q_2)$
(because the definition of $\sigma_i$ only relies on the letters $i$ and $i+1$, while all other letters stay in their places and have no effect).

Thus, let $\qq = (q_1, q_2)$.
We want to show $\word(\fraks_1 \qq) = \sigma_1\bigl( \word(\qq) \bigr)$.
If $\abs{q_1} = \abs{q_2}$, then $\fraks_1 \qq = \qq$ by the definition of $\fraks_1$.
Moreover, we have $\sigma_1\bigl( \word(\qq) \bigr) = \word(\qq)$ in this case, since the word $\word(\qq)$ has as many letters $1$ as it has letters $2$, but the map $\sigma_1$ leaves such words unchanged.
Hence, the claim holds when $\abs{q_1} = \abs{q_2}$.
Thus, we assume that $\abs{q_1} \neq \abs{q_2}$.
Therefore, there exists at least one unbalanced site for the configuration $\qq = (q_1, q_2)$.

The operator $\fraks_1$ commutes with the cyclic shift map $\zeta$ on MLQs because $\zeta$ merely shifts the balanced intervals.
The operator $\sigma_1$ commutes with the cyclic shift map $\zeta$ on words in $A^*$ by~\cite[Prop.~5.6.1]{Loth}.
Hence, and because of~\eqref{pf.prop:braid.word-zeta-old6}, we can apply $\zeta$ to $\qq$ any number of times without loss of generality.
Thus, we assume that the site $1$ is unbalanced for the configuration $\qq = (q_1, q_2)$, since at least one unbalanced site $j$ exists and we can cyclically shift until it is $1$.
Therefore, no balanced interval has the form $\inter[i,j]$ with $i > j$.

We construct a sequence of parentheses as follows: For each $j = 1, 2, \dotsc, n$ (in that order), we write
\begin{itemize}
\item an opening parenthesis `$($' if $j \in q_1$ and $j \notin q_2$,
\item a closing parenthesis `$)$' if $j \notin q_1$ and $j \in q_2$,
\item a matched pair of parentheses `$()$' if $j \in q_1$ and $j \in q_2$,
\item nothing otherwise.
\end{itemize}
Note that this is exactly the sequence of parentheses constructed when applying $\sigma_1$ to $\word(\qq)$ (removing all $\circ$ letters).
Furthermore, every `$($' and `$)$' corresponds to a contribution to $c^{\uparrow}[1,n]$ and $c^{\downarrow}[1,n]$, respectively.
Additionally, every matched pair of parentheses from sites $j \leq j'$ under the standard matching algorithm corresponds to the endpoints of a balanced interval $\inter[j,j']$. (This is easily proven by induction on the time at which the parentheses got matched: At this time, all the parentheses inbetween have already been matched, thus forming balanced intervals, and the newly matched pair merely wraps them in a bigger balanced interval.)
Thus, if the algorithm would leave both a `$)$' and a `$($' unmatched, then the (cyclic) interval between the rightmost unmatched `$)$' and the leftmost unmatched `$($' would also be a balanced interval, which would contradict the fact that no balanced interval has the form $\inter[i,j]$ with $i > j$.
Consequently, the algorithm either leaves only `$)$' parentheses unmatched, or leaves only `$($' parentheses unmatched.
The precise outcome depends on which of $\abs{q_1}$ and $\abs{q_2}$ is larger.
Consequently, the sites of the unmatched parentheses are precisely the unbalanced sites.

Now, recall that $\fraks_1$ merely toggles the unbalanced sites between $q_1$ and $q_2$, whereas $\sigma_1$ switches the number of unmatched `$)$'s with the number of unmatched `$($'s (which, in the case of $\word(\qq)$, boils down to just turning each unmatched `$)$' into a `$($' or vice versa, because one of the numbers is $0$). Since the sites of the unmatched parentheses are precisely the unbalanced sites, this shows that the two maps agree -- that is, we have $\word(\fraks_1 \qq) = \sigma_1\bigl( \word(\qq) \bigr)$. This proves~\eqref{pf.prop:braid.inter-old6}.

The equality~\eqref{pf.prop:braid.inter-old6} can be rewritten as the
commutative diagram
\[
\xymatrix{
 \set{\text{MLQs}} \ar[r]^{\fraks_i} \ar[d]_{\word} & \set{\text{MLQs}} \ar[d]^{\word} \\
 A^* \ar[r]_{\sigma_i} & A^*
}
\]
for all $i \in \set{1, 2, \dotsc, k-1}$.
In view of the injectivity of the map $\word \colon \set{\text{MLQs}} \to A^*$,
this diagram allows us to translate~\eqref{pf.prop:braid.loth-eq-old6} into
$\fraks_i \fraks_{i+1} \fraks_i = \fraks_{i+1} \fraks_i \fraks_{i+1}$.
\end{proof}

\begin{remark}
Our letters $1, \ldots, k$ correspond to the letters
$a_k, \ldots, a_1$ in~\cite{Loth},
since the definition of $\sigma_i$ in~\cite{Loth} involves $a_i$ rather
than $i+1$ as closing parenthesis and $a_{i+1}$ rather than $i$ as opening one.
Also,~\cite{Loth} does not include the letter $\circ$ in the alphabet,
but this makes no difference to the proof, since all letters $\circ$ are always frozen.
\end{remark}

\begin{remark}
The operator $\sigma_i$ is essentially a combination of co-plactic operators.
Moreover, it corresponds to the Weyl group action on a tensor product of crystals~\cite{BS17}.
Note that the bracketing rule given above is precisely the usual signature rule (see, \textit{e.g.},~\cite[Sec.~2.4]{BS17} for a description) for computing tensor products.
This arises from considering the MLQ as a binary $m \times n$ matrix and the natural $(\mathfrak{sl}_m \oplus \mathfrak{sl}_n)$-action.
\end{remark}

Next, we define two queues corresponding to a word $w \in \mcW_n$ of type $\mm$.
Namely, for $k \in \set{p_i(\mm) \mid i \geq 0}$, let $[w]_k$ denote the set of the indices $i \in \ive{n}$
corresponding to the $k$ smallest letters $w_i$ of $w$.

The crucial tools in our proof of Theorem~\ref{thm:permutation} will be the following two facts.

\begin{lemma} \label{lem:SL.reconstruct-old6}
Let $w, w' \in \mcW_n$ be two words of the same type $\mm$.
Assume that $[w]_k = [w']_k$ for each $k \in \set{p_i(\mm) \mid i \geq 0}$.
Then, $w = w'$.
\end{lemma}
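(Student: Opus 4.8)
The plan is to reconstruct each word letter-by-letter from the data of the sets $[\cdot]_k$; in fact the argument is identical to the one already given for Lemma~\ref{lem:SL.reconstruct}. First I would record the key observation: since $w$ has type $\mm$, the $p_i(\mm)$ smallest letters of $w$ are exactly the letters that are $\leq i$, so the set $[w]_{p_i(\mm)}$ is precisely the set of sites of $w$ carrying a letter $\leq i$. Consequently, for each $i \geq 1$, the set of sites carrying the letter $i$ in $w$ equals $[w]_{p_i(\mm)} \setminus [w]_{p_{i-1}(\mm)}$. The same description applies verbatim to $w'$, giving $\set{j \in \ive{n} \mid w'_j = i} = [w']_{p_i(\mm)} \setminus [w']_{p_{i-1}(\mm)}$.

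Next I would invoke the hypothesis $[w]_k = [w']_k$, applied to the two values $k = p_i(\mm)$ and $k = p_{i-1}(\mm)$ (both of which lie in $\set{ p_j(\mm) \mid j \geq 0 }$, so the hypothesis is applicable and both $[w]_k$ and $[w']_k$ are well-defined), to conclude that $[w]_{p_i(\mm)} \setminus [w]_{p_{i-1}(\mm)} = [w']_{p_i(\mm)} \setminus [w']_{p_{i-1}(\mm)}$. Thus $w$ and $w'$ carry the letter $i$ in exactly the same sites, for every $i \geq 1$. Since each site of $\ive{n}$ carries exactly one letter, this forces $w_j = w'_j$ for all $j$, i.e.\ $w = w'$.

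I do not expect any genuine obstacle: the statement is essentially a bookkeeping fact about how the increasing filtration $[w]_0 \subseteq [w]_{p_1(\mm)} \subseteq [w]_{p_2(\mm)} \subseteq \cdots$ determines the level sets of $w$. The only points worth a sentence of care are that the indices $p_i(\mm)$ and $p_{i-1}(\mm)$ belong to the set $\set{p_j(\mm)\mid j\ge 0}$ over which the hypothesis is phrased, and that $[w]_k$ is well-defined exactly for such $k$ (both immediate from the definitions), so that the set differences above make sense.
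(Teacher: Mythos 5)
Your proof is correct and follows exactly the same route as the paper's own argument: identify the sites carrying the letter $i$ as the set difference $[w]_{p_i(\mm)} \setminus [w]_{p_{i-1}(\mm)}$, apply the hypothesis at $k = p_i(\mm)$ and $k = p_{i-1}(\mm)$, and conclude letter by letter. No gaps; the extra remarks about well-definedness are fine but not essential.
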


\begin{proof}
Fix some $i \geq 1$.
The sites containing the letter $i$ in $w$ are the elements of $[w]_{p_i(\mm)} \setminus [w]_{p_{i-1}(\mm)}$
(since $w$ has type $\mm$).
Likewise,
the sites containing the letter $i$ in $w'$ are the elements of $[w']_{p_i(\mm)} \setminus [w']_{p_{i-1}(\mm)}$.
Since our assumption ($[w]_k = [w']_k$) yields $[w]_{p_i(\mm)} \setminus [w]_{p_{i-1}(\mm)}
= [w']_{p_i(\mm)} \setminus [w']_{p_{i-1}(\mm)}$,
we conclude that these are the same sites.
Since this holds for all letters $i$, we have $w = w'$.
\end{proof}

\begin{prop} \label{prop:SL.dual-old6}
Let $u \in \mcW_n$ be a word of type $\mm$.
Let $k = p_{\alpha}(\mm)$ for some $\alpha$.
Let $q$ be a queue.
The dual configuration of $\tup{ [u]_k , q }$ has the form $\tup{q^{\dagger} , [q(u)]_k }$, where $q^{\dagger}$ is some queue.
\end{prop}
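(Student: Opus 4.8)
The plan is to mirror the proof of Proposition~\ref{prop:SL.dual} from Section~\ref{sec:thm_proof}, transcribing every step into the language of balanced intervals used for the dual configuration in this appendix. Write $C = \tup{[u]_k, q}$ and let $C' = \tup{q'_1, q'_2}$ be its dual. Since $\abs{q'_2} = \abs{[u]_k} = k = \abs{[q(u)]_k}$, it is enough to prove $[q(u)]_k \subseteq q'_2$.

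First I would dispose of the case $k > \abs{q}$, exactly as in the main text. Using $\bigl([u]_k\bigr)^* = [u^*]_{n-k}$, the identity $\abs{q^*} = n - \abs{q}$, and $n - k < \abs{q^*}$, apply the case $k \le \abs{q}$ to $n-k$, $u^*$, $q^*$ to get $\tup{[u^*]_{n-k}, q^*}' = \tup{q^\dagger, [q^*(u^*)]_{n-k}}$; then~\eqref{eq.rmk:balanced-dual-let.dual-old6} together with Lemma~\ref{le:dual} gives $(q'_2)^* = [q^*(u^*)]_{n-k} = \bigl([q(u)]_k\bigr)^*$, hence $q'_2 = [q(u)]_k$. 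So assume henceforth $k \le \abs{q}$, and write $q_1 = [u]_k$, $q_2 = q$, so that $\abs{q_1} \le \abs{q_2}$. In this regime all unbalanced sites of $C$ lie in $q_2 \setminus q_1$, so by the construction of the dual configuration $q'_2 = q_2 \cap \set{\text{balanced sites of } C}$. A routine reading of the queue algorithm (cf.\ Remark~\ref{rmk:t-splitting}) also gives $[q(u)]_k \subseteq q_2$; hence it remains only to show that every site of $[q(u)]_k$ is balanced for $C$.

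To prove this, fix a permutation $\tup{i_1, \dotsc, i_n}$ with $u_{i_1} \le \cdots \le u_{i_n}$ and run it through the algorithm computing $q(u)$; then $[u]_k = \set{i_1, \dotsc, i_k}$ and $[q(u)]_k = \set{j_1, \dotsc, j_k}$, where $j_\kappa$ is the site set at the $\kappa$-th step of Phase~II (all of $i_1, \dotsc, i_k$ fall in Phase~II because $k \le \abs{q}$). I then prove, by induction on $\kappa \in \set{0, 1, \dotsc, k}$, the balanced-interval analogue of the bookkeeping behind Lemmas~\ref{lem:SL.dual.1} and~\ref{lem:SL.dual.3}: deleting $\set{i_1, \dotsc, i_\kappa}$ from $q_1$ and $\set{j_1, \dotsc, j_\kappa}$ from $q_2$ leaves the set of unbalanced sites of $C$ unchanged (and makes $i_1, \dotsc, i_\kappa, j_1, \dotsc, j_\kappa$ balanced for $C$). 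The inductive step rests on a balanced-interval version of Lemma~\ref{lem:SL.dual.1}: in the residual configuration $\tup{q_1 \setminus \set{i_1, \dotsc, i_{\kappa-1}}, \, q_2 \setminus \set{j_1, \dotsc, j_{\kappa-1}}}$ we still have $i_\kappa$ in the residual upper queue, $j_\kappa$ is the first residual-lower-queue site weakly to the right (cyclically) of $i_\kappa$, and the residual upper queue is still no larger than the residual lower queue (since $k - (\kappa-1) \le \abs{q} - (\kappa-1)$); these are precisely the hypotheses under which the partner of the closing parenthesis contributed by $j_\kappa$ is an opening parenthesis lying between $i_\kappa$ and $j_\kappa$, so that erasing $i_\kappa$ and $j_\kappa$ has the same effect on the matching and on the unbalanced set as erasing that partner together with $j_\kappa$. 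Taking $\kappa = k$, the residual upper queue is empty, so the residual configuration's unbalanced sites are exactly $q_2 \setminus \set{j_1, \dotsc, j_k}$; since these coincide with the unbalanced sites of $C$, the balanced sites of $C$ inside $q_2$ are precisely $\set{j_1, \dotsc, j_k} = [q(u)]_k$, and therefore $q'_2 = q_2 \cap \set{\text{balanced sites of } C} = [q(u)]_k$.

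The hard part will be this balanced-interval reformulation of Lemma~\ref{lem:SL.dual.1} and the careful verification of the peeling induction in the genuinely cyclic setting: one must check that the ``first lower-queue site to the right'' step removes exactly one matched pair and disturbs neither the other matched pairs nor any unbalanced site, and one must handle sites lying in $q_1 \cap q_2$ (dealt with, as in the main text, by pairing off the immediately-matched ``$()$'' contributions first, i.e.\ passing to the reduced sequence). Once this lemma is established, everything else is routine; I expect it to follow from the same string-shortening argument as in Section~\ref{sec:thm_proof}, translated through the correspondence between matched parenthesis pairs and balanced intervals recorded in the proof of Proposition~\ref{prop:braid-old6}.
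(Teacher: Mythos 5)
Your proposal is sound, but it is a genuinely different argument from the one the paper gives for this proposition. The paper's proof stays entirely inside the balanced-interval formalism: it fixes the ordering permutation, sets $I_p = \inter[i_p,j_p]$, forms the union $U = I_1 \cup \cdots \cup I_k$, shows $U \cap q = \set{j_1,\ldots,j_k}$ and $U \neq \ive{n}$, proves by a prefix-counting argument that each connected component of $U$ is top-heavy, and then concludes from a global count (each component has at least as many elements of $[u]_k$ as of $q$, while the totals are both $k$) that every component is balanced; hence each $j_p$ is balanced, so $[q(u)]_k \subseteq q'_2$, and the size count $\abs{q'_2}=k$ finishes it -- no peeling induction and no parenthesis matching. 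You instead transport the Section~\ref{sec:thm_proof} proof of Proposition~\ref{prop:SL.dual} (Lemma~\ref{lem:SL.dual.1} plus the step-by-step induction of Lemma~\ref{lem:SL.dual.3}) into this setting via the dictionary ``matched pair of parentheses $\leftrightarrow$ endpoints of a balanced interval, unmatched parenthesis $\leftrightarrow$ unbalanced site'', which is indeed available (it is established, after a cyclic normalization, inside the appendix's proof of the braid relation; since both notions are rotation-invariant and an unbalanced site exists whenever the two queue sizes differ, the normalization is harmless, but you must say this). Your key step -- that removing $i_\kappa$ from the upper queue and $j_\kappa$ from the lower one affects the matching, and hence the unbalanced set, exactly as removing a matched pair, because all parentheses between $i_\kappa$ and $j_\kappa$ are opening -- is correct, and the terminal computation (empty residual upper queue, so the residual unbalanced set is $q \setminus \set{j_1,\ldots,j_k}$) is right. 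What your route buys is reuse of the machinery already built for Proposition~\ref{prop:SL.dual} and an essentially explicit proof that the two definitions of dual configuration agree; what the paper's route buys is brevity and self-containedness in the interval language. One small caution: the balanced-interval apparatus of this appendix is only set up for $r_1 \neq r_2$, so the case $k = \abs{q}$ should be split off and disposed of trivially (there $C' = C$ and $[q(u)]_k = q$), as the paper does, rather than folded into the formula $q'_2 = q \cap \set{\text{balanced sites}}$.
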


\begin{proof} 
Let $C' = (q'_1, q'_2)$ denote the dual configuration of $C = \tup{ [u]_k , q }$.
The notations $c^{\uparrow}$ and $c^{\downarrow}$ as well as the concept of balanced
intervals shall refer to $C$.

Choose a permutation $\tup{i_1, i_2, \ldots, i_n}$ of $\tup{1, 2, \ldots, n}$
such that $u_{i_1} \leq u_{i_2} \leq \cdots \leq u_{i_n}$.
Use this permutation to construct $q(u)$ (as in the definition of $q(u)$).
For each $p \in \ive{n}$, let $j_p$ be the site $j$ that is found in this construction when $i = i_p$.
Thus, $j_p \in q$ if $p \leq \abs{q}$, whereas $j_p \notin q$ if $p > \abs{q}$.
Also, $q(u)_{j_1} \leq q(u)_{j_2} \leq \cdots \leq q(u)_{j_n}$, so that
$[q(u)]_k = \set{j_1, j_2, \ldots, j_k}$ for each $k$ for which $[q(u)]_k$ is well-defined.

We want to prove that $(q'_1, q'_2)$ has the form $\tup{q^{\dagger} , [q(u)]_k }$.
In other words, we want to prove that $q'_2 = [q(u)]_k$.
If $k = \abs{q}$, then this is obvious (because in this case, the two queues in the configuration $C$ have the same size, so that its dual configuration $C'$ equals $C$, and thus $q'_2 = q$; but the assumption $k = \abs{q}$ also yields $[q(u)]_k = q$ because of the construction of $q(u)$, and therefore we obtain $q'_2 = q = [q(u)]_k$).

Suppose next that $k < \abs{q}$.
Thus, each site in $[u]_k$ is balanced.
But $[u]_k = \set{i_1, i_2, \ldots, i_k}$ by the definition of the permutation.

If $S$ is a set of sites, then the \defn{connected components} of $S$ are the maximal intervals contained in $S$.

We say that a closed cyclic interval $\inter[i,j]$ is \defn{top-heavy} if for each $\ell \in \inter[i,j]$, we have $c^\uparrow[i,\ell] \geq c^\downarrow[i,\ell]$.
Thus, a balanced cyclic interval is just a top-heavy cyclic interval $\inter[i,j]$ satisfying $c^{\uparrow}[i,j] = c^{\downarrow}[i,j]$.

For each $p \in \ive{k}$, we define an interval $I_p$ by $I_p = \inter[i_p, j_p]$.

We first observe that $I_p \cap q \subseteq \set{j_1, j_2, \ldots, j_p}$
for each $p \in \ive{k}$.
[\textit{Proof.} Let $p \in \ive{k}$.
Recall that the site $j_p$ is found
(in Phase~II of the algorithm computing $q(u)$) as the first site $j \in q$ weakly to the right of $i_p$ such that $q(u)_j$ is not set yet.
In other words, $j_p$ is the first site $j \in q$ weakly to the right of $i_p$ that is not one of $j_1, j_2, \ldots, j_{p-1}$
(since the sites $j \in q$ such that $q(u)_j$ has already been set are $j_1, j_2, \ldots, j_{p-1}$).
In other words, all sites in $I_p = \inter[i_p, j_p]$ that belong to $q$ must be among $j_1, j_2, \ldots, j_{p-1}, j_p$.
In other words, $I_p \cap q \subseteq \set{j_1, j_2, \ldots, j_p}$, qed.]

Let $U = I_1 \cup I_2 \cup \cdots \cup I_k$.
Then, $U \cap q \subseteq \set{j_1, j_2, \ldots, j_k}$ (since $I_p \cap q \subseteq \set{j_1, j_2, \ldots, j_p}$ for each $p \in \ive{k}$).
Combining this with $\set{j_1, j_2, \ldots, j_k} \subseteq U \cap q$ (since each $j_p$ satisfies $j_p \in I_p \subseteq U$ and $j_p \in \set{j_1, j_2, \ldots, j_{\abs{q}}} = q$),
we obtain $U \cap q = \set{j_1, j_2, \ldots, j_k}$.

If we had $U = \ive{n}$, then this would rewrite as $q = \set{j_1, j_2, \ldots, j_k}$, which would entail $\abs{q} = k$; this would contradict $k < \abs{q}$.
Hence, $U \neq \ive{n}$.

Each connected component of the set $U$ is top-heavy.
[\textit{Proof.} Let $\inter[a,b]$ be a connected component of $U$. Then, we must prove that $\inter[a,b]$ is top-heavy.
Indeed, since $U$ is the union of the connected intervals $I_1, I_2, \ldots, I_k$, its connected component $\inter[a,b]$ must have the form $\inter[a,b] = \bigcup_{p \in R} I_p$ for some nonempty subset $R$ of $\ive{k}$, and be disjoint from all the $I_p$ with $p \notin R$. Consider this $R$.
Thus, $\inter[a,b] \cap \set{i_1, i_2, \ldots, i_k} = \set{i_p \mid p \in R}$ (indeed, $\inter[a,b] = \bigcup_{p \in R} I_p$ shows that all of the $i_p$ with $p \in R$ must lie in $\inter[a,b] \cap \set{i_1, i_2, \ldots, i_k}$; on the other hand, none of the remaining elements of $\set{i_1, i_2, \ldots, i_k}$ can belong to $\inter[a,b]$, since $\inter[a,b]$ is disjoint from all the $I_p$ with $p \notin R$).
Likewise, $\inter[a,b] \cap \set{j_1, j_2, \ldots, j_k} = \set{j_p \mid p \in R}$.
From $[u]_k = \set{i_1, i_2, \ldots, i_k}$, we obtain
\[
\inter[a,b] \cap [u]_k = \inter[a,b] \cap \set{i_1, i_2, \ldots, i_k} = \set{i_p \mid p \in R} .
\]
From $\inter[a,b] \subseteq U$, we obtain
\[
\inter[a,b] \cap q = \inter[a,b] \cap \underbrace{U \cap q}_{= \set{j_1, j_2, \ldots, j_k}}
= \inter[a,b] \cap \set{j_1, j_2, \ldots, j_k} = \set{j_p \mid p \in R} .
\]
Thus, for each $\ell \in \inter[a,b]$, the number $c^\uparrow[a,\ell] = \abs{\inter[a,\ell] \cap [u]_k}$ counts all of the $i_p$ with $p \in R$ that fall into the interval $\inter[a,\ell]$, while the number $c^\downarrow[a,\ell] = \abs{\inter[a,\ell] \cap q}$ counts all of the $j_p$ with $p \in R$ that fall into this interval.
Hence, the former number is at least as large as the latter number (because if $p \in R$ is such that $j_p$ falls into $\inter[a,\ell]$, then $i_p$ must also fall into $\inter[a,\ell]$\footnote{In fact, let $p \in R$ be such that $j_p \in \inter[a,\ell]$. But $\inter[i_p,j_p] = I_p$ is a subinterval of $\inter[a,b]$, due to $\inter[a,b] = \bigcup_{p \in R} I_p$. The interval $\inter[a,\ell]$ is a ``prefix'' of $\inter[a,b]$; thus, every subinterval of $\inter[a,\ell]$ that ends inside $\inter[a,\ell]$ must also begin inside $\inter[a,\ell]$. Applying this to the subinterval $\inter[i_p, j_p]$, we conclude that $i_p \in \inter[a,\ell]$, as we wanted to show. Here, we have tacitly used the fact that $\inter[a,b] \neq \ive{n}$, which follows from $\inter[a,b] \subseteq U \neq \ive{n}$.}).
We have thus proven that for each $\ell \in \inter[i,j]$, we have $c^\uparrow[a,\ell] \geq c^\downarrow[a,\ell]$. In other words, the interval $\inter[a,b]$ is top-heavy, qed.]

Consider again the connected components of $U$. Each of them is top-heavy (as we have just shown), and thus contains at least as many elements of $[u]_k$ as it contains elements of $q$. Hence, the set $U$ altogether contains at least as many elements of $[u]_k$ as it contains elements of $q$, and this inequality becomes an equality only if each connected component of $U$ is balanced.
But this inequality does become an equality, because the set $U$ contains all $k$ elements of $[u]_k$ (since $[u]_k = \set{i_1, i_2, \ldots, i_k} \subseteq U$) and contains exactly $k$ elements of $q$ (since $U \cap q = \set{j_1, j_2, \ldots, j_k}$).
Thus, each connected component of $U$ is balanced. In other words, $U$ is balanced.
Hence, each element of $[q(u)]_k$ is balanced (since $[q(u)]_k = \set{j_1, j_2, \ldots, j_k} = U \cap q \subseteq U$).
Due to how the dual configuration $(q'_1, q'_2)$ was defined, we thus conclude that each element of $[q(u)]_k$ is in $q'_2$. In other words, $[q(u)]_k \subseteq q'_2$.
Combining this with $\abs{q'_2} = \abs{[u]_k} = k = \abs{[q(u)]_k}$, we obtain $q'_2 = [q(u)]_k$.
Thus, Proposition~\ref{prop:SL.dual-old6} is proven in the case when $k < \abs{q}$.

Now suppose $k > \abs{q}$.
Let $\ell$ be the number of classes in $u$, and consider the contragredient duals $q^*$ and $u^*$ as in Lemma~\ref{le:dual}.
Thus, $n-k < n - \abs{q} = \abs{q^*}$.
Hence, applying the $k < \abs{q}$ case (proven above)
to $n-k$, $u^*$ and $q^*$ instead of $k$, $u$ and $q$,
we see that
$\tup{ [u^*]_{n-k} , q^* }' = \tup{ q^\dagger , [q^*(u^*)]_{n-k} }$
for some queue $q^\dagger$.
Now,
\begin{align*}
 \bigl( (q'_1)^*, (q'_2)^* \bigr)
 &= \tup{ ([u]_k)^* , q^* }'
 = \tup{ [u^*]_{n-k} , q^* }' \\
 & = \tup{ q^\dagger , [q^*(u^*)]_{n-k} }
 = \tup{ q^\dagger , [(q(u))^*]_{n-k} } ,
\end{align*}
where
\begin{itemize}
 \item the first equality follows from~\eqref{eq.rmk:balanced-dual-let.dual-old6};
 \item the second equality is because $([u]_k)^* = [u^*]_{n-k}$; and
 \item the fourth equality follows from Lemma~\ref{le:dual}.
\end{itemize}
Therefore, $(q'_2)^* = [(q(u))^*]_{n-k} = ([q(u)]_k)^*$, so that
$q'_2 = [q(u)]_k$.
Hence, Proposition~\ref{prop:SL.dual-old6} is proven in the case when $k > \abs{q}$.
\end{proof}

\begin{proof}[Proof of Theorem~\ref{thm:permutation}.]
Recall that any permutation in $\SymGp{\ell-1}$ is a product of simple transpositions $s_1, s_2, \ldots, s_{\ell-2}$.
Hence, in order to prove Theorem~\ref{thm:permutation}, it suffices to show that $\swt{u}_{\sigma} = \swt{u}_{\sigma s_i}$ for each $\sigma \in \SymGp{\ell-1}$ and $i \in \ive{\ell-2}$.
Then, Theorem~\ref{thm:permutation} follows by induction on length, \textit{i.e.} the minimal number of simple transpositions needed to write $\sigma$.

In order to prove $\swt{u}_{\sigma} = \swt{u}_{\sigma s_i}$, we need to show, for a $\sigma$-twisted MLQ $\qq$ of type $\mm$ satisfying $u = \qq (1^n)$, that $\fraks_i \qq$ is a $\sigma s_i$-twisted MLQ of type $\mm$ satisfying $u = (\fraks_i \qq) (1^n)$ (since this will show that $\fraks_i$ bijects the former MLQs to the latter).
The only nontrivial part is showing $u = (\fraks_i\qq) (1^n)$.
More generally, we will show that $(\fraks_i\qq)(w) = \qq(w)$ for any word $w \in \mcW_n$.
The proof of this claim reduces to showing that for any configuration $C = \tup{q_1, q_2}$ and any word $w \in \mcW_n$ the dual configuration $\fraks_1 C = C' = (q'_1, q'_2)$ of $C$ satisfies $C'(w) = C(w)$.

Each word $w$ can be obtained from a standard word by a sequence of
merges (each of which sends a word $u$ to $\vee^{(k)} u$ for some
$k \in \set{ p_j(\mm) \mid j \geq 1 }$, where $\mm$ is the type of
$u$). Lemma~\ref{lemma:queue_merge_commute} shows that these merges
commute with the action of a queue (and thus of an MLQ).
Hence, it is sufficient to consider standard words $w$.
Thus, assume that $w$ is standard of type $\mm$.
It is straightforward to see
(using Equation~\eqref{eq:queue_type_change} and $\abs{q'_2} = \abs{q_1}$ and $\abs{q'_1} = \abs{q_2}$)
that the words
$C(w) = q_2\bigl( q_1(w) \bigr)$
and
$C'(w) = q'_2\bigl( q_1'(w) \bigr)$
have the same type.
Let $\nn$ be this type.
We shall now show that
$[C'(w)]_k = [C(w)]_k$
for all $k \in \set{ p_i(\nn) \mid i \geq 0}$.
According to Lemma~\ref{lem:SL.reconstruct-old6}, this will yield $C'(w) = C(w)$, and thus our proof will be complete.

Let $k \in \set{ p_i(\nn) \mid i \geq 0}$.
Thus, $k \in \set{0, 1, \ldots, n} = \set{ p_i(\mm) \mid i \geq 0 }$ (since $w$ is standard).
Hence, $[w]_k$ is well-defined.
Note that $q_i(w)$ and $q'_i(w)$ are also standard words.
Using Proposition~\ref{prop:SL.dual-old6} to compute dual
configurations, we can see how the MLQ
$\qq = \tup{[w]_k, q_1, q_2}$ transforms under the action of
$\fraks_1 \fraks_2 \fraks_1$: Namely, we have
\begin{align*}
\tup{[w]_k, q_1, q_2} & \overset{\fraks_1}{\longmapsto} \tup{\ast, [q_1(w)]_k, q_2}
\\ & \overset{\fraks_2}{\longmapsto} \tup{\ast, \ast, \bigl[ q_2\bigl( q_1(w) \bigr) \bigr]_k}
\\ & \overset{\fraks_1}{\longmapsto} \tup{\ast, \ast, \bigl[ q_2\bigl( q_1(w) \bigr) \bigr]_k},
\end{align*}
where $\ast$ denotes some queue.
Likewise, the action of $\fraks_2 \fraks_1 \fraks_2$ is given by
\begin{align*}
\tup{[w]_k, q_1, q_2} & \overset{\fraks_2}{\longmapsto} \tup{[w]_k, q'_1, q'_2}
\\ & \overset{\fraks_1}{\longmapsto} \tup{\ast, [q'_1(w)]_k , q'_2}
\\ & \overset{\fraks_2}{\longmapsto} \tup{\ast, \ast, \bigl[ q'_2\bigl( q'_1(w) \bigr) \bigr]_k}.
\end{align*}
(See Figure~\ref{fig:crossing_diagrams-old6} for the actions depicted using crossing diagrams.)
Yet, the two maps are equal by Proposition~\ref{prop:braid-old6}.
Thus, the resulting MLQs must be identical:
\[
\tup{\ast, \ast, \bigl[ q'_2\bigl(q'_1(w) \bigr) \bigr]_k}
=
\tup{\ast, \ast, \bigl[ q_2\bigl(q_1(w) \bigr) \bigr]_k}.
\]
Hence, we have
\[
\bigl[ q'_2\bigl( q'_1(w) \bigr) \bigr]_k = \bigl[ q_2\bigl( q_1(w) \bigr) \bigr]_k.
\]
In other words, $[C'(w)]_k = [C(w)]_k$.
\end{proof}

\begin{figure}
\begin{gather*}
\begin{tikzpicture}[xscale=3.5]
\node (i1) at (0,0) {$[w]_k$};
\node (i2) at (0,-1) {$q_1$};
\node (i3) at (0,-2) {$q_2$};
\node (s11) at (1,0) {$\ast$};
\node (s12) at (1,-1) {$[q_1(w)]_k$};
\node (s13) at (1,-2) {$q_2$};
\node (s21) at (2,0) {$\ast$};
\node (s22) at (2,-1) {$\ast$};
\node (s23) at (2,-2) {$\bigl[ q_2\bigl( q_1(w) \bigr) \bigr]_k$};
\node (t1) at (3,0) {$\ast$};
\node (t2) at (3,-1) {$\ast$};
\node (t3) at (3,-2) {$\bigl[ q_2\bigl( q_1(w) \bigr) \bigr]_k$};
\path[-] (i2) edge (s11) (s11) edge (s21) (s21) edge (t2);
\path[-] (i3) edge (s13) (s13) edge (s22) (s22) edge (t1);
\path[-,red] (i1) edge (s12) (s12) edge (s23) (s23) edge (t3);
\end{tikzpicture}
\\
\begin{tikzpicture}[xscale=3.5]
\node (i1) at (0,0) {$[w]_k$};
\node (i2) at (0,-1) {$q_1$};
\node (i3) at (0,-2) {$q_2$};
\node (s11) at (1,0) {$[w]_k$};
\node (s12) at (1,-1) {$q'_1$};
\node (s13) at (1,-2) {$q'_2$};
\node (s21) at (2,0) {$\ast$};
\node (s22) at (2,-1) {$[q'_1(w)]_k$};
\node (s23) at (2,-2) {$q'_2$};
\node (t1) at (3,0) {$\ast$};
\node (t2) at (3,-1) {$\ast$};
\node (t3) at (3,-2) {$\bigl[ q'_2\bigl( q'_1(w) \bigr) \bigr]_k$};
\path[-] (i2) edge (s13) (s13) edge (s23) (s23) edge (t2);
\path[-] (i3) edge (s12) (s12) edge (s21) (s21) edge (t1);
\path[-,red] (i1) edge (s11) (s11) edge (s22) (s22) edge (t3);
\end{tikzpicture}
\end{gather*}
\caption{Crossing diagrams representing the action of $\fraks_1 \fraks_2 \fraks_1$ (top) and $\fraks_2 \fraks_1 \fraks_2$ (bottom).}
\label{fig:crossing_diagrams-old6}
\end{figure}

\end{verlong}

\bibliographystyle{alpha}
\bibliography{queue}{}
\end{document}